\newtheorem{theorem}{Theorem}[section]
\newtheorem{lemma}[theorem]{Lemma}
\theoremstyle{definition}
\newtheorem{remark}[theorem]{Remark}
\numberwithin{equation}{section}
\begin{document}

\title [Keller-Segel-Fluid model in $\mathbb{R}^2$]{On the Keller-Segel models interacting with a stochastically forced incompressible viscous flow in $\mathbb{R}^2$} 

\author{Lei Zhang}
\address{School of Mathematics and Statistics, Hubei Key Laboratory of Engineering Modeling  and Scientific Computing, Huazhong University of Science and Technology,  Wuhan 430074, Hubei, P.R. China.}
\email{lzhang890701@163.com; lei\_zhang@hust.edu.cn (L. Zhang)}

\author{Bin Liu}
\address{School of Mathematics and Statistics, Hubei Key Laboratory of Engineering Modeling  and Scientific Computing, Huazhong University of Science and Technology,  Wuhan 430074, Hubei, P.R. China.}
\email{binliu@mail.hust.edu.cn (B. Liu)}

\keywords{Stochastic Keller-Segel-Navier-Stokes system; Entropy-energy inequality; Martingale solutions; Pathwise uniqueness.}

\date{\today}

\begin{abstract}
This paper considers the Keller-Segel model coupled to stochastic Navier-Stokes equations  (KS-SNS, for short), which describes the dynamics of oxygen and bacteria densities evolving within a stochastically forced 2D incompressible viscous flow.  Our main goal is to investigate the existence and uniqueness of global solutions (strong in the probabilistic sense and weak in the PDE sense) to the KS-SNS system. A novel approximate KS-SNS system with proper regularization and cut-off operators in $H^s(\mathbb{R}^2)$ is introduced, and the existence of approximate solution is proved by some a priori uniform bounds and a careful analysis on the approximation scheme. Under appropriate assumptions, two types of stochastic entropy-energy inequalities that seem to be new in their forms are derived, which together with the Prohorov theorem and Jakubowski-Skorokhod theorem enables us to show that the sequence of approximate solutions converges to a global martingale weak solution. In addition, when $\chi(\cdot)\equiv \textrm{const.}>0$, we prove that the solution is pathwise unique, and hence by the Yamada-Wantanabe theorem that the KS-SNS system admits a unique global pathwise weak solution.
\end{abstract}

\maketitle
%\newpage
\tableofcontents
%----------------------------------------------------
\section{Introduction}\label{sec1}
\subsection{Statement of the problem}

Chemotaxis refers to the directional movement of cells, such as bacteria, in response to chemical signals. A notable example is that bacteria often swim towards areas with higher concentrations of oxygen for survival.  One of the most renowned models in chemotaxis is the Keller-Segel (KS) model, which was developed by Keller and Segel \cite{keller1970,keller1971}. It has since become one of the most extensively studied models in mathematical biology. The interplay between cells and the surrounding fluid, where chemical substances are consumed, has been acknowledged in \cite{fujikawa1989fractal,dombrowski2004self,tuval2005}. These studies confirm that the density of bacteria and chemoattractants change with the motion of fluid. Consequently, the velocity field of fluid is influenced by both moving bacteria and external body forces. To describe such a coupled biological phenomena, Tuval et al. \cite{tuval2005} introduced the deterministic Keller-Segel-Navier-Stokes (KS-NS) system. The main goal of this paper is to study the Cauchy problem for the Keller-Segel model of consumption type coupled with an incompressible fluid described by the stochastic Navier-Stokes equations (KS-SNS) (cf.
\cites{tuval2005,zhai20202d}):
\begin{equation}\label{KS-SNS}
\left\{
\begin{aligned}
&\mathrm{d}  n +u\cdot \nabla n  \mathrm{d} t = d_1\Delta n  \mathrm{d} t-  \textrm{div} \left(n\chi(c)\nabla c\right) \mathrm{d} t , \\
&\mathrm{d}  c+ u\cdot \nabla c   \mathrm{d} t =d_2 \Delta c \mathrm{d} t-n \kappa(c) \mathrm{d} t ,\\
&\mathrm{d}  u+ \left((u\cdot \nabla) u+\nabla P\right) \mathrm{d} t =  d_3 \Delta u \mathrm{d} t + n\nabla \phi \mathrm{d} t+ f(t,u) \mathrm{d}  W,\\
&  \textrm{div}   u  =0 , \\
& n|_{t=0}=n_0,~c|_{t=0}=c_0,~ u|_{t=0}=u_0,
\end{aligned}
\right.
\end{equation}
for all  $(t,x)\in \mathbb{R}^{+}\times\mathbb{R}^2$. The unknowns are the scalar functions $n=n(t, x)$, $c=c(t, x)$, $ P=P(t, x): \mathbb{R}^{+} \times \mathbb{R}^2 \rightarrow \mathbb{R}^{+}$ and the two-dimensional vector field $u=u(t, x): \mathbb{R}^{+} \times \mathbb{R}^2 \rightarrow \mathbb{R}^2$, which represent the density of cells, the chemical sensitivity, the pressure and the velocity of fluid, respectively. The positive constants $d_i$, $i=1,2,3$, are the diffusion coefficients for cells, substrate and fluid. The function $\chi(c)$ denotes the chemotactic sensitivity and $\kappa(c)$ describes the consumption rate of the substrate. The term  $n\nabla \phi$ characterizes the external agency  exerted by bacteria on fluid via a time-independent gravitational potential $\phi=\phi(x)$. The triple $(n_0,c_0,u_0)$ represents the initial data satisfying proper regularity conditions which will be described later.

The deterministic Keller-Segel-Navier-Stokes system (by taking $f(t,u)\equiv 0$ in \eqref{KS-SNS}) was initially proposed by Tuval, et al. \cite{tuval2005} (see also \cites{fujikawa1989fractal,dombrowski2004self}) to describe the interaction between the bacterial populations of consumption type and the surrounding fluid, which reveals the complex facets of the spatio-temporal behavior in colonies of the aerobic species \emph{Bacillus subtilis} when suspended in sessile water drops. The density of bacteria and the evolution of chemical substrates are changing over time corresponding to the flow of liquid environment, and conversely, the dynamic behavior of fluid is affected by certain external body force, such as the gravity of bacteria, centrifugal, electric or magnetic forces and uncertain external force surrounding the fluid \cites{duan2010global,zhai20202d}.

In recent years, in order to get a better understanding of its important applications in the biomathematics \cites{tuval2005,hillen2009user,arumugam2021keller}, the mathematical analysis of KS-NS system such as well-posedness, asymptotic behavior et al. for the KS-NS system of consumption type have been extensively studied in both bounded and unbounded domains from the PDEs' point of view. Among others we would like to mention the following incomplete references which are related to the present work. For the results concerning the KS-NS system in bounded domains, see for example Lorz \cite{lorz2010coupled}, Winkler \cites{winkler2012global,winkler2012global,winkler2016global}, Black and Winkler \cite{black2022global}, Ding and Lankeit \cite{ding2022generalized} and so on. Recently, Winkler \cite{winkler2017far} proved that after some relaxation time, the global weak solutions constructed in \cite{winkler2016global} enjoys further regularity properties and thereby complies with the concept of eventual energy solutions. Moreover, in \cite{winkler2022does}, the possibility for singularities to  weak energy solutions occur on small time-scales was shown to arise only on a sets of measure zero.

Note that the aforementioned work are focused on bounded domains, while when people investigate the dynamic behavior of bacteria and chemicals evolving within large scale spacial regions, such as the lakes and oceans, it will be beneficial to approximately suppose that the domain that the bacteria living in  is unbounded, for example the whole plane $\mathbb{R}^2$ or the physical space $\mathbb{R}^3$. In fact, there already have some interesting results along this direction, and we refer to the works by Duan, Lorz and Markowich \cite{duan2010global}, Liu  and  Lorz \cite{liu2011coupled}, Chae, Kang  and  Lee \cite{chae2014global}, Duan, Li and Xiang \cite{duan2017global}, Kang  and  Kim \cite{kang2017existence}, Zhang and Zheng \cites{zhang2014global,zhang2021global}, Diebou Yomgne \cite{diebou2021well} and the references cited therein. A more recent advance in $\mathbb{R}^d $ or $\mathbb{T}^d $ comes from Jeong and Kang \cite{jeong2022well}, where the authors investigated the local well-posedness and blow-up phenomena in Sobolev spaces for both partially inviscid and fully inviscid KS-NS system by performing a new weighted Gagliardo-Nirenberg-Sobolev inequality.

Physically speaking, incorporating stochastic effects is crucial in creating mathematical models for complex phenomena in science that involve uncertainty. For instance, the evolution of viscous fluids is usually not only affected by the external force $n\nabla \phi$ caused by bacteria, but also by random sources from the environment. The presence of randomness can significantly impact the overall evolution of the viscous fluid \cite{flandoli2008introduction,breit2018stochastically}. As a matter of fact, numerous studies have been conducted on the stochastic Navier-Stokes equations, as evidenced in \cite{bensoussan1995stochastic,flandoli1995martingale,brzezniak20132d,breit2018local,
chen2019martingale,hofmanova2019non,chen2022sharp} and their cited references. Due to the widespread applications of random fluctuations in hydrodynamics, developing a stochastic theory for the KS-NS system coupled with perturbed momentum equations by random forces is essential. In this paper, we assume that the viscous flow described by Navier-Stokes equations are inevitably affected, besides the external force $n\nabla \phi$ stemming from the bacteria, also by some random factors in surrounding environment. Therefore, it will be more felicitously assuming that the incompressible viscous flow is perturbed by a stochastic external force, especially in the form of $f(t,u)\mathrm{d}  W$, in \eqref{KS-SNS}$_3$. More precisely, we denote by $\{W(t)\}_{t\geq 0}$ an $U$-valued cylindrical Wiener process defined on a fixed stochastic basis $(\Omega,\mathcal {F},\{\mathcal {F}_t\}_{t>0},\mathbb{P})$ with complete right-continuous filtration, and $W(t)$ is given by the formal expansion
\begin{equation}\label{1..2}
\begin{split}
W(t )=\sum_{j\geq 1}W^j(t )e_j,~~\mathbb{P}\textrm{-a.s.},
 \end{split}
\end{equation}
where $\{W^j(t )\}_{k\geq1}$ is a family of mutually independent $\mathcal {F}_t$-adapted real-valued standard Wiener processes, and $\{e_j\}_{j\geq1}$ is a complete orthonormal basis in the separable Hilbert space $U$. To make sense of the series \eqref{1..2}, one can consider a larger auxiliary space $U_0$ via
$
 U_0:=\{v=\sum_{j\geq 1}\alpha_j e_j;~\sum_{j\geq 1} \alpha_j^2/j^2  <\infty\} \supset U,
$
which is endowed with the norm $\|v\|_{U_0}^2=\sum_{k\geq 1} \alpha_j^2/j^2$, for any $v=\sum_{j\geq 1} \alpha_j e_j$. Note that the embedding from $U$ into $ U_0$ is Hilbert-Schmidt \cite{da2014stochastic}, and the trajectories of $W $ are in $\mathcal {C}([0,T];U_0)$, $\mathbb{P}$-a.s.

%It is well-known that stochastic partial differential equations (SPDEs) can be used to describe systems that are too complex to be described deterministically, such as an airflow around an airplane wing perturbed by
%the random state of the atmosphere and weather, and a flow of a chemical
%substance in a river subjected to wind and rain, et al. In the physical  world, the dynamic behavior of the viscous fluid is inevitably influenced by the random factors coming from the external environment (cf. \cites{mikulevicius2001equations,flandoli2008introduction,breit2018stochastically}). In view of the extensive application of the stochastic dynamics theory in fluid mechanics, we believe that it is important and necessary to develop the stochastic theory for the KS-SNS system perturbed by certain random external force, which may provide new insights in the field of chemotaxis-fluid interactions.

To the best   of our knowledge, unlike its deterministic counterpart (i.e., $f\equiv 0$ in \eqref{KS-SNS}), the literature on the stochastically perturbed KS-NS system is still relatively scarce. Indeed, the first result concerning the KS-SNS system \eqref{KS-SNS} was obtained recently by Zhai and Zhang in \cite{zhai20202d}, which deals with the global solvability of mild or weak solutions to \eqref{KS-SNS} in a bounded convex domain $\mathscr{O}\subset \mathbb{R}^2$. Later, the Zhang and Liu proved the existence of a global martingale weak solution in dimension three when the system \eqref{KS-SNS} is perturbed by a more general L\'{e}vy processes \cite{zhang2022global}. Very recently, Hausenblas et al. \cite{hausenblas2024existence} studied the initial-boundary value problem for the two-dimensional SCNS system with an additional random noise imposed on the $n$-equation.

The \textsf{main contribution} of  present work is further to investigate the global solvability of \eqref{KS-SNS} under different relaxed assumptions in two-dimensional plane. Compared with the existed results, the novelty in our work are three folds:

$\bullet$ In this paper, we initiates the study of global well-posedness of \eqref{KS-SNS} in the context of unbounded domain  $\mathbb{R}^2$, which of course causes new difficulties and hence novel ideas have to be introduced. We believe that the framework used in the proof is applicable for studying the other generalized systems in unbounded domains.

$\bullet$ Our first main result shows that the cell-density $n_0$ lives in a larger class of functions and so removes the restriction of the $L^2$-integrability of the quantity $n_0 \langle x\rangle  $ in \cite{zhai20202d}. The proof is essentially based on an entropy-energy inequality established in Lemma \ref{unf}, which even improves the one for deterministic counterpart.

$\bullet$ In the second main result, although the condition $n_0 \langle x\rangle \in L^2(\mathbb{R}^2)$ is retained,  we  prove that the large enough diffusion coefficients $d_i$, $i=1,2,3$, are sufficient to guarantee the existence of global solutions. The proof also strongly depends on a newly derived entropy-energy inequality.

\subsection{Main results} \label{sec1.2}

Let us first give some notations that will be frequently used in the following argument. The inequality $A\lesssim_{a,b,...}B$ means that there exists a positive constant $C$ depending only on $a,b,...$ such that $A\leq C B$, while $A\asymp_{a,b,...}B$ indicates that there two positive constants $c\leq C$ depending only on $a,b,...$ such that $cB\leq A\leq C B$. We use $\nabla \wedge u$ to denote the vorticity of a two-dimensional vector field $u=(u_1,u_2)$, i.e., $\nabla \wedge u= \partial_{x_1}u_2-\partial_{x_2}u_1$.

To give the statement of the first main result, let us make the following assumptions:
\begin{itemize}
\item [(\textbf{\textsf{A$_1$}})] The initial data $(n_0,c_0,u_0)$ satisfies

\begin{itemize}
\item [1)] $n_0 \in L^1(\mathbb{R}^2)\cap L^2(\mathbb{R}^2)$, $n_0>0$,

\item [2)] $c_0 \in L^1(\mathbb{R}^2)\cap L^\infty(\mathbb{R}^2)$, $|\nabla \sqrt{c}|\in L^2(\mathbb{R}^2)$, $c_0>0$,

\item [3)] $u_0 \in L^2(\mathbb{R}^2)$, $ \textrm{div}  u_0 =0$.
\end{itemize}

\item [{(\textbf{\textsf{A$_2$}})}] For the parameters $\phi,\kappa$ and $\chi$ in \eqref{KS-SNS}, we assume that

\begin{itemize}
\item [1)] $\phi\in W^{2,\infty}(\mathbb{R}^2;\mathbb{R})$,

\item [2)] $\chi :[0,\infty)\mapsto (0,\infty)$ is a $\mathcal {C}^2$ function, $\kappa\in \mathcal {C}^2([0,\infty))$, $\kappa(0)=0$,  and $\kappa>0$ on $(0,\infty)$,

\item [3)] $\left(\kappa/\chi\right)'>0$, $\left(\kappa/\chi\right)''\leq 0$,  and $ \left(\kappa \chi\right)'\geq0$ on $[0,\infty)$.
\end{itemize}

\item [{(\textbf{\textsf{A$_3$}})}] For the stochastic term, we suppose

\begin{itemize}
\item [1)] For all $s\in \mathbb{R}$, there is  a positive constant $\varrho>0$ such that
\begin{equation*}
\begin{split}
  \|f (t,u)\|_{L_2(U;H^s)}^2&\leq \varrho (1+\|u\|_{H^s}^2),\quad\forall u \in H^s(\mathbb{R}^2).
 \end{split}
\end{equation*}
\item [2)] For each $M>0$, there is a $C_M>0$ such that
\begin{equation*}
\begin{split}
\|f (t,u)-f (t,v)\|_{L_2(U;H^s)}  &\leq C_M  \|u-v\|_{H^s} ,\\
\|\nabla \wedge f (t,u)-\nabla \wedge f (t,v)\|_{L_2(U;H^{s-1})}  &\leq C_M \|u-v\|_{H^s},
 \end{split}
\end{equation*}
for any $u,v \in B_M:=\{u \in H^s(\mathbb{R}^2);~\|u\|_{H^s}\leq M\}$, where $L_2(U;H^s(\mathbb{R}^2))$ denotes the space of all Hilbert-Schmidt operators from $U$ to $H^s(\mathbb{R}^2)$ equipped with the norm
$
\|f(t,u)\|_{L_2(U;H^s)}^2=\sum_{j\geq 1}\|f_j(t,u)\|_{H^s}^2$, where $f_j(t,u)= f(t,u)e_j$.
 \end{itemize}
\end{itemize}

\begin{remark}
Concerning the chemotactic sensitivity and the consumption rate of substrate in (\textsf{A$_2$}), it is of interest to consider $\chi\equiv\textrm{const.}>0 $ (which is required for uniqueness result in our results), and $\kappa(s)=s$ for all $s\geq0$,
which corresponds to the prototypical chemotaxis model (cf. \cite{hillen2009user,arumugam2021keller}).
One of the candidates in (\textsf{A$_3$}) is given by the linear multiplicative noise $f(t,u)=cu$, for some $c\in \mathbb{R}\setminus\{0\}$, where $c$ describes the strength of the noise.
\end{remark}

Our first main result can now be stated by the following result. Without loss of generality, we shall assume that $d_1=d_2=d_3\equiv 1$ in Theorem \ref{th1}.

\begin{theorem}\label{th1}
Under the hypothesises (\textsf{A$_1$})-(\textsf{A$_3$}), the following statements hold:
\begin{itemize}
\item [(\textbf{\textsf{a$_1$}})] (\textsf{Global martingale weak solution}) For any $T>0$, there exists a stochastic basis $(\Omega,\mathcal {F},\{\mathcal {F}_t\}_{t>0},\mathbb{P})$ with complete right continuous filtration, on which defined a $\mathcal {F}_t$-cylindrical Wiener process $\{W(t)\}_{t\geq 0} $ and a progressively measurable process $(n,c,u)$, such that $\mathbb{P}$-a.s.
\begin{equation*}
\begin{split}
&n \in  L^\infty(0,T;L^1(\mathbb{R}^2)\cap L^2(\mathbb{R}^2))\bigcap L^2(0,T;H^1(\mathbb{R}^2)),\\
&c \in  L^\infty(0,T;L^1(\mathbb{R}^2)\cap L^\infty(\mathbb{R}^2))\bigcap L^\infty(0,T;H^1(\mathbb{R}^2))\bigcap L^2(0,T;H^2(\mathbb{R}^2)),\\
&u \in  L^\infty(0,T; L^2(\mathbb{R}^2))\bigcap L^2(0,T;H^1(\mathbb{R}^2)),
 \end{split}
\end{equation*}
and for any $t \in (0,T]$, the following relationships
\begin{equation}\label{1.2}
\begin{split}
 \int_{\mathbb{R}^2}n(t) \varphi\mathrm{d}x&= \int_{\mathbb{R}^2}n(0) \varphi\mathrm{d}x+\int_0^t \int_{\mathbb{R}^2} (u n-d_1\nabla n+n\chi(c)\nabla c)\cdot\nabla\varphi\mathrm{d}x\mathrm{d}r,\\
 \int_{\mathbb{R}^2}c(t) \varphi\mathrm{d}x&= \int_{\mathbb{R}^2}c(0) \varphi\mathrm{d}x +\int_0^t \int_{\mathbb{R}^2} (u c-d_2\nabla c-n \kappa(c))\cdot\nabla\varphi\mathrm{d}x\mathrm{d}r,\\
 \int_{\mathbb{R}^2}u(t)\cdot\psi\mathrm{d}x&=\int_{\mathbb{R}^2}u(0)\cdot\psi\mathrm{d}x  +\int_0^t \int_{\mathbb{R}^2} (u \otimes u- d_3\nabla u ) : \nabla\psi\mathrm{d}x\mathrm{d}r\\
 &+\int_0^t \int_{\mathbb{R}^2} n \nabla\phi\cdot \psi\mathrm{d}x\mathrm{d}r+ \sum_{j\geq 1}\int_0^t \int_{\mathbb{R}^2} f(r,u)\cdot \psi \mathrm{d}x \mathrm{d}  W^j(r)
 \end{split}
\end{equation}
hold $\mathbb{P}$-a.s., for any $\varphi\in \mathcal {C}^\infty_0(\mathbb{R}^2;\mathbb{R})$ and $\psi\in \mathcal {C}^\infty_{0 }(\mathbb{R}^2;\mathbb{R}^2)$ with $ \textrm{div} \ \psi =0$.

\item [(\textbf{\textsf{a$_2$}})] (\textsf{Pathwise uniqueness}) In addition, if the chemotactic sensitivity  $\chi(\cdot)\equiv const.> 0$ is a real number, then under the given stochastic basis $(\Omega,\mathcal {F},\{\mathcal {F}_t\}_{t>0},\mathbb{P})$, the weak solution $(n,c,u)$ to the KS-SNS system \eqref{KS-SNS} exists uniquely. As a result, system \eqref{KS-SNS} admits a global pathwise weak solution.
\end{itemize}
\end{theorem}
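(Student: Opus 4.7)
The plan is to follow the standard approximation/tightness/Jakubowski--Skorokhod scheme to obtain the martingale solution $(\textsf{a}_1)$, and then upgrade to $(\textsf{a}_2)$ via pathwise uniqueness and Yamada--Watanabe.

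\textbf{Step 1: Regularized system.} First I would introduce an approximate KS-SNS system in $H^s(\mathbb{R}^2)$ for $s$ sufficiently large, obtained by mollifying the nonlinear terms $u\cdot\nabla$, $\mathrm{div}(n\chi(c)\nabla c)$, $n\kappa(c)$, $n\nabla\phi$ with a Friedrichs mollifier of scale $\varepsilon$ and multiplying each one by a smooth cut-off $\theta_R(\|(n,c,u)\|_{H^s})$. The coefficients then become globally Lipschitz and of linear growth on a single Hilbert space, so a classical Banach fixed point / stochastic ODE argument produces a unique $H^s$-valued strong (probabilistic) approximation $(n_\varepsilon,c_\varepsilon,u_\varepsilon)$. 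Positivity of $n_\varepsilon$ and $c_\varepsilon$ is preserved after a further truncation at zero inside $\log$ and $\kappa/\chi$.

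\textbf{Step 2: Stochastic entropy--energy bounds.} Then I would apply It\^o's formula to the sum of (i) the entropy $\int n_\varepsilon\log n_\varepsilon\,\mathrm{d}x$, (ii) a cross-term $\int n_\varepsilon\,\Phi(c_\varepsilon)\,\mathrm{d}x$, where $\Phi$ is built from $\kappa,\chi$ so that the structural monotonicity conditions $(\kappa/\chi)'>0$, $(\kappa/\chi)''\le0$, $(\kappa\chi)'\ge0$ of $(\textsf{A}_2)$ produce a nonnegative dissipation à la Winkler, and (iii) the kinetic energy $\tfrac12\|u_\varepsilon\|_{L^2}^2$. The It\^o correction $\tfrac12\sum_j\|f_j(\cdot,u_\varepsilon)\|_{L^2}^2$ is absorbed by $(\textsf{A}_3)\text{(1)}$ and closed by stochastic Gr\"onwall/BDG. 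Combined with independent It\^o estimates for $\|n_\varepsilon\|_{L^2}^2$, $\|c_\varepsilon\|_{L^\infty}$ and the vorticity $\nabla\wedge u_\varepsilon$, this gives the uniform bounds of Lemma \ref{unf} in the regularity spaces listed in $(\textsf{a}_1)$.

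\textbf{Step 3: Compactness and identification of the limit.} With these moment bounds, the laws of $(n_\varepsilon,c_\varepsilon,u_\varepsilon,W)$ are tight in $L^2(0,T;L^2_{\mathrm{loc}}(\mathbb{R}^2))^3\times \mathcal{C}([0,T];U_0)$: temporal equicontinuity follows from the mild form together with BDG, while spatial compactness uses $H^1_{\mathrm{loc}}\hookrightarrow\hookrightarrow L^2_{\mathrm{loc}}$ on balls (global compactness is not available on $\mathbb{R}^2$, so the localization must be kept throughout). By Prohorov and Jakubowski--Skorokhod I obtain a new stochastic basis on which a subsequence converges $\mathbb{P}$-a.s.\ to a candidate $(n,c,u,W)$. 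The deterministic nonlinearities pass to the limit by Vitali and the uniform moment bounds; the stochastic integral is identified by the standard Debussche--Glatt-Holtz--Temam (or Bensoussan) martingale argument, yielding the weak formulation \eqref{1.2} and hence $(\textsf{a}_1)$.

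\textbf{Step 4: Uniqueness and Yamada--Watanabe.} For $(\textsf{a}_2)$, with $\chi\equiv$ const., I would take two solutions $(n^i,c^i,u^i)$ on the same basis, same data, and apply It\^o to $\|n^1-n^2\|_{L^2}^2+\|\nabla(c^1-c^2)\|_{L^2}^2+\|u^1-u^2\|_{L^2}^2$. The chemotactic cross term reduces to $\chi\int(n^1-n^2)\nabla c^2\cdot\nabla(n^1-n^2)\,\mathrm{d}x$ (plus lower order), which is controlled by 2D Ladyzhenskaya and $c^2\in L^2_tH^2$; the noise is handled by $(\textsf{A}_3)\text{(2)}$, and stochastic Gr\"onwall closes the estimate. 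Yamada--Watanabe then converts $(\textsf{a}_1)$ into the global pathwise weak solution on the originally prescribed basis. The \emph{main obstacle} I expect is Step 2 without the weight $n_0\langle x\rangle\in L^2$ used in \cite{zhai20202d}: the It\^o entropy identity on the whole plane must be closed purely through the monotonicity in $(\textsf{A}_2)\text{(3)}$ and the linear growth in $(\textsf{A}_3)\text{(1)}$, and the far-field decay of $\nabla\phi$, $n_\varepsilon$ and $c_\varepsilon$ has to be tracked carefully to prevent spurious contributions at infinity and to justify integration by parts against the stochastic forcing.
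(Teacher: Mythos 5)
Your overall scheme (mollify and cut off, derive a stochastic entropy--energy inequality, tightness plus Prohorov/Jakubowski--Skorokhod, then pathwise uniqueness and Yamada--Watanabe) is exactly the route the paper takes, but two of your steps have concrete gaps that would need repair.

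First, in Step 1 you propose to mollify \emph{all} nonlinear terms, including the transport terms $u\cdot\nabla n$ and $u\cdot\nabla c$, and to restore positivity by ``truncating at zero inside $\log$ and $\kappa/\chi$.'' This does not work: once the advection is replaced by a nonlocal mollified term, the maximum principle fails, so neither $n^\epsilon\ge 0$ nor $0\le c^\epsilon\le\|c_0\|_{L^\infty}$ is available, and truncating the test functionals does not recover these bounds for the solution itself. These pointwise bounds are indispensable inputs to Step 2 (all constants there are of the form $\max_{[0,\|c_0\|_{L^\infty}]}|\chi|$, $\max_{[0,\|c_0\|_{L^\infty}]}|\kappa|$, etc.). The paper's system \eqref{Mod-1} deliberately leaves the transport and diffusion unmollified and regularizes only the coupling terms, so that Lemma \ref{lem9} (positivity, $L^1$-conservation, $L^\infty$-contraction) survives; the price is that the drift then loses a derivative and is not Lipschitz on a single $H^s$, which is why the paper needs the extra frequency-truncation layer $J_k$ in \eqref{Mod-2} and the Cauchy-in-probability argument of Lemma \ref{lem6} to pass $k\to\infty$ in the absence of compact embeddings on $\mathbb{R}^2$. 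Your one-line ``globally Lipschitz, Banach fixed point'' dismisses precisely the difficulty that forces this three-layer construction.

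Second, in Step 2 the cross term you propose, $\int n_\varepsilon\,\Phi(c_\varepsilon)\,\mathrm{d}x$, is not the coupling that closes the estimate for consumption-type models. The functional that works (Lemma \ref{lem10}) pairs $\int(n^\epsilon+1)\ln(n^\epsilon+1)\,\mathrm{d}x$ with $\tfrac12\|\nabla h(c^\epsilon)\|_{L^2}^2$, where $h(t)=\int_1^t\sqrt{\chi/\kappa}\,\mathrm{d}s$, and the entire technical content is the sign analysis of the cubic term $\int g(c^\epsilon)|\nabla h(c^\epsilon)|^2\Delta h(c^\epsilon)\,\mathrm{d}x$ with $g=(\sqrt{\kappa/\chi})'$: the hypotheses $(\kappa/\chi)'>0$, $(\kappa/\chi)''\le 0$ enter only through the pointwise inequality $g'/g^2\le -h'$, which is what makes the quartic dissipation $\|\sqrt{g(c^\epsilon)}\nabla h(c^\epsilon)\|_{L^4}^4$ emerge with a good sign. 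That dissipation is in turn what controls $\|n^\epsilon\|_{L^2}$ and $\|\nabla n^\epsilon\|_{L^2_tL^2}$ (on the high-probability events $\Omega_R^\epsilon$), and without it the $n$-regularity in $(\textsf{a}_1)$ and the tightness of $\mathcal{L}[n^\epsilon]$ cannot be obtained. A cross term of the form $\int n\Phi(c)$ does not generate $\|\nabla\sqrt{n}\|_{L^2}^2$ or the quartic gradient term, so ``dissipation à la Winkler'' would need to be replaced by this specific computation. Your final remark about far-field decay is, by contrast, a non-issue here: the estimates are closed in unweighted spaces precisely because the entropy is taken of $(n+1)\ln(n+1)$ rather than $n\ln n$, which is what removes the need for $n_0\langle x\rangle\in L^1$.
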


\begin{remark} A few remarks on Theorem \ref{th1} are in order.
\begin{itemize}
\item [$\bullet$] In the above theorem, it should be noted that martingale solution means that the solution is defined on a filtered completed probability space which is not assumed in advance,  while the weak solution means that the solution satisfies the system in  the framework of PDEs, where the associated spacial derivatives are described in the sense of distribution.

\item [$\bullet$] By introducing reasonable boundary conditions within a smooth bounded domain $D\subset \mathbb{R}^2$, such as $ \frac{\partial n}{\partial \textbf{n}} =\frac{\partial c}{\partial \textbf{n}}=0$ and $u =0$,  on $\mathbb{R}^+\times\partial D$,  where $\textbf{n}$ stands for the inward normal on the boundary, it is not hard to verify that the proof in present work still remains true for the KS-SNS in smooth bounded domains, which indicates that Theorem \ref{th1} covers the results in \cites{liu2011coupled,winkler2012global,jiang2015global}.

\item [$\bullet$]  Theorem \ref{th1} provides a new global well-posedness of system \eqref{KS-SNS} without assuming that $n_0|x| \in L^1(\mathbb{R}^2)$ for the density of bacteria. Roughly speaking, $n_0|x| \in L^1(\mathbb{R}^2)$ requires that
\begin{equation}
\begin{split}
n_0(x)\sim \frac{1}{|x|^{3+\delta}}~~ \textrm{as}~~ |x|\rightarrow\infty,~~ \textrm{for some} ~~\delta>0,
 \end{split}
\end{equation}
which excludes the class of initial data $n_0$ decaying slower than $1/|x|^{3+\delta}$ as $|x|\rightarrow\infty$, such as $n_0(x)= 1/(1+|x|)^\gamma$ ($2<\gamma \leq 3$), it is clear that $n_0\in L^1 (\mathbb{R}^2)$, but $n_0|x| \not\in L^1 (\mathbb{R}^2)$. Thereby, Theorem \ref{th1} improves the recent global well-posedness result for the KS-SNS system \eqref{KS-SNS} in \cite{zhai20202d}, and also the global result for deterministic counterparts in Theorem 1.4 (2D version) of \cite{chae2013existence}, Theorem 1.1 of \cite{zhang2014global}, Theorem 2.1 of \cite{liu2011coupled} and Theorem 3.1 of \cite{duan2010global}.

\item [$\bullet$] \textsf{Difficulties and strategies} in proving Theorem \ref{th1}:
\begin{itemize}
\item [{1)}] In order to overcome the difficulty caused by the low-regularity terms such as $ \textrm{div}  (n \nabla c  )$ in the $n$-equation, we first regularize the nonlinear terms by using standard mollifiers. This regularization is important in establishing the well-posedness of the approximate solutions, especially for the uniqueness of approximations and the entropy-energy inequality. Indeed, such a technique has been used in the existed works for deterministic CNS systems (cf. \cite{duan2010global,liu2011coupled,chae2014global}). However, different from the deterministic cases and the existed works for stochastic counterparts, it is not an obvious work to ensure the existence of global solutions to the approximate system \eqref{Mod-1}. To overcome this difficulty, we use the regularization method in \cite{majda2002vorticity} to regularize the system \eqref{Mod-1} with Friedrichs projectors, resulting in a nonlinear SDE in Hilbert space. Furthermore, we apply the efficient truncation technique (cf. \cite{rockner2014local,breit2018stochastically,du2020local}) to add a smooth cut-off function (with the parameter $R$) in front of the nonlinear terms, which help us to construct global approximate solutions on any interval $[0,T]$. For this three-layer approximate system, one can derive a priori uniform estimates, and the existence and uniqueness of approximations is ensured by the classical Leha-Ritter Theorem.

\item [{2)}] Due to the ``nonlocal peoperty'' of the cut-off operators and the loss of compactness from $H^s(\mathbb{R}^2)$ into $ H^t(\mathbb{R}^2)$ for $s>t$, one can not prove the property $\theta_R(\|\textbf{u}^{k,R,\epsilon}\|_{W^{1,\infty}})
\rightarrow\theta_R(\|\textbf{u}^{R,\epsilon}\|_{W^{1,\infty}})$ as $k\rightarrow\infty$ from the uniform bounds in Lemma \ref{lem2} directly.  In this work we shall overcome this difficulty by adopting ideas from \cite{li2021stochastic}, where the key step is to demonstrate the convergence of $\textbf{u}^{k,R,\epsilon}$ in $\mathcal {C}([0, T], \textbf{H}^{s-3}(\mathbb{R}^2))$ through a careful analysis of the approximation scheme, and then elevate the spatial regularity of the solution in $\textbf{H}^{s}(\mathbb{R}^2)$. Then, based on a uniqueness result for system and the stopping techniques, one can show that the system \eqref{3.1} admits a global approximate solution, see Lemmas \ref{lem7}-\ref{lem8} for details. We also remark that the aforementioned difficulty will not occur neither in the deterministic KS-NS system nor the KS-SNS system in bounded domains.

\item [{3)}] We prove the convergence of the approximate solution with parameter $\epsilon>0$ by using a stochastic compact method. As usual, we are inspired to establish some uniform a priori bounds for the approximation solutions. However, very different with the decoupled Navier-Stokes equations, the deterministic CNS system or the stochastic CNS system in bounded domain,  the existed energy estimates are not sufficient to achieve this goal. By taking advantage of the special structure of the system, we are able to derive a stochastic version of the entropy-energy functional inequality (cf. Lemma \ref{lem10}), which improves the the stochastic version in \cite{zhai20202d} and even the deterministic result in \cite{zhang2014global}. Based on this type of uniform estimate, we can prove the tightness of the approximate solutions, and then use the stochastic compactness method to take the limit as $\epsilon \rightarrow 0$ to obtain the global martingale solutions to \eqref{KS-SNS}. We remark that the entropy-type estimates is a powerful tool in dealing with global well-posedness problems, which has been widely applied in the study of global solvability of deterministic chemotaxis-fluid systems, such as  \cite{duan2010global,liu2011coupled,winkler2012global,chae2014global} and the references therein.
    %
%    For example in Lemma \ref{lem7}, even through $H^{s-2} (\mathbb{R}^2)\subset W^{1,\infty}(\mathbb{R}^2)$ for large $s\in \mathbb{R}$, it is not clear that $\theta_R(\|u^{k,R,\epsilon}\|_{W^{1,\infty}})\rightarrow \theta_R(\|u^{R,\epsilon}\|_{W^{1,\infty}})$  as $k\rightarrow\infty$, since one only have the local result: $u^{k,R,\epsilon}\rightarrow u^{R,\epsilon}$ in $H^{s-2}_{\textrm{loc}}(\mathbb{R}^2)$ as $k\rightarrow\infty$. To overcome this difficulty, we regard the KS-SNS system \eqref{KS-SNS} as a system of SDEs in Hilbert space by proper smoothing operators and cut-off operators. In this way, one can approximate the exact solution by means of spatio-smooth functions in $H^s(\mathbb{R}^2)$ with $s>5$, and then take the limit as $k\rightarrow\infty$ in \eqref{Mod-2} directly to find a solution to \eqref{Mod-1} (cf. Lemma \ref{lem7}). Note that the cutoffs in \eqref{Mod-2} are essential in deriving uniform estimates for approximations, whose appearance brings additional troublesome.
    \end{itemize}
\end{itemize}
\end{remark}

 To state the second main result, we need the following hypothesises.

\begin{itemize}
\item [{(\textbf{\textsf{B$_1$}})}] The initial data $(n_0,c_0,u_0)$ satisfies
\begin{itemize}
\item [1)] $n_0 (| \ln n_0 |+\langle x\rangle) \in L^1(\mathbb{R}^2)$, $\langle x\rangle= (1+|x|^2)^{1/2}$; $n_0>0$,
 \item [2)] $c_0 \in L^\infty(\mathbb{R}^2)\cap L^1(\mathbb{R}^2)\cap H^1(\mathbb{R}^2)$, $c_0>0$,
 \item [3)] $u_0 \in L^2(\mathbb{R}^2)$, $ \textrm{div}  u_0 =0$.
\end{itemize}

\item [{(\textbf{\textsf{B$_2$}})}]  The diffusion coefficients $d_1$, $d_2$ and $d_3$ satisfy
\begin{equation}\label{1.4}
\begin{split}
\frac{2\Lambda^2\|n_0 \|_{L^1}}{d_1 d_2} \left(M_\kappa^2 +  \Lambda^4 \|  c_0\|_{L^\infty}^2\left(\frac{M_\chi^4 }{8 d_1^2  } +\frac{4}{ d_3}\right)\right)\leq 1,
 \end{split}
\end{equation}
where $$ M_\kappa:=   \max_{t\in[0,\|c_0\|_{L^\infty}]} |\kappa (t)|, ~~~M_\chi:=   \max_{t\in[0,\|c_0\|_{L^\infty}]} |\chi (t)|,$$ and $\Lambda$ is the general constant such that the Gagliardo-Nirenberg inequality $\|n^\epsilon \|_{L^2} \leq \Lambda \|\sqrt{n^\epsilon} \|_{L^2} \|\nabla\sqrt{n^\epsilon}\|_{L^2} $ holds.

\item [{(\textbf{\textsf{B$_3$}})}]  The potential $\phi\in W^{1,\infty}(\mathbb{R}^2 )$; For each $M>0$, there exists a $C_M>0$ such that
$$
|\chi (t)|\leq C_M,~~|\kappa (t)|\leq C_M,~~ \forall t\in [0,M].
$$
Moreover, $\kappa(\cdot)$ is a non-negative function which is continuous at $t=0$ with $\kappa (0)=0$.
\end{itemize}

Our second main result can be stated by the following theorem.

\begin{theorem}\label{th2}
Under the assumptions {(\textsf{B$_1$})}-{(\textsf{B$_3$})}, the following statements hold:
\begin{itemize}
\item [{(\textbf{\textsf{b$_1$}})}] The KS-SNS system \eqref{KS-SNS} has at least one   global martingale weak solution $(n,c,u,W)$. More precisely, there exists a stochastic basis $(\Omega,\mathcal {F},\{\mathcal {F}_t\}_{t>0},\mathbb{P})$, on which defined a $\mathcal {F}_t$-cylindrical Wiener process $\{W(t)\}_{t\geq 0} $ and a progressively measurable triple $(n,c,u)$, such that $\mathbb{P}$-a.s.
\begin{equation*}
\begin{split}
&n (|\ln n|+| x|)\in  L^\infty(0,T;L^1(\mathbb{R}^2) ),~ \sqrt{n} \in L^2(0,T;H^1(\mathbb{R}^2)),\\
&c \in  L^\infty(0,T;L^\infty(\mathbb{R}^2)\cap L^1(\mathbb{R}^2)\cap H^1(\mathbb{R}^2))\bigcap L^2(0,T;H^2(\mathbb{R}^2)),\\
&u \in  L^\infty(0,T; L^2(\mathbb{R}^2))\bigcap L^2(0,T;H^1(\mathbb{R}^2)),
 \end{split}
\end{equation*}
and the quadruple $(n,c,u,W)$ satisfies the relationships in \eqref{1.2} $\mathbb{P}$-a.s.

\item [{(\textbf{\textsf{b$_2$}})}] Moreover, if $\chi(\cdot)\equiv \textrm{const.}$ is a positive constant, then the KS-SNS system \eqref{KS-SNS} admits a unique global pathwise weak solution under the stochastic basis $(\Omega,\mathcal {F},\{\mathcal {F}_t\}_{t>0},\mathbb{P})$.
\end{itemize}
\end{theorem}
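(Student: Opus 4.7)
\textbf{Proof plan for Theorem \ref{th2}.} The overall strategy parallels that of Theorem \ref{th1}: reuse the three-layer approximate system (mollification plus Friedrichs truncation plus $R$-cut-off) to produce global $H^s$-valued approximations $(n^\epsilon,c^\epsilon,u^\epsilon)$, derive $\epsilon$-uniform a priori bounds in the regularity classes listed in (\textsf{b$_1$}), then pass to the limit via Prohorov and Jakubowski-Skorokhod, and finally combine (\textsf{b$_1$}) with pathwise uniqueness through the Yamada-Watanabe theorem. The genuinely new ingredient is a second stochastic entropy-energy inequality, based on the Boltzmann-type entropy $\int n\ln n\,\mathrm{d}x$ coupled with a weighted $L^1$-moment $\int n\langle x\rangle\,\mathrm{d}x$, which replaces the $L^2$-entropy used for Theorem \ref{th1} and is forced by the weaker assumption $n_0(|\ln n_0|+\langle x\rangle)\in L^1(\mathbb{R}^2)$.

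Concretely, the key step is to apply It\^o's formula to
\begin{equation*}
\mathcal{E}(t)=\int_{\mathbb{R}^2}n^\epsilon\ln n^\epsilon\,\mathrm{d}x+\alpha\int_{\mathbb{R}^2}|\nabla c^\epsilon|^2\,\mathrm{d}x+\beta\int_{\mathbb{R}^2}|u^\epsilon|^2\,\mathrm{d}x,
\end{equation*}
with constants $\alpha,\beta>0$ to be calibrated. Testing the $n$-equation against $1+\ln n^\epsilon$ yields the dissipation $4d_1\|\nabla\sqrt{n^\epsilon}\|_{L^2}^2$ plus a cross term of the form $\int\chi(c^\epsilon)\nabla c^\epsilon\cdot\nabla n^\epsilon\,\mathrm{d}x$; testing the $c$-equation against $-\Delta c^\epsilon$ produces $d_2\|\Delta c^\epsilon\|_{L^2}^2$ together with terms involving $n^\epsilon\kappa(c^\epsilon)\Delta c^\epsilon$ and $(u^\epsilon\cdot\nabla c^\epsilon)\Delta c^\epsilon$; and the standard energy identity for $u^\epsilon$ contributes $d_3\|\nabla u^\epsilon\|_{L^2}^2$, the drift $\int n^\epsilon\nabla\phi\cdot u^\epsilon\,\mathrm{d}x$, and the It\^o correction $\|f(t,u^\epsilon)\|_{L_2(U;L^2)}^2$. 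A scalar stochastic maximum-principle argument applied to the $c$-equation (using $\kappa\ge 0$ and $u^\epsilon$ divergence-free) gives $\|c^\epsilon(t)\|_{L^\infty}\le\|c_0\|_{L^\infty}$, hence $|\chi(c^\epsilon)|\le M_\chi$ and $|\kappa(c^\epsilon)|\le M_\kappa$. Estimating all cross terms by the Gagliardo-Nirenberg inequality $\|n^\epsilon\|_{L^2}\le\Lambda\|\sqrt{n^\epsilon}\|_{L^2}\|\nabla\sqrt{n^\epsilon}\|_{L^2}$ together with the conservation $\|\sqrt{n^\epsilon}\|_{L^2}^2=\|n_0\|_{L^1}$, the condition (\textsf{B$_2$}), i.e.\ \eqref{1.4}, is exactly what makes the sum of cross terms absorbable into the three dissipation terms for a suitable choice of $\alpha,\beta$.

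Since $\int n^\epsilon\ln n^\epsilon\,\mathrm{d}x$ is not positive (it is negative on $\{n<1\}$), I would pair the above with an equation for the spatial moment: It\^o applied to $\int n^\epsilon\langle x\rangle\,\mathrm{d}x$ produces $\int n^\epsilon u^\epsilon\cdot\nabla\langle x\rangle\,\mathrm{d}x+d_1\int n^\epsilon\Delta\langle x\rangle\,\mathrm{d}x-\int n^\epsilon\chi(c^\epsilon)\nabla c^\epsilon\cdot\nabla\langle x\rangle\,\mathrm{d}x$; since $|\nabla\langle x\rangle|,|\Delta\langle x\rangle|\lesssim 1$, all three terms are controlled by the $L^1$-conservation of $n^\epsilon$ together with the energy bounds on $u^\epsilon$ and $\nabla c^\epsilon$. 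An elementary inequality of the shape $n\ln n\ge -C(1+n\langle x\rangle)-\tfrac12 n\langle x\rangle$ then lets one bound $\int n^\epsilon|\ln n^\epsilon|\,\mathrm{d}x$ from above, yielding uniform integrability of $n^\epsilon$. After localization by stopping times and BDG for the stochastic integrals, taking expectations and applying Gronwall delivers $\epsilon$-uniform bounds in exactly the classes of (\textsf{b$_1$}).

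The tightness argument, the Skorokhod representation, and the identification of limits in the nonlinear terms are structurally identical to those used for Theorem \ref{th1}, the only delicate point being the passage to the limit in $\int n^\epsilon\chi(c^\epsilon)\nabla c^\epsilon\cdot\nabla\varphi\,\mathrm{d}x$, which one treats via strong convergence of $c^\epsilon$ in $L^2(0,T;H^1_{\mathrm{loc}})$ combined with equi-integrability of $n^\epsilon$ inherited from the entropy bound (through a de la Vall\'ee Poussin/Vitali argument). Part (\textsf{b$_2$}) on pathwise uniqueness reduces, when $\chi$ is constant, to a difference estimate for $(\delta n,\delta c,\delta u)$ via It\^o on a suitable $L^2$-functional and a BDG-Gronwall argument, as in (\textsf{a$_2$}) of Theorem \ref{th1}; Yamada-Watanabe then upgrades the martingale solution to a pathwise one. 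The main obstacle is calibrating the entropy-energy inequality so that the entropy dissipation, the $H^2$-dissipation of $c$, and the $H^1$-dissipation of $u$ jointly absorb the cross terms under the precise scalar condition \eqref{1.4}; everything else is a relatively routine adaptation of the machinery already developed for Theorem \ref{th1}.
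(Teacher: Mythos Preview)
Your proposal is correct and follows essentially the same route as the paper: the key new step is precisely the entropy-energy functional $\int n^\epsilon\ln n^\epsilon+\alpha\|\nabla c^\epsilon\|_{L^2}^2+\beta\|u^\epsilon\|_{L^2}^2$ (the paper fixes $\alpha=1$, $\beta=4\Lambda^4\|c_0\|_{L^\infty}^2/(d_2d_3)$), with the cross terms absorbed via the Gagliardo--Nirenberg inequality and condition~\eqref{1.4}, the negative part of $\int n^\epsilon\ln n^\epsilon$ handled through the $\langle x\rangle$-moment, and the remaining compactness, identification, and Yamada--Watanabe steps recycled from Theorem~\ref{th1}. One cosmetic remark: the $n$- and $c$-equations carry no noise, so your references to ``It\^o'' and a ``stochastic maximum principle'' there are really just the ordinary chain rule and the deterministic comparison principle.
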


\begin{remark} For Theorem \ref{th2}, we make the following remarks:
\begin{itemize}
\item [$\bullet$] The strategy of the proof for Theorem \ref{th2} is similar to Theorem \ref{th1}, which depends on the same approximate scheme introduced in Subsection \ref{subsec2.1}, and the proof can also be extended to the KS-SNS system in bounded domain $D\subset \mathbb{R}^2$ with suitable boundary conditions. The key point is to derive a new stochastic analogue of entropy-energy functional inequality (cf. Lemma \ref{lem4.1}) by making use of the fine structure of the equations.

\item [$\bullet$] The main difference between Theorem \ref{th2} and the existing results (cf. \cites{zhai20202d,zhang2022global}) lies in the fact that only locally continuity conditions (cf. (\textsf{B$_3$})) for the functional $\chi$ and $\kappa$ are needed without any structural and monotonicity conditions (cf. {(\textsf{A$_2$})} in Theorem \ref{th1} or the assumptions (A)(b)  in \cite{zhai20202d}), while the cost is to keep the valid of the condition $n_0|x| \in L^1(\mathbb{R}^2)$  and add some boundedness condition \eqref{1.4} for diffucsion coefficients. Moreover, the assumption {(\textsf{B$_2$})} infers that, for any fixed viscosity $d_3>0$, the properly chosen coefficients $d_1,d_2>0$ are sufficient to rule out the singularity of solutions.
\end{itemize}
\end{remark}

\subsection{Structure of the paper}
In Section \ref{subsec2.1}, we introduce the approximation procedure for \eqref{KS-SNS} by felicitously regularizing the system and defining cut-off operators (cf. the systems \eqref{Mod-1} and \eqref{Mod-2}), from which we construct a family of approximations $\textbf{y}^{k,R,\epsilon} \in \mathcal {C}([0,T];\textbf{H}^s(\mathbb{R}^2))$. Subsection \ref{subsec2.2} is devoted to an uniform estimate with respect to $k\geq1$, and we prove in Subsection 2.3 that the approximations converges (up to a subsequence) to a unique pathwise solution $\textbf{y}^{\epsilon} $ of \eqref{Mod-1}. In Section \ref{sec3}, we first establish an entropy-nergy functional inequality and several energy estimates on the events $\Omega_R^\epsilon$ and $\widetilde{\Omega}_N^\epsilon$ large enough. Then by using the stochastic compactness method and Yamada-Watanabe Theorem, we take the limit as $\epsilon\rightarrow0$ to construct a unique global pathwise solution $(n,c,u)$ to system \eqref{KS-SNS}. In Section \ref{sec4}, a new entropy-energy inequality under the key conditions {(\textsf{B$_2$})} on diffusion coefficients is derived. The method of the proof for Theorem \ref{th2} is similar to Theorem \ref{th1}, we just formulate the sketch of proof.

\section{Solvability of the modified systems}

\subsection{Approximation scheme}\label{subsec2.1}
We will construct the approximation solutions as follows. Let $\rho^\epsilon $ be a standard Friedrichs mollifier (cf. \cite{majda2002vorticity}), then we consider the following first modified system:
\begin{equation}\label{Mod-1}
\left\{
\begin{aligned}
&\mathrm{d}  n^\epsilon +u^\epsilon\cdot \nabla n^\epsilon  \mathrm{d} t = \Delta n ^\epsilon \mathrm{d} t-  \textrm{div} \left(n^\epsilon[(\chi(c^\epsilon)\nabla c^\epsilon)*\rho^\epsilon]\right) \mathrm{d} t , \\
&\mathrm{d}  c^\epsilon+ u^\epsilon\cdot \nabla c^\epsilon   \mathrm{d} t = \Delta c^\epsilon \mathrm{d} t-\kappa(c^\epsilon)(n^\epsilon*\rho^\epsilon)  \mathrm{d} t ,\\
&\mathrm{d} u^\epsilon+   \textbf{P} (u^\epsilon\cdot \nabla) u^\epsilon  \mathrm{d} t =   A u^\epsilon \mathrm{d} t +  \textbf{P}[ (n^\epsilon\nabla \phi)*\rho^\epsilon] \mathrm{d} t+  \textbf{P} f(t,u^\epsilon) \mathrm{d} W  ,\\
&  \textrm{div}   u ^\epsilon =0 , \\
&n^\epsilon|_{t=0}=n_0^\epsilon,~c|_{t=0}=c_0^\epsilon,~ u|_{t=0}=u_0^\epsilon,
\end{aligned}
\right.
\end{equation}
 with smooth initial conditions
$$
n_0^\epsilon= n_0*\rho^\epsilon, ~~c_0^\epsilon= c_0*\rho^\epsilon,~~u_0^\epsilon= u_0*\rho^\epsilon.
$$
Here $A= - \textbf{P}  \Delta$ is the Stokes operator, where $ \textbf{P} $ denotes the
Helmholtz projection from $L^2(\mathbb{R}^2)$ into $L^2_ \textrm{div} (\mathbb{R}^2):=\{u \in L^2(\mathbb{R}^2);~  \textrm{div} ~ u=0\}$.
%If one can prove the existence and uniqueness of global-in-time smooth solutions $(u^\epsilon,c^\epsilon,n^\epsilon)$ to \eqref{Mod-1}, then the desired solutions $(u,c,n)$ to \eqref{KS-SNS} can be constructed by taking the limit as $\epsilon \rightarrow\infty$ in proper sense with the help of some uniform a priori estimates established in Section ??.

The construction of solutions $(n^\epsilon,c^\epsilon,u^\epsilon)$ to \eqref{Mod-1} seems to be impossible by the existed methods for deterministic KS-NS system in \cites{duan2010global,liu2011coupled,chae2014global,zhang2014global}) and the stochastic counterparts in \cites{zhai20202d,zhang2022global}, which forces us to introduce a further regularized system in the following manner.

For any $k\in\mathbb{N}$, let $J_k$ be the frequency truncation operators defined by
$$
\widehat{J_k f} (\xi)= \textbf{1}_{\mathcal {C}_k}(\xi) \widehat{f}(\xi),~~~\mathcal {C}_k=\{\xi\in \mathbb{R}^2;~ k^{-1}\leq |\xi|\leq k\},
$$
where $\widehat{f}$ denotes the Fourier transformation of $f$. For any $R>0$, choose a smooth cut-off function $\theta_R : [0,\infty)\rightarrow [0,1]$ such that
$\theta_R (x)\equiv 1$ if $0\leq x \leq R$ and $\theta_R (x)\equiv 0$ if $x > 2R$. Then the second modified KS-SNS system is given by
\begin{equation}\label{Mod-2}
\left\{
\begin{aligned}
& \mathrm{d}  \textbf{y}^{k,R,\epsilon}= F ^{k,R } (\textbf{y}^{k,R,\epsilon}  )  \mathrm{d} t
+G^{ R} (\textbf{y}^{k,R,\epsilon}  )\mathrm{d}  \mathcal {W}(t),\\
& \textbf{y}^{k,R,\epsilon}(0) =\textbf{y}^{k,R,\epsilon}_0= (J_kn_0^\epsilon,J_kc_0^\epsilon,J_ku_0^\epsilon),
\end{aligned}
\right.
\end{equation}
where $\textbf{y}^{k,R,\epsilon} =(n^{k,R,\epsilon} , c^{k,R,\epsilon},
u^{k,R,\epsilon}): \Omega\times \mathbb{R}^+ \times \mathbb{R}^2\mapsto \mathbb{R}^4$ is a four-dimensional random field. The drift term and diffusion term in \eqref{Mod-2} are defined  by
$$
 F^{k,R}(\cdot)= \begin{pmatrix}
F_1^{k,R} (\cdot)\\
  F_2^{k,R} (\cdot)\\
F_3^{k,R}(\cdot)
\end{pmatrix}, \quad G^{ R}(\cdot) \mathrm{d}  \mathcal {W}=\begin{pmatrix}
0 & 0 & 0\\
  0& 0 & 0\\
0 & 0&  \theta_R\left(\|\cdot\|_{W^{1,\infty}}\right)    \textbf{P}  f(t,\cdot)
\end{pmatrix}\begin{pmatrix}
0 \\
  0 \\
\mathrm{d}  W
\end{pmatrix}.
$$
Given any $\textbf{y}=(n,c,u)$, the functionals $F_i^{k,R}(\cdot)$, $i=1,2,3$ are given by
\begin{equation*}
\begin{split}
 F_1^{k,R}(\textbf{y} ) =& \Delta J_k^2 n -\theta_R(\|(u,n)\|_{W^{1,\infty}})J_k(J_k u\cdot \nabla J_k n)\\
 &- \theta_R(\|(n,c)\|_{W^{1,\infty}})    \textrm{div}  J_k(J_kn[\chi(J_kc)\nabla J_kc]*\rho^{\epsilon}),\\
F_2^{k,R}(\textbf{y} )= &\Delta J_k^2 c-\theta_R(\|(u,c)\|_{W^{1,\infty}})J_k(J_k u\cdot \nabla J_k c) -\theta_R(\|(n,c)\|_{W^{1,\infty}})J_k([J_kn \kappa(J_kc)]*\rho^{\epsilon}),\\
F_3^{k,R}(\textbf{y} )= &\Delta  J_k^2u-\theta_R(\|u\|_{W^{1,\infty}}) \textbf{P} J_k(J_k u\cdot \nabla J_k u) + \textbf{P} J_k((J_kn\nabla \phi)*\rho^{\epsilon}).
\end{split}
\end{equation*}

In order to solve the approximate system \eqref{Mod-2}, for any $s\in\mathbb{R} $, we define the  space
$$
 \textbf{H}^s(\mathbb{R}^2):=\{(n,c,u)\in H^s(\mathbb{R}^2)\times H^s(\mathbb{R}^2) \times(H^s(\mathbb{R}^2))^2;~  \textrm{div}  u=0\},
$$
which is equipped with the norm $\|(n,c,u)\|_{\textbf{H}^s}= \|n\|_{H^s}+\|c\|_{H^s}+\|u\|_{H^s}$. Especially if $s=0$, we set $\textbf{H}^0 (\mathbb{R}^2)=\textbf{L}^2 (\mathbb{R}^2)$. For convenience, we also denote $\| (f,g)\|_{W^{s,p}}=\|f\|_{W^{s,p}}+\|g\|_{W^{s,p}}$.

\begin{lemma}\label{lem1} Given a stochastic basis $(\Omega,\mathcal {F},\{\mathcal {F}_t\}_{t>0},\mathbb{P})$. Let $s> 5$, $\epsilon>0$, $R> 1$ and $k\in \mathbb{N}$, then
under the assumptions {(\textsf{A$_1$})}-{(\textsf{A$_3$})}, the system \eqref{Mod-2} has a unique solution
$$
(n^{k,R,\epsilon} , c^{k,R,\epsilon},
u^{k,R,\epsilon})\in \mathcal {C}([0,T];\textbf{H}^s(\mathbb{R}^2)),~~ \mathbb{P}\mathrm-{a.s.}
$$
\end{lemma}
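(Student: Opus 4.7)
The plan is to recast \eqref{Mod-2} as an abstract It\^o SDE
$$
\mathrm{d}\textbf{y}=F^{k,R}(\textbf{y})\,\mathrm{d}t+G^{R}(\textbf{y})\,\mathrm{d}\mathcal{W}
$$
on the Hilbert space $\textbf{H}^{s}(\mathbb{R}^{2})$ and to apply the classical Leha--Ritter theorem for SDEs in Hilbert spaces. For this it suffices to verify that $F^{k,R}:\textbf{H}^{s}\to\textbf{H}^{s}$ and $G^{R}:\textbf{H}^{s}\to L_{2}(U;\textbf{H}^{s})$ are globally Lipschitz with at most linear growth. The crucial enabling observation is that every term of $F^{k,R}$ and $G^{R}$ carries either the truncation $J_{k}$ or the mollifier $\rho^{\epsilon}$, and is further damped by the cutoff $\theta_{R}$: the range of $J_{k}$ consists of band-limited functions on which Bernstein-type inequalities make all Sobolev norms equivalent, $\|J_{k}g\|_{H^{\sigma}}\leq C(k,\sigma,\tau)\|J_{k}g\|_{H^{\tau}}$, so that $\Delta J_{k}^{2}$ is a bounded operator on $H^{s}$ and we may replace $\textbf{H}^{s}$-estimates by $\textbf{L}^{2}$-estimates at the price of a constant depending on $k$ and $s$.

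The first step is to treat each nonlinear term of $F^{k,R}$ via the telescoping identity
$$
\theta_{R}(\alpha_{1})N(\textbf{y}_{1})-\theta_{R}(\alpha_{2})N(\textbf{y}_{2})=[\theta_{R}(\alpha_{1})-\theta_{R}(\alpha_{2})]N(\textbf{y}_{2})+\theta_{R}(\alpha_{1})[N(\textbf{y}_{1})-N(\textbf{y}_{2})],
$$
where $\alpha_{i}$ denotes the relevant $W^{1,\infty}$-norm of $\textbf{y}_{i}$. The first summand is handled by the Lipschitz continuity of $\theta_{R}$ together with a uniform $\textbf{L}^{2}$-bound on $N(\textbf{y}_{2})$ over the support of $\theta_{R}$, and the second by bilinearity (or composition-product structure) of $N$. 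The key auxiliary tools are: Young's inequality $\|f*\rho^{\epsilon}\|_{L^{2}}\leq\|f\|_{L^{2}}$; the Sobolev embedding $H^{s}\hookrightarrow W^{1,\infty}$ (valid since $s>5>2$); the pointwise control $\|J_{k}c\|_{L^{\infty}}\leq 2R$ on the support of $\theta_{R}$, so that $\chi,\kappa\in\mathcal{C}^{2}$ are evaluated on the compact interval $[0,2R]$ where the mean-value estimate $|\chi(a)-\chi(b)|\leq\|\chi'\|_{L^{\infty}([0,2R])}|a-b|$ applies; and Bernstein bounds on the band-limited factors for higher derivatives. This produces a global Lipschitz bound $\|F^{k,R}(\textbf{y}_{1})-F^{k,R}(\textbf{y}_{2})\|_{\textbf{H}^{s}}\leq C(k,R,\epsilon)\|\textbf{y}_{1}-\textbf{y}_{2}\|_{\textbf{H}^{s}}$; since $F^{k,R}(0)=0$ (using $\kappa(0)=0$ from (\textsf{A$_{2}$})), the linear-growth bound follows.

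The same telescoping argument handles $G^{R}$, with (\textsf{A$_{3}$})-2) furnishing the Lipschitz continuity of $f(t,\cdot)$ on $H^{s}$-balls and the norm equivalence on $\mathrm{Ran}(J_{k})$ converting the $W^{1,\infty}$-radius of the cutoff into an explicit $H^{s}$-radius. The linear growth of $G^{R}$ comes directly from (\textsf{A$_{3}$})-1). With the globally Lipschitz and linearly-growing drift and diffusion in hand, the Leha--Ritter theorem produces a unique $\textbf{H}^{s}$-valued global strong solution $\textbf{y}^{k,R,\epsilon}\in\mathcal{C}([0,T];\textbf{H}^{s}(\mathbb{R}^{2}))$, $\mathbb{P}$-almost surely.

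The main obstacle I anticipate is the chemotaxis drift $\mathrm{div}\,J_{k}(J_{k}n\,[\chi(J_{k}c)\nabla J_{k}c]*\rho^{\epsilon})$, which simultaneously couples a composition with the nonlinear function $\chi$, a quadratic product, a derivative, and two layers of smoothing. The outer $J_{k}$ is what ultimately unlocks the estimate: it allows us to bound the whole expression in $L^{2}$ (using that each factor is uniformly bounded on the support of $\theta_{R}$ thanks to the $W^{1,\infty}$ cutoff and Bernstein) and then promote it to $H^{s}$ through the frequency-localized norm equivalence, whereas a direct $H^{s}$-estimate would require more regularity of $\chi$ than the assumed $\mathcal{C}^{2}$. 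An entirely parallel argument disposes of the consumption term $J_{k}([J_{k}n\,\kappa(J_{k}c)]*\rho^{\epsilon})$ in $F_{2}^{k,R}$.
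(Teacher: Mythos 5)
Your overall strategy --- recasting \eqref{Mod-2} as an abstract It\^o SDE on $\textbf{H}^s(\mathbb{R}^2)$, using Bernstein's inequality on the range of $J_k$ to trade $H^s$-estimates for $L^2$-estimates, and invoking the classical Hilbert-space SDE theory --- is exactly the paper's. The genuine problem is your claim that $F^{k,R}$ is \emph{globally} Lipschitz on $\textbf{H}^s$; that step fails. Consider the transport piece $\theta_R(\|(u,n)\|_{W^{1,\infty}})J_k(J_ku\cdot\nabla J_kn)$ and two states differing only in $n$. The difference contains $\theta_R(\cdot)\,J_k\bigl(J_ku\cdot\nabla J_k(n_1-n_2)\bigr)$, and any product estimate pairs one factor in an $L^\infty$-type norm with the other in an $L^2$-type norm: you get either $\|J_ku\|_{L^\infty}\|\nabla J_k(n_1-n_2)\|_{L^2}$ or $\|J_ku\|_{L^2}\|\nabla J_k(n_1-n_2)\|_{L^\infty}$. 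The cutoff $\theta_R$ only controls $W^{1,\infty}$-type quantities, and on the whole plane an $L^\infty$ bound on a band-limited function does \emph{not} imply an $L^2$ bound --- Bernstein runs the other way, $\|J_kg\|_{L^p}\lesssim_k\|J_kg\|_{L^q}$ only for $q\le p$ (think of $e^{i\xi_0\cdot x}$ times a bounded, slowly decaying profile). The same issue appears in the telescoping term $[\theta_R(\alpha_1)-\theta_R(\alpha_2)]N(\textbf{y}_2)$, where $\|N(\textbf{y}_2)\|_{\textbf{H}^s}$ is controlled by $R$ times an $L^2$-norm of $\textbf{y}_2$, not by $R$ alone. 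So the Lipschitz constant unavoidably depends on $\max_i\|\textbf{y}_i\|_{\textbf{H}^s}$; this is precisely why the paper states \eqref{2.3} only on balls $\|\textbf{y}_i^{k,R,\epsilon}\|_{\textbf{H}^s}\le M$ with a constant depending on $M$. Your derivation of linear growth from ``global Lipschitz plus $F^{k,R}(0)=0$'' collapses with it. (This obstruction is specific to $\mathbb{R}^2$; on a bounded domain $L^\infty\subset L^2$ and your argument would go through.)

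The repair is exactly the paper's route: prove the linear growth bound \eqref{2.2} directly (each nonlinear term has one factor measured in $L^\infty$, killed by the cutoff, and one factor measured in $L^2\asymp_k H^s$ after frequency localization), prove the \emph{local} Lipschitz bound \eqref{2.3}, and invoke the existence--uniqueness theorems for Hilbert-space SDEs with locally Lipschitz coefficients of linear growth, which is the setting the paper's references cover. With that downgrade your argument becomes correct and coincides with the paper's proof; as written, it asserts a bound that does not hold.
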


\begin{proof}[\emph{\textbf{Proof}}]
Since supp $\widehat{J_k f} \subset\{\xi\in \mathbb{R}^2;~1/k\leq |\xi |\leq k\}$ for all $k\geq1$, it follows from the Bernstein inequality (cf. \cite[Lemma 2.1 and Lemma 2.2]{bahouri2011fourier}, which will be applied frequently in the following estimations without further declaration) that
\begin{equation}\label{2.1}
\begin{split}
\|\nabla^l J_k f\|_{H^s} \asymp _{k,l} \|J_k f\|_{H^s},\quad \textrm{for any}~ k\geq 1,~l\geq0.
\end{split}
\end{equation}
For any fixed $k,R$ and $\epsilon$, we deduce from \eqref{2.1} that
\begin{equation}\label{2.2}
\begin{split}
 \|F^{k,R}(\textbf{y}^{k,R,\epsilon}) \|_{\textbf{H}^s} \lesssim_k  \|\textbf{y}^{k,R, \epsilon}\|_{\textbf{H}^s}+1 ,\quad\|G^{R}(t,\textbf{y}^{k,R,\epsilon}) \|_{H^s} \lesssim \|\textbf{y}^{k,R,\epsilon}\|_{H^s}+1.
\end{split}
\end{equation}
Moreover, by \eqref{2.1} and the Moser-type estimate (cf. \cite{miao2012littlewood}), one can verify that both $F^{k,R}(\cdot):\textbf{H}^s(\mathbb{R}^2)\mapsto \textbf{H}^s(\mathbb{R}^2) $  and $G^{k,R}(\cdot): \textbf{H}^s(\mathbb{R}^2)\mapsto L_2(U;\textbf{H}^s(\mathbb{R}^2))$ are locally Lipschitz continuous functionals, that is, we have for a given $M>0$
\begin{equation}\label{2.3}
\begin{split}
\|F^{k,R}(\textbf{y}_1^{k,R,\epsilon}) -F^{k,R}(\textbf{y}_2^{k,R,\epsilon}) \|_{\textbf{H}^s} &\lesssim_{M,R,k,\phi,\epsilon}  \|\textbf{y}_1^{k,R,\epsilon}-\textbf{y}_2^{k,R,\epsilon}\|_{\textbf{H}^s} ,\\
 \|G^{R}(\textbf{y}_1^{k,R,\epsilon})-G^{R}(\textbf{y}_2^{k,R,\epsilon}) \|_{\textbf{H}^s} &\lesssim_{R,\epsilon}  \|\textbf{y}_1^{k,R,\epsilon}-\textbf{y}_2^{k,R,\epsilon}\|_{\textbf{H}^s},
\end{split}
\end{equation}
where $\|\textbf{y}_1^{k,R,\epsilon}\|_{\textbf{H}^s}\leq M$ and $\|\textbf{y}_2^{k,R,\epsilon}\|_{\textbf{H}^s}\leq M$. The verification of \eqref{2.3} is straightforward and based on the Bernstein inequality and the Moser inequality, we omit the details here.

According to conditions \eqref{2.2} and \eqref{2.3}, one can conclude from the well-known theory for SDEs in Hilbert spaces (cf. Theorms 5.1.1 and 5.1.2 in \cite{kallianpur1995stochastic}; Theorem 4.2.4 in \cite{prevot2007concise}) and the cancelation property (cf. \cites{glatt2014local,majda2002vorticity}) that the system \eqref{Mod-2} admits a unique global smooth solution $\textbf{y} ^{k,R,\epsilon}$ in $\mathcal {C}([0,T];\textbf{H}^s(\mathbb{R}^2))$, $\mathbb{P}$-a.s. The proof of Lemma \ref{lem1} is completed.
\end{proof}

In the following sections, we will take the limits as $k\rightarrow\infty$, $R\rightarrow\infty$  and $\epsilon\rightarrow 0$ in suitable sense successively to prove that the limit process $(n,c,u)$ is actually a unique global pathwise solutions to the KS-SNS system \eqref{KS-SNS}.

\subsection{Uniform bounds in Sobolev spaces}\label{subsec2.2}
As a first step, we will take the limit as $k\rightarrow\infty$  in \eqref{Mod-2} to construct a global solution to system \eqref{Mod-1} with cutoffs. Let us start with the following a priori estimates uniformly in $k\in \mathbb{N}$.

\begin{lemma} \label{lem2}
Assume that $s>5$, $R>1$ and $\epsilon>0$. Let $T>0$ and $(n^{k,R,\epsilon},c^{k,R,\epsilon},u^{k,R,\epsilon}) \in \mathcal {C}([0,T]; \textbf{H}^s(\mathbb{R}^2))$ be the unique solutions to \eqref{Mod-2} constructed in Lemma \ref{lem1}. Then we have
$$
\sup_{k \in \mathbb{N}}\mathbb{E}\left\|(n^{k,R,\epsilon},c^{k,R,\epsilon},u^{k,R,\epsilon})\right\| _{\mathcal {C}([0,T]; \textbf{H}^s )}^p \lesssim_{R,s,p,\phi,\kappa,\chi,\epsilon,n_0,c_0,u_0,T} 1,\quad \textrm{for all}~p\geq2.
$$
Moreover, for any $p>2$ and  $\alpha \in (0,\frac{p-2}{2p} )$, there holds
\begin{equation*}
\begin{split}
\sup_{k \in \mathbb{N}}\left(\mathbb{E}\left\|(n^{k,R,\epsilon}, c^{k,R,\epsilon})\right\|^p_{   \emph{\textrm{Lip}}([0,T]; H^{s-2} ) }+ \mathbb{E}\left\|u^{k,R,\epsilon}\right\|^p_{ \mathcal {C}^{ \alpha}([0,T];  H^{s-2} )  }  \right) \lesssim _{R,p,\epsilon,\phi,\chi,\kappa,n_0,c_0,u_0,T}1.
\end{split}
\end{equation*}
\end{lemma}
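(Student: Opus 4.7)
The plan is to apply Itô's formula to $\|\textbf{y}^{k,R,\epsilon}(t)\|_{\textbf{H}^s}^{2p}$ componentwise and derive a Gronwall-type inequality with constants that are independent of $k$. The four ingredients that will carry the argument are: (i) the cancellation property for the mollified transport terms $J_k(J_k v \cdot \nabla J_k f)$ coming from $\mathrm{div}\, J_k u = 0$ together with the self-adjointness of $J_k$, as in \cite{majda2002vorticity,glatt2014local}; (ii) commutator estimates of Kato-Ponce type to handle $\Lambda^s$ applied to products; (iii) the pointwise bound $\|(u,n,c)\|_{W^{1,\infty}} \leq 2R$ that is in force whenever the cut-off $\theta_R$ is non-zero, which turns each nonlinear contribution into something at worst quadratic in the $\textbf{H}^s$-norm with an $R$-dependent constant; (iv) assumption (\textsf{A$_3$}) combined with the Burkholder-Davis-Gundy inequality for the Itô correction $\tfrac12\|G^R(\textbf{y})\|_{L_2(U;\textbf{H}^s)}^2$ and the stochastic integral.

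Concretely, I would estimate the three components in turn. For the $n$-equation, pairing $\Lambda^s F_1^{k,R}$ with $\Lambda^s n^{k,R,\epsilon}$, the diffusive contribution $\Delta J_k^2 n$ gives the non-positive term $-\|J_k \nabla n^{k,R,\epsilon}\|_{H^s}^2$ which is discarded; the transport term collapses, after the $J_k$-cancellation and one commutator, to something bounded by $\|u\|_{W^{1,\infty}}\|n\|_{H^s}^2 \leq 2R\,\|n\|_{H^s}^2$; the chemotactic term, rewritten via integration by parts in $\mathrm{div}$, is controlled by Moser estimates using the cut-off bound on $\|n\|_{W^{1,\infty}}$, $\|c\|_{W^{1,\infty}}$ and the $\mathcal{C}^2$-regularity of $\chi$. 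The $c$-equation is handled in exactly the same fashion. For the $u$-equation the deterministic contributions are analogous, while the noise produces, after Itô's formula, the quadratic correction bounded by $\varrho(1+\|u\|_{H^s}^2)$ from (\textsf{A$_3$}). Summing and applying BDG to the resulting martingale gives, for any $p \geq 2$,
$$\mathbb{E}\sup_{r\in[0,t]}\|\textbf{y}^{k,R,\epsilon}(r)\|_{\textbf{H}^s}^{2p} \lesssim_{R,p,\epsilon,\phi,\chi,\kappa} 1+\|\textbf{y}_0^\epsilon\|_{\textbf{H}^s}^{2p}+\int_0^t \mathbb{E}\sup_{r'\in[0,r]}\|\textbf{y}^{k,R,\epsilon}(r')\|_{\textbf{H}^s}^{2p}\,\mathrm{d}r,$$
and Gronwall closes the first bound uniformly in $k$.

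For the time regularity in $H^{s-2}$, note that $n^{k,R,\epsilon}$ and $c^{k,R,\epsilon}$ carry no stochastic integral; integrating their equations in time and using the fact that every term in $F_1^{k,R},F_2^{k,R}$ loses at most two derivatives relative to $\textbf{H}^s$ (the worst terms being $\Delta J_k^2 n$ and $\mathrm{div}$ of a product) yields the Lipschitz estimate via the uniform bound already established. For $u^{k,R,\epsilon}$, the drift part is Lipschitz by the same reasoning, and the Itô integral $\int_0^{\cdot}\theta_R \textbf{P}f(r,u^{k,R,\epsilon})\,\mathrm{d}W$ is handled by the standard BDG plus Kolmogorov continuity combination: (\textsf{A$_3$}) bounds its increments in $L^p(\Omega;H^{s-2})$ by $|t-t'|^{1/2}$ times a uniformly controlled factor, giving Hölder exponent $\alpha<(p-2)/(2p)$. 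The main obstacle is point (i): verifying that the interplay between $J_k$, the mollification $*\rho^\epsilon$ and the cut-off preserves the transport cancellation uniformly in $k$, so that the a priori estimates do not retain the $k$-dependent constant present in \eqref{2.2}–\eqref{2.3}. Once this is in place, everything else is a careful but routine Gronwall/BDG argument.
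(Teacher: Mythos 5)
Your proposal follows essentially the same route as the paper: $\Lambda^s$-energy estimates with the Kato--Ponce commutator and the $J_k$-cancellation $(J_k u\cdot\nabla\Lambda^s J_k n,\Lambda^s J_k n)_{L^2}=0$ from $\operatorname{div}J_k u=0$, cut-off bounds to make every nonlinearity quadratic with $R$-dependent constants, It\^{o} plus BDG for the $u$-component, Gronwall for the moment bound, and then Lipschitz-in-time for $(n,c)$ together with BDG/Kolmogorov for the H\"older exponent $\alpha<\tfrac12-\tfrac1p$ of $u$ in $H^{s-2}$. The uniformity in $k$ you flag as the main obstacle is resolved exactly as you suggest, via the uniform boundedness and self-adjointness of $J_k$ (the $k$-dependent constants in \eqref{2.2}--\eqref{2.3} come only from Bernstein's inequality applied to isolated derivatives, which the energy pairing avoids), so the argument is complete.
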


\begin{proof}[\emph{\textbf{Proof}}] To simplify the notations, we   use the notation $(n,c,u)$ instead of  $(n^{k,R,\epsilon},c^{k,R,\epsilon},u^{k,R,\epsilon})$ in the proof.

\textsf{Step 1 ($L^2$-estimate).}  Applying the Bessel operator $\Lambda^s= (1-\Delta)^{\frac{s}{2}}$ with $ s> 5$ (which is an isometry isomorphism between $H^{r+2}(\mathbb{R}^2)$ and $H^{r}(\mathbb{R}^2)$, cf. \cite{bahouri2011fourier}) to both sides of $n $-equation in \eqref{Mod-2}, and then  taking the scalar product with  $\Lambda^s n $ over $\mathbb{R}^2$, we get
\begin{equation}\label{2.4}
\begin{split}
&\frac{1}{2}  \mathrm{d}  \| n\|^2_{H^s}+\|\nabla J_kn\|^2_{H^s}  \mathrm{d} t
 =- \theta_R\left(\|(u,n)\|_{W^{1,\infty}}\right)( \Lambda^s J_kn,\Lambda^s \left(J_k u\cdot \nabla J_k n \right))_{L^2} \mathrm{d} t \\
&\quad \quad-  \theta_R\left(\|(n,c)\|_{W^{1,\infty}}\right)( \Lambda^s \nabla J_k n,\Lambda^s  \left(J_kn[\chi(c)\nabla J_kc]*\rho\right))_{L^2} \mathrm{d} t := (A_1+A_2)  \mathrm{d} t.
\end{split}
\end{equation}
For $A_1$, first recall the following commutator estimate (cf. \cite{miao2012littlewood}):
\begin{equation}\label{2.5}
\begin{split}
\|[\Lambda^s,f]g\|_{L^p} \lesssim_{p,s}\|\nabla f\|_{L^\infty}\|\Lambda^{s-1} g\|_{L^p}+\|\Lambda^s f\|_{L^p}\|g\|_{L^\infty}, \quad \textrm{for all} ~
1<p<\infty~ \textrm{and} ~ s \geq 0.
\end{split}
\end{equation}
By taking $p=2$ in \eqref{2.5} and using the divergence-free condition $ \textrm{div}  u =0$, we get
\begin{equation}\label{2.6}
\begin{split}
 |A_1| &= \theta_R\left(\|(u,n)\|_{W^{1,\infty}}\right)|([\Lambda^s,J_k u]\cdot \nabla J_k n, \Lambda^s J_kn) _{L^2} + (J_k u\cdot \nabla \Lambda^s J_k n , \Lambda^s J_kn)_{L^2} |\\
% &\leq \theta_R\left(\|(u,n)\|_{W^{1,\infty}}\right)\|[\Lambda^s,J_k u]\cdot \nabla J_k n\|_{L^2}\| \Lambda^s J_kn\|_{L^2}\\
 &\leq C\theta_R\left(\|(u,n)\|_{W^{1,\infty}}\right)\| J_kn\|_{H^s}(\|J_k u\|_{H^s}\|\nabla J_k n\|_{L^\infty}+\|\nabla J_k u\|_{L^\infty}\|J_k n\|_{H^s})\\
 &\lesssim _R  \|J_k u\|_{H^s}^2+\|J_k n\|_{H^s}^2\lesssim _R  \| u\|_{H^s}^2+\|  n\|_{H^s}^2,
\end{split}
\end{equation}
where the last inequality used the fact of $\| J_kn\|_{H^s}\lesssim \|n\|_{H^s}$, for any $k\geq 1$. Note that
$
\chi(J_kc)\nabla J_kc = \nabla g(J_kc):=\nabla \int_0^ {J_kc} \chi(s)\mathrm{d}s.
$
it follows from  Theorem 2.87 in \cite{bahouri2011fourier} that
$$
\|\chi(J_kc)\nabla J_kc\|_{H^{s-1}}= \|\nabla g(J_kc)\|_{H^{s-1}} \leq \| g(J_kc)\|_{H^{s }} \leq _{s,\chi }\|J_kc\|_{H^{s}}.
$$
By \eqref{2.3}, the term $A_2$ can be estimated as
\begin{equation}\label{2.7}
\begin{split}
 |A_2| %&\leq \theta_R\left(\|(n,c)\|_{W^{1,\infty}}\right)  \|\Lambda^s \nabla J_k n\| _{L^2} \|\Lambda^s \left(J_kn[\chi(J_kc)\nabla J_kc]*\rho\right)\|  _{L^2} \\
 & \leq \frac{1}{2}   \|\nabla J_k n\| _{H^s}^2+ C  \theta_R\left(\|(n,c)\|_{W^{1,\infty}}\right) ^2\big(\|J_kn\|_{L^\infty}\| J_kc\|_{H^{s }} +\|J_kn\|_{H^{s}}\|\chi(J_kc)\|_{L^\infty}\|\nabla J_kc\|_{L^\infty} \big)^2\\
 & \leq  \frac{1}{2} \|\nabla  J_kn\| _{H^s}^2+ C_{R,s,\chi }  \big( \| c\|_{H^s}^2 +\| n\|_{H^s} ^2 \big),
\end{split}
\end{equation}
where the last inequality used the fact of $\|f* \rho^\epsilon\|_{H^{s}}\leq \frac{C}{\epsilon} \|f \|_{H^{s-1}}$, and the continuity of $\chi(\cdot)$ such that
$$
\theta_R\left(\|(n,c)\|_{W^{1,\infty}}\right)\|\chi(J_kc)\|_{L^\infty}\leq \theta_R\left(\|(n,c)\|_{W^{1,\infty}}\right) \max_{s\in[0,\|J_kc\|_{L^\infty}]}\chi (s)
%\leq \max_{s\in[0,2R]}\chi (s)
\lesssim _R 1.
$$
By the estimates \eqref{2.4}-\eqref{2.7}, we get
\begin{equation} \label{2.8}
\begin{split}
\mathbb{E}\sup_{s\in [0,t]} \| n(s)\|^2_{H^s}+\mathbb{E}\int_0^t\|\nabla J_kn(s)\|^2_{H^s} \mathrm{d}s
 \lesssim_{R,s,\chi }  \| n_0 \|^2_{H^s} +  \mathbb{E}\int_0^t(\|  u\|_{H^s}^2+\|  c\|_{H^s}^2+\|  n\|_{H^s}^2) \mathrm{d}s.
\end{split}
\end{equation}
Thanks to the assumption $\kappa(0)=0$ and the Paralinearization Theorem (cf. \cite[Theorem 2.89, p.105]{bahouri2011fourier}), we gain
$$
\theta_R(\|(n,c)\|_{W^{1,\infty}})\|\kappa(J_k c)\|_{L^\infty}\leq \theta_R(\|(n,c)\|_{W^{1,\infty}})\max_{s\in [0,\| J_k c \|_{L^\infty}]}\kappa(s)\leq \max_{s\in [0,2R]}\kappa(s)\lesssim _{R,\kappa} 1.
$$
In a similar manner to \eqref{2.8}, one can derive that
\begin{equation}\label{2.9}
\begin{split}
\mathbb{E}\sup_{s\in [0,s]}\|c(t)\|^2_{H^s}+ \mathbb{E}\int_0^t\|\nabla J_kc(s)\|^2_{H^s}\mathrm{d}s \lesssim_{R, \kappa } \| c_0\|^2_{H^s}+ \mathbb{E}\int_0^t\big(\|  u\|_{H^s}^2+ \|n\|_{H^s} ^2 +\|c\|_{H^s}^2 \big)\mathrm{d}s.
\end{split}
\end{equation}
To deal with the estimation for the $u$-equation perturbed by random noises, we first take the $\Lambda^s $ operator to the $u$-equation ($\Lambda^s$ is interchangeable with the operators $ \textbf{P} $ and $J_k$, cf. \cite{majda2002vorticity}),
% to obtain
%\begin{equation*}
%\begin{split}
%\mathrm{d}  \Lambda^s u&=\Delta \Lambda^s J_k^2u\mathrm{d}  s-\theta_R\left(\|u\|_{W^{1,\infty}}\right)\Lambda^s \textbf{P} J_k\left(J_k u\cdot \nabla J_k u \right)\mathrm{d}  s\\
% &+ \Lambda^s \textbf{P} J_k\left((J_kn\nabla \phi)*\rho\right)\mathrm{d}  s+ \theta_R\left(\|u\|_{W^{1,\infty}}\right)   \Lambda^s \textbf{P}  f(t,u) \mathrm{d}  W.
%\end{split}
%\end{equation*}
and then apply It\^{o}'s formula (cf. \cite{da2014stochastic}) to $ \|\Lambda^s u\|_{L^2}^2$ to find
\begin{equation}\label{2.10}
\begin{split}
& \| \Lambda^s u(t)\|_{L^2}^2 +2 \int_0^t\|\nabla\Lambda^s J_ku\|_{L^2 }^2 \mathrm{d} t =\| \Lambda^s u_0\|_{L^2}^2+ \int_0^t(B_1 +B_2 +B_3 )  \mathrm{d} t+\sum_{j\geq 1}\int_0^tB_4^j  \mathrm{d}  W_j,
\end{split}
\end{equation}
where
\begin{equation*}
\begin{split}
&B_1 =  \theta_R(\|u\|_{W^{1,\infty}}) ^2 \| \Lambda^s \textbf{P}  f (t,u)\|_{L_2(U;L^2)}^2, \\
&
B_2 =2 (\Lambda^s J_k u, \Lambda^s \textbf{P} J_k((J_kn\nabla \phi)*\rho))_{L^2},\\
&B_3 =- 2  \theta_R  (\|u\|_{W^{1,\infty}}) (\Lambda^s J_k u, \Lambda^s \textbf{P} (J_k u\cdot \nabla J_k u ))_{L^2},\\
&
 B_4 ^j=2   \theta_R(\|u\|_{W^{1,\infty}})(\Lambda^s u,   \Lambda^s \textbf{P}  f _j(t,u))_{L^2},~~j\geq 1.
\end{split}
\end{equation*}
By virtue of the assumption {(\textsf{A$_3$})}, we gain
\begin{equation}\label{2.11}
\begin{split}
& |B_1 | \lesssim \theta_R\left(\|u\|_{W^{1,\infty}}\right)^2 (1+ \|u\|_{H^s}^2)\lesssim_R  1+ \|u\|_{H^s}^2 .
\end{split}
\end{equation}
For $B_2 $,
\begin{equation}\label{2.12}
\begin{split}
  |B_2  | \lesssim  \|\Lambda^s J_k u\|_{L^2}^2+\| \Lambda^s \textbf{P} J_k\left((J_kn\nabla \phi)*\rho\right)\|_{L^2}^2 \lesssim _\phi  \|u\|_{H^s}^2+ \|n\|_{H^s}^2 .
\end{split}
\end{equation}
For $B_3 $, first note that
$$
(\Lambda^s J_k u, \textbf{P} \Lambda^s\left(J_k u\cdot \nabla J_k u \right))_{L^2}= (\Lambda^s J_k u,  \textbf{P} [\Lambda^s,J_k u]\cdot \nabla J_k u)_{L^2},
$$
where $[A,B]=AB-BA$. Then we get by applying the commutator estimate \eqref{2.5} that
\begin{equation}\label{2.13}
\begin{split}
  |B_3  |  \lesssim  \theta_R\left(\|u\|_{W^{1,\infty}}\right)\|\Lambda^s J_k u\| _{L^2}\|J_ku\| _{H^{s}} \|\nabla J_ku\|_{L^\infty}  \lesssim _R \|u\| _{H^{s}}^2.
\end{split}
\end{equation}
For the stochastic term involving $B_4 ^j$, we get by using the Burkholder-Davis-Gundy (BDG) inequality (cf. \cite{da2014stochastic}) and Young inequality that
\begin{equation}\label{2.14}
\begin{split}
\mathbb{E}\sup_{r\in [0,t]}\left|\sum_{j\geq 1}\int_0^rB_4^j  \mathrm{d}  W_j\right|&\leq C\mathbb{E}\left(\int_0^ t\theta_R\left(\|u\|_{W^{1,\infty}}\right)^2\sum_{j\geq 1}(\Lambda^s u,   \Lambda^s \textbf{P}  f_j(s,u))_{L^2}^2 \mathrm{d}s\right)^{1/2} \\
&\leq C\mathbb{E}\left[\sup_{r\in [0,t]}\|\Lambda^s u\|_{L^2}\left( \int_0^t \theta_R \left(\|u\|_{W^{1,\infty}}\right)^2 \sum_{j\geq 1}\|\Lambda^s \textbf{P}  f_j(s,u)\|_{L^2}^2 \mathrm{d}s\right)^{1/2}\right] \\
%&\leq C\mathbb{E}\bigg[\sup_{s\in [0,t]}\|\Lambda^s u\|_{L^2}\bigg( \int_0^ \tau \left(1+\|u(s)\|_{H^{s}}^2\right) \mathrm{d}s\bigg)^{1/2}\bigg]\nonumber\\
&\leq \frac{1}{2} \mathbb{E}\sup_{s\in [0,t]}\|\Lambda^s u\|_{L^2}^2+ C  \mathbb{E} \int_0^ t \left(1+\|u(s)\|_{H^{s}}^2\right) \mathrm{d}s.
\end{split}
\end{equation}
Thereby, by taking the supremum over  $[0,t]$ in \eqref{2.10}, it follows from \eqref{2.11}-\eqref{2.14} that
\begin{equation*}
\begin{split}
 \mathbb{E}\sup_{r\in [0,t]}\| u(r)\|_{H^s}^2+ \mathbb{E}\int_0^t\|\nabla  J_ku\|_{H^s}^2 \mathrm{d} t \lesssim_{R,\phi} \| u_0\|_{H^s}^2+  \mathbb{E} \int _0^t(1+ \|u\|_{H^s}^2 + \|n\|_{H^s}^2) \mathrm{d}  s,
\end{split}
\end{equation*}
which together with \eqref{2.8}-\eqref{2.9} imply that
\begin{equation*}
\begin{split}
\mathbb{E}\sup_{s\in [0,t]}\|(n,c,u)(s)\|^2_{\textbf{H}^s} \lesssim_{R,s,\phi,\chi,\kappa,\epsilon} \|(n_0,c_0,u_0)\|^2_{\textbf{H}^s}+  \mathbb{E}\int_0^t\|(n,c,u)(s)\|^2_{\textbf{H}^s}\mathrm{d}  s.
\end{split}
\end{equation*}
Applying  Gronwall Lemma leads to
$$
\mathbb{E}\sup_{s\in [0,T]}\|(n,c,u)\|^2_{\textbf{H}^s} \lesssim_{R,s,\phi,\chi,\kappa,\epsilon,T} \|(n_0,c_0,u_0)\|^2_{\textbf{H}^s},~~ \forall T > 0.
$$

\textsf{Step 2: $L^p$-estimate ($p>2$).}  Let us first treat the estimate for $n$-equation. Indeed, by applying the chain rule to $ \| n(t)\|^p_{H^s}= (\| n(t)\|^2_{H^s})^{p/2}$, we get from \eqref{2.4} that
\begin{equation}\label{2.15}
\begin{split}
 \| n(t)\|^p_{H^s}&= \| n_0\|^p_{H^s} -p\int_0^t\| n(\tau)\|_{H^s}^{p-2}\|\nabla J_kn\|^2_{H^s} \mathrm{d}  \tau +p\int_0^t\| n(\tau)\|_{H^s}^{p-2} (A_1+A_2) \mathrm{d}  \tau,
\end{split}
\end{equation}
where $A_1$ and $A_2$ are defined in \eqref{2.4}. In view of the estimates in \textsf{Step 1}, we have
\begin{equation}\label{2.16}
\begin{split}
 & |A_1|+|A_2| \leq \frac{1}{2} \|\nabla  J_kn\| _{H^s}^2 + C_{R,s,\chi}  \big(\|u\|_{H^s}^2+ \|c\|_{H^s}^2 +\|n\|_{H^s} ^2 \big).
\end{split}
\end{equation}
It then follows from \eqref{2.15} and \eqref{2.16} that
\begin{equation}\label{2.17}
\begin{split}
& \| n(t)\|^p_{H^s}+\frac{p}{2}\int_0^t\| n(\tau)\|_{H^s}^{p-2}\|\nabla J_kn\|^2_{H^s} \mathrm{d}  \tau \\
%&\quad \lesssim_{p,R,s,\chi}  \| n_0\|^p_{H^s} + \int_0^t\| n(\tau)\|_{H^s}^{p-2}   \big(\|u\|_{H^s}^2+ \|c\|_{H^s}^2 +\|n\|_{H^s} ^2 \big)  \mathrm{d}  \tau\\
&\quad  \lesssim_{p,R,s,\chi}  \| n_0\|^p_{H^s} +  \int_0^t(\| n(\tau)\|_{H^s}^{p} +\| c(\tau)\|_{H^s}^{p}+\| u(\tau)\|_{H^s}^{p})  \mathrm{d}  \tau.
\end{split}
\end{equation}
By applying the similar argument for the $c$-equation, one can obtain
\begin{equation}\label{2.18}
\begin{split}
& \| c(t)\|^p_{H^s}+\frac{p}{2}\int_0^t\| c(\tau)\|_{H^s}^{p-2}\|\nabla J_k c\|^2_{H^s} \mathrm{d}  \tau\\
 &\quad\lesssim_{p,R,s,\kappa}   \| c_0\|^p_{H^s} +  \int_0^t(\| n(\tau)\|_{H^s}^{p} +\| c(\tau)\|_{H^s}^{p}+\| u(\tau)\|_{H^s}^{p})  \mathrm{d}  \tau.
\end{split}
\end{equation}
Now we apply It\^{o}'s formula to $ \|\Lambda^s u(t)\|^p_{L^2}= (\|\Lambda^s u(t)\|^2_{L^2})^{p/2}$, we find
\begin{align}\label{2.19}
&\|\Lambda^s u(t)\|^p_{L^2}+ p \int_0^t\|\Lambda^s u(t)\| _{L^2} ^{p-2} \|\nabla \Lambda^s J_ku\|_{L^2 }^2\mathrm{d}  \tau\\
&\quad= \|\Lambda^s u_0\|^p_{L^2} + \frac{p}{2} \int_0^t\|\Lambda^s u(t)\| _{L^2} ^{p-2} (K_1 +K_2 +K_3 ) \mathrm{d}  \tau\nonumber\\
 &\quad\quad+\frac{p(p-2)}{4}\sum_{j \geq 1} \int_0^t\theta_R\left(\|u\|_{W^{1,\infty}}\right)\|\Lambda^s u(t)\|_{L^2} ^{p-4} \left(\Lambda^s u,   \Lambda^s \textbf{P}  f(t,u)e_j\right)_{L^2}^2 \mathrm{d}  \tau\nonumber\\
 &\quad\quad+\frac{p}{2}\sum_{j \geq 1}\int_0^t \theta_R\left(\|u\|_{W^{1,\infty}}\right)\|\Lambda^s u(\tau)\| _{L^2} ^{p-2}  \left(\Lambda^s u,   \Lambda^s \textbf{P}  f(t,u)e_j\right)_{L^2} \mathrm{d}  W_j \nonumber\\
 &\quad:= \|\Lambda^s u_0\|^p_{L^2}+ L_1+L_2+L_3,
\end{align}
where $B_i $, $i=1,2,3$ are defined in \eqref{2.10}. By \eqref{2.11}-\eqref{2.13}, one can estimate as
\begin{equation*}
\begin{split}
 \int_0^t\|\Lambda^s u\| _{L^2} ^{p-2} K_1 \mathrm{d}  \tau &\lesssim_R  \int_0^t\|\Lambda^s u\| _{L^2} ^{p-2}(1+ \|u\|_{H^s}^2) \mathrm{d}  \tau \lesssim_{p,R} \int_0^t(1+ \|u\|_{H^s}^p)\mathrm{d}  \tau,\\
 \int_0^t\|\Lambda^s u\| _{L^2} ^{p-2} K_2 \mathrm{d}  \tau &\lesssim_{\phi}  \int_0^t \| u\| _{H^S} ^{p-2}(\|u\|_{H^s}^2+ \|n\|_{H^s}^2)\mathrm{d}  \tau\lesssim_{p,\phi} \int_0^t(\|n\|_{H^s}^p+ \|u\|_{H^s}^p)\mathrm{d}  \tau,\\
 \int_0^t\|\Lambda^s u\| _{L^2} ^{p-2} K_3 \mathrm{d}  \tau &\lesssim_{ R} \int_0^t\|\Lambda^s u\| _{L^2} ^{p-2}\|u\| _{H^{s}}^2 \mathrm{d}  \tau \lesssim_{ R} \int_0^t\|u\| _{H^{s}}^p \mathrm{d}  \tau,
\end{split}
\end{equation*}
which imply the following estimation for term $L_1$:
\begin{equation}\label{2.20}
\begin{split}
 |L_1| \lesssim_{R,p,\phi}\int_0^t(1+\|n\|_{H^s}^p+ \|u\|_{H^s}^p)\mathrm{d}  \tau.
\end{split}
\end{equation}
For $L_2$, we get by Young inequality that
\begin{equation}\label{2.21}
\begin{split}
|L_2|&\lesssim \int_0^t\theta_R\left(\|u\|_{W^{1,\infty}}\right)\|\Lambda^s u \|_{L^2} ^{p-2}  \sum_{j \geq 1}\| \Lambda^s \textbf{P}  f(t,u)e_j \|_{L^2} ^2  \mathrm{d}  \tau\\
&\lesssim \int_0^t \|\Lambda^s u(\tau)\|_{L^2} ^{p-2}  (1+ \|\Lambda^s u(\tau)\|_{L^2}^2) \mathrm{d}  \tau \lesssim_p \int_0^t (1+ \| u(\tau)\|_{H^s}^p) \mathrm{d}  \tau.
\end{split}
\end{equation}
For $L_3$, by using the BDG inequality, we obtain
\begin{align}\label{2.22}
\mathbb{E}\sup_{\tau \in [0,t]}|L_3|
%&\lesssim_p\mathbb{E}\bigg(\int_0^t\|\Lambda^s u(\tau)\| _{L^2} ^{2p-4}  \theta_R^2\left(\|u\|_{W^{1,\infty}}\right)\sum_{j \geq 1}\left(\Lambda^s u,   \Lambda^s \textbf{P}  f(t,u)e_j\right)_{L^2}^2 \mathrm{d}  \tau\bigg)^{1/2}\nonumber\\
&\lesssim_{p,R}\mathbb{E}\left(\sup_{\tau\in [0,t]}\|\Lambda^s u\|^p_{L^2}\int_0^t\theta_R^2\left(\|u\|_{W^{1,\infty}}\right)  \|\Lambda^s u(\tau)\| _{L^2} ^{ p-2} (1+\|\Lambda^s u\|^2_{L^2})\mathrm{d}  \tau\right)^{1/2}\nonumber\\
  & \leq  \frac{1}{2}\mathbb{E} \sup_{\tau\in [0,t]}\|\Lambda^s u(\tau)\|^p_{L^2}  + C_{p,R} \mathbb{E}\int_0^t (1+ \|\Lambda^s u(\tau)\|^p_{L^2}) \mathrm{d}  \tau.
\end{align}
By taking the supremum over $[0,t]$ on both sides of \eqref{2.19}, we get from \eqref{2.20}-\eqref{2.22} that
\begin{equation*}
\begin{split}
 \mathbb{E}\sup_{\tau \in [0,t]}\|\Lambda^s u(\tau)\|^p_{L^2}+   \mathbb{E}\int_0^t\|\Lambda^s u\| _{L^2} ^{p-2} \|\nabla\Lambda^s J_ku\|_{L^2 }^2\mathrm{d}  \tau \lesssim_{R,p,\phi} \| u_0\|^p_{H^s} + \int_0^t(1+\|n\|_{H^s}^p+ \|u\|_{H^s}^p)\mathrm{d}  \tau,
\end{split}
\end{equation*}
which together with \eqref{2.17}-\eqref{2.18} yield that
\begin{equation*}
\begin{split}
  \mathbb{E}\sup_{\tau \in [0,t]}\|(n,c,u)\|^p_{\textbf{H}^s}\lesssim_{R,s,p,\phi,\kappa,\chi,\epsilon}\|(n_0,c_0,u_0)\|^p_{\textbf{H}^s} + \int_0^t(\| n(\tau)\|_{H^s}^{p} +\| c(\tau)\|_{H^s}^{p}+\| u(\tau)\|_{H^s}^{p})  \mathrm{d}  \tau.
\end{split}
\end{equation*}
An application of Gronwall Lemma leads to
\begin{equation}\label{dfd}
\begin{split}
 \mathbb{E}\sup_{t \in [0,T]}\|(n,c,u)\|^p_{\textbf{H}^s} \lesssim_{R,s,p,\phi,\kappa,\chi,\epsilon,n_0,c_0,u_0,T} 1,~~\forall T>0.
\end{split}
\end{equation}
Thereby  $(n,c,u)\in L^p(\Omega; \mathcal {C}([0,T];\textbf{H}^s(\mathbb{R}^2) ))$, for any $T>0$ and $p>2$.

\textsf{Step 3: H\"{o}lder regularity.} According to \eqref{dfd},  it is standard to verify that
\begin{equation}\label{2.23}
\begin{split}
(n,c)\in L^p\left(\Omega; \textrm{Lip}([0,T]; H^{s-2}(\mathbb{R}^2)\times H^{s-2}(\mathbb{R}^2) )\right).
\end{split}
\end{equation}
Now, we prove that the $u$-component is fractionally differentiable, that is, $u\in \mathcal {C}^{\alpha}([0,T];H^{s-2}(\mathbb{R}^2))$ $\mathbb{P}$-a.s.  Indeed, by $\eqref{Mod-2}_3$, we infer that
\begin{equation}\label{2.24}
\begin{split}
\|u(t)-u(\tau)\|_{H^{s-2 }}  \leq& \left\|\int_\tau^t \Delta J_k^2u\mathrm{d}  s \right\|_{H^{s-2 }}+ \left\|\int_\tau^t \textbf{P} J_k ((n\nabla \phi)*\rho  )\mathrm{d}  s\right\|_{H^{s}} \\
  & +\left\|\int_\tau^t\theta_R (\|u \|_{W^{1,\infty}} ) \textbf{P} J_k  (J_k u \cdot \nabla J_k u  ) \mathrm{d}  s\right\|_{H^{s-1}}\\
 & +\left\|\int_\tau^t\theta_R (\|u^{k,\epsilon}\|_{W^{1,\infty}} )    \textbf{P}  f(t,u^{k,\epsilon}) \mathrm{d}  W\right\|_{H^{s }} \\
:=& D_1+D_2+D_3+D_4 .
\end{split}
\end{equation}
Thanks to Lemma \ref{lem1}, we see that $
D_1+D_2+D_3\lesssim _{\phi,R}\int_\tau^t  (1+\|n\|_{H^s}+\|u\|_{H^s} ) \mathrm{d}  s$, which implies
\begin{equation}\label{2.25}
\begin{split}
\mathbb{E}\left(D_1+D_2+D_3\right)^p
%&\leq C(\phi,R)\mathbb{E}\left(\int_\tau^t \left(1+\|n\|_{H^s}+\|u\|_{H^s} \right) \mathrm{d}  s\right)^p\\
&\lesssim _{p,\phi,R}\mathbb{E}  \sup_{s\in [\tau,t]}\left(1+\|n(s)\|_{H^s}^p+\|u(s)\|_{H^s}^p \right) |t-\tau|^p \\
&\lesssim _{p,\phi,R,n_0,c_0,u_0,\epsilon,T} |t-\tau|^p.
\end{split}
\end{equation}
For $D_4$, first note that for any $\gamma>0$, there must be a subinterval $[\tau',t']\subset [\tau,t]$ such that
$$
\sup_{t\neq \tau}\frac{D_4(\tau,t)}{|t-\tau|^\sigma}< \sup_{t\neq \tau}\frac{D_4(\tau',t')}{|t'-\tau'|^\sigma}+\gamma^{\frac{1}{p}}.
$$
It then follows from the BDG inequality that
\begin{equation*}
\begin{split}
\mathbb{E} \left(\sup_{t\neq \tau}\frac{D_4(\tau,t)}{|t-\tau|^\sigma}\right) ^p %&\leq C(p) \left[\mathbb{E}\left( \frac{M_4(\tau',t')}{|t'-\tau'|^\sigma}\right) ^p+\gamma\right]\\
&\lesssim_p   \frac{\mathbb{E}(\int_{\tau'}^{t'}\theta_R^2(\|u \|_{W^{1,\infty}})   \|f(t,u )\|_{L_2(U;L^2)}^2 \mathrm{d}  s)^\frac{p}{2}}{|t'-\tau'|^{\sigma p}} +C(p)\gamma\\
%&\lesssim_{p,R}  \frac{\mathbb{E}(\int_{\tau'}^{t'} (1+ \|u(s)\|_{H^s}^2) \mathrm{d}  s)^\frac{p}{2}}{|t'-\tau'|^{\sigma p}} + \gamma\\
&\lesssim_{p,R} |t'-\tau'|^{\frac{p}{2}-\sigma p} \mathbb{E}\sup_{s\in [0,T]}(1+ \|u(s)\|_{H^s}^p)  + \gamma\\
& \lesssim_{p,R,n_0,c_0,u_0,\epsilon,T}  |t'-\tau'|^{\frac{p}{2}-\sigma p}+ \gamma.
\end{split}
\end{equation*}
Since $\gamma>0$ is arbitrary and $\frac{p}{2}-\sigma p>0$  as $\sigma\in (\frac{1}{p},\frac{1}{2})$, it follows from the last estimate that
\begin{equation}\label{2.26}
\begin{split}
\mathbb{E} \left\|\int_\tau^t\theta_R(\|u \|_{W^{1,\infty}})    \textbf{P}  f(t,u ) \mathrm{d}  W\right\|_{H^{s }}^p \lesssim_{p,R,n_0,c_0,u_0,\epsilon,T} |t-\tau|^{\sigma p}.
\end{split}
\end{equation}
Combining the estimates \eqref{2.25} with \eqref{2.26} lead to
$$
\mathbb{E} \|u(t)-u(\tau)\|_{H^{s-2}} ^p \lesssim_{p,R,n_0,c_0,u_0,\epsilon,T}  |t-\tau|^{\sigma p}.
$$
By means of the Kolmogorov Continuity Theorem (cf. Theorem 3.3 in \cite{da2014stochastic}), we infer that the solution $u$ has an undistinguishable  version in $  \mathcal {C}^{\alpha}([0,T];H^{s-2}(\mathbb{R}^2))$, for all  $  0\leq {\alpha}\leq\sigma -\frac{1}{p} \leq\frac{1}{2}-\frac{1}{p}$. The proof of Lemma \ref{lem2} is completed.
\end{proof}

\subsection{Taking the limits $k\rightarrow\infty$ and $R\rightarrow\infty$}

Our purpose is to show that the family of
$\{\textbf{y}^{k}=(n^{k} , c^{k}, u^{k})\}_{k\in\mathbb{N}}$ comprises a subsequence which converges strongly in $\mathcal {C}([0,T];\textbf{H}^{s-3}(\mathbb{R}^2))$ with $s>5$,  $\mathbb{P}$-a.s. The key ingredient of the proof lies in proving that $\textbf{y}^{k} $ converges in probability (up to an subsequence) to an element $\textbf{y}:=(n, c, u)$ in $L^\infty(0,T;\textbf{H}^{s-3}(\mathbb{R}^2))$ as $k\rightarrow\infty$. Here and in the following of this subsection, we will simply write $\textbf{y}^{k}$, $\textbf{y}$, $F ^{k}(\cdot)$ and $G(\cdot)$ instead of $\textbf{y}^{k,R,\epsilon}$, $\textbf{y}^{R,\epsilon}$, $F ^{k,R}(\cdot)$ and $G^{R} (\cdot)$, respectively.

Being inspired by the direct convergence method in \cite{li2021stochastic}, we first prove that
\begin{equation}\label{2.27}
\begin{split}
\lim_{k\rightarrow\infty}\sup_{l\geq k} \mathbb{P} \left\{\sup_{t\in [0,T]} \|\textbf{y}^{k,R,\epsilon}-\textbf{y}^{l,R,\epsilon}\|_{\textbf{H}^{s-3} }> \frac{1}{k} \right\}=0.
\end{split}
\end{equation}
To this end, for any fixed $R>0$ and $\epsilon \in (0,1)$, let $\textbf{y}^{k}$ and $\textbf{y}^{l}$ be solutions to \eqref{Mod-2} associated to the frequency truncations $J_k$ and $J_l$, respectively. Define $\textbf{y}^{k,l}:=\textbf{y}^{k}-\textbf{y}^{l}$, it follows from \eqref{Mod-2} that
\begin{equation}\label{2.28}
\left\{
\begin{aligned}
& \mathrm{d}  \textbf{y}^{k,l}= (F ^{k} (\textbf{y}^{k}  )-F ^{l } (\textbf{y}^{l}  ))  \mathrm{d} t
+(G (\textbf{y}^{k}  )-G   (\textbf{y}^{l}  ))\mathrm{d}  \mathcal {W}(t),\\
& \textbf{y}^{k,l}(0) =0,
\end{aligned}
\right.
\end{equation}
where
\begin{equation*}
\begin{split}
& F ^{k}_1 (\textbf{y}^{k}  )-F ^{l}_1 (\textbf{y}^{l}  )\\
&=\Delta J_k^2  n^{k,l}+(J_k^2-J_l^2)\Delta n^l+\theta_R(\|(u^k,n^k)\|_{W^{1,\infty}})[J_l(J_l u^l\cdot \nabla J_l n^l)- J_k(J_k u^k\cdot \nabla J_k n^k)]   \\
&\quad +[\theta_R(\|(u^l,n^l)\|_{W^{1,\infty}}) -\theta_R(\|(u^k,n^k)\|_{W^{1,\infty}}) ] J_l(J_l u^l\cdot \nabla J_l n^l)\\
 &\quad+[\theta_R(\|(n^l,c^l)\|_{W^{1,\infty}})-\theta_R(\|(n^k,c^k)\|_{W^{1,\infty}})] \textrm{div}  J_k(J_kn^k[\chi(J_kc^k)\nabla J_kc^k]*\rho^{\epsilon}) \\
 &\quad+\theta_R(\|(n^l,c^l)\|_{W^{1,\infty}})[ \textrm{div}  J_l(J_ln^l[\chi(J_lc^l)\nabla J_lc^l]*\rho^{\epsilon})- \textrm{div}  J_k(J_kn^k[\chi(J_kc^k)\nabla J_kc^k]*\rho^{\epsilon})]\\
 &:= \Delta J_k^2  n^{k,l}+ M_1+\cdots + M_5,
 \end{split}
\end{equation*}
\begin{equation*}
\begin{split}
&F ^{k}_2 (\textbf{y}^{k}  )-F ^{l}_2 (\textbf{y}^{l}  )\\
&=\Delta J_k^2  c^{k,l}+(J_k^2-J_l^2)\Delta c^l  +[\theta_R(\|(u^l,c^l)\|_{W^{1,\infty}}) \quad-\theta_R(\|(u^k,c^k)\|_{W^{1,\infty}}) ] J_l(J_l u^l\cdot \nabla J_k c^l) \\
&\quad+\theta_R(\|(u^k,c^k)\|_{W^{1,\infty}})[J_l(J_l u^l\cdot \nabla J_l c^l)- J_k(J_k u^k\cdot \nabla J_k c^k)]\\
 &\quad+[\theta_R(\|(n^l,c^l)\|_{W^{1,\infty}})-\theta_R(\|(n^k,c^k)\|_{W^{1,\infty}})]J_k([J_kn^k \kappa(J_kc^k)]*\rho^{\epsilon}) \\
 &\quad+\theta_R(\|(n^l,c^l)\|_{W^{1,\infty}})[J_l([J_ln^l \kappa(J_lc^l)]*\rho^{\epsilon})-J_k([J_kn^k \kappa(J_kc^k)]*\rho^{\epsilon})]\\
 &:= \Delta J_k^2  c^{k,l}+ N_1+\cdots + N_5,
 \end{split}
\end{equation*}
and
\begin{equation*}
\begin{split}
&F ^{k}_3 (\textbf{y}^{k}  )-F ^{l}_3 (\textbf{y}^{l}  )\\
&=\Delta J_k^2  u^{k,l}+(J_k^2-J_l^2)\Delta u^l  +[\theta_R(\| u^l \|_{W^{1,\infty}}) -\theta_R(\| u^k \|_{W^{1,\infty}}) ] J_l(J_l u^l\cdot \nabla J_l c^l)\\
&\quad+ \theta_R(\| u^k \|_{W^{1,\infty}}) \textbf{P} [  J_l(J_l u^l\cdot \nabla J_l u^l)- J_k(J_k u^k\cdot \nabla J_k u^k)]      \\
&\quad + \textbf{P} [J_k((J_kn^k\nabla \phi)*\rho^{\epsilon})- J_l((J_ln^l\nabla \phi)*\rho^{\epsilon})]\\
 &:=\Delta J_k^2  c^{k,l}+ P_1+\cdots + P_4.
 \end{split}
\end{equation*}
Concerning the stochastic integrals, we have
\begin{equation*}
\begin{split}
&\int_0^t(\textbf{y}^{k},(G (\textbf{y}^{k}  )-G (\textbf{y}^{l}  ))\mathrm{d}  \mathcal {W})_{\textbf{H} ^{s-3}}\\
 &=\sum_{j\geq 1}\left(\int_0^t(\theta_R (\|u^{k} \|_{W^{1,\infty}} )-\theta_R (\|u^{l} \|_{W^{1,\infty}} )) (u^k,\textbf{P}   f_j(r,u^{k} ))_{H^{s-3}}\mathrm{d}  W ^j\right.\\
 &\left.\quad +\int_0^t\theta_R (\|u^{l} \|_{W^{1,\infty}} )  (u^k,\textbf{P}   f_j(r,u^{k} )-  \textbf{P} f_j(r,u^{l}  ))_{H^{s-3}}\mathrm{d}  W ^j\right) \\
 &:= \sum_{j\geq 1}\left(\int_0^t Q_j\mathrm{d}  W ^j+\int_0^t S_j\mathrm{d}  W ^j\right).
 \end{split}
\end{equation*}

Let us now derive some useful estimations for terms associated to $M_j,N_j,P_j,Q_j$ and $S_j$, respectively.
\begin{lemma}\label{lem3}
For the integrals associated to $M_i$, $i=1,...,5$, we have
\begin{subequations}
\begin{align}
& |(n^{k,l},M_1)_{H^{s-3}}| \lesssim \|n^{k,l}\|_{H^{s-3}}^2+\max\{\frac{1}{k^2},\frac{1}{l^2}\} \|  n^l\|_{H^{s}}^2 ,\label{2.30a}\\
&|(n^{k,l},M_2)_{H^{s-3}}| \lesssim  \|n^{k,l}\|_{H^{s-3}} ^2+  \max\{\frac{1}{k^{2}} ,\frac{1}{l^{2}} \}\big( \| u^k\|_{H^{s}}^4+\| u^l\|_{H^{s}}^4+\| n^l\|_{H^{s}}^4+\| n^k\|_{H^{s}} ^4\big),\label{2.30b}\\
&|(n^{k,l},M_3)_{H^{s-3}}| \lesssim\|(u^{k,l},n^{k,l})\|_{H^{s-3}}^2+ \frac{1}{l^2} (\|u^{k}\|_{H^{s}}^2+\|n^{k}\|_{H^{s}}^2+\|u^{l}\|_{H^{s}}^2+\|n^{l}\|_{H^{s}}^2),\label{2.30c}\\
&|(n^{k,l},M_4)_{H^{s-3}}| \lesssim_ \chi \|(n^{k,l},c^{k,l})\|_{H^{s-3}}^2+  \frac{1}{k^2} (\|n^k\|_{H^{s}} ^2+\|n^l\|_{H^{s}} ^2+\|c^k \|_{H^{s}}^2),\label{2.30d}\\
&|(n^{k,l},M_5)_{H^{s-3}}| \lesssim_ \chi \|n^{k,l}\|_{H^{s-3}} ^2   +  \max\{\frac{1}{k^2},\frac{1}{l^2}\} (\|n^k \|_{H^{s}}^2+\|n^l \|_{H^{s}}^2+\|c^l \|_{H^{s}}^2+\|c^k\|_{H^{s}}^2).\label{2.30e}
\end{align}
\end{subequations}
\end{lemma}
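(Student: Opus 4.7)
All five estimates follow a common template: apply Cauchy--Schwarz in the $H^{s-3}$ inner product, $|(n^{k,l}, M_i)_{H^{s-3}}|\le \|n^{k,l}\|_{H^{s-3}}\|M_i\|_{H^{s-3}}$, then use Young's inequality to separate the two factors. It remains to control $\|M_i\|_{H^{s-3}}^2$ in terms of either (i) squared $H^{s-3}$-norms of the dynamical differences $u^{k,l}, n^{k,l}, c^{k,l}$ (which will close the Gronwall loop in the proof of \eqref{2.27}) or (ii) a prefactor $\max\{1/k^2, 1/l^2\}$ multiplying products of $H^s$-norms of $u^\bullet, n^\bullet, c^\bullet$ (which remain uniformly bounded thanks to Lemma \ref{lem2}).

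\textbf{Main tool: projector-difference estimate.} For $k\le l$, the Fourier support of $J_l - J_k$ lies in $\{1/l \le |\xi|\le 1/k\}\cup\{k\le |\xi|\le l\}$. On the high-frequency branch $(1+|\xi|^2)^{-1}\le k^{-2}$, and on the low-frequency branch any derivative factor $|\xi|$ built into the expression being projected supplies another $|\xi|\le 1/k$. Together this yields
\[
\|(J_l - J_k)\partial g\|_{H^{s-3}}^2 \lesssim \tfrac{1}{\min\{k,l\}^2}\|g\|_{H^{s-1}}^2,
\]
(with $\partial$ standing for any first-order derivative), which converts projector differences into the desired $\max\{1/k^2, 1/l^2\}$ prefactors. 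Combined with Moser's product estimate, the paralinearisation bound of Theorem 2.89 in \cite{bahouri2011fourier}, and the two-dimensional embedding $H^{s-3}\hookrightarrow W^{1,\infty}$ (valid for $s>5$), this machinery handles all five inner products.

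\textbf{Term-by-term strategy.} Estimate \eqref{2.30a} is an immediate application of the key inequality with $g=n^l$. For \eqref{2.30b} I would telescope
\begin{align*}
&J_l(J_l u^l\!\cdot\!\nabla J_l n^l) - J_k(J_k u^k\!\cdot\!\nabla J_k n^k) \\
&\quad = (J_l - J_k)\bigl(J_l u^l\!\cdot\!\nabla J_l n^l\bigr) + J_k\bigl((J_l - J_k)u^l\!\cdot\!\nabla J_l n^l\bigr) + J_k\bigl(J_k u^l\!\cdot\!(J_l - J_k)\nabla n^l\bigr)\\
&\qquad + J_k\bigl(J_k u^{k,l}\!\cdot\!\nabla J_k n^l\bigr) + J_k\bigl(J_k u^k\!\cdot\!\nabla J_k n^{k,l}\bigr),
\end{align*}
apply the projector inequality plus Moser in $H^{s-1}$ to the first three pieces (gaining $1/\min\{k,l\}^2$, after which Young's inequality $ab\le a^2+b^2$ on the resulting product of two $H^s$-norms produces the fourth powers recorded in \eqref{2.30b}), and bound the last two pieces by Moser to produce the $(u^{k,l}, n^{k,l})$ diagonal contributions. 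For \eqref{2.30c} and \eqref{2.30d}, the Lipschitz bound $|\theta_R(a)-\theta_R(b)|\le C_R|a-b|$ combined with the Sobolev embedding converts the cutoff gap into $\|(u^{k,l},n^{k,l})\|_{H^{s-3}}$ or $\|(n^{k,l}, c^{k,l})\|_{H^{s-3}}$, and the remaining transport or chemotaxis factor is estimated directly by Moser and the uniform $H^s$-bound.

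\textbf{Main obstacle.} The technical heart of the lemma is \eqref{2.30e}, where I need to telescope the difference
\[
\textrm{div}\, J_l\bigl(J_l n^l [\chi(J_l c^l)\nabla J_l c^l]*\rho^\epsilon\bigr) - \textrm{div}\, J_k\bigl(J_k n^k [\chi(J_k c^k)\nabla J_k c^k]*\rho^\epsilon\bigr)
\]
through several intermediate terms, switching $J_l\to J_k$ and $c^l, n^l\to c^k, n^k$ one slot at a time. Projector-difference pieces give the $\max\{1/k^2, 1/l^2\}$ gain via the key estimate, since both the external divergence and the internal gradient on $c$ furnish the derivative needed for the low-frequency branch. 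The composition difference $\chi(J_kc^k) - \chi(J_lc^l)$ is handled by paralinearisation, controlled by $\|J_kc^k - J_lc^l\|_{H^{s-3}}$ using the $W^{1,\infty}$-boundedness of $c$ enforced by the retained cutoff $\theta_R(\|(n^l,c^l)\|_{W^{1,\infty}})$. The mollifier $\rho^\epsilon$ acts as a contraction on every $H^\sigma$ and so plays no role here. After bookkeeping, a final Young step applied to the various products of $H^s$-norms yields exactly the right-hand side of \eqref{2.30e}. I expect this telescope, and in particular the interplay between the projector shift and the composition $\chi(\cdot)$, to be the most delicate bookkeeping of the lemma.
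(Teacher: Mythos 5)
Your plan reproduces the paper's proof essentially step for step: Cauchy--Schwarz plus Young in $H^{s-3}$, a Bernstein-type gain of $\max\{1/k,1/l\}$ per available derivative from the frequency support of $J_l-J_k$ (the paper phrases this via $J_k^2-J_l^2=(J_k+J_l)(J_k-J_l)$ and \eqref{2.1}, but it is the same mechanism), the same telescoping decompositions slot by slot, Moser product estimates, paralinearization of $\chi(\cdot)$ via Theorem 2.89 of \cite{bahouri2011fourier}, and the Lipschitz bound on $\theta_R$ combined with $H^{s-3}\hookrightarrow W^{1,\infty}$ for the cutoff differences in $M_3$ and $M_4$. The only (harmless) deviation is that your telescoping of $M_2$ and $M_5$ retains difference terms such as $\|u^{k,l}\|_{H^{s-3}}^2\|n^l\|_{H^s}^2$ with solution-dependent coefficients, rather than the bare $\|n^{k,l}\|_{H^{s-3}}^2$ written in \eqref{2.30b} and \eqref{2.30e}; this weaker form is exactly what is aggregated in \eqref{2.35} and absorbed by the stopping times in Lemma \ref{lem6}, so the downstream Gronwall argument closes unchanged.
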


\begin{proof}[\emph{\textbf{Proof}}]
By using the Bernstein inequality \eqref{2.1}  and the fact of $J_k^2-J_l^2=(J_k-J_l)(J_k+J_l)$, we get
\begin{equation*}
\begin{split}
 |(n^{k,l},M_1)_{H^{s-3}}|
 %& =|(n^{k,l},(J_k +J_l )(J_k -J_l )\Delta n^l)_{H^{s-3}}|\\
 &\leq \|n^{k,l}\|_{H^{s-3}}\|(J_k +J_l )(J_k -J_l )\Delta n^l\|_{H^{s-3}}\\
 & \lesssim \max\{\frac{1}{k},\frac{1}{l}\} \|n^{k,l}\|_{H^{s-3}}\|  n^l\|_{H^{s}} \lesssim \|n^{k,l}\|_{H^{s-3}}^2+\max\{\frac{1}{k^2},\frac{1}{l^2}\} \|  n^l\|_{H^{s}}^2.
 \end{split}
\end{equation*}
Generally, it follows from the boundedness of the operator $J_k$ and Bernstein inequality that
\begin{equation*}
\begin{split}
\|J_l n^l- J_k n^k\|_{H^{s-d}}\lesssim\left\{
\begin{aligned}
& \max\{\frac{1}{k^d},\frac{1}{l^d}\} ( \|n^k\|_{H^{s}}+\|n^l\|_{H^{s}}),\\
&   \max\{\frac{1}{k^d},\frac{1}{l^d}\} \|n^l\|_{H^{s}}+ \|n^{k,l}\|_{H^{s-d}} ,
\end{aligned}
\right.\quad  \textrm{for all} ~s \in \mathbb{R},~d\in \mathbb{N}.
 \end{split}
\end{equation*}
By virtue of the Moser-type estimate (\cites{bahouri2011fourier,miao2012littlewood}) and the fact that $H^{s-3}(\mathbb{R}^2)$ is a Banach algebra as $s>5$, we have
\begin{equation*}
\begin{split}
 |(n^{k,l},M_2)_{H^{s-3}}| &=  \theta_R(\|(u^k,n^k)\|_{W^{1,\infty}}) |(n^{k,l},(J_l-J_k)(J_l u^l\cdot \nabla J_l n^l))_{H^{s-3}}|\\
 &\quad+|(n^{k,l},J_k[ (J_l u^l-J_ku^k)\cdot \nabla J_l n^l +  J_k u^k\cdot \nabla( J_l n^l- J_k n^k) ] )\\
 %& \lesssim  \|n^{k,l}\|_{H^{s-3}} (\|J_l u^l\cdot \nabla J_l n^l\|_{H^{s-3}}+ \|(J_l u^l-J_ku^k)\cdot \nabla J_l n^l +  J_k u^k\cdot \nabla( J_l n^l- J_k n^k)\|_{H^{s-3}})\\
 & \lesssim  \|n^{k,l}\|_{H^{s-3}} \left(\frac{1}{l^5}\| u^l\|_{H^{s}}\| n^l\|_{H^{s}}+ \max\{\frac{1}{k^3l^2},\frac{1}{l^5} \}(\|n^k\|_{H^{s}}+\|n^l\|_{H^{s}})\| n^l \|_{H^{s}}\right.\\
 &\quad\left.+ \max\{\frac{1}{k^3l^2},\frac{1}{k^5}\} \|  u^k\|_{H^{s-3}} (\|n^k\|_{H^{s-2}}+\|n^l\|_{H^{s-2}})\right)\\
 &\lesssim  \|n^{k,l}\|_{H^{s-3}} ^2+  \max\{\frac{1}{l^{2}}, \frac{1}{k^{2}} \}\big( \| u^k\|_{H^{s}}^4+\| u^l\|_{H^{s}}^4+\| n^l\|_{H^{s}}^4+\| n^k\|_{H^{s}} ^4\big).
 \end{split}
\end{equation*}
By using the Mean Value Theorem, the embedding $H^{s}(\mathbb{R}^2) \subset H^{s-3}(\mathbb{R}^2)\subset W^{1,\infty}(\mathbb{R}^2)$ as well as the boundedness of  $\theta'(\cdot)$, we have
\begin{equation*}
\begin{split}
 |(n^{k,l},M_3)_{H^{s-3}}| %&\leq \|n^{k,l}\|_{H^{s-3}}|\theta_R(\|(u^l,n^l)\|_{W^{1,\infty}}) -\theta_R(\|(u^k,n^k)\|_{W^{1,\infty}}) | \|J_l(J_l u^l\cdot \nabla J_l n^l)\|_{H^{s-3}}\\
 &\lesssim \|(u^{k,l},n^{k,l})\|_{H^{s-3}}(\|(u^{k},n^{k})\|_{H^{s-3}}+\|(u^{l},n^{l})\|_{H^{s-3}})\|J_l u^l\|_{H^{s-3}}\|\nabla J_l n^l\|_{H^{s-3}}\\
 &\lesssim (\|u^{k,l}\|_{H^{s-3}}+\|n^{k,l}\|_{H^{s-3}})\\
 &\quad\times(\| u^{k}\|_{H^{s}}+\|n^{k} \|_{H^{s}}+\| u^{l}\|_{H^{s}}+\|n^{l} \|_{H^{s}})\frac{1}{l^3}\| u^l\|_{H^{s}}\frac{1}{l^2}\| n^l\|_{H^{s}}\\
 &\lesssim \|u^{k,l}\|_{H^{s-3}}^2+\|n^{k,l}\|_{H^{s-3}}^2+ \frac{1}{l^2} (\|u^{k}\|_{H^{s}}^2+\|n^{k}\|_{H^{s}}^2+\|u^{l}\|_{H^{s}}^2+\|n^{l}\|_{H^{s}}^2).
 \end{split}
\end{equation*}
By applying the Paralinearization Theorem (cf. \cite[Theorem 2.89, p.105]{bahouri2011fourier}) in Sobolev spaces $H^s(\mathbb{R}^2)$ with respect to the smooth function $h(t)=\int_0^t \chi(r)\mathrm{d}r$, we have
\begin{equation*}
\begin{split}
 |(n^{k,l},M_4)_{H^{s-3}}| %&\lesssim \| n^{k,l} \|_{H^{s-3}}\|(n^{k,l},c^{k,l})\|_{H^{s-3}}\| \textrm{div}  J_k(J_kn^k[\chi(J_kc^k)\nabla J_kc^k]*\rho^{\epsilon}) \|_{H^{s-3}}\\
 &\lesssim\|(n^{k,l},c^{k,l})\|_{H^{s-3}}(\| n^k \|_{H^{s}}+\| n^l\|_{H^{s}})\|J_kn^k\|_{H^{s-1}} \| \int_0^{J_kc^k}\chi(r)\mathrm{d}r *\rho^{\epsilon} \|_{H^{s-1}} \\
 &\lesssim _ \chi\frac{1}{k} (\|n^{k,l}\|_{H^{s-3}}+\|c^{k,l}\|_{H^{s-3}})(\| n^k \|_{H^{s}}^2+\| n^l\|_{H^{s}}^2) \| \int_0^{J_kc^k}\chi(r)\mathrm{d}r  \|_{H^{s-1}}\\
 &\lesssim _ \chi \|n^{k,l}\|_{H^{s-3}}^2+\|c^{k,l}\|_{H^{s-3}}^2+  \frac{1}{k^2} (\|n^k\|_{H^{s}} ^2+\|n^l\|_{H^{s}} ^2+\|c^k \|_{H^{s}}^2).
  \end{split}
\end{equation*}
For the term associated  to $M_5$, we have
\begin{equation*}
\begin{split}
 |(n^{k,l},M_5)_{H^{s-3}}|
 %&\leq\|n^{k,l}\|_{H^{s-3}}   \bigg\|J_l(J_ln^l[\nabla \int_0^{J_lc^l}\chi(r)\mathrm{d}r*\rho^{\epsilon}])- J_k(J_kn^k[\nabla \int_0^{J_kc^k}\chi(r)\mathrm{d}r *\rho^{\epsilon}])\bigg\|_{H^{s-2}}\\
 & \lesssim \|n^{k,l}\|_{H^{s-3}} \left(  \Big\|(J_l-J_k) \Big(J_ln^l[\nabla \int_0^{J_lc^l}\chi(r)\mathrm{d}r*\rho^{\epsilon}]\Big)      \Big\|_{H^{s-2}} \right.\\
 &\left.+    \Big\|(J_l n^l-J_k n^k) [ \nabla  \int_0^{J_kc^l}\chi(r)\mathrm{d}r*\rho^{\epsilon}]\Big\|_{H^{s-2}}\right.\\
 &\left.+    \Big\| J_k n^k [ \nabla  \int_0^{J_kc^k}\chi(r)\mathrm{d}r*\rho^{\epsilon}-\nabla  \int_0^{J_kc^k}\chi(r)\mathrm{d}r*\rho^{\epsilon}]       \Big\|_{H^{s-2}}\right)\\
 %& \lesssim \|n^{k,l}\|_{H^{s-3}} \bigg(  \| J_ln^l \|_{H^{s-2}}\| J_lc^l\|_{H^{s-1}}  +  \max\{\frac{1}{k^2},\frac{1}{l^2}\} ( \|n^k\|_{H^{s}}+\|n^l\|_{H^{s}})\|J_kc^k\|_{H^{s-1}}\\
 %&+    \| J_k n^k\|_{H^{s-2}}\| \int_{J_kc^k}^{J_lc^l}\chi(r)\mathrm{d}r*\rho^{\epsilon}  \|_{H^{s-1}}\bigg)\\
 & \lesssim_ \chi \|n^{k,l}\|_{H^{s-3}} \left( \frac{1}{l^2} \| n^l \|_{H^{s}}\|c^l\|_{H^{s}}  +  \max\{\frac{1}{k^2},\frac{1}{l^2}\} ( \|n^k\|_{H^{s}}+\|n^l\|_{H^{s}})\|c^k\|_{H^{s}}\right.\\
 &\left.\quad+   \max\{\frac{1}{k},\frac{1}{l}\} \| n^k\|_{H^{s-2}}(\|c^l \|_{H^{s}}+\|c^k\|_{H^{s}})\right)\\
 &\lesssim _ \chi\|n^{k,l}\|_{H^{s-3}} ^2   +  \max\{\frac{1}{k^2},\frac{1}{l^2}\} (\|n^k \|_{H^{s}}^2+\|n^l \|_{H^{s}}^2+\|c^l \|_{H^{s}}^2+\|c^k\|_{H^{s}}^2),
  \end{split}
\end{equation*}
where the last inequality used   the Paralinearization Theorem and Bernstein inequality such that
$
 \| \int_{0}^{J_lc^l}\chi(r)\mathrm{d}r \|_{H^{s-1}}\lesssim \|J_kc^k\|_{H^{s-1}}\lesssim \frac{1}{k}\|c^k\|_{H^{s}}.
$
This completes the proof of Lemma \ref{lem3}.
\end{proof}

\begin{lemma}\label{lem4}
For the integrals associated to $N_i$, $i=1,...,5$, we have
\begin{subequations}
\begin{align}
& |(c^{k,l},N_1)_{H^{s-3}}| \lesssim \|c^{k,l}\|_{H^{s-3}}^2+\max\{\frac{1}{k^2},\frac{1}{l^2}\} \|c^l\|_{H^{s}}^2 \label{2.31a},\\
&|(c^{k,l},N_2)_{H^{s-3}}| \lesssim \|(u^{k,l},c^{k,l})\|_{H^{s-3}}^2 + \frac{1}{ l^2}(\|u^l\|_{H^{s}}^4+\|c^l\|_{H^{s}}^4),\label{2.31b}\\
&|(c^{k,l},N_3)_{H^{s-3}}| \lesssim \|(u^{k,l},c^{k,l})\|_{H^{s-3}}^2 + \frac{1}{k^2}(\|c^k\|_{H^{s-3}}^2+\|c^l\|_{H^{s-3}}^2+\|u^l\|_{H^{s-3}}^2),\label{2.31c}\\
&|(c^{k,l},N_4)_{H^{s-3}}| \lesssim_\kappa \|(n^{k,l},c^{k,l})\|_{H^{s-3}}^2+\frac{1}{k^2} (\|n^k\|_{H^{s}}^6 +\| c^{k} \|_{H^{s}}^6+\| c^{l} \|_{H^{s}}^6), \label{2.31d}\\
&|(c^{k,l},N_5)_{H^{s-3}}| \lesssim_\kappa \| c^{k,l} \|_{H^{s-3}}^2 + \max\{\frac{1}{k^2},\frac{1}{l^2}\}(\| n^l \|_{H^{s}}^4+\| c^l \|_{H^{s}}^4+\| n^k \|_{H^{s}}^4+\| c^k\|_{H^{s}}^4).\label{2.31e}
\end{align}
\end{subequations}
\end{lemma}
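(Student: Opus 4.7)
The plan is to mirror, term by term, the argument already deployed for Lemma \ref{lem3}, since the $N_i$ blocks have exactly the same algebraic shape as the $M_i$ blocks after the substitution $n \leftrightarrow c$ and $\chi \leftrightarrow \kappa$ in the appropriate places. The common toolkit consists of: the Bernstein inequality \eqref{2.1} together with the identity $J_k^2-J_l^2=(J_k-J_l)(J_k+J_l)$; the Moser-type product estimate and the fact that $H^{s-3}(\mathbb{R}^2)$ is a Banach algebra since $s>5$; the Mean Value Theorem applied to $\theta_R$ via the boundedness of $\theta_R'$ and the embedding $H^{s-3}\hookrightarrow W^{1,\infty}$; the convolution bound $\|f*\rho^\epsilon\|_{H^{s}}\lesssim \epsilon^{-1}\|f\|_{H^{s-1}}$; and the Paralinearization Theorem (Theorem~2.89 of \cite{bahouri2011fourier}) applied to the composition with the $\mathcal{C}^2$ function $\kappa$.

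For \eqref{2.31a}, I would write $N_1=(J_k-J_l)(J_k+J_l)\Delta c^l$, apply Cauchy--Schwarz against $c^{k,l}$ in $H^{s-3}$, use the Bernstein bound $\|J_k-J_l\|_{H^{s-1}\to H^{s-3}}\lesssim\max\{1/k^2,1/l^2\}$, and finally Young's inequality. For \eqref{2.31b}, I would extract the prefactor difference via the Mean Value Theorem, producing the factor $\|(u^{k,l},c^{k,l})\|_{H^{s-3}}$, and then use the algebra property and Bernstein to turn the remaining $L^2$ inner product into a product of $H^s$ norms with a gain of $1/l^2$ coming from the differentiation and smoothing by $J_l$. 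For \eqref{2.31c}, I would telescope
\[
J_l(J_l u^l\cdot\nabla J_l c^l)-J_k(J_k u^k\cdot\nabla J_k c^k)=(J_l-J_k)(J_l u^l\cdot\nabla J_l c^l)+J_k((J_l u^l-J_k u^k)\cdot\nabla J_l c^l)+J_k(J_k u^k\cdot\nabla(J_l c^l-J_k c^k)),
\]
then control each piece exactly as for $M_2$ in Lemma \ref{lem3}: the first term gives the $\max\{1/k,1/l\}$ gain, the middle and last ones produce the $\|u^{k,l}\|_{H^{s-3}}$ and $\|c^{k,l}\|_{H^{s-3}}$ factors via the auxiliary inequality $\|J_ln^l-J_kn^k\|_{H^{s-d}}\lesssim \max\{1/k^d,1/l^d\}\|n^l\|_{H^s}+\|n^{k,l}\|_{H^{s-d}}$ already used in Lemma~\ref{lem3}, after which Young's inequality closes the estimate.

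For \eqref{2.31d}, the Mean Value Theorem on $\theta_R$ yields the factor $\|(n^{k,l},c^{k,l})\|_{H^{s-3}}$; the remaining quantity $\|J_k([J_kn^k\kappa(J_kc^k)]*\rho^\epsilon)\|_{H^{s-3}}$ is bounded through the algebra property together with the Paralinearization Theorem, which gives $\|\kappa(J_kc^k)\|_{H^{s-1}}\lesssim_\kappa \|J_kc^k\|_{H^{s-1}}\lesssim (1/k)\|c^k\|_{H^s}$ (using $\kappa(0)=0$), after which the convolution is absorbed and Young's inequality produces \eqref{2.31d}. The last and most delicate estimate \eqref{2.31e} follows the same three-way splitting as for $M_5$: insert and subtract intermediate terms so as to isolate $(J_l-J_k)$, $(J_ln^l-J_kn^k)$, and $\kappa(J_lc^l)-\kappa(J_kc^k)$ separately. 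The composition difference is handled by writing $\kappa(J_lc^l)-\kappa(J_kc^k)=\int_0^1 \kappa'(J_kc^k+\tau(J_lc^l-J_kc^k))\,d\tau\cdot(J_lc^l-J_kc^k)$, bounding the integral factor in $H^{s-1}$ via paralinearization, and then using the algebra property to pair it with $(J_lc^l-J_kc^k)$, which in turn admits the $\max\{1/k^2,1/l^2\}$ gain plus the $\|c^{k,l}\|_{H^{s-3}}$ remainder.

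The main obstacle I expect is the bookkeeping in \eqref{2.31e}: one must simultaneously keep $\theta_R$-prefactor uniformly bounded (which is immediate), propagate the correct powers of $1/k$ and $1/l$ through the Bernstein inequality applied to a product of three factors, and verify that the $\kappa$-composition is controlled in an algebra-compatible Sobolev norm using only the $\mathcal{C}^2$ regularity of $\kappa$ on $[0,2R]$ granted by the cut-off. Once these ingredients are lined up in the same order as in the proof of \eqref{2.30e}, only routine applications of Cauchy--Schwarz and Young's inequality remain to land on the stated right-hand sides.
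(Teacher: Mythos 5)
Your treatment of \eqref{2.31a}--\eqref{2.31d} matches the paper's: the paper states explicitly that \eqref{2.31a}, \eqref{2.31b}, \eqref{2.31c} are proved exactly like \eqref{2.30a}, \eqref{2.30c}, \eqref{2.30b} (you correctly identified $N_2$ as the $\theta_R$-difference term and $N_3$ as the telescoping convection term), and for \eqref{2.31d} it uses precisely your combination of the Mean Value Theorem on $\theta_R$, the algebra property, and the paralinearization bound $\|\kappa(J_kc^k)\|\lesssim_\kappa\|J_kc^k\|$ enabled by $\kappa(0)=0$. Where you diverge is \eqref{2.31e}. You propose the three-way splitting that isolates $(J_l-J_k)$, $(J_ln^l-J_kn^k)$, and the composition difference $\kappa(J_lc^l)-\kappa(J_kc^k)$, the last handled via $\int_0^1\kappa'(\cdot)\,d\tau$ and paralinearization. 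The paper instead never forms the composition difference: it bounds $\|N_5\|_{H^{s-3}}$ directly, splitting off $(J_l-J_k)$ applied to $[J_ln^l\kappa(J_lc^l)]*\rho^\epsilon$ (Bernstein gain $\max\{1/k,1/l\}$) and then estimating the two remaining terms $J_k([J_ln^l\kappa(J_lc^l)]*\rho^\epsilon)$ and $J_k([J_kn^k\kappa(J_kc^k)]*\rho^\epsilon)$ \emph{separately}, each picking up a factor $1/k$ from Bernstein combined with the smoothing $\|f*\rho^\epsilon\|_{H^{\sigma}}\lesssim\epsilon^{-1}\|f\|_{H^{\sigma-1}}$; Young's inequality then yields \eqref{2.31e}. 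The paper's route is cruder (it does not exploit cancellation between the two terms) but avoids the one genuinely delicate point in yours: since $\kappa'(0)$ need not vanish, the factor $\int_0^1\kappa'(J_kc^k+\tau(J_lc^l-J_kc^k))\,d\tau$ is generally not in $H^{s-3}(\mathbb{R}^2)$, so you cannot apply the Banach-algebra product bound directly; you would need to split off the constant $\kappa'(0)$ and use a tame $L^\infty$--$H^{s-3}$ product estimate on the remainder. With that fix your decomposition closes and even yields slightly better constants, but the paper's direct bound on $\|N_5\|_{H^{s-3}}$ is the shorter path to the stated inequality.
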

\begin{proof}[\emph{\textbf{Proof}}]
The proof of \eqref{2.31a}, \eqref{2.31b} and \eqref{2.31c} is similar to \eqref{2.30a}, \eqref{2.30c} and \eqref{2.30b}, respectively. For \eqref{2.31d}, we get by the Mean Value Theorem and the fact of $\kappa(0)=0$ that
\begin{equation*}
\begin{split}
 |(c^{k,l},N_4)_{H^{s-3}}| %&\lesssim \| c^{k,l} \|_{H^{s-3}}\|(n^{k,l},c^{k,l})\|_{H^{s-3}}\|J_k([J_kn^k \kappa(J_kc^k)]*\rho^{\epsilon})\|_{H^{s-3}}\\
 &\lesssim (\|n^{k,l}\|_{H^{s-3}}+\|c^{k,l}\|_{H^{s-3}})(\| c^{k} \|_{H^{s}}+\| c^{l} \|_{H^{s}})\|J_kn^k\|_{H^{s-3}}\| \kappa(J_kc^k)\|_{H^{s-3}}\\
 &\lesssim \frac{1}{k^6}(\|n^{k,l}\|_{H^{s-3}}+\|c^{k,l}\|_{H^{s-3}})(\| c^{k} \|_{H^{s}}+\| c^{l} \|_{H^{s}})\|n^k\|_{H^{s}}\| c^k\|_{H^{s}}\\
 &\lesssim \|n^{k,l}\|_{H^{s-3}}^2+\|c^{k,l}\|_{H^{s-3}}^2+\frac{1}{k^2} (\|n^k\|_{H^{s}}^6 +\| c^{k} \|_{H^{s}}^6+\| c^{l} \|_{H^{s}}^6) .
  \end{split}
\end{equation*}
For $(c^{k,l},N_5)_{H^{s-3}}$, first note that
\begin{equation*}
\begin{split}
\| N_5\|_{H^{s-3}} ^2&=\|J_l([J_ln^l \kappa(J_lc^l)]*\rho^{\epsilon})-J_k([J_kn^k \kappa(J_kc^k)]*\rho^{\epsilon})\|_{H^{s-3}}\\
%&  \leq \|(J_l-J_k)([J_ln^l \kappa(J_lc^l)]*\rho^{\epsilon})\|_{H^{s-3}}+\|J_k([J_ln^l \kappa(J_lc^l)-J_kn^k \kappa(J_kc^k)]*\rho^{\epsilon} )\|_{H^{s-3}}\\
&   \lesssim \max\{\frac{1}{k},\frac{1}{l}\}\| J_ln^l \kappa(J_lc^l) \|_{H^{s}}+\frac{1}{k}\|J_ln^l \kappa(J_lc^l)\|_{H^{s-3}}+\frac{1}{k}\|J_kn^k \kappa(J_kc^k)\|_{H^{s-3}}\\
&   \lesssim \max\{\frac{1}{k},\frac{1}{l}\}(\| n^l \|_{H^{s}}^2+\| c^l \|_{H^{s}}^2+\| n^k \|_{H^{s}}^2+\| c^k\|_{H^{s}}^2).
  \end{split}
\end{equation*}
Then we get by H\"{o}lder inequality that
\begin{equation*}
\begin{split}
 |(c^{k,l},N_5)_{H^{s-3}}| &\lesssim \| c^{k,l} \|_{H^{s-3}}^2 + \| N_5\|_{H^{s-3}} ^2\\
 &\lesssim \| c^{k,l} \|_{H^{s-3}}^2 + \max\{\frac{1}{k^2},\frac{1}{l^2}\}(\| n^l \|_{H^{s}}^4+\| c^l \|_{H^{s}}^4+\| n^k \|_{H^{s}}^4+\| c^k\|_{H^{s}}^4).
  \end{split}
\end{equation*}
This completes the proof of Lemma \ref{lem4}.
\end{proof}

\begin{lemma}\label{lem5}
For the integrals associated to $P_i$, $i=1,...,5$,  we have
\begin{subequations}
\begin{align}
& |(u^{k,l},P_1)_{H^{s-3}}| \lesssim \|u^{k,l}\|_{H^{s-3}}^2+\max\{\frac{1}{k^2},\frac{1}{l^2}\} \|  u^l\|_{H^{s}}^2 ,\label{2.32a}\\
&|(u^{k,l},P_2)_{H^{s-3}}| \lesssim  \|u^{k,l}\|_{H^{s-3}} ^2+  \frac{1}{l^{2}} \big( \| u^l\|_{H^{s}}^4+\| c^l\|_{H^{s}}^4\big),\label{2.32b}\\
&|(u^{k,l},P_3)_{H^{s-3}}| \lesssim  \|u^{k,l}\|_{H^{s-3}}^2(\|u^k\|_{H^{s}}^2+\|u^l\|_{H^{s}}^2)+ \max\{\frac{1}{k^2},\frac{1}{l^2}\}(1+\|u^{k}\|_{H^{s}}^4+\|u^{l}\|_{H^{s}}^4 ),\label{2.32c}\\
&|(u^{k,l},P_4)_{H^{s-3}}|\lesssim_{\phi,\epsilon}  \|u^{k,l}\|_{H^{s-3}}^2+ \max\{\frac{1}{k^2},\frac{1}{l^2}\} (\|n^k \|_{H^s}^2+\|n^l \|_{H^s}^2).\label{2.32d}
\end{align}
\end{subequations}
For the terms with respect to $Q_j$ and $S_j$, $j\geq 1$,  we have
\begin{equation}\label{2.33}
\begin{split}
\sum_{j\geq 1} |Q_j|^2\lesssim \|u^{k,l}\|_{H^{s-3}}^2(1+\|u^{k}\|_{H^{s}}^4+\|u^{l}\|_{H^{s}}^4),~~~ \sum_{j\geq 1} |S_j|^2\lesssim \|u^{k,l}\|_{H^{s-3}}^2 \|u^{k}\|_{H^{s}}^2.
  \end{split}
\end{equation}
\end{lemma}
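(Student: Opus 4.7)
My plan is to establish Lemma \ref{lem5} by mirroring the exact three-pronged strategy that proved Lemmas \ref{lem3} and \ref{lem4}, namely: (i) the identity $J_k^2 - J_l^2 = (J_k-J_l)(J_k+J_l)$ combined with the Bernstein inequality \eqref{2.1}, which extracts a decay factor $\max\{1/k^d,1/l^d\}$ every time a frequency-localization difference $J_k-J_l$ appears; (ii) the Mean Value Theorem applied to the Lipschitz cutoff $\theta_R(\cdot)$, together with the embedding $H^{s-3}(\mathbb{R}^2)\hookrightarrow W^{1,\infty}(\mathbb{R}^2)$ valid since $s>5$, which gives $|\theta_R(\|u^k\|_{W^{1,\infty}})-\theta_R(\|u^l\|_{W^{1,\infty}})|\lesssim \|u^{k,l}\|_{H^{s-3}}$; and (iii) the Moser-type algebra property of $H^{s-3}(\mathbb{R}^2)$, which controls all products of Sobolev functions.

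For $P_1$, factor $(J_k^2-J_l^2)\Delta u^l=(J_k-J_l)(J_k+J_l)\Delta u^l$, then Cauchy--Schwarz with Bernstein gives \eqref{2.32a}. For $P_2$, first extract a factor $\|u^{k,l}\|_{H^{s-3}}$ from the cutoff difference via the Mean Value Theorem, then control $\|J_l(J_l u^l\cdot\nabla J_l u^l)\|_{H^{s-3}}$ by the $H^{s-3}$ algebra property together with one Bernstein factor $1/l$ gained from the low-frequency cutoff $J_l$ acting on $\nabla J_l u^l$; Young's inequality then yields \eqref{2.32b}. For $P_3$, decompose telescopically as
\begin{align*}
&J_l(J_l u^l\cdot\nabla J_l u^l)-J_k(J_k u^k\cdot\nabla J_k u^k)\\
&\quad=(J_l-J_k)(J_l u^l\cdot\nabla J_l u^l)+J_k\bigl((J_l u^l-J_k u^k)\cdot\nabla J_l u^l\bigr)+J_k\bigl(J_k u^k\cdot\nabla(J_l u^l-J_k u^k)\bigr),
\end{align*}
and apply Bernstein to the first piece while handling the latter two via the general inequality $\|J_l u^l-J_k u^k\|_{H^{s-3}}\lesssim \|u^{k,l}\|_{H^{s-3}}+\max\{1/k,1/l\}(\|u^k\|_{H^s}+\|u^l\|_{H^s})$ already recorded in the proof of Lemma \ref{lem3}, giving \eqref{2.32c}. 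For $P_4$, split
\[
J_k((J_kn^k\nabla\phi)*\rho^\epsilon)-J_l((J_ln^l\nabla\phi)*\rho^\epsilon)=(J_k-J_l)\bigl((J_kn^k\nabla\phi)*\rho^\epsilon\bigr)+J_l\bigl(((J_kn^k-J_ln^l)\nabla\phi)*\rho^\epsilon\bigr),
\]
and use $\phi\in W^{2,\infty}$, the convolution bound $\|f*\rho^\epsilon\|_{H^{s-2}}\lesssim \epsilon^{-1}\|f\|_{H^{s-3}}$, and Bernstein to deduce \eqref{2.32d}.

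For the stochastic contributions, $Q_j$ carries a cutoff difference, so the Mean Value step produces the factor $\|u^{k,l}\|_{W^{1,\infty}}\lesssim \|u^k\|_{H^s}+\|u^l\|_{H^s}$; pairing this with assumption \textsf{(A$_3$)}-1) (which gives $\sum_{j\ge 1}\|f_j(r,u^k)\|_{H^{s-3}}^2\lesssim 1+\|u^k\|_{H^s}^2$) and Cauchy--Schwarz produces $\sum_j|Q_j|^2\lesssim \|u^{k,l}\|_{H^{s-3}}^2(1+\|u^k\|_{H^s}^4+\|u^l\|_{H^s}^4)$ after squaring. For $S_j$, on the support of $\theta_R(\|u^l\|_{W^{1,\infty}})$ the iterates lie in a ball of fixed radius $2R$, so assumption \textsf{(A$_3$)}-2) applies to give $\sum_{j\ge 1}\|f_j(r,u^k)-f_j(r,u^l)\|_{H^{s-3}}^2\lesssim \|u^{k,l}\|_{H^{s-3}}^2$; combined with one factor $\|u^{k,l}\|_{H^{s-3}}\lesssim \|u^k\|_{H^s}$ extracted from the first slot of the inner product this yields the second estimate in \eqref{2.33}.

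The main obstacle is purely bookkeeping: each term must be split so that exactly one copy of $\max\{1/k^a,1/l^a\}$ is produced by a Bernstein gain (or, alternatively, the factor $\|u^{k,l}\|_{H^{s-3}}^2$ is extracted from a Mean Value step), while the remaining factors are absorbed by Young's inequality into the benign powers $\|u^k\|_{H^s}^p$, $\|u^l\|_{H^s}^p$ and $\|c^l\|_{H^s}^p$ appearing on the right-hand sides of \eqref{2.32a}--\eqref{2.32d} and \eqref{2.33}. Since no new analytical tool is needed beyond those already exploited in Lemmas \ref{lem3}--\ref{lem4}, the argument reduces to a careful case-by-case verification, and I would omit the routine computations whose pattern has already been displayed for the $n$- and $c$-components.
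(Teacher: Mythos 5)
Your proposal is correct and follows essentially the same route as the paper: $(J_k^2-J_l^2)=(J_k-J_l)(J_k+J_l)$ plus Bernstein for $P_1$, the Mean Value Theorem on $\theta_R$ with $H^{s-3}\hookrightarrow W^{1,\infty}$ for $P_2$ and $Q_j$, the identical telescopic splitting for $P_3$, the convolution/$\mathbf{P}$-boundedness argument for $P_4$, and assumption (\textsf{A$_3$}) for $S_j$. The only blemishes are two expository slips in the stochastic part (for $Q_j$ the cutoff difference should be bounded by $\|u^{k,l}\|_{H^{s-3}}$ rather than by $\|u^k\|_{H^s}+\|u^l\|_{H^s}$, and for $S_j$ the factor extracted from the first slot is $\|u^k\|_{H^{s-3}}\lesssim\|u^k\|_{H^s}$, not $\|u^{k,l}\|_{H^{s-3}}$), but the stated conclusions match \eqref{2.33} and the argument is otherwise the paper's.
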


\begin{proof}[\emph{\textbf{Proof}}]
The proof of \eqref{2.32a} and \eqref{2.32b} is similar to \eqref{2.31a} and \eqref{2.31b}, respectively. For \eqref{2.32c}, one can estimate  as
\begin{equation*}
\begin{split}
&|(u^{k,l},P_3)_{H^{s-3}}| \\
&\quad\lesssim \frac{1}{l}\|u^{k,l}\|_{H^{s-3}}\|u^{l}\|_{H^{s}}^2+\|u^{k,l}\|_{H^{s-3}}\Big(\|(J_l u^l-J_ku^k)\nabla J_lu^l    \|_{H^{s-3}}+ \|J_ku^k\nabla(J_l u^l-J_ku^k) \|_{H^{s-3}}\Big)\\
&\quad\lesssim \|u^{k,l}\|_{H^{s-3}}^2+ \frac{1}{l^2} \|u^{l}\|_{H^{s}}^4+ (\max\{\frac{1}{k^2},\frac{1}{l^2}\}\|u^{k}\|_{H^{s}}+ \|u^{k,l}\|_{H^{s-3}} )\|u^{k,l}\|_{H^{s-3}}\|u^l    \|_{H^{s}} \\
&\quad\quad+ \max\{\frac{1}{k},\frac{1}{l}\} \|u^k\|_{H^{s}}\|u^{k,l}\|_{H^{s-3}}(\|u^k\|_{H^{s}}+\|u^l\|_{H^{s}})\\
&\quad\lesssim \|u^{k,l}\|_{H^{s-3}}^2(\|u^k\|_{H^{s}}^2+\|u^l\|_{H^{s}}^2)+ \max\{\frac{1}{k^2},\frac{1}{l^2}\}(1+\|u^{k}\|_{H^{s}}^4+\|u^{l}\|_{H^{s}}^4 ).
  \end{split}
\end{equation*}
For the term associated to \eqref{2.31d}, by using the boundedness of  $\textbf{P}$ in $L^p(\mathbb{R}^2)$ ($1< p <\infty$) and the isometry property of operators $\Lambda^{s-3}$ from $H^{s-3}(\mathbb{R}^2)$ into $L^2(\mathbb{R}^2)$, we see that
\begin{equation*}
\begin{split}
|(u^{k,l},P_4)_{H^{s-3}}| &\lesssim  \|u^{k,l}\|_{H^{s-3}}(\|(J_kn^k\nabla \phi)*\rho^{\epsilon}\|_{H^{s-3}}+\|(J_ln^l\nabla \phi)*\rho^{\epsilon}\|_{H^{s-3}})\\
&\lesssim  \|u^{k,l}\|_{H^{s-3}}(\|(J_kn^k\nabla \phi)*\Lambda^{s-3}\rho^{\epsilon}\|_{L^2}+\|(J_ln^l\nabla \phi)*\Lambda^{s-3}\rho^{\epsilon}\|_{L^2})\\
%&\lesssim_\epsilon  \|u^{k,l}\|_{H^{s-3}}(\|J_kn^k\nabla \phi\|_{L^2}+\|J_ln^l\nabla \phi\|_{L^2})\\
&\lesssim_{\phi,\epsilon}  \|u^{k,l}\|_{H^{s-3}}^2+ \max\{\frac{1}{k^2},\frac{1}{l^2}\} (\|n^k \|_{H^s}^2+\|n^l \|_{H^s}^2).
  \end{split}
\end{equation*}
For the first estimate in \eqref{2.33}, it follows from the Mean Value Theorem and the embedding $H^{s-3}(\mathbb{R}^2)\subset W^{1,\infty}(\mathbb{R}^2)$ that
\begin{equation*}
\begin{split}
 \sum_{j\geq 1} |Q_j|^2&\lesssim \sum_{j\geq 1}|\theta_R (\|u^{k} \|_{W^{1,\infty}} )-\theta_R (\|u^{l} \|_{W^{1,\infty}} )| ^2 \|u^k\|_{H^{s-3}}^2\|\textbf{P}   f_j(t,u^{k} )\|_{H^{s-3}}^2\\
 &\lesssim\|u^{k,l}\|_{W^{1,\infty}}^2 \|u^k\|_{H^{s-3}}^2\sum_{j\geq 1}\| f_j(t,u^{k} )\|_{H^{s-3}}^2\\
 &\lesssim\|u^{k,l}\|_{H^{s-3}}^2 \|u^k\|_{H^{s-3}}^2(1+\|u^k\|_{H^{s-3}}^2) \lesssim\|u^{k,l}\|_{H^{s-3}}^2 (1+\|u^k\|_{H^{s}}^4).
 \end{split}
\end{equation*}
For the second term in \eqref{2.33}, we get from the condition {(\textsf{A$_3$})} that
\begin{equation*}
\begin{split}
 \sum_{j\geq 1} |S_j|^2 \lesssim \sum_{j\geq 1} \|u^k\|_{H^{s-3}}^2 \|\textbf{P}   f_j(t,u^{k} )-  \textbf{P} f_j(t,u^{l}  )\|_{H^{s-3}}^2\lesssim \|u^{k,l}\|_{H^{s-3}}^2\|u^k\|_{H^{s}}^2.
 \end{split}
\end{equation*}
This completes the proof of Lemma \eqref{lem5}.
\end{proof}

Now we are ready to prove that the sequence $\{\textbf{y}^{k}\}_{k\geq1}$ (up to a subsequence) converges strongly in $  \mathcal {C}([0,T];H^{s-3}(\mathbb{R}^2))$, $\mathbb{P}$-a.s. Precisely, we have the following result.

\begin{lemma}\label{lem6}
Let $s>5$, $p\geq 2$. For any $R>1$, $0<\epsilon<1$ and $T>0$, there exists a subsequence of $\{\textbf{y} ^{k,R,\epsilon} \}_{k\geq1}$ (still denoted by itself) and a $\mathcal {F}_t$-progressively measurable stochastic  process $\textbf{y}^{ R,\epsilon} \in L^p(\Omega; L^\infty(0,T;\textbf{H}^{s}(\mathbb{R}^2)))$, such that
\begin{equation}
\begin{split}
\mathbb{P} \left\{\textbf{y}^{k,R,\epsilon}\rightarrow \textbf{y}^{R,\epsilon}~ \textrm{strongly in}~ \mathcal {C}(0,T;\textbf{H}^{s-3}(\mathbb{R}^2)) ~ \textrm{as}~ k\rightarrow\infty \right\}=1.
 \end{split}
\end{equation}
\end{lemma}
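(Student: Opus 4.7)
The plan is to prove that $\{\textbf{y}^{k,R,\epsilon}\}_{k\geq 1}$ is Cauchy in probability in $\mathcal{C}([0,T];\textbf{H}^{s-3}(\mathbb{R}^2))$, which by the standard Borel--Cantelli argument will yield a subsequence converging $\mathbb{P}$-a.s. The natural road is to apply It\^{o}'s formula to $\|\textbf{y}^{k,l}(t)\|_{\textbf{H}^{s-3}}^2$ with $\textbf{y}^{k,l}=\textbf{y}^k-\textbf{y}^l$ satisfying \eqref{2.28}, and then absorb each drift and diffusion contribution using Lemmas \ref{lem3}--\ref{lem5}. To compensate for the loss of compactness of the embedding $H^s\hookrightarrow H^{s-3}$ on $\mathbb{R}^2$ and the nonlocal nature of the cut-off $\theta_R(\|\cdot\|_{W^{1,\infty}})$, I would introduce a truncation in the phase space via stopping times
\begin{equation*}
\tau_{N}^{k,l}:=\inf\Big\{t\in[0,T]:\ \|\textbf{y}^{k}(t)\|_{\textbf{H}^s}+\|\textbf{y}^{l}(t)\|_{\textbf{H}^s}>N\Big\}\wedge T,
\end{equation*}
so that on $[0,\tau_{N}^{k,l}]$ the ``bad'' $\textbf{H}^s$-norms appearing on the right-hand side of \eqref{2.30a}--\eqref{2.33} become deterministic constants depending only on $N$.

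In detail, after applying It\^{o}'s formula and integrating the Laplacian term by parts (which contributes a nonnegative dissipation that can be dropped), the pairings $(n^{k,l},M_i)$, $(c^{k,l},N_i)$, $(u^{k,l},P_i)$ are estimated term by term by Lemmas \ref{lem3}--\ref{lem5}; each produces a term proportional to $\|\textbf{y}^{k,l}\|_{\textbf{H}^{s-3}}^2$ (with coefficient bounded on $[0,\tau_N^{k,l}]$ by a polynomial in $N$) plus a remainder of order $\max\{k^{-2},l^{-2}\}$ times a polynomial in the $\textbf{H}^s$-norms of $\textbf{y}^k$ and $\textbf{y}^l$. The quadratic variation of the stochastic integral, controlled by the $\sum_j(|Q_j|^2+|S_j|^2)$ estimate in \eqref{2.33}, is of the same form; applying the BDG inequality and Young's inequality absorbs the $\sup$ in time of $\|\textbf{y}^{k,l}\|_{\textbf{H}^{s-3}}^2$ into the left-hand side. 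Stochastic Gronwall's lemma then yields
\begin{equation*}
\mathbb{E}\sup_{t\in[0,\tau_N^{k,l}]}\|\textbf{y}^{k,l}(t)\|_{\textbf{H}^{s-3}}^2\ \leq\ C(N,R,\epsilon,T)\,\max\Big\{\tfrac{1}{k^2},\tfrac{1}{l^2}\Big\}\,\mathbb{E}\!\!\sup_{t\in[0,T]}\!\big(1+\|\textbf{y}^k\|_{\textbf{H}^s}^6+\|\textbf{y}^l\|_{\textbf{H}^s}^6\big),
\end{equation*}
where the last expectation is finite and bounded uniformly in $k,l$ by Lemma \ref{lem2}.

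To pass from convergence on $[0,\tau_N^{k,l}]$ to convergence on $[0,T]$, I would split
\begin{equation*}
\mathbb{P}\Big\{\sup_{t\in[0,T]}\|\textbf{y}^{k,l}\|_{\textbf{H}^{s-3}}>\tfrac{1}{k}\Big\}\leq \mathbb{P}\{\tau_N^{k,l}<T\}+\mathbb{P}\Big\{\sup_{t\in[0,\tau_N^{k,l}]}\|\textbf{y}^{k,l}\|_{\textbf{H}^{s-3}}>\tfrac{1}{k}\Big\}.
\end{equation*}
The Markov inequality together with the uniform $\textbf{H}^s$-bound from Lemma \ref{lem2} makes the first probability small uniformly in $k,l$ by choosing $N$ large; for fixed such $N$, Chebyshev and the Gronwall estimate show the second probability tends to zero as $k\to\infty$. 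This establishes \eqref{2.27}; a standard diagonal extraction then yields a subsequence (still denoted $\{\textbf{y}^{k,R,\epsilon}\}$) and a limit $\textbf{y}^{R,\epsilon}$ with $\textbf{y}^{k,R,\epsilon}\to \textbf{y}^{R,\epsilon}$ in $\mathcal{C}([0,T];\textbf{H}^{s-3}(\mathbb{R}^2))$, $\mathbb{P}$-a.s.

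Finally, the progressive measurability of $\textbf{y}^{R,\epsilon}$ is inherited as the a.s. limit of progressively measurable processes, and Fatou's lemma combined with the uniform bound of Lemma \ref{lem2} lifts the limit into $L^p(\Omega;L^\infty(0,T;\textbf{H}^{s}(\mathbb{R}^2)))$ for every $p\geq 2$. The main obstacle I anticipate is the careful bookkeeping of the cut-off differences $\theta_R(\|u^k\|_{W^{1,\infty}})-\theta_R(\|u^l\|_{W^{1,\infty}})$: handling them requires exploiting the Lipschitz continuity of $\theta_R$ together with the embedding $H^{s-3}(\mathbb{R}^2)\hookrightarrow W^{1,\infty}(\mathbb{R}^2)$ (for $s>5$) so that the cut-off differences are in fact controlled by $\|u^{k,l}\|_{H^{s-3}}$, which is precisely what allows these terms to be absorbed by the Gronwall step rather than forcing an increase of the Sobolev regularity of the difference.
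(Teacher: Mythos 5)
Your proposal is correct and follows essentially the same route as the paper's proof: Itô's formula for $\|\textbf{y}^{k,l}\|_{\textbf{H}^{s-3}}^2$ applied to the difference system \eqref{2.28}, absorption of the drift and diffusion terms via Lemmas \ref{lem3}--\ref{lem5}, truncation by the stopping times \eqref{stop1} so that the $\textbf{H}^s$-norms on the right-hand side become bounded by $N$, BDG plus Gronwall to get the rate $\max\{k^{-2},l^{-2}\}$ on the stopped interval, and the splitting of the exceptional event into $\{\tau_N^{k,l}<T\}$ (small by Chebyshev and Lemma \ref{lem2}) plus the stopped contribution. The only cosmetic difference is in the final regularity step, where you invoke Fatou's lemma while the paper identifies the a.s.\ limit with a weak-star limit in $L^2(\Omega;L^\infty(0,T;\textbf{H}^s))$; both are standard and valid.
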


\begin{proof}[\emph{\textbf{Proof}}]
Applying the operator $\Lambda^{s-3}:=(1-\Delta)^{\frac{s-3}{2}}$ to \eqref{2.28}, and then using It\^{o}'s formula to  $\mathrm{d}  \|\Lambda^{s-3}\textbf{y}^{k,l}\|_{L^2}^2$ with respect to the resulting system, one can infer that
\begin{equation}\label{2.34}
\begin{split}
 & \|\textbf{y}^{k,l}(t)\|_{\textbf{H}^{s-3}}^2 + 2 \int_0^t\|\nabla J_k \textbf{y}^{k,l}\|_{\textbf{H}^{s-3}}^2 \mathrm{d}r= 2 \sum_{1\leq i \leq 5}\int_0^t(n^{k,l},M_i)_{H^{s-3}} \mathrm{d}r\\
  &\quad +2 \sum_{1\leq i \leq 5}\int_0^t(c^{k,l},N_i)_{H^{s-3}} \mathrm{d}r+2 \sum_{1\leq i \leq 4}\int_0^t(u^{k,l},P_i)_{H^{s-3}} \mathrm{d}r+  \sum_{j\geq 1}\int_0^t|Q_j+S_j| ^2  \mathrm{d}r \\
 & \quad+2 \int_0^t(\textbf{y}^{k},(G^{k} (\textbf{y}^{k}  )-G ^{l } (\textbf{y}^{l}  ))\mathrm{d}  \mathcal {W})_{\textbf{H} ^{s-3}}.
 \end{split}
\end{equation}
Plugging the estimates in Lemma \ref{lem3}-Lemma \ref{lem6} together, we get by Young inequality that
\begin{equation}\label{2.35}
\begin{split}
 &\textrm{The first four terms on the R.H.S. of} ~\eqref{2.34}\\
 &\quad \lesssim_{\chi,\kappa,\phi,\epsilon}  \int_0^t (1+\|\textbf{y}^{k }(r)\|_{\textbf{H}^{s }}^4+\|\textbf{y}^{ l}(r)\|_{\textbf{H}^{s }}^4)\|\textbf{y}^{k,l}(r)\|_{\textbf{H}^{s-3}}^2 \mathrm{d}r\\
 &\quad \quad+\max\{\frac{1}{k^2},\frac{1}{l^2}\}\int_0^t(1+\|\textbf{y}^{k }(r)\|_{\textbf{H}^{s }}^6+\|\textbf{y}^{l}(r)\|_{\textbf{H}^{s }}^6 )\mathrm{d}r.
 \end{split}
\end{equation}
For all $k,l \in \mathbb{N}$ and $N \geq 1$, we define a sequence of stopping times
\begin{equation}\label{stop1}
\begin{split}
\textbf{t}^{k,l,T,N}:=\textbf{t}^{k,T,N}\bigwedge\textbf{t}^{l,T,N},~~ ~\textbf{t}^{k,T,N} =T \bigwedge \inf\{t\geq 0;~ \|\textbf{y}^{k }(t)\|_{\textbf{H}^{s }}\geq N\}.
 \end{split}
\end{equation}
By raising the $p$-th power on both sides of \eqref{2.34} and using the estimates in Lemmas \ref{lem3}-\ref{lem6}, we get from \eqref{2.35} and the Young inequality that
\begin{equation} \label{e2}
\begin{split}
 \mathbb{E}\sup_{t\in[0,\textbf{t}^{k,l,T,N}]}\|\textbf{y}^{k,l}(t)\|_{\textbf{H}^{s-3}}^{2p}& \lesssim_{R,\chi,\kappa,\phi,\epsilon,T} \mathbb{E}\int_0^{\textbf{t}^{k,l,T,N}} (1+\|\textbf{y}^{k }\|_{\textbf{H}^{s }}^ {4p}+\|\textbf{y}^{ l}\|_{\textbf{H}^{s }}^{4p})\|\textbf{y}^{k,l}(r)\|_{\textbf{H}^{s-3}}^{2p} \mathrm{d}r\\
  & +\max\{\frac{1}{k^2},\frac{1}{l^2}\}\mathbb{E}\int_0^{\textbf{t}^{k,l,T,N}}(1+\|\textbf{y}^{k }\|_{\textbf{H}^{s }}^{6p}+\|\textbf{y}^{l}\|_{\textbf{H}^{s }}^{6p} )\mathrm{d}r\\
 & +  \mathbb{E}\sup_{t\in[0,\textbf{t}^{k,l,T,N}]}\left| \int_0^t(\textbf{y}^{k},(G  (\textbf{y}^{k}  )-G (\textbf{y}^{l}  ))\mathrm{d}  \mathcal {W})_{\textbf{H} ^{s-3}}\right|.
 \end{split}
\end{equation}
By using the BDG inequality, we have
\begin{equation}\label{e3}
\begin{split}
 &\textrm{Last term in}~\eqref{e2}\\ %&\leq  \mathbb{E} \bigg( \int_0^{\textbf{t}^{k,l,T,N}}\|\textbf{y}^{k}\|_{\textbf{H}^{s-3}}^2\|G^{k} (\textbf{y}^{k}  )-G ^{l } (\textbf{y}^{l} ) \|_{L_2(U;\textbf{H}^{s-3})}^2\mathrm{d}r\bigg)^{\frac{p}{2}}\\
 &\quad\leq \frac{1}{2}\mathbb{E}\sup_{t\in[0,\textbf{t}^{k,l,T,N}]}\|\textbf{y}^{k,l}(t)\|_{\textbf{H}^{s-3}}^{2p}
 +C  \mathbb{E} \left(\int_0^{\textbf{t}^{k,l,T,N}} \|G (\textbf{y}^{k}  )-G  (\textbf{y}^{l} ) \|_{L_2(U;\textbf{H}^{s-3})}^{2}\mathrm{d}r\right)^p \\
 &\quad\leq \frac{1}{2}\mathbb{E}\sup_{t\in[0,\textbf{t}^{k,l,T,N}]}\|\textbf{y}^{k,l}(t)\|_{\textbf{H}^{s-3}}^{2p}
 +C_{T,p} \mathbb{E} \int_0^{\textbf{t}^{k,l,T,N}} \|\textbf{y}^{k,l} (r)\|_{\textbf{H}^{s-3}}^{2p}\mathrm{d}r.
 \end{split}
\end{equation}
It then follows from \eqref{e2}-\eqref{e3} and the definition of $\textbf{t}^{k,l,T,N}$ that
\begin{equation*}
\begin{split}
 &\mathbb{E}\sup_{t\in[0,\textbf{t}^{k,l,T,N}]}\|\textbf{y}^{k,l}(t)\|_{\textbf{H}^{s-3}}^{2p} \\
 &\quad\lesssim_{R,\chi,\kappa,\phi,\epsilon,T} \max\{\frac{1}{k^p},\frac{1}{l^p}\} (1+N^{6p}) +(1+N^{4p}) \mathbb{E}\int_0^{\textbf{t}^{k,l,T,N}} \|\textbf{y}^{k,l}(r)\|_{\textbf{H}^{s-3}}^{2p} \mathrm{d}r,
 \end{split}
\end{equation*}
which combined with the Gronwall Lemma lead to
\begin{equation*}
\begin{split}
 \mathbb{E}\sup_{t\in[0,\textbf{t}^{k,l,T,N}]}\|\textbf{y}^{k,l}(t)\|_{\textbf{H}^{s-3}}^{2p} \lesssim_{R,\chi,\kappa,\phi,\epsilon,T} \max\{\frac{1}{k^p},\frac{1}{l^p}\} (1+N^{6p}) e^{(1+N^{4p})}.
 \end{split}
\end{equation*}
This immediately implies the convergence of $\{\textbf{y}^{k}\}_{k\geq 1}$ in $p$-th momentum up to stopping times, i.e.,
\begin{equation}\label{2.39}
\begin{split}
 \lim_{k\rightarrow\infty} \sup_{l\geq k}\mathbb{E}\sup_{t\in[0,\textbf{t}^{k,l,T,N}]}\|\textbf{y}^{k}(t)-\textbf{y}^{l}(t)
 \|_{\textbf{H}^{s-3}}^{2p}=0,\quad N\geq 1.
 \end{split}
\end{equation}
Now we introduce the following events
$$
E^{k,l}(r):= \left\{\omega;~\sup_{t\in[0,r]}\|\textbf{y}^{k}(t,\omega)-\textbf{y}^{l}(t,\omega)
 \|_{\textbf{H}^{s-3}}^{2p}>\frac{1}{k}\right\},\quad\textrm{for all}~r> 0.
$$
In view of the definition of the stopping times  $\textbf{t}^{k,l,T,N}$ in \eqref{stop1}, we get for any $T>0$ that
\begin{equation*}
\begin{split}
 E^{k,l}(T)
  & = \left(E^{k,l}(T)\bigcap \{\textbf{t}^{k,l,T,N} =T\}\right) \bigcup \left(E^{k,l}(T)\bigcap\{\textbf{t}^{k,l,T,N} <T\}\right) \\
 & \subset E^{k,l}(\textbf{t}^{k,l,T,N})\bigcup \left(E^{k,l}(T)\bigcap\{\textbf{t}^{k,l,T,N} <T\}\right) \\
 %& \subset E^{k,l}(\textbf{t}^{k,l,T,N})\bigcup\{\textbf{t}^{k,l,T,N} <T\}\\
 & \subset E^{k,l}(\textbf{t}^{k,l,T,N})\bigcup \{\textbf{t}^{k,T,N} \leq T\}\bigcup\{\textbf{t}^{ l,T,N} \leq T\}.
 \end{split}
\end{equation*}
By applying the Chebyshev inequality and the uniform bound in Lemma \ref{lem2}, we get
\begin{equation*}
\begin{split}
 \mathbb{P}\{E^{k,l}(T)\} &\leq \mathbb{P}\{E^{k,l}(\textbf{t}^{k,T,N})\}+\mathbb{P}\{\textbf{t}^{k,T,N} \leq T\}+\mathbb{P}\{\textbf{t}^{l,T,N}\leq T\}\\
 &\lesssim_{R,s,p,\phi,\kappa,\chi,\epsilon,\textbf{y}_0,T} \frac{1}{N^2}+ \mathbb{P}\left\{\sup_{t\in[0,\textbf{t}^{k,l,T,N}]}\|\textbf{y}^{k}(t)-\textbf{y}^{l}(t)
 \|_{\textbf{H}^{s-3}}^{2p}>\frac{1}{k}\right\} .
 \end{split}
\end{equation*}
By \eqref{2.39},  the last inequality provides
\begin{equation*}
\begin{split}
 \lim_{k\rightarrow\infty} \sup_{l\geq k}\mathbb{P}\left\{ \sup_{t\in[0,T]}\|\textbf{y}^{k}(t)-\textbf{y}^{l}(t)
 \|_{\textbf{H}^{s-3}}^{2p}>\frac{1}{k}\right\} \lesssim_{R,s,p,\phi,\kappa,\chi,\epsilon,\textbf{y}_0,T} \frac{1}{N^2}\rightarrow 0,~~ \textrm{as}~~N\rightarrow+\infty,
 \end{split}
\end{equation*}
which indicates that $\textbf{y}^{k}$ converges in probability to an element $\textbf{y}$ in $\mathcal {C}([0,T];\textbf{H}^{s-3}(\mathbb{R}^2) )$. As a result, it follows from the Riesz Theorem that there exists a subsequence of $\{\textbf{y}^{k}\}_{k\geq1}$, still denoted by itself, such that $\textbf{y}^{k}\rightarrow\textbf{y}$ strongly in $\mathcal {C}([0,T];\textbf{H}^{s-3}(\mathbb{R}^2) )$, $\mathbb{P}$-a.s.

To finish the proof, it suffices to verify the spacial regularity of the limit $\textbf{y}$ in $ \mathcal {C}([0,T];\textbf{H}^{s}(\mathbb{R}^2) )$.  Indeed, in view of the uniform bound in Lemma \ref{lem2}, we get by taking $p=2$
\begin{equation*}
\begin{split}
\sup_{k \in\mathbb{N}^+} \mathbb{E} \sup_{t\in [0,T]}\|\textbf{y}^{k}(t)\|^2_{\textbf{H}^s} \leq C,
\end{split}
\end{equation*}
which indicates that there exists a subsequence of $\{\textbf{y}^k\}_{k\geq1}$, still denoted by itself, such that $\textbf{y}^k\rightarrow \varrho$ weak star in $L^2(\Omega;L^\infty(0,T;\textbf{H}^{s}(\mathbb{R}^2) )$.
Since we have proved that $\textbf{y}^{k }\rightarrow \textbf{y}$ in $\mathcal {C}([0,T];\textbf{H}^{s-3}(\mathbb{R}^2))$ as $k\rightarrow\infty$, $\mathbb{P}$-a.s., we infer that $\textbf{y}=\varrho \in L^2(\Omega;L^\infty(0,T;\textbf{H}^{s}(\mathbb{R}^2) )$. This completes the proof of Lemma \ref{lem6}.
\end{proof}

With the help of Lemma \ref{lem6}, we are now ready to prove the existence and uniqueness of global pathwise solutions to the modified systsem \eqref{Mod-2} with cut-off operators.

\begin{lemma}\label{lem7}
Let $s>5$, and assume that the hypothesises {(\textsf{A$_1$})}-{(\textsf{A$_3$})} hold. Then for any fixed $R>1$, $ \epsilon \in (0,1)$ and $T>0$, the system \eqref{Mod-1} with cutoffs has a unique solution $\textbf{y}^{R,\epsilon} \in L^p(\Omega;\mathcal {C}([0,T];\textbf{H}^{s}(\mathbb{R}^2)))$ such that
$$
\mathbb{E}\sup_{t\in [0,T]}\|\textbf{y}^{R,\epsilon} (t)\|_{\textbf{H}^{s}}^p\lesssim_{R,p,\phi,\kappa,\chi,\epsilon,\textbf{y}_0,T} 1,\quad \textrm{for any} ~p\geq 2.
$$
\end{lemma}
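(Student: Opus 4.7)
The plan is to construct a solution of \eqref{Mod-1} with cut-offs as the $k\to\infty$ limit of the approximations $\{\textbf{y}^{k,R,\epsilon}\}$ from Lemma \ref{lem1}, then to establish pathwise uniqueness and upgrade the spatial regularity to obtain $\mathcal{C}([0,T];\textbf{H}^s)$-continuity together with the claimed $L^p$-moment bound. Throughout, $R>1$ and $\epsilon\in(0,1)$ are fixed.

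First I would extract from Lemma \ref{lem6} a subsequence (still denoted $\textbf{y}^{k,R,\epsilon}$) converging $\mathbb{P}$-a.s.\ in $\mathcal{C}([0,T];\textbf{H}^{s-3}(\mathbb{R}^2))$ to some limit $\textbf{y}^{R,\epsilon}$, and from Lemma \ref{lem2} combined with Banach-Alaoglu a weak-$\ast$ limit point in $L^p(\Omega;L^\infty(0,T;\textbf{H}^s(\mathbb{R}^2)))$, which must coincide with $\textbf{y}^{R,\epsilon}$ by identification of limits. To pass to the limit in each term of \eqref{Mod-2}, I would use the embedding $H^{s-3}\hookrightarrow W^{1,\infty}$ (valid because $s>5$) to guarantee the pointwise convergence of $\theta_R(\|\textbf{y}^{k,R,\epsilon}\|_{W^{1,\infty}})$, the strong convergence $J_k\to\mathrm{Id}$ on every $H^r(\mathbb{R}^2)$, and product/Moser estimates in $H^{s-3}$ to handle the nonlinear drift terms. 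The stochastic integral converges in probability via the Lipschitz assumption (\textsf{A$_3$}) and the BDG inequality applied to the difference $\textbf{P} f(\cdot,u^{k,R,\epsilon})-\textbf{P} f(\cdot,u^{R,\epsilon})$. This identifies $\textbf{y}^{R,\epsilon}$ as a solution of the cut-off version of \eqref{Mod-1}, the equation being satisfied in the $\textbf{H}^{s-3}$-sense.

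For pathwise uniqueness, given two solutions $\textbf{y}_1,\textbf{y}_2\in\mathcal{C}([0,T];\textbf{H}^{s-3})\cap L^\infty(0,T;\textbf{H}^s)$, I would apply It\^o's formula to $\|\textbf{y}_1-\textbf{y}_2\|_{\textbf{H}^{s-3}}^2$ and reuse the commutator and product estimates from Lemmas \ref{lem3}--\ref{lem5} (with the vanishing $k^{-p}$-terms dropped, since the difference then involves only the same drift/diffusion functionals) to obtain a bound of the form
\[
\mathrm{d}\|\textbf{y}_1-\textbf{y}_2\|_{\textbf{H}^{s-3}}^2\lesssim_{R,\chi,\kappa,\phi,\epsilon}\bigl(1+\|\textbf{y}_1\|_{\textbf{H}^s}^4+\|\textbf{y}_2\|_{\textbf{H}^s}^4\bigr)\|\textbf{y}_1-\textbf{y}_2\|_{\textbf{H}^{s-3}}^2\,\mathrm{d}t+\mathrm{d}M_t,
\]
with $M_t$ a local martingale. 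Stopping at the first exit time of either solution from the ball of radius $N$ in $\textbf{H}^s$ and applying the Gronwall lemma yields uniqueness on $[0,\textbf{t}^N]$, hence on $[0,T]$ after sending $N\to\infty$. The required $L^p$-moment bound on $\sup_{[0,T]}\|\textbf{y}^{R,\epsilon}\|_{\textbf{H}^s}$ then follows immediately from Lemma \ref{lem2} via weak-$\ast$ lower semicontinuity of the norm in $L^p(\Omega;L^\infty(0,T;\textbf{H}^s))$.

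The main obstacle will be upgrading the a.s.\ regularity from $L^\infty(0,T;\textbf{H}^s)$ to $\mathcal{C}([0,T];\textbf{H}^s)$. My plan is to derive an energy identity for $\|\textbf{y}^{R,\epsilon}(t)\|_{\textbf{H}^s}^2$ by applying It\^o's formula directly at the $\textbf{H}^s$-level; this is legitimate since, with $R,\epsilon$ fixed, the drift and diffusion of the cut-off version of \eqref{Mod-1} define continuous maps $\textbf{H}^s\to\textbf{H}^{s-2}$ and $\textbf{H}^s\to L_2(U;\textbf{H}^s)$, respectively, and $\textbf{y}^{R,\epsilon}$ lies a.s.\ in $L^\infty(0,T;\textbf{H}^s)$. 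The a.s.\ continuity of $t\mapsto\|\textbf{y}^{R,\epsilon}(t)\|_{\textbf{H}^s}^2$ follows from this identity, and combined with the a.s.\ weak continuity in $\textbf{H}^s$ (inherited from strong continuity in $\textbf{H}^{s-3}$ together with a.s.\ boundedness in $\textbf{H}^s$), yields strong continuity. Should the direct It\^o application be delicate at the $\textbf{H}^s$-level, a fallback is a Bona-Smith-type argument in which one regularizes once more with $J_n$, exploits the commutator cancellations used in Lemma \ref{lem2}, and passes $n\to\infty$ to justify the identity.
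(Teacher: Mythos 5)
Your overall architecture coincides with the paper's: pass to the limit $k\to\infty$ using Lemma \ref{lem6} and the embedding $H^{s-3}\hookrightarrow W^{1,\infty}$, recover the $L^p(\Omega;L^\infty_t\textbf{H}^s)$ bound from Lemma \ref{lem2} by weak-$\ast$ lower semicontinuity, prove pathwise uniqueness by an It\^o/Gronwall argument on a lower-order norm of the difference up to stopping times (the paper works in $\textbf{H}^{s-2}$ rather than $\textbf{H}^{s-3}$; both are fine), and upgrade to strong $\mathcal{C}_t\textbf{H}^s$-continuity via weak continuity plus continuity of the norm.

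The one point where your primary plan would fail is the "direct It\^o's formula at the $\textbf{H}^s$-level" for the norm identity. With only $\textbf{y}^{R,\epsilon}\in L^\infty(0,T;\textbf{H}^s)$ a.s.\ in hand, the drift of the cut-off system takes values merely in $\textbf{H}^{s-2}$ (and the convection term only in $\textbf{H}^{s-1}$), so the classical Hilbert-space It\^o formula for $\|\cdot\|_{\textbf{H}^s}^2$ does not apply; the variational version in the triple $\textbf{H}^{s+1}\subset\textbf{H}^s\subset\textbf{H}^{s-1}$ would require $\textbf{y}^{R,\epsilon}\in L^2(0,T;\textbf{H}^{s+1})$, which is not yet available at this stage. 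This is exactly the obstruction the paper flags, and your "fallback" is in fact the necessary route and is what the paper does: it mollifies once more with $L_\eta=\varrho_\eta*$, applies It\^o's formula to $\|\Lambda^s L_\eta\textbf{y}\|_{L^2}^{2p}$, derives (after localization by stopping times) an increment estimate of Kolmogorov type $\mathbb{E}\bigl|\|L_\eta\textbf{y}(t_2)\|_{\textbf{H}^s}^{2p}-\|L_\eta\textbf{y}(t_1)\|_{\textbf{H}^s}^{2p}\bigr|^4\lesssim |t_2-t_1|^2$ uniformly in $\eta$, and then sends $\eta\to0$ and invokes Kolmogorov's continuity theorem to get a.s.\ continuity of $t\mapsto\|\textbf{y}(t)\|_{\textbf{H}^s}^{2p}$. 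So promote your fallback to the main argument; with that substitution the proof is complete and matches the paper's.
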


\begin{proof}[\emph{\textbf{Proof}}]
\textsf{Step 1 (Existence and regularity).} According to Lemma \ref{lem6} and the continuously embedding from $H^{s-3}(\mathbb{R}^2)$ into $W^{1,\infty}(\mathbb{R}^2)$, one can take the limit as $k\rightarrow \infty$ in \eqref{Mod-2} to conclude that the limit $\textbf{y}$ obtain the in Lemma \ref{lem6} solves \eqref{Mod-1} with cut-off operators. It remains to prove that $\textbf{y} \in L^p(\Omega;\mathcal {C}([0,T];\textbf{H}^{s}(\mathbb{R}^2)))$. Indeed, recalling that (cf. Lemma \ref{lem6})
$$
\textbf{y} \in L^\infty([0,T];\textbf{H}^{s}(\mathbb{R}^2))\bigcap  \mathcal {C}([0,T];\textbf{H}^{s-3}(\mathbb{R}^2)) .
$$
It then follows from the Lemma 1.4 in \cite{temam2001navier} that $\textbf{y} \in  \mathcal {C}_{\textrm{weak}}([0,T];\textbf{H}^{s}(\mathbb{R}^2))$,  that is,  for any $r \in [0,T]$ and smooth function $\varphi\in \mathcal {C}_0^\infty(\mathbb{R}^2)$, we have
\begin{equation}\label{2.40}
\begin{split}
\lim_{t\rightarrow r} (\textbf{y}(t),\varphi)_{\textbf{H}^{s},(\textbf{H}^{s})'}= (\textbf{y}(r),\varphi)_{\textbf{H}^{s},(\textbf{H}^{s})'}.
 \end{split}
\end{equation}

Our next goal is to prove the continuity of the map $t\mapsto \|\textbf{y}(t)\|_{\textbf{H}^{s}}$, which together with \eqref{2.40} imply that $\textbf{y}$ is strongly continuous in time. To overcome the difficulty cased by the low-regularity of convection terms in $H^{s-1}(\mathbb{R}^2)$ which prevents us applying the It\^{o}'s formula in $H^{s}(\mathbb{R}^2)$, we first apply the operator $L_\eta :=\varrho_\eta* $ to \eqref{Mod-1} with cut-off operators, where $\varrho_\eta $ is another  standard spatial mollifier.
By utilizing the It\^{o} formula in Hilbert space to $\mathrm{d}  \|\Lambda^sL_\eta \textbf{y}\|_{\textbf{L}^{2}}^{2p}(p\geq 2)$,  we find
\begin{equation}\label{2.41}
\begin{split}
 &\mathbb{E}\left[\Big(\|\Lambda^sL_\eta \textbf{y}(t_2)\|_{\textbf{L}^{2}} ^{2p} -\|\Lambda^sL_\eta \textbf{y}(t_1)\|_{\textbf{L}^{2}} ^{2p}\Big)^4\right]\\
  &\quad\lesssim_p  \mathbb{E}\left(\int_{t_1}^{t_2} \|\Lambda^sL_\eta \textbf{y}(t)\|_{\textbf{L}^{2}}^{2(p-1)} (\Lambda^sL_\eta \textbf{y}(t), \Lambda^sL_\eta F   (\textbf{y} ))_{\textbf{L}^{2}}  \mathrm{d} t\right)^4\\
&\quad+  \mathbb{E}\left(\int_{t_1}^{t_2}\|\Lambda^sL_\eta \textbf{y}(t)\|_{\textbf{L}^{2}}^{2(p-1)} \|\Lambda^sL_\eta G  (\textbf{y} )\|_{L_2(U;\textbf{L}^{2})}^2 \mathrm{d} t\right)^4\\
 &\quad+  \mathbb{E}\left(\sum_{j\geq 1}\int_{t_1}^{t_2} \|\Lambda^sL_\eta \textbf{y}(t)\|_{\textbf{L}^{2}}^{2(p-1)} (\Lambda^sL_\eta \textbf{y}(t),\Lambda^sL_\eta G (\textbf{y} )e_j)_{\textbf{L}^{2}} \mathrm{d}  \mathcal {W}^j\right)^4\\
 &\quad+  \mathbb{E}\left(\int_{t_1}^{t_2} \|\Lambda^sL_\eta \textbf{y}(t)\|_{\textbf{L}^{2}}^{2(p-2)}\sum_{j\geq 1}|(\Lambda^sL_\eta \textbf{y}(t),\Lambda^sL_\eta G (\textbf{y} )e_j)_{\textbf{L}^{2}}|^2 \mathrm{d} t\right)^4,
 \end{split}
\end{equation}
where $F  =(F_1  ,F _2 ,F _3 )$,
$F_1 (\textbf{y} ) = \Delta  n -\theta_R(\|(u,n)\|_{W^{1,\infty}})u\cdot \nabla  n- \theta_R(\|(n,c)\|_{W^{1,\infty}}) \textrm{div}  (n[\chi(c)\nabla c]*\rho^{\epsilon})$, $F_2 (\textbf{y} )= \Delta  c-\theta_R(\|(u,c)\|_{W^{1,\infty}})  u\cdot \nabla  c  -\theta_R(\|(n,c)\|_{W^{1,\infty}})[n \kappa(c)]*\rho^{\epsilon}$ and
$F_3 (\textbf{y} )= \Delta  u-\theta_R(\|u\|_{W^{1,\infty}}) \textbf{P} (u\cdot \nabla)  u + \textbf{P} (n\nabla \phi)*\rho^{\epsilon} $.

Now for any $K>0$, we define a sequence of stopping times by
$$
\textbf{t}^K:=\inf\{t\geq  0;~  \|\textbf{y}(t)\|_{\textbf{H}^{s} }\geq K\}.
$$
In view of Lemma \ref{lem6}, we see that $\textbf{t}^K\rightarrow\infty$ as $K\rightarrow\infty$. For any $T>0$, we replace $t_i$ by $\textbf{t}^K \wedge t_i$, $i=1,2$ in \eqref{2.41}. Since $\|\Lambda^sL_\eta \textbf{y}\|_{\textbf{L}^{2}}\approx \| L_\eta \textbf{y}\|_{\textbf{H}^{s}}\lesssim \| \textbf{y}\|_{\textbf{H}^{s}}$, one can derive from \eqref{2.41} and the definition of $\textbf{t}^K$ that
\begin{equation}\label{2.42}
\begin{split}
 &\mathbb{E}\left[\Big(\| L_\eta \textbf{y}(\textbf{t}^K \wedge t_2)\|_{\textbf{H}^s } ^{2p} -\| L_\eta \textbf{y}(\textbf{t}^K \wedge t_1)\|_{\textbf{H}^s } ^{2p}\Big)^4\right]\\
  &\quad \lesssim_{p,K} |\textbf{t}^K \wedge t_2-\textbf{t}^K \wedge t_1|^4+ \mathbb{E}\left(\int_{\textbf{t}^K \wedge t_1}^{\textbf{t}^K \wedge t_2}  \|  L_\eta F   (\textbf{y}(t) )\|_{\textbf{H}^{s}}  \mathrm{d} t\right)^4\\
 &\quad+  \mathbb{E}\left(\sum_{j\geq 1}\int_{\textbf{t}^K \wedge t_1}^{\textbf{t}^K \wedge t_2} \|L_\eta \textbf{y}(t)\|_{\textbf{H}^{s}}^{2(p-1)} (L_\eta \textbf{y}(t),L_\eta G (\textbf{y} )e_j)_{\textbf{H}^{s}} \mathrm{d}  \mathcal {W}^j\right)^4.
 \end{split}
\end{equation}
The term $\|  L_\eta F   (\textbf{y}  )\|_{\textbf{H}^{s}}$ can be estimated similar to the \textsf{Step 1} in Lemma \ref{lem2}, and we have
\begin{equation*}
\begin{split}
\|  L_\eta F   (\textbf{y}(t)  )\|_{\textbf{H}^{s}} \lesssim _{p,K,\eta}\| \textbf{y}(t)\|_{\textbf{H}^{s}}\lesssim _{p,K,\eta}1,\quad \textrm{for all} ~t\in [0,\textbf{t}^K].
 \end{split}
\end{equation*}
For the stochastic term in \eqref{2.42}, we get by using the BDG inequality that
\begin{equation*}
\begin{split}
\textrm{Last term in}~\eqref{2.42}&\lesssim \mathbb{E}\left(\int_{\textbf{t}^K \wedge t_1}^{\textbf{t}^K \wedge t_2}\|L_\eta \textbf{y}(t)\|_{\textbf{H}^{s}}^{4(p-1)} \sum_{j\geq 1}|(L_\eta \textbf{y}(t),L_\eta G (\textbf{y} )e_j)_{\textbf{H}^{s}}|^2  \mathrm{d} t\right)^2\\
&\lesssim \mathbb{E}\left(\int_{\textbf{t}^K \wedge t_1}^{\textbf{t}^K \wedge t_2} \sum_{j\geq 1}\|L_\eta G (\textbf{y} )e_j\|_{\textbf{H}^{s}}^2  \mathrm{d} t\right)^2\\
&\lesssim \mathbb{E}\left(\int_{\textbf{t}^K \wedge t_1}^{\textbf{t}^K \wedge t_2} (1+ \| \textbf{y}(t)\|_{\textbf{H}^{s}}^2)  \mathrm{d} t\right)^2\lesssim _{K} |\textbf{t}^K \wedge t_2-\textbf{t}^K \wedge t_1|^2.
 \end{split}
\end{equation*}
Putting the last two estimates into \eqref{2.42}, we get
\begin{equation*}
\begin{split}
 \mathbb{E} \Big|\| L_\eta \textbf{y}(\textbf{t}^K \wedge t_2)\|_{\textbf{H}^s } ^{2p} -\| L_\eta \textbf{y}(\textbf{t}^K \wedge t_1)\|_{\textbf{H}^s } ^{2p}\Big|^4 \lesssim_{p,K,\eta} | t_2- t_1|^2.
 \end{split}
\end{equation*}
By taking the limit as $\eta\rightarrow 0$ and then $K\rightarrow\infty$ in the last estimate lead to
\begin{equation*}
\begin{split}
 \mathbb{E} \Big|\| \textbf{y}( t_2)\|_{\textbf{H}^s } ^{2p} -\|  \textbf{y}(  t_1)\|_{\textbf{H}^s } ^{2p}\Big|^4  \lesssim_{p,K,\eta} | t_2- t_1|^2,
 \end{split}
\end{equation*}
which combined with Kolmogorov's Continuity Theorem imply that the real-valued process $\| \textbf{y}(  t)\|_{\textbf{H}^s } ^{2p}$ is continuous in time $\mathbb{P}$-almost surely.

\textsf{Step 2 (Uniqueness).} Assume that $(\textbf{y}^1,\textbf{t}^1)$ and  $(\textbf{y}^2,\textbf{t}^2)$ are two local solutions to \eqref{Mod-1} with cut-off operators with respect to the same initial data $\textbf{y}_0$.  Let $L> 2M = 2\| \textbf{y}_0\|_{\textbf{H}^s}$. For any $T>0$, define $\textbf{t}^L_T:= \textbf{t}^L \wedge T$, where
\begin{equation}\label{2.43}
\begin{split}
\textbf{t}^L:=\inf\{t> 0;~\| \textbf{y}^1(t)\|_{\textbf{H}^s}+\| \textbf{y}^2(t)\|_{\textbf{H}^s}\geq L\}.
 \end{split}
\end{equation}
Then $\mathbb{P}\{\textbf{t}^L>0\}=1$ for $L$ large enough. Recall that $\mathbb{E}(\sup_{t\in [0,\textbf{t}^i]}\| \textbf{y}^i(t)\|_{\textbf{H}^s}^2) <\infty$, $i=1,2$, we have
\begin{equation}\label{2.44}
\begin{split}
\mathbb{P}\left\{\liminf_{L\rightarrow\infty}\textbf{t}^L > \textbf{t}^1\wedge\textbf{t}^2\right\}=1.
 \end{split}
\end{equation}
Define $\textbf{p}=\textbf{y}^1-\textbf{y}^2$, then there holds
\begin{equation}\label{2.45}
\begin{split}
\mathrm{d}  \textbf{p}= (F(\textbf{y}^1)-F(\textbf{y}^1))  \mathrm{d} t+ (G(\textbf{y}^1)-G(\textbf{y}^1))\mathrm{d} \mathcal {W},
 \end{split}
\end{equation}
where the functionals $F(\cdot)$ and $G(\cdot)$ are defined in \eqref{2.41}. Note that all of the terms in \eqref{2.45} have spacial regularity in $\textbf{H}^{s-2}(\mathbb{R}^2)$. Applying It\^{o}'s formula to $   \|\textbf{p}(t)\|_{\textbf{H}^{s-2}}^2$, we deduce that
\begin{equation}\label{2.46}
\begin{split}
 &\|\textbf{p}(t)\|_{\textbf{H}^{s-2}}^2 + \int_0^t \|\nabla\textbf{p}(r)\|_{\textbf{H}^{s-2}}^2 \mathrm{d}r \\
  &\quad\leq 2\int_0^t(\textbf{p}(t), F(\textbf{y}^1)-\Delta \textbf{y}^1 -F(\textbf{y}^2)+ \Delta\textbf{y}^2  )_{\textbf{H}^{s-2}}\mathrm{d}r+ \int_0^t\|G(\textbf{y}^1)-G(\textbf{y}^2)\|_{L_2(U;\textbf{H}^{s-2})}^2 \mathrm{d}r \\
 &\quad\quad+2\sum_{j\geq 1}\int_0^t(\textbf{p}(t),(G(\textbf{y}^1)-G(\textbf{y}^2))e_j )_{\textbf{H}^{s-2}} \mathrm{d} \mathcal {W}^j := \mathscr{T}_1 (t) +\mathscr{T}_2 (t)+\mathscr{T}_3 (t).
 \end{split}
\end{equation}
For $\mathscr{T}_1(t)$, we will only deal with the third component, say $\mathscr{T}_1^{(3)}(t)$,  in $\mathscr{T}_1(t)$ with respect to the fluid equation, since the terms in the other equations of \eqref{Mod-1} can be treated in a similar manner (in view of the Lemma \ref{lem2}). Note that
\begin{equation}\label{2.47}
\begin{split}
\mathscr{T}_1^{(3)}(t)&=\left(u^1-u^2, \theta_R(\|u^1\|_{W^{1,\infty}})\textbf{P} (u^1\cdot \nabla)  u^1-\theta_R(\|u^2\|_{W^{1,\infty}})\textbf{P} (u^2\cdot \nabla)  u^2 \right)_{H^{s-2}}\\
&\quad+\left(u^1-u^2, \textbf{P} ((n^1-n^2)\nabla \phi)*\rho^{\epsilon}\right) _{H^{s-2}} \\
& := T_1(t)+T_2(t).
 \end{split}
\end{equation}
For $T_1(t)$,  we first get by the divergence-free condition $ \textrm{div}  u^2 =0$ that
\begin{equation*}
\begin{split}
| (\Lambda ^{s-2}(u^1-u^2),   \Lambda ^{s-2}[u^2\cdot \nabla    (u^1 -u^2)] )_{L^2}|&=| (\Lambda ^{s-2}\nabla(u^1-u^2),   \Lambda ^{s-2} [u^2\otimes (u^1 -u^2)]  )_{L^2}|\\
&\leq \frac{1}{2}  \|\nabla (u^1-u^2)\|_{H^{s-2}}^2+ C\| u^2\|_{H^{s }}^2\|u^1-u^2\|_{H^{s-2}}^2.
\end{split}
\end{equation*}
Then, it follows from the facts that $H^{s-2}(\mathbb{R}^2)$ is a Banach algebra and $H^{s-2}(\mathbb{R}^2)\subset W^{1,\infty}(\mathbb{R}^2)$ that
\begin{equation}\label{2.48}
\begin{split}
| T_1 |&\leq \theta_R(\|u^1\|_{W^{1,\infty}})|(u^1-u^2,\textbf{P} (u^1\cdot \nabla)  u^1- \textbf{P} (u^2\cdot \nabla)  u^2)_{H^{s-2}}|\\
&\quad+|\theta_R(\|u^1\|_{W^{1,\infty}}) -\theta_R(\|u^2\|_{W^{1,\infty}})|(\Lambda ^{s-2}(u^1-u^2),  \Lambda ^{s-2}[(u^2\cdot \nabla)  u^2])_{H^{s-2}}\\
&\lesssim  |(\Lambda ^{s-2}(u^1-u^2),  \Lambda ^{s-2}[(u^1-u^2)\cdot \nabla  u^1])_{L^2}|\\
&\quad+|(\Lambda ^{s-2}(u^1-u^2),   \Lambda ^{s-2}[u^2\cdot \nabla    (u^1 -u^2)])_{L^2}|\\
&\quad+ \|u^1-u^2\|_{W^{1,\infty}}\|u^1-u^2\|_{H^{s-2}}\| (u^2\cdot \nabla)  u^2\|_{H^{s-2}}\\
&\leq \frac{1}{2}  \|\nabla (u^1-u^2)\|_{H^{s-2}}^2+ \|u^1-u^2\|_{H^{s-2}}^2(\| u^1\|_{H^{s }}^2+\| u^2\|_{H^{s }}^2).
\end{split}
\end{equation}
For $T_2(t)$, we get from the convolution inequality $\|f*g\|_{L^2}\lesssim \|f \|_{L^2}\| g\|_{L^1}$ that
\begin{equation}\label{2.49}
\begin{split}
| T_2| &\leq \|u^1-u^2\| _{H^{s-2}} \|\textbf{P} ((n^1-n^2)\nabla \phi)*\Lambda ^{s-2}\rho^{\epsilon}\| _{L^2 } \\
&\lesssim_{\phi,\epsilon} (\|u^1-u^2\| _{H^{s-2}} ^2+\| n^1-n^2 \|_{H^{s-2} } ^2).
\end{split}
\end{equation}
Therefore by estimates \eqref{2.47}-\eqref{2.49}, we gain
\begin{equation}\label{2.50}
\begin{split}
 \mathscr{T}_1^{(3)} (t)\leq \frac{1}{2}  \|\nabla (u^1-u^2)\|_{H^{s-2}}^2 +C_{\phi,\epsilon} (\| u^1\|_{H^{s }}^2+\| u^2\|_{H^{s }}^2) \|\textbf{p}\| _{\textbf{H}^{s-2}}^2.
\end{split}
\end{equation}
Similarly, by using the Paralinearization Theorem and the Mean Value Theorem as well as condition $ \textrm{div}  u^1= \textrm{div}  u^2=0$, one can estimate the first two components $\mathscr{T}_1^{(1)}$ and $\mathscr{T}_1^{(2)}$ in $\mathscr{T}_1 $ to obtain
\begin{equation*}
\begin{split}
\mathscr{T}_1^{(1)} (t)&\leq \frac{1}{2}  \|\nabla (n^1-n^2)\|_{H^{s-2} }^2 +C_{\chi} \left(\| n^1\|_{H^{s }} (\| c^1\|_{H^{s } }+1)+\| n^2\|_{H^{s }}+\| u^2\|_{H^{s }}^2\right)\|\textbf{p}\| _{\textbf{H}^{s-2} }^2,
\end{split}
\end{equation*}
and
\begin{equation*}
\begin{split}
\mathscr{T}_1^{(2)} (t)&\leq \frac{1}{2}  \|\nabla (c^1-c^2)\|_{H^{s-2} }^2\\
&\quad+C_{R,\kappa} \left(\| c^1\|_{H^{s }} (\| u^1\|_{H^{s } }+\| n^1\|_{H^{s } }+1)+\| n^2\|_{H^{s }}+\| u^2\|_{H^{s }}^2\right)\|\textbf{p}\| _{\textbf{H}^{s-2}}^2,
\end{split}
\end{equation*}
which together with \eqref{2.50} imply that
\begin{equation}\label{2.51}
\begin{split}
 \mathscr{T}_1 (t)\leq \frac{1}{2} \int_0^t  \|\nabla \textbf{p}(r)\|_{\textbf{H}^{s-2}}^2 \mathrm{d}r +C_{R,\kappa,\chi,\phi,\epsilon}\int_0^t \sum_{i=1}(\| n^i\|_{H^{s } }^2+\| c^i\|_{H^{s } }^2 +\| u^i\|_{H^{s }}^2)\|\textbf{p}(r)\| _{\textbf{H}^{s-2}}^2\mathrm{d}r.
\end{split}
\end{equation}
For $\mathscr{T}_2 (t)$, we have
\begin{equation}\label{2.52}
\begin{split}
 \mathscr{T}_2 (t)\lesssim_{L,R}  \int_0^t \|\textbf{p}(r)\| _{H^{s-2}}^2\mathrm{d}r.
\end{split}
\end{equation}
By applying the BDG inequality, we get
\begin{equation} \label{2.53}
\begin{split}
\mathbb{E}\sup_{r\in[0,\textbf{t}^L_T\wedge t]}|\mathscr{T}_3 (r)|&\lesssim \mathbb{E}\left(\int_0^{\textbf{t}^L_T\wedge t}\|\textbf{p}(r)\|_{\textbf{H}^{s-2}}^2\sum_{j\geq 1}\|(G(\textbf{y}^1)-G(\textbf{y}^2))e_j \|_{\textbf{H}^{s-2}}^2 \mathrm{d}r\right)^{1/2}\\
&\lesssim \mathbb{E}\left[\sup_{r\in[0,\textbf{t}^L_T\wedge t]}\|\textbf{p}(r)\|_{\textbf{H}^{s-2}} \bigg(\int_0^{\textbf{t}^L_T\wedge t} \|\textbf{p}(r)\|_{\textbf{H}^{s-2}} ^2\mathrm{d}r\bigg)^{1/2}\right]\\
&\leq \frac{1}{2} \mathbb{E} \sup_{r\in[0,\textbf{t}^L_T\wedge t]}\|\textbf{p}(r)\|_{\textbf{H}^{s-2}}^2+ C_R \mathbb{E}\int_0^{\textbf{t}^L_T\wedge t} \|\textbf{p}(r)\|_{\textbf{H}^{s-2}} ^2\mathrm{d}r .
\end{split}
\end{equation}
Thereby, by taking the supremum on both sides of \eqref{2.46} over $[0,\textbf{t}^L_T\wedge t]$, it follows from the definition of $\textbf{t}^L_T$ and the estimates \eqref{2.51}-\eqref{2.53} that
\begin{equation*}
\begin{split}
\mathbb{E}\sup_{r\in[0,\textbf{t}^L_T\wedge t]}\|\textbf{p}(r)\|_{\textbf{H}^{s-2}}^2  \lesssim_{L,R,\chi,\kappa,\phi,\epsilon} \int_0^{ t} \mathbb{E}\sup_{r\in[0,\textbf{t}^L_T\wedge \tau]} \|\textbf{p}(r)\|_{\textbf{H}^{s-2}} ^2\mathrm{d}  \tau,\quad \textrm{for all}~ t>0.
\end{split}
\end{equation*}
By applying Gronwall Lemma to the last inequality leads to $\mathbb{E}\sup_{r\in[0,\textbf{t}^L_T ]}\|\textbf{p}(r)\|_{\textbf{H}^{s-2}}^2=0$, which implies
$$
\mathbb{E}\sup_{r\in[0,\textbf{t}^L\wedge \textbf{t}^1\wedge\textbf{t}^2 ]}\|\textbf{p}(r)\|_{\textbf{H}^{s-2}}^2=0.
$$
Taking the limit as $L\rightarrow\infty$ and using \eqref{2.44}, we get from the Monotone Convergence Theorem that
$$
\mathbb{P}\{\textbf{y}^1(t)=\textbf{y}^1(t),~~ \forall t \in [0,\textbf{t}^1\wedge\textbf{t}^2]\}=1.
$$
The proof of Lemma \ref{lem7} is completed.
\end{proof}

\begin{lemma}\label{lem9}
Any maximal local solution $(n_{\epsilon},c_{\epsilon},u_{\epsilon}, \textbf{t}^\epsilon)$ to \eqref{Mod-1} satisfies that, for all $T>0$,
\begin{equation}\label{2.54}
\begin{split}
n^{\epsilon}(x,t)\geq 0,~~ c^{\epsilon}(x,t)\geq 0,~~\forall t\in[0,T\wedge\textbf{t}^\epsilon),~~x\in \mathbb{R}^2,~~\mathbb{P}\textrm{-a.s.,}
\end{split}
\end{equation}
\begin{equation}\label{2.55}
\begin{split}
\| n^\epsilon(\cdot,t)\|_{L^1}\equiv\| n^\epsilon_0\|_{L^1},~~ \|c^\epsilon(\cdot,t)\|_{L^1\cap L^\infty}\leq\|c_0\|_{L^1\cap L^\infty},~~\forall t\in[0,T\wedge\textbf{t}^\epsilon),~~\mathbb{P}\textrm{-a.s.}
\end{split}
\end{equation}
\end{lemma}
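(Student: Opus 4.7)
The plan is to argue pathwise. For $\mathbb{P}$-a.s.\ $\omega$ we have $(n^\epsilon, c^\epsilon, u^\epsilon)(\omega)\in\mathcal{C}([0, T\wedge \textbf{t}^\epsilon); \textbf{H}^s(\mathbb{R}^2))$ with $s>5$ (Lemma \ref{lem7} applied locally in $R$). Since the $n$- and $c$-equations of \eqref{Mod-1} carry no stochastic integral, they reduce to deterministic parabolic PDEs with smooth divergence-free drift $u^\epsilon(\omega)$ and smooth coefficient $[\chi(c^\epsilon)\nabla c^\epsilon]*\rho^\epsilon$. Before proceeding I would first extend $\kappa$ smoothly to all of $\mathbb{R}$ by declaring $\kappa\equiv 0$ on $(-\infty,0)$ (compatible with $\kappa(0)=0$ and $\kappa\in\mathcal{C}^2([0,\infty))$); this has no effect on the original equation once we know $c^\epsilon\geq 0$.

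For non-negativity of $c^\epsilon$, I would test the $c$-equation against $2(c^\epsilon)_-=2\max(-c^\epsilon,0)$. Using $\textrm{div}\,u^\epsilon=0$ and the extension $\kappa(c^\epsilon)\equiv 0$ on $\{c^\epsilon<0\}$, the transport and reaction terms disappear, giving
\begin{equation*}
\frac{d}{dt}\int_{\mathbb{R}^2}(c^\epsilon)_-^2\,\mathrm{d}x+2\int_{\mathbb{R}^2}|\nabla (c^\epsilon)_-|^2\,\mathrm{d}x=0,
\end{equation*}
so $(c^\epsilon)_-\equiv 0$ since $c_0^\epsilon=c_0*\rho^\epsilon>0$. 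Next set $v:=[\chi(c^\epsilon)\nabla c^\epsilon]*\rho^\epsilon$, which is $W^{1,\infty}$ pathwise by the convolution gain and the $\textbf{H}^s$-bound. Testing the $n$-equation against $2(n^\epsilon)_-$ and integrating by parts yields
\begin{equation*}
\frac{d}{dt}\int_{\mathbb{R}^2}(n^\epsilon)_-^2\,\mathrm{d}x+2\int_{\mathbb{R}^2}|\nabla(n^\epsilon)_-|^2\,\mathrm{d}x\leq \|\textrm{div}\,v\|_{L^\infty}\int_{\mathbb{R}^2}(n^\epsilon)_-^2\,\mathrm{d}x,
\end{equation*}
so Gronwall and $n_0^\epsilon\geq 0$ force $(n^\epsilon)_-\equiv 0$.

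For the $L^1$-identities, I would introduce a standard cut-off $\psi_R\in\mathcal{C}_c^\infty(\mathbb{R}^2)$ with $\psi_R\equiv 1$ on $B_R$, supported in $B_{2R}$, with $\|\nabla^j\psi_R\|_{L^\infty}\lesssim R^{-j}$. Testing the $n$-equation against $\psi_R$ and using $\textrm{div}\,u^\epsilon=0$ gives
\begin{equation*}
\frac{d}{dt}\int_{\mathbb{R}^2}n^\epsilon\psi_R\,\mathrm{d}x=\int_{\mathbb{R}^2}n^\epsilon\Delta\psi_R\,\mathrm{d}x+\int_{\mathbb{R}^2}n^\epsilon(u^\epsilon+v)\cdot\nabla\psi_R\,\mathrm{d}x,
\end{equation*}
and each term on the right is bounded by $C R^{-1}$ uniformly in $t$ (via Cauchy–Schwarz using $n^\epsilon,u^\epsilon\in L^2$ and $v\in L^\infty$ from the $\textbf{H}^s$-bound). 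Integrating in time, letting $R\to\infty$, and applying monotone convergence to the non-negative integrands yields $\|n^\epsilon(t)\|_{L^1}=\|n_0^\epsilon\|_{L^1}$. An identical argument for the $c$-equation now features an additional, non-positive sink $-\kappa(c^\epsilon)(n^\epsilon*\rho^\epsilon)$, producing $\|c^\epsilon(t)\|_{L^1}\leq\|c_0^\epsilon\|_{L^1}\leq\|c_0\|_{L^1}$.

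Finally, for $\|c^\epsilon\|_{L^\infty}\leq\|c_0\|_{L^\infty}$, I would set $M:=\|c_0\|_{L^\infty}\geq\|c_0^\epsilon\|_{L^\infty}$ and test the $c$-equation against $2(c^\epsilon-M)_+$. The non-negativity of $c^\epsilon,\kappa(c^\epsilon)$ and $n^\epsilon*\rho^\epsilon$ makes the reaction contribution non-positive, so
\begin{equation*}
\frac{d}{dt}\int_{\mathbb{R}^2}(c^\epsilon-M)_+^2\,\mathrm{d}x+2\int_{\mathbb{R}^2}|\nabla(c^\epsilon-M)_+|^2\,\mathrm{d}x\leq 0,
\end{equation*}
and the vanishing of the initial integrand gives $c^\epsilon\leq M$ pointwise. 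The chief technical nuisance I foresee is not any single energy computation but making the integrations by parts on the whole plane fully rigorous: one must exploit the $\textbf{H}^s$-regularity ($s>5$) carefully to justify the $R\to\infty$ passage and to guarantee that the convolution $v$ has the $W^{1,\infty}$-regularity needed for the Gronwall step in the $n^\epsilon$-positivity argument.
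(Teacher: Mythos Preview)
Your approach is the standard Stampacchia-truncation argument and is essentially what the paper has in mind; the paper itself omits the proof entirely, deferring to \cite{zhai20202d}, which proceeds along the same lines. So there is no genuine gap in strategy.

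One small technical point deserves correction. You write that you ``extend $\kappa$ smoothly to all of $\mathbb{R}$ by declaring $\kappa\equiv 0$ on $(-\infty,0)$''. This extension is merely continuous, not $\mathcal{C}^2$, unless $\kappa'(0)=\kappa''(0)=0$; for the prototype $\kappa(s)=s$ it is not even $\mathcal{C}^1$. Since the construction of the approximate solutions in Lemmas~\ref{lem1}--\ref{lem7} (Moser estimates, paralinearization, local Lipschitz bounds) relies on $\kappa\in\mathcal{C}^2$, you should instead take an arbitrary $\mathcal{C}^2$ extension $\tilde\kappa$ with $\tilde\kappa(0)=0$. The cleanest fix is then to reverse the order: prove $n^\epsilon\geq 0$ first (your Gronwall argument for $(n^\epsilon)_-$ is insensitive to the sign of $c^\epsilon$), and only afterwards treat $(c^\epsilon)_-$. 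With $n^\epsilon*\rho^\epsilon\geq 0$ in hand, the reaction contribution satisfies
\[
2\int_{\mathbb{R}^2}(c^\epsilon)_-\,\tilde\kappa(c^\epsilon)\,(n^\epsilon*\rho^\epsilon)\,\mathrm{d}x
\;\leq\; 2\|\tilde\kappa'\|_{L^\infty(-\|c^\epsilon\|_{L^\infty},0)}\,\|n^\epsilon*\rho^\epsilon\|_{L^\infty}\,\|(c^\epsilon)_-\|_{L^2}^2,
\]
using $|\tilde\kappa(s)|\leq C|s|$ near zero, and Gronwall closes. This also removes any circularity about which extension generates the solution. Apart from this, your cutoff argument for the $L^1$ identities and the $(c^\epsilon-M)_+$ test for the $L^\infty$ bound are correct as written.
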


\begin{proof}[\emph{\textbf{Proof}}]
The proof is similar to \cite{zhai20202d}, and we omit the details here.
\end{proof}

\begin{lemma}\label{lem8}
Let $s>5$, and assume that the hypothesises {(\textsf{A$_1$})}-{(\textsf{A$_3$})} hold. Then for any $0<\epsilon<1$ and $T>0$, the system \eqref{Mod-1} has a global unique pathwise solution
\begin{equation}\label{eee}
\begin{split}
\textbf{y}^{\epsilon} \in L^2(\Omega;\mathcal {C}([0,T];\textbf{H}^{s}(\mathbb{R}^2)))\bigcap L^2(\Omega;L^2([0,T];\textbf{H}^{s+1}(\mathbb{R}^2))).
\end{split}
\end{equation}
\end{lemma}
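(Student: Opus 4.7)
My strategy is a two-step process: first, assemble the cut-off solutions from Lemma~\ref{lem7} into a unique maximal local pathwise solution of \eqref{Mod-1} via a stopping-time gluing argument; second, rule out finite-time blow-up by an $R$-uniform $\textbf{H}^s$-estimate that exploits the absence of cut-offs up to the stopping time together with the a priori bounds of Lemma~\ref{lem9}.

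\emph{Gluing step.} Fix $R>1$ and let $\textbf{y}^{R,\epsilon}$ be the pathwise solution of the cut-off system supplied by Lemma~\ref{lem7}. Since $s>5$, the embedding $\textbf{H}^s(\mathbb{R}^2)\hookrightarrow\textbf{W}^{1,\infty}(\mathbb{R}^2)$ has some constant $C_*$, and I set
\[
\tau_R^\epsilon:=T\wedge\inf\bigl\{t\geq 0 : \|\textbf{y}^{R,\epsilon}(t)\|_{\textbf{H}^s}\geq R/(2C_*)\bigr\}.
\]
Then $\theta_R(\|\textbf{y}^{R,\epsilon}\|_{W^{1,\infty}})\equiv 1$ on $[0,\tau_R^\epsilon]$, so on this random interval $\textbf{y}^{R,\epsilon}$ actually solves the cut-off-free system \eqref{Mod-1}. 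Replaying the $\textbf{H}^{s-2}$-uniqueness argument of Lemma~\ref{lem7} (Step~2), now with the $\theta_R$-factors removed and with the $\textbf{H}^s$-norms of the two competing solutions bounded by $\max\{R,R'\}/(2C_*)$ on $[0,\tau_R^\epsilon\wedge\tau_{R'}^\epsilon]$ to close the Gronwall loop, yields $\textbf{y}^{R',\epsilon}\equiv\textbf{y}^{R,\epsilon}$ on $[0,\tau_R^\epsilon]$ whenever $R'\geq R$. Hence $\{\tau_R^\epsilon\}$ is non-decreasing, and setting $\textbf{t}^\epsilon:=\lim_{R\to\infty}\tau_R^\epsilon$ and $\textbf{y}^\epsilon(t):=\textbf{y}^{R,\epsilon}(t)$ for $t\leq\tau_R^\epsilon$ defines a unique maximal local pathwise solution of \eqref{Mod-1} on $[0,\textbf{t}^\epsilon)$, inheriting the regularity stated in \eqref{eee} up to $\textbf{t}^\epsilon$.

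\emph{$R$-uniform estimate and globalization.} To conclude $\textbf{t}^\epsilon=T$ $\mathbb{P}$-a.s., it suffices to bound $\mathbb{E}\sup_{r\in[0,\tau_R^\epsilon]}\|\textbf{y}^{R,\epsilon}(r)\|_{\textbf{H}^s}^2$ by a constant $K(\epsilon,T,\textbf{y}_0,\phi,\chi,\kappa)$ independent of $R$, for then Chebyshev's inequality gives $\mathbb{P}\{\tau_R^\epsilon<T\}\leq 4C_*^2 K/R^2\to 0$ as $R\to\infty$. On $[0,\tau_R^\epsilon]$ the cut-offs are inactive, so the $\textbf{H}^s$-energy identity of Lemma~\ref{lem2} is run without any $\theta_R$-factor. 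Three ingredients make this estimate closable on $[0,T]$:
\begin{itemize}
\item Lemma~\ref{lem9} provides $\|c^\epsilon\|_{L^\infty}\leq\|c_0\|_{L^\infty}$ and $\|n^\epsilon\|_{L^1}=\|n_0^\epsilon\|_{L^1}$, so by (\textsf{A$_2$}) both $\chi(c^\epsilon)$ and $\kappa(c^\epsilon)$ are pointwise bounded by constants depending only on $c_0$;
\item the smoothing $\|g*\rho^\epsilon\|_{H^{s+1}}\leq C(\epsilon)\|g\|_{H^s}$ turns the coupling nonlinearities $(n^\epsilon\chi(c^\epsilon)\nabla c^\epsilon)*\rho^\epsilon$, $[n^\epsilon\kappa(c^\epsilon)]*\rho^\epsilon$ and $(n^\epsilon\nabla\phi)*\rho^\epsilon$ into contributions linear in the top-order norm;
\item the unmollified transport terms $u^\epsilon\cdot\nabla n^\epsilon$, $u^\epsilon\cdot\nabla c^\epsilon$, $(u^\epsilon\cdot\nabla)u^\epsilon$ are handled by the commutator estimate \eqref{2.5} together with $\mathrm{div}\,u^\epsilon=0$; their $\|\cdot\|_{W^{1,\infty}}$-coefficients are tamed by an auxiliary $L^\infty_tL^2_x$-vorticity bound for $u^\epsilon$, obtained by applying It\^o's formula to $\|\nabla\wedge u^\epsilon\|_{L^2}^2$ in \eqref{Mod-1}$_3$ and invoking Biot--Savart in 2D.
\end{itemize}
Combining these with BDG for the stochastic integral and absorbing the dissipation $\|\nabla\textbf{y}^{R,\epsilon}\|_{\textbf{H}^s}^2$ on the left-hand side produces a Gronwall-type inequality with $R$-independent constants, from which the desired bound follows. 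The same energy identity simultaneously delivers the parabolic gain $\textbf{y}^\epsilon\in L^2(\Omega; L^2(0,T;\textbf{H}^{s+1}))$ required in \eqref{eee}.

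\emph{Main obstacle.} The delicate point is the $R$-uniform closure in Step~2. Without the cut-off $\theta_R$, the transport nonlinearities a priori generate super-linear $\|\textbf{y}^{R,\epsilon}\|_{\textbf{H}^s}^{3}$-type growth, and the $\epsilon$-mollification by itself does not suppress them. It is the interplay between the 2D parabolic dissipation present in every equation of \eqref{Mod-1}, the a priori $L^1\cap L^\infty$-bounds of Lemma~\ref{lem9}, and the vorticity structure of the stochastic Navier--Stokes part that prevents finite-time blow-up; carrying this out in the stochastic setting without upsetting the It\^o cancellation is the technical heart of the proof.
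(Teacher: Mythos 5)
Your gluing step is sound and is essentially what the paper does in its Step~1: for $R$ exceeding $C_{\textrm{emb}}(\|\textbf{y}_0^\epsilon\|_{\textbf{H}^s}+1)$ the cut-offs are identically one up to a stopping time, the $\textbf{H}^{s-2}$-uniqueness argument of Lemma~\ref{lem7} gives consistency of the family, and one obtains a unique maximal local pathwise solution. The globalization step, however, has a genuine gap on two counts. First, you claim an $R$-uniform bound on $\mathbb{E}\sup_{r\le\tau_R^\epsilon}\|\textbf{y}^{R,\epsilon}(r)\|_{\textbf{H}^s}^2$ and then conclude by Chebyshev. But the $\textbf{H}^s$-energy inequality for the uncut transport terms closes only via Gronwall against $\exp\bigl(C\int_0^T(\|\nabla u^\epsilon\|_{L^\infty}+\|(n^\epsilon,c^\epsilon)\|_{H^2}^2)\,\mathrm{d}t\bigr)$, and taking expectations of this exponential of a random variable is exactly the obstruction your sketch does not address. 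The paper does not prove such a uniform moment bound: its final estimate \eqref{2.72} depends exponentially on the thresholds $R,K,D$ of a cascade of stopping times $\tau_K^\epsilon$, $\textbf{t}_R^\epsilon$, $\bar{\textbf{t}}_D^\epsilon$, and globality follows pathwise because those stopping times tend to infinity $\mathbb{P}$-a.s.\ (the lower-order quantities being a.s.\ finite), not from a threshold-uniform bound plus Chebyshev. Your plan as written would need to justify integrability of the Gronwall exponential, which it cannot.

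Second, your claim that the $W^{1,\infty}$-coefficients of the transport terms are ``tamed by an auxiliary $L^\infty_tL^2_x$-vorticity bound'' is false in $\mathbb{R}^2$: $\|\nabla\wedge u^\epsilon\|_{L^2}$ controls only $\|\nabla u^\epsilon\|_{L^2}$ via Biot--Savart, and $H^1(\mathbb{R}^2)\not\hookrightarrow L^\infty(\mathbb{R}^2)$. What is actually needed, and what the paper proves, is the parabolic gain $\varpi^\epsilon\in L^2(0,T;H^2)$ (applying It\^o to $\|\nabla\varpi^\epsilon\|_{L^2}^2$, not $\|\varpi^\epsilon\|_{L^2}^2$), which in turn requires first controlling $\int_0^T\|n^\epsilon\|_{H^2}^2\,\mathrm{d}t$ coming from the forcing $\nabla\wedge[(n^\epsilon\nabla\phi)*\rho^\epsilon]$ --- hence the intermediate $\textbf{H}^1\to\textbf{H}^2$ estimate \eqref{2.57} that your outline omits entirely. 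Without this intermediate layer the vorticity estimate cannot even be started, and without the $L^2_tH^2_x$ (rather than $L^\infty_tL^2_x$) regularity of $\varpi^\epsilon$ the $\textbf{H}^s$ Gronwall loop does not close.
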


\begin{proof}[\emph{\textbf{Proof}}]
The proof will be divided into two steps. In the first step we show the existence and uniqueness of local maximal pathwise solutions. In the second step we prove that the lifespan of the maximal solution can be extended to infinity.

\textsf{Step 1.} For any give $R>1$, let $\textbf{y}^{R,\epsilon}(t)$ be the global pathwise solution to \eqref{Mod-1} with cut-off operators associated to the initial data $\textbf{y}^{R,\epsilon}(0)= (n_0^\epsilon,c_0^\epsilon,u_0^\epsilon) $ (cf. Lemma \ref{lem7}). To remove the cutoffs, we define
\begin{equation}
\begin{split}
\textbf{t}^\epsilon :=\inf\{t>0;~ \sup_{r\in [0,t]}\|\textbf{y}^{R,\epsilon}(r)\|_{\textbf{H}^{s}}> \|\textbf{y}^{R,\epsilon}(0)\|_{\textbf{H}^{s}}+1\}.
\end{split}
\end{equation}
For any $R>0$, we have $\mathbb{P}\{\textbf{t}^\epsilon > 0\}=1$. Let $C_\epsilon>0$ be a constant such that $\|\textbf{y}^{R,\epsilon}(0)\|_{\textbf{H}^{s}}\leq C_\epsilon$, and   $C_{\textrm{emb}}>0$ be the Sobolev embedding constant such that $\|\cdot\|_{W^{1,\infty}}\leq C_{\textrm{emb}} \|\cdot\|_{\textbf{H}^{s}} $. Then we have
\begin{equation}
\begin{split}
 \|\textbf{y}^{R,\epsilon}(t)\|_{W^{1,\infty}} \leq C_{\textrm{emb}} \|\textbf{y}^{R,\epsilon}(t)\|_{\textbf{H}^{s}} \leq C_{\textrm{emb}} (\|\textbf{y}^{R,\epsilon}(0)\|_{\textbf{H}^{s}} +1) \leq C_{\textrm{emb}} (C_\epsilon +1),
\end{split}
\end{equation}
for any $t \in [0,\textbf{t}^\epsilon ]$.   If $R > C_{\textrm{emb}} (C_\epsilon +1)$, then we have
$$
\theta_R(\|(u,n)\|_{W^{1,\infty}})=\theta_R(\|(n,c)\|_{W^{1,\infty}})=\theta_R(\|(u,c)\|_{W^{1,\infty}})
=\theta_R(\|u\|_{W^{1,\infty}})\equiv 1 , ~~ \mathbb{P} \textrm{-a.s.}$$ Therefore, $(\textbf{y}^{R,\epsilon}, \textbf{t}^\epsilon )$ is a local pathwise solution to \eqref{Mod-1}. In this case,  the solution $\textbf{y}^{R,\epsilon}$ will be denoted by $\textbf{y}^{\epsilon}=(n^\epsilon,c^\epsilon,u^\epsilon)$.  By using a standard method (cf. \cites{bensoussan1995stochastic,breit2018stochastically}), one can extend the solution $(n^\epsilon,c^\epsilon,u^\epsilon)$ to a maximal time of existence $\widetilde{\textbf{t}^\epsilon}$. This proves the existence of local solution for \eqref{Mod-1}.

\textsf{Step 2.} We show that the local maximal solution constructed in the last step is actually a global-in-time one, that is $\mathbb{P}\{\widetilde{\textbf{t}^\epsilon}=\infty\}=1$. This will be done by deriving a suitable $\textbf{H}^{s}$-energy estimate for solutions to \eqref{Mod-1} uniformly in $\epsilon \in (0,1)$.

By virtue of $ \textrm{div}  u^\epsilon =0$ and the nonnegativity of $c^\epsilon$ and $n^\epsilon$, similar to the proof in Lemma \ref{lem2} (see also \cites{duan2010global,chae2014global,zhai20202d}), one can deduce that for any $T>0$
$$
\mathbb{E}\sup_{t\in [0,T]}\|(n^\epsilon,c^\epsilon,u^\epsilon) \|_{\textbf{L}^2}^2 +\mathbb{E} \int_0^T  \|(n^\epsilon,c^\epsilon,u^\epsilon) \|_{\textbf{H}^1}^2  \mathrm{d} t \lesssim _{n_0,c_0,u_0,\phi} 1 .
$$
Moreover, we have
\begin{equation}\label{2.57}
\begin{split}
&\mathbb{E} \sup_{r\in [0,T\wedge \tau_K^\epsilon]}\|(n^\epsilon,c^\epsilon,u^\epsilon)(r) \|_{\textbf{H}^{1}}^2 +\mathbb{E} \int_0^{T\wedge \tau_K^\epsilon} \|( n^\epsilon, c^\epsilon,u^\epsilon)  (r)\|_{\textbf{H}^{2} }^2 \mathrm{d}  r \lesssim _{n_0,c_0,u_0,K,\phi,T} 1.
\end{split}
\end{equation}
where
$
\tau_K^\epsilon:= \inf\{t>0;~ \sup_{r\in [0,t]}\|( n^\epsilon, c^\epsilon, u^\epsilon)(r)\|_{\textbf{L}^2}^2>K~ \textrm{or}~  \int_0^t \|( n^\epsilon, c^\epsilon, u^\epsilon)\|_{\textbf{H}^1}^2 \mathrm{d}r>K\}.
$ Clearly, we have $\tau_K^\epsilon\rightarrow\infty$ as $K\rightarrow\infty$ almost surely.

To obtain the $\textbf{H}^{s}(s>1)$-estimate of $({n^\epsilon},{c^\epsilon},{u^\epsilon})$, let us first exploring the estimate for  $u^\epsilon$ in $W^{1,\infty}(\mathbb{R}^2)\supset \textbf{H}^{s}(\mathbb{R}^2)$ with $s>2$. Note that this can be achieved by consider the estimate for  $\|\varpi^\epsilon\|_{H^2}$ in view of the Biot-Savart law, where $\varpi^\epsilon = \nabla\wedge u^\epsilon= \partial _{x_2}u_1^\epsilon-\partial _{x_2}u^\epsilon_2$ denotes the vorticity of $u^\epsilon$.

\textsf{Estimate for $\|u^\epsilon\|_{L^2(\Omega;L^2(0,T;W^{1,\infty})}$.} For any $R>0$, define
$$
 \textbf{t}_ R^\epsilon := \left\{t>0;  \int_0^t \|n^\epsilon(r)\|_{H^{2}}^2 \mathrm{d}r> R~~ \textrm{or} ~~ \int_0^t  \|\nabla u^\epsilon(r)\|_{L^2}^2 \mathrm{d}r> R \right\}.
$$
Then we have \eqref{2.57} that $T\wedge \textbf{t}_ R^\epsilon\wedge \tau_K^\epsilon\rightarrow T \wedge \tau_K^\epsilon$ as $R\rightarrow\infty$, $\mathbb{P}$-a.s.  Applying the operator $\nabla\wedge $ to \eqref{Mod-1}$_3$, we get
\begin{equation*}
\begin{split}
 \mathrm{d} \nabla \varpi^\epsilon+   \nabla\textbf{P} (u^\epsilon\cdot \nabla) \varpi^\epsilon  \mathrm{d} t =   A \nabla\varpi^\epsilon \mathrm{d} t +  \nabla\textbf{P}\nabla\wedge(n^\epsilon\nabla \phi) \mathrm{d} t+  \nabla\textbf{P}\nabla\wedge f(t,u^\epsilon) \mathrm{d} W.
\end{split}
\end{equation*}
Applying It\^{o}'s formula to $\mathrm{d}  \|\nabla \varpi^\epsilon\|_{L^2}^2$, we find
\begin{equation}\label{2.60}
\begin{split}
  & \|\nabla \varpi^\epsilon(t)\|_{L^2}^2 + 2 \int_0^t\|A^{\frac{1}{2}}\nabla \varpi^\epsilon\|_{L^2}^2\mathrm{d}r  = \|\nabla \varpi^\epsilon_0\|_{L^2}^2\\
  &\quad-2 \int_0^t(\nabla \varpi^\epsilon,  \nabla\textbf{P} (u^\epsilon\cdot \nabla) \varpi^\epsilon )_{L^2}\mathrm{d}r+2 \int_0^t(\nabla \varpi^\epsilon,   \textbf{P}\nabla\{\nabla\wedge[(n^\epsilon\nabla \phi)*\rho^\epsilon]\} )_{L^2}\mathrm{d}r\\
  &\quad+  \int_0^t\|\nabla\textbf{P}\nabla\wedge f(t,u^\epsilon)\|_{L_2(U;L^2)}^2 \mathrm{d}r+2 \sum_{j\geq 1}\int_0^t(\nabla \varpi^\epsilon,  \nabla\textbf{P}\nabla\wedge f_j(t,u^\epsilon))_{L^2}\mathrm{d} W^j\\
  &\quad:= \|\nabla \varpi^\epsilon_0\|_{L^2}^2+\mathcal {B}_1(t)+\cdots+\mathcal {B}_4(t).
\end{split}
\end{equation}
For $\mathcal {B}_1$, by using the Sobolev embedding $H^1(\mathbb{R}^2)\subset L^p(\mathbb{R}^2)$ for all $1<p< \infty$  (cf. \cite{ladyzhenskaya1969mathematical}) and the fact of $(u^\epsilon\cdot \nabla) \varpi^\epsilon= \textrm{div}  (u^\epsilon \varpi^\epsilon)$, we have
\begin{equation}\label{2.61}
\begin{split}
  \mathcal {B}_1(t)=2 \int_0^t(\Delta \varpi^\epsilon,    u^\epsilon \varpi^\epsilon )_{L^2}\mathrm{d}r%&\leq \int_0^t\|\Delta \varpi^\epsilon\|_{L^2}^2 \mathrm{d}r+\int_0^t\|u^\epsilon\|_{L^4}^2\| \varpi^\epsilon \|_{L^4}^2\mathrm{d}r\\
  &\leq \int_0^t\|\Delta \varpi^\epsilon\|_{L^2}^2 \mathrm{d}r+ C \int_0^t\|\nabla u^\epsilon\|_{L^2}^2\| \nabla\varpi^\epsilon \|_{L^2}^2\mathrm{d}r.
\end{split}
\end{equation}
For $\mathcal {B}_2$, first note that
$$
|\nabla\{\nabla\wedge[(n^\epsilon\nabla \phi)*\rho^\epsilon]\}|= |\nabla [(\nabla n^\epsilon\wedge\nabla \phi)*\rho^\epsilon]|\leq\sum_k\sum_{i\neq j}(|\partial_k\partial_i n^\epsilon \partial_j \phi|  + | \partial_i n^\epsilon \partial_k\partial_j \phi|)*\rho^\epsilon.
$$
Then it follows from  the Young inequality and the fact of $\|\rho^\epsilon\|_{L^1}=1$ that
\begin{equation}\label{2.62}
\begin{split}
  \mathcal {B}_2(t)%&\leq \int_0^t\|\nabla \varpi^\epsilon\|_{L^2}^2\mathrm{d}r+  \int_0^t  \| \nabla\{\nabla\wedge[(n^\epsilon\nabla \phi)*\rho^\epsilon]\} \|_{L^2}^2\mathrm{d}r\\
  &\leq \int_0^t\|\nabla \varpi^\epsilon\|_{L^2}^2\mathrm{d}r+  C \int_0^t (\|\Delta n^\epsilon\|_{L^2}^2 +\|\nabla n^\epsilon\|_{L^2}^2) \|\phi\|_{W^{2,\infty}}^2 \mathrm{d}r\\
  &\leq  \int_0^t\|\nabla \varpi^\epsilon\|_{L^2}^2\mathrm{d}r+  C _ \phi\int_0^t \| n^\epsilon(r)\|_{H^2}^2 \mathrm{d}r.
\end{split}
\end{equation}
For $\mathcal {B}_3$,  we get from the  relation between $\varpi^\epsilon$ and $u^\epsilon$ (cf. \cite{majda2002vorticity}) that
\begin{equation}\label{2.63}
\begin{split}
  \mathcal {B}_3(t)&\lesssim   \int_0^t\| f(t,u^\epsilon)\|_{L_2(U;H^2)}^2 \mathrm{d}r \lesssim   \int_0^t (1+\| u^\epsilon \|_{ H^2 }^2) \mathrm{d}r\lesssim_T   \int_0^t  (1+\| \nabla\varpi^\epsilon \|_{L^2 }^2 ) \mathrm{d}r  .
\end{split}
\end{equation}
By \eqref{2.60}-\eqref{2.63}, we get
\begin{equation}\label{2.64}
\begin{split}
 \|\nabla \varpi^\epsilon(t)\|_{L^2}^2   \leq \|\nabla \varpi^\epsilon_0\|_{L^2}^2+ C \int_0^t(1+\|\nabla u^\epsilon\|_{L^2}^2)(1+\| \nabla\varpi^\epsilon \|_{L^2 }^2 )\mathrm{d}r+   \int_0^t \| n^\epsilon(r)\|_{H^2}^2 \mathrm{d}r +\mathcal {B}_4(t).
\end{split}
\end{equation}
By applying the Gronwall Lemma, it follows from \eqref{2.64} that
\begin{equation*}
\begin{split}
 &\|\nabla \varpi^\epsilon(t\wedge  \textbf{t}_ R^\epsilon  \wedge \tau_K^\epsilon)\|_{L^2}^2 +\int_0^{t\wedge  \textbf{t}_ R^\epsilon \wedge \tau_K^\epsilon}\|A  \varpi^\epsilon\|_{L^2}^2\mathrm{d}r  \leq  e^{C\int_0^{t\wedge  \textbf{t}_ R^\epsilon \wedge \tau_K^\epsilon}  (1+\|\nabla u^\epsilon\|_{L^2}^2) \mathrm{d}r }\\
 &\quad\times \left(\| \varpi^\epsilon_0\|_{H^1}^2+ \int_0^{t\wedge  \textbf{t}_ R^\epsilon \wedge \tau_K^\epsilon} \| n^\epsilon(r)\|_{H^2}^2 \mathrm{d}r+\bigg|\sum_{j\geq 1}\int_0^{t\wedge  \textbf{t}_ R^\epsilon \wedge \tau_K^\epsilon}(\nabla \varpi^\epsilon,  \nabla\textbf{P}\nabla\wedge f_j(t,u^\epsilon))_{L^2}\mathrm{d} W^j\bigg|\right).
\end{split}
\end{equation*}
From the definition of $\textbf{t}_ R^\epsilon$ and the BDG inequality,  the last inequality implies that
\begin{equation}\label{2.65}
\begin{split}
 &\mathbb{E}\sup_{r\in[0,t]}\|\nabla \varpi^\epsilon(r\wedge  \textbf{t}_ R^\epsilon\wedge\tau_K^\epsilon )\|_{L^2}^2+\mathbb{E}\int_0^{t\wedge  \textbf{t}_ R^\epsilon\wedge\tau_K^\epsilon}\| \varpi^\epsilon(r)\|_{H^2}^2\mathrm{d}r\\
 &\quad\leq  e^{C_R}(\| \varpi^\epsilon_0\|_{H^1}^2+R) + e^{C_R} \mathbb{E}\sup_{r\in[0,t]}\left|\sum_{j\geq 1}\int_0^{r\wedge  \textbf{t}_ R^\epsilon \wedge\tau_K^\epsilon}(\nabla \varpi^\epsilon,  \nabla\textbf{P}\nabla\wedge f_j(r,u^\epsilon))_{L^2}\mathrm{d} W^j\right| \\
 &\quad\leq  e^{C_R}(\| \varpi^\epsilon_0\|_{H^1}^2+R) + C e^{C_R} \mathbb{E} \left(\int_0^{t\wedge  \textbf{t}_ R^\epsilon \wedge\tau_K^\epsilon}\|\nabla \varpi^\epsilon\|^2\|  f (r,u^\epsilon)\|_{L_2(U;H^2)}^2\mathrm{d}r \right)^{1/2}\\
 &\quad\leq  \frac{1}{2}\mathbb{E}\sup_{r\in[0,t]}\|\nabla \varpi^\epsilon(r\wedge  \textbf{t}_ R^\epsilon \wedge\tau_K^\epsilon)\|_{L^2}^2+ e^{C_R}(\| \varpi^\epsilon_0\|_{H^1}^2+R) + C_R \mathbb{E} \int_0^{t\wedge  \textbf{t}_ R^\epsilon\wedge\tau_K^\epsilon}(1+\|\nabla \varpi^\epsilon \|^2)\mathrm{d}r.
\end{split}
\end{equation}
Absorbing the first term on the R.H.S. of \eqref{2.65} and using the Gronwall Lemma, we get
\begin{equation*}
\begin{split}
\mathbb{E}\sup_{r\in[0,T\wedge  \textbf{t}_ R^\epsilon \wedge\tau_K^\epsilon]}\|\nabla \varpi^\epsilon(r)\|_{L^2}^2+\mathbb{E}\int_0^{T\wedge  \textbf{t}_ R^\epsilon \wedge\tau_K^\epsilon}\| \varpi^\epsilon(r)\|_{H^2}^2\mathrm{d}r \lesssim _{R,K,\phi, n_0,c_0,u_0,T}1,~~~\forall T> 0,
\end{split}
\end{equation*}
which implies that
\begin{equation}\label{2.66}
\begin{split}
 \mathbb{E}\int_0^{T\wedge  \textbf{t}_ R^\epsilon \wedge\tau_K^\epsilon}\| u^\epsilon(r)\|_{W^{1,\infty}}^2\mathrm{d}r \lesssim _{R,K,\phi, n_0,c_0,u_0,T}1, ~~~\forall T> 0.
\end{split}
\end{equation}
\textsf{Estimate for $\|\textbf{y}^\epsilon \|_{L^2(\Omega;C([0,T];\textbf{H}^s))}$ ($s> 2$).}  For any  $D>0$, define
$$
\textbf{t}_{D,R,K}^\natural= \bar{\textbf{t}}_D^\epsilon\wedge\textbf{t}_ R^\epsilon\wedge\tau_K^\epsilon,
$$
where $\tau_K^\epsilon$, $\textbf{t}_ R^\epsilon$ is defined as before, and
$$
 \bar{\textbf{t}}_D^\epsilon :=\left\{t>0; \int_0^{t} \|\nabla u^{\epsilon}\|_{L^\infty}\mathrm{d}  s > D ~~\textrm{or}~~\int_0^{t} \| n^{\epsilon}\|_{H^2}^2\mathrm{d}  s > D~~\textrm{or}~~\int_0^{t} \|  c^{\epsilon}\|_{H^2}^2\mathrm{d}  s > D \right\}.
$$
Then for any fixed $R>0$,  \eqref{2.57} and \eqref{2.66} imply that $T\wedge {\textbf{t}}_{D,R}^\natural\rightarrow T\wedge \textbf{t}_ R^\epsilon$ as $D\rightarrow\infty$.
%
%For any $T>0$, we shall derive a serious of estimates of solutions in $H^s$-norm ($s>2$) over the interval $[0, {\textbf{t}}_{D,R}^\natural]$.
First, by applying the operator  $\Lambda^s n^\epsilon \Lambda^s$ to the first equation in \eqref{Mod-1} and integrating by parts over $\mathbb{R}^2$, we get $\mathbb{P}$-a.s.
\begin{equation}\label{2.67}
\begin{split}
  \frac{ \mathrm{d} }{ \mathrm{d} t} \|\Lambda^s n^\epsilon \|_{L^2}^2 + 2\|\Lambda^s \nabla n^\epsilon\|_{L^2}^2 \lesssim |(\Lambda^s n^\epsilon,\Lambda^s(u^\epsilon\cdot \nabla n^\epsilon))_{L^2}| + |(\Lambda^s\nabla n^\epsilon, \Lambda^s\left(n^\epsilon[(\chi(c^\epsilon)\nabla c^\epsilon)*\rho^\epsilon]\right) )_{L^2}|.
\end{split}
\end{equation}
For the first term on the R.H.S. of \eqref{2.67}, we get by the Moser-type estimate that
\begin{equation*}
\begin{split}
2|(\Lambda^s n^\epsilon,\Lambda^s(u^\epsilon\cdot \nabla n^\epsilon))_{L^2}|&=2|(\Lambda^s n^\epsilon,\Lambda^s \textrm{div} (u^\epsilon n^\epsilon))_{L^2}|=2|(\Lambda^s \nabla n^\epsilon,\Lambda^s(u^\epsilon n^\epsilon))_{L^2}|\\
%& \leq \|\Lambda^s \nabla n^\epsilon\|_{L^2}\|\Lambda^s(u^\epsilon n^\epsilon)\|_{L^2}\\
&\leq C\|\Lambda^s \nabla n^\epsilon\|_{L^2}(\| u^\epsilon \|_{H^s}\| n^\epsilon \|_{L^\infty}+\| u^\epsilon \|_{L^\infty}\| n^\epsilon \|_{H^s})\\
&\leq \frac{1}{2}\|\Lambda^s \nabla n^\epsilon\|_{L^2}^2+ C(\| n^\epsilon \|_{H^2}^2\| u^\epsilon \|_{H^s}^2+\| u^\epsilon \|_{L^\infty}^2\|n^\epsilon \|_{H^s}^2),
\end{split}
\end{equation*}
and by Lemma \ref{lem9} that
\begin{equation*}
\begin{split}
 &2|(\Lambda^s\nabla n^\epsilon, \Lambda^s\left(n^\epsilon[(\chi(c^\epsilon)\nabla c^\epsilon)*\rho^\epsilon]\right) )_{L^2}|\\
 % &\leq \frac{1}{2}\|\Lambda^s \nabla n^\epsilon\|_{L^2}^2+ C\|n^\epsilon[(\chi(c^\epsilon)\nabla c^\epsilon)*\rho^\epsilon]\|_{H^s}^2\\
 &\quad \leq\frac{1}{2}\|\Lambda^s \nabla n^\epsilon\|_{L^2}^2+ C(\|n^\epsilon\|_{L^\infty}^2      \| \int_0^{c^\epsilon} \chi(r)\mathrm{d}r *\nabla \rho^\epsilon \|_{H^s}^2+\|n^\epsilon\|_{H^s}^2\|\int_0^{c^\epsilon} \chi(r)\mathrm{d}r *\nabla \rho^\epsilon\|_{L^\infty}^2)\\
 &\quad\leq\frac{1}{2}\|\Lambda^s \nabla n^\epsilon\|_{L^2}^2+ C_\epsilon  \|n^\epsilon\|_{H^2}^2\| c^\epsilon \|_{H^s}^2  + C_{\epsilon,c_0,\chi} \|n^\epsilon\|_{H^s}^2  .
\end{split}
\end{equation*}
Inserting the last two estimates into \eqref{2.67} leads to
\begin{equation} \label{2.68}
\begin{split}
&    \|\Lambda^s n^\epsilon (t) \|_{L^2}^2 +  \int_0^t\|\Lambda^s \nabla n^\epsilon\|_{L^2}^2 \mathrm{d}r \\
 &\quad\lesssim _{\epsilon,c_0,\chi} \|\Lambda^s n^\epsilon_0 \|_{L^2}^2+\int_0^t (\|n^\epsilon\|_{H^2}^2+\| u^\epsilon \|_{L^\infty}^2+ 1 )(\|n^\epsilon\|_{H^s}^2  +\| c^\epsilon \|_{H^s}^2+ \| u^\epsilon \|_{H^s}^2 ) \mathrm{d}r.
\end{split}
\end{equation}
In a similar manner,
\begin{equation}
\begin{split}
 &\|\Lambda^s c^\epsilon (t) \|_{L^2}^2 +  \int_0^t\|\Lambda^s \nabla c^\epsilon\|_{L^2}^2 \mathrm{d}r
  \lesssim _{\epsilon,c_0,\kappa} \|\Lambda^s c^\epsilon_0 \|_{L^2}^2\\
 &\quad+\int_0^t (\|n^\epsilon\|_{H^2}^2+\|c^\epsilon\|_{H^2}^2+\| u^\epsilon \|_{L^\infty}^2+ 1 )(\|n^\epsilon\|_{H^s}^2  +\| c^\epsilon \|_{H^s}^2+ \| u^\epsilon \|_{H^s}^2 ) \mathrm{d}r.
\end{split}
\end{equation}
For the stochastic equation in \eqref{Mod-1}, we apply It\^{o}'s formula to $ \|\Lambda^s u^\epsilon\|_{L^2}^2$ to obtain
\begin{equation}\label{2.69}
\begin{split}
 & \|\Lambda^s u^\epsilon(t)\|_{L^2}^2+\int_0^t\|A^{\frac{1}{2}}\Lambda^s  u^\epsilon(r)\|_{L^2}^2 \mathrm{d}r =\|\Lambda^s u^\epsilon_0\|_{L^2}^2 -2\int_0^t(\Lambda^s u^\epsilon ,\Lambda^s\textbf{P} (u^\epsilon\cdot \nabla) u^\epsilon)_{L^2} \mathrm{d}r  \\
  & \quad +2 \int_0^t(\Lambda^s u^\epsilon ,\Lambda^s\textbf{P}[ (n^\epsilon\nabla \phi)*\rho^\epsilon])_{L^2} \mathrm{d}r  +2 \sum_{j\geq 1}\int_0^t(\Lambda^s u^\epsilon , \Lambda^s\textbf{P} f_j(t,u^\epsilon) )_{L^2}\mathrm{d} W ^j.
\end{split}
\end{equation}
For the terms on the R.H.S. of \eqref{2.69}, one can estimate as
\begin{equation*}
\begin{split}
  |(\Lambda^s u^\epsilon ,\Lambda^s\textbf{P} (u^\epsilon\cdot \nabla) u^\epsilon)_{L^2}|&= |(\Lambda^s u^\epsilon ,\textbf{P}[\Lambda^s,  u^\epsilon]\cdot \nabla u^\epsilon)_{L^2}|+\underbrace{|(\Lambda^s u^\epsilon ,\textbf{P} u^\epsilon \cdot \nabla \Lambda^s u^\epsilon)_{L^2}|}_{= 0 }\\
 & \lesssim\|\Lambda^s u^\epsilon \|_{L^2}( \|\Lambda^s u^\epsilon \|_{L^2} \| u^\epsilon \|_{L^\infty}+\|\Lambda^{s-1} \nabla u^\epsilon \|_{L^2} \| \nabla u^\epsilon \|_{L^\infty})\\
 & \lesssim (1+\| \nabla u^\epsilon \|_{L^\infty}^2)\| u^\epsilon \|_{H^s}^2,
\end{split}
\end{equation*}
 and\begin{equation*}
\begin{split}
  |(\Lambda^s u^\epsilon ,\Lambda^s\textbf{P}[ (n^\epsilon\nabla \phi)*\rho^\epsilon])_{L^2}| \lesssim \|\Lambda^s u^\epsilon \|_{L^2}\| n^\epsilon\nabla \phi\|_{L^2}\|\Lambda^s\rho^\epsilon \|_{L^1}\lesssim_{\epsilon,\phi} \| u^\epsilon \|_{H^s}\| n^\epsilon \|_{L^2}.
\end{split}
\end{equation*}
It then follows from \eqref{2.69} and the last two estimates that
\begin{equation*}
\begin{split}
 \| u^\epsilon(t)\|_{H^s}^2 +\int_0^t\| u^\epsilon(r)\|_{H^{s+1}}^2 \mathrm{d}r&\lesssim_{\epsilon,\phi} \|\Lambda^s u^\epsilon_0\|_{L^2}^2+1 + \int_0^t(1+\| \nabla u^\epsilon \|_{L^\infty}^2)\| u^\epsilon \|_{H^s}^2\mathrm{d}r  \\
  & +  \int_0^t\| u^\epsilon \|_{H^s}^2\| n^\epsilon \|_{L^2}^2 \mathrm{d}r  +2 \sum_{j\geq 1}\int_0^t(\Lambda^s u^\epsilon , \Lambda^s\textbf{P} f_j(t,u^\epsilon) )_{L^2}\mathrm{d} W ^j,
\end{split}
\end{equation*}
which combined with \eqref{2.68} and \eqref{2.69} lead to
\begin{equation*}
\begin{split}
&    \| (n^\epsilon,c^\epsilon,u^\epsilon) (t\wedge \textbf{t}_{D,R,K}^\natural) \|_{\textbf{H}^s}^2+\int_0^{t\wedge \textbf{t}_{D,R,K}^\natural}\| (n^\epsilon,c^\epsilon,u^\epsilon) (r) \|_{\textbf{H}^{s+1}}^2\mathrm{d}r\\
  &\quad\lesssim _{\epsilon,c_0,\chi,\phi,\kappa} \| (n^\epsilon_0,c^\epsilon_0,u^\epsilon_0) \|_{\textbf{H}^s}^2+1+\int_0^{t\wedge \textbf{t}_{D,R,K}^\natural} \left(\| (n^\epsilon,c^\epsilon,u^\epsilon) \|_{\textbf{H}^2}^2+\| u^\epsilon \|_{W^{1,\infty}}^2+ 1 \right)\\
&\quad\quad\times \| (n^\epsilon,c^\epsilon,u^\epsilon) (r) \|_{\textbf{H}^s}^2 \mathrm{d}r +2 \sum_{j\geq 1}\int_0^t(\Lambda^s u^\epsilon , \textbf{P}\Lambda^s f_j(t,u^\epsilon) )_{L^2}\mathrm{d} W ^j.
\end{split}
\end{equation*}
An application of Gronwall Lemma to the last inequality implies that
\begin{equation*}
\begin{split}
& \| (n^\epsilon,c^\epsilon,u^\epsilon) (t\wedge \textbf{t}_{D,R,K}^\natural) \|_{\textbf{H}^s}^2+\int_0^{t\wedge \textbf{t}_{D,R,K}^\natural}\| (n^\epsilon,c^\epsilon,u^\epsilon) (r) \|_{\textbf{H}^{s+1}}^2\mathrm{d}r\\
  &\quad\lesssim _{\epsilon,c_0,\chi,\phi,\kappa,T}\exp\left\{\int_0^{t\wedge \textbf{t}_{D,R,K}^\natural} (\| (n^\epsilon,c^\epsilon,u^\epsilon) \|_{\textbf{H}^2}^2+\| \nabla u^\epsilon \|_{L^\infty}^2+ 1 ) \mathrm{d}r\right\}\\
  &\quad\quad\times \left (\| (n^\epsilon_0,c^\epsilon_0,u^\epsilon_0) \|_{\textbf{H}^s}^2+1+2 \sum_{j\geq 1}\int_0^{t\wedge \textbf{t}_{D,R,K}^\natural}(\Lambda^s u^\epsilon , \Lambda^s\textbf{P} f_j(t,u^\epsilon) )_{L^2}\mathrm{d} W ^j\right).
\end{split}
\end{equation*}
In view of the definition of  $\textbf{t}_{D,R,K}^\natural$, we have
$$
\exp\left\{\int_0^{t\wedge \textbf{t}_{D,R,K}^\natural} (\| (n^\epsilon,c^\epsilon,u^\epsilon) \|_{\textbf{H}^2}^2+\| u^\epsilon \|_{W^{1,\infty}}^2+ 1 ) \mathrm{d}r\right\}\leq e^{C_{R}+T}.
$$
Therefore,  by taking the supremum over $[0,T]$ in last inequality,  we deduce that
\begin{equation*}
\begin{split}
& \mathbb{E}\sup_{t\in [0,T]}\| (n^\epsilon,c^\epsilon,u^\epsilon) (t\wedge \textbf{t}_{D,R,K}^\natural) \|_{\textbf{H}^s}^2\\
  %&\quad\lesssim _{R,\textbf{y}_0^\epsilon,\epsilon,\chi,\phi,\kappa,T}  1+\mathbb{E}\sup_{t\in [0,T]}\bigg|\sum_{j\geq 1}\int_0^{t\wedge \textbf{t}_{D,R,K}^\natural}(\Lambda^s u^\epsilon , \Lambda^s\textbf{P} f_j(t,u^\epsilon) )_{L^2}\mathrm{d} W ^j\bigg| \\
  &\quad\lesssim _{R,\textbf{y}_0^\epsilon,\epsilon,\chi,\phi,\kappa,T}  1+\mathbb{E} \left(\sup_{t\in[0,T\wedge \textbf{t}_{D,R,K}^\natural]}\|\Lambda^s u^\epsilon\|_{L^2}^2\sum_{j\geq 1}\int_0^{T\wedge \textbf{t}_{D,R,K}^\natural}\| \Lambda^s f_j(r,u^\epsilon) \|_{L^2}^2\mathrm{d}r\right)^{1/2}\\
  &\quad\leq\frac{1}{2} \mathbb{E}\sup_{t\in [0,T]}\| (n^\epsilon,c^\epsilon,u^\epsilon) (r\wedge \textbf{t}_{D,R,K}^\natural) \|_{\textbf{H}^s}^2\\
  &\quad\quad+C_{R,\textbf{y}_0^\epsilon,\epsilon,\chi,\phi,\kappa,T} \left(\| (n^\epsilon_0,c^\epsilon_0,u^\epsilon_0) \|_{\textbf{H}^s}^2+1+\mathbb{E}  \int_0^{T\wedge \textbf{t}_{D,R,K}^\natural}(1+\| u^\epsilon\|_{H^s}^2 )\mathrm{d}r \right).
\end{split}
\end{equation*}
By using the Gronwall Lemma, we arrive at
\begin{equation}\label{2.72}
\begin{split}
 \mathbb{E}\sup_{t\in [0,T\wedge \textbf{t}_{D,R,K}^\natural]}\| (n^\epsilon,c^\epsilon,u^\epsilon) (t ) \|_{\textbf{H}^s}^2+\mathbb{E}\int_0^{t\wedge \textbf{t}_{D,R,K}^\natural}\| (n^\epsilon,c^\epsilon,u^\epsilon) (r) \|_{\textbf{H}^{s+1}}^2\mathrm{d}r \lesssim_{R,K,\textbf{y}_0^\epsilon,\epsilon, \chi,\phi,\kappa,T} 1.
\end{split}
\end{equation}
Define
$$
n^{\epsilon}_{R,D}(t,x)=n^{\epsilon}(t\wedge \textbf{t}_{D,R,K}^\natural), ~~ c^{\epsilon}_{R,D}(t,x)=c^{\epsilon}(t\wedge\textbf{t}_{D,R,K}^\natural),~~ u^{\epsilon}_{R,D}(t,x)=u^{\epsilon}(t\wedge\textbf{t}_{D,R,K}^\natural),~~\forall t> 0.
$$
Apparently, the triple $(n^{\epsilon}_{R,D},c^{\epsilon}_{R,D},u^{\epsilon}_{R,D})$ satisfies \eqref{Mod-1} for any given $R,D>0$, and there holds
$$
\mathbb{E}\sup_{t\in [0,T]}\| (n^{\epsilon}_{R,D},c^{\epsilon}_{R,D},u^{\epsilon}_{R,D}) (t ) \|_{\textbf{H}^s}^2 \lesssim_{R,K,\textbf{y}_0^\epsilon,\epsilon, \chi,\phi,\kappa,T} 1.
$$
It then follows that
\begin{equation} \label{2.73}
\begin{split}
\widetilde{\textbf{t}^\epsilon}\geq T\wedge \textbf{t}_{D,R,K}^\natural =T \wedge \bar{\textbf{t}}_D^\epsilon\wedge\textbf{t}_ R^\epsilon \wedge\tau_K^\epsilon,\quad \textrm{for all}~~ R,D,K>0,~~\mathbb{P}\textrm{-a.s.},
\end{split}
\end{equation}
for any $T>0$, where $\widetilde{\textbf{t}^\epsilon}$ is the maximal existence time constructed in \textsf{Step 1}.   Sending $D\rightarrow +\infty$ and then $R\rightarrow +\infty$ in \eqref{2.73}, we obtain $\mathbb{P}\{\widetilde{\textbf{t}^\epsilon}=\infty\}=1$, which implies that the solution $(n^\epsilon,c^\epsilon,u^\epsilon)$ exists globally. The regularity \eqref{eee} is a direct consequence of \eqref{2.72}.  The proof of Lemma \ref{lem8} is now completed.
\end{proof}

\section{Proof of Theorem \ref{th1}}\label{sec3}

In this section, we shall first derive a uniform a priori estimate for the smooth approximate solutions $\{\textbf{y}^{\epsilon}\}_{\epsilon >0} $, and then we prove that the KS-SNS system \eqref{KS-SNS} admits a unique weak solution by taking the limit $\epsilon\rightarrow0$ via stochastic compactness method. Let us start with the following uniform a priori estimate.

\subsection{A new stochastic entropy estimate}

\begin{lemma} \label{lem10}
Let $\epsilon \in (0,1)$, $T>0$,  and assume that  $(n^\epsilon,c^\epsilon,u^\epsilon)$ is the unique global pathwise solution in $L^2(\Omega;\mathcal {C}([0,T];\textbf{H}^{s}(\mathbb{R}^2)))\bigcap L^2(\Omega;L^2([0,T];\textbf{H}^{s+1}(\mathbb{R}^2)))$ constructed in Lemma \ref{lem8}. Define
$
h(t):=\int_1^t\sqrt{\chi/\kappa}(s)\mathrm{d}  s$ and $g(t):=  \sqrt{\kappa/\chi}'(t).
$
Then we have for all $p\geq1$
\begin{equation}\label{unf}
\begin{split}
&\mathbb{E} \sup_{t\in [0,T]} \left(\|n^\epsilon(t)\|_{L^1 \cap~  {L \emph{\textrm{log}} L}}^2+ \|\nabla h (c^{\epsilon}(t)) \|_{L^2} ^{2 }+ \|u^\epsilon(t) \|_{L^2}^{2 }\right) ^p\\
&\quad+ \mathbb{E}\left(\int_0^T\left(\|\nabla \sqrt{n^{\epsilon}+1}\|^2_{L^2} +   \|\Delta h (c^{\epsilon})\|_{L^2}^2  +  \| \sqrt{g (c^\epsilon)}\nabla h (c^{\epsilon})\|_{L^4}^4  +  \| \nabla u^\epsilon\|_{L^2}^2\right)  \mathrm{d} t\right)^p \\
&\quad\lesssim _{n_0,c_0,u_0,\phi,\kappa,\chi ,p,T}  1.
\end{split}
\end{equation}
Here, $L \emph{\textrm{log}} L(\mathbb{R}^2)$ denotes the  Zygmund space which is equipped with the norm
$$
\|f\|_{L \log L}=\inf  \{k>0; \int_{\mathbb{R}^2} Z(f/ k) \mathrm{d}x \leq 1 \}
$$
with respect to $Z(t) =t\log^+ t$ if $t\geq1$ and $Z(t) =0$ otherwise.
\end{lemma}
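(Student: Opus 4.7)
\emph{Plan.} The strategy is to establish a closed It\^o-differential inequality for the stochastic entropy-energy functional
\[
\mathcal{E}(t) := \int_{\mathbb{R}^2} (n^\epsilon+1)\log(n^\epsilon+1)\,dx + \tfrac12\|\nabla h(c^\epsilon(t))\|_{L^2}^2 + \tfrac12\|u^\epsilon(t)\|_{L^2}^2
\]
and close it via the Burkholder--Davis--Gundy (BDG) inequality together with stochastic Gronwall. The mass conservation $\|n^\epsilon(t)\|_{L^1}=\|n_0^\epsilon\|_{L^1}$ and $0\le c^\epsilon(t)\le\|c_0\|_{L^\infty}$ from Lemma \ref{lem9}, combined with {(\textsf{A$_2$})}, render $\chi(c^\epsilon),\kappa(c^\epsilon),g(c^\epsilon)$ pointwise bounded and provide the equivalence between $\int(n^\epsilon+1)\log(n^\epsilon+1)\,dx$ and the $L\log L$-norm appearing in \eqref{unf}.

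\emph{Three building-block identities.} Because the $n^\epsilon$- and $c^\epsilon$-equations of \eqref{Mod-1} carry no noise, the deterministic chain rule applied to $\mathcal H(n):=(n+1)\log(n+1)-n$, after integration by parts and using $\mathrm{div}\,u^\epsilon=0$, yields
\[
\tfrac{d}{dt}\int \mathcal H(n^\epsilon)\,dx + 4\|\nabla\sqrt{n^\epsilon+1}\|_{L^2}^2 = \int \tfrac{n^\epsilon}{n^\epsilon+1}\nabla n^\epsilon\cdot[(\chi(c^\epsilon)\nabla c^\epsilon)*\rho^\epsilon]\,dx.
\]
For the $c^\epsilon$-component, using $h'(c)=\sqrt{\chi/\kappa}(c)$, the pointwise identity $-h''(c)|\nabla c|^2=g(c)|\nabla h(c)|^2\ge 0$ (which follows directly from the definitions of $h,g$ and $(\kappa/\chi)'>0$), and $h'(c)\kappa(c)=\sqrt{\chi\kappa}(c)$, testing the $c^\epsilon$-equation with $-\Delta h(c^\epsilon)$ produces
\[
\tfrac{d}{dt}\tfrac12\|\nabla h(c^\epsilon)\|_{L^2}^2 + \|\Delta h(c^\epsilon)\|_{L^2}^2 = \int \Delta h(c^\epsilon)\bigl[u^\epsilon\cdot\nabla h(c^\epsilon) - g(c^\epsilon)|\nabla h(c^\epsilon)|^2 + \sqrt{\chi\kappa}(c^\epsilon)(n^\epsilon*\rho^\epsilon)\bigr]dx.
\]
The $L^4$-dissipation $\|\sqrt{g(c^\epsilon)}\nabla h(c^\epsilon)\|_{L^4}^4$ is then extracted from the second term using the concavity condition $(\kappa/\chi)''\le 0$ (which controls the sign of $g'$) together with a second integration by parts. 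Finally, It\^o's formula for $\|u^\epsilon(t)\|_{L^2}^2$ produces
\[
d\|u^\epsilon\|_{L^2}^2 + 2\|\nabla u^\epsilon\|_{L^2}^2\,dt = 2((n^\epsilon\nabla\phi)*\rho^\epsilon,u^\epsilon)_{L^2}\,dt + \|f(t,u^\epsilon)\|_{L_2(U;L^2)}^2\,dt + dM_t,
\]
with the It\^o correction controlled by {(\textsf{A$_3$})} and $M_t$ a local martingale.

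\emph{Main obstacle: cancelling the $n$--$c$ coupling.} Rewriting $\chi(c)\nabla c=\sqrt{\chi\kappa}(c)\nabla h(c)$ in the entropy flux, integrating by parts, and splitting $\frac{n^\epsilon}{n^\epsilon+1}=1-\frac{1}{n^\epsilon+1}$, the principal piece $\int \nabla n^\epsilon\cdot\sqrt{\chi\kappa}(c^\epsilon)\nabla h(c^\epsilon)\,dx$ cancels the coupling $\int\sqrt{\chi\kappa}(c^\epsilon)n^\epsilon\Delta h(c^\epsilon)\,dx$ from the $h(c^\epsilon)$-identity (after one further integration by parts), leaving only a remainder $-\int n^\epsilon(\sqrt{\chi\kappa})'(c^\epsilon)\nabla c^\epsilon\cdot\nabla h(c^\epsilon)\,dx$ whose favourable sign is guaranteed by $(\chi\kappa)'\ge 0$. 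The residual $\frac{1}{n^\epsilon+1}$-piece is bounded by $\|\sqrt{\chi\kappa}\nabla h(c^\epsilon)\|_{L^2}\|\nabla\sqrt{n^\epsilon+1}\|_{L^2}$ and absorbed into the two gradient dissipations. The mollification $\rho^\epsilon$ introduces only commutator errors with constants uniform in $\epsilon$ since $\|\rho^\epsilon\|_{L^1}=1$, while the fluid forcing $((n^\epsilon\nabla\phi)*\rho^\epsilon,u^\epsilon)_{L^2}$ is bounded via H\"older and Young using $\phi\in W^{2,\infty}$ and the conserved $L^1$-norm of $n^\epsilon$.

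\emph{Stochastic closure.} Summing the three building-block identities produces, for some $\alpha>0$ and $C$ independent of $\epsilon$,
\[
d\mathcal{E}(t) + \alpha\bigl(\|\nabla\sqrt{n^\epsilon+1}\|_{L^2}^2 + \|\Delta h(c^\epsilon)\|_{L^2}^2 + \|\sqrt{g(c^\epsilon)}\nabla h(c^\epsilon)\|_{L^4}^4 + \|\nabla u^\epsilon\|_{L^2}^2\bigr) dt \le C(1+\mathcal{E}(t))\,dt + dM_t.
\]
Raising to the $p$-th power, applying BDG to $M_t$ (whose quadratic variation is $\lesssim\int_0^t\|u^\epsilon\|_{L^2}^2(1+\|u^\epsilon\|_{L^2}^2)\,ds$ by {(\textsf{A$_3$})}), absorbing $\tfrac12\mathbb{E}\sup_{[0,T]}\mathcal{E}(t)^p$ on the right-hand side, and invoking the classical stochastic Gronwall lemma deliver the estimate \eqref{unf}, uniformly in $\epsilon\in(0,1)$.
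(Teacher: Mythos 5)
Your proposal is correct and follows essentially the same route as the paper: the same entropy functional $(n^\epsilon+1)\log(n^\epsilon+1)$, the same change of variable $h(c^\epsilon)$ tested against $-\Delta h(c^\epsilon)$ with the $L^4$-dissipation extracted from $-\int g|\nabla h|^2\Delta h$ via the sign conditions on $(\kappa/\chi)'$, $(\kappa/\chi)''$, the same cancellation of the $n$--$c$ coupling using $(\kappa\chi)'\ge 0$, and the same BDG/Gronwall closure. The only under-specified step is the fluid forcing $\int (n^\epsilon\nabla\phi)*\rho^\epsilon\cdot u^\epsilon\,dx$, which cannot be closed by H\"older alone since $\|n^\epsilon\|_{L^2}$ is not yet controlled; as in the paper one must interpolate $\|n^\epsilon\|_{L^2}\lesssim \|n^\epsilon\|_{L^1}^{1/2}\|\nabla\sqrt{n^\epsilon+1}\|_{L^2}+\|n^\epsilon\|_{L^1}$ (Gagliardo--Nirenberg on the set $\{n^\epsilon\ge 1\}$) and absorb the resulting gradient term into the entropy dissipation.
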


\begin{proof}[\emph{\textbf{Proof}}]
Applying the chain rule to $(n^\epsilon+1) \ln (n^\epsilon+1 )$ associated to the $n^\epsilon$-equation in \eqref{Mod-1} and integrating by parts with the help of divergence-free condition $ \textrm{div}  u^\epsilon =0$, we find
\begin{equation}\label{3.1}
\begin{split}
&\mathrm{d}  \int_{\mathbb{R}^2} (n^\epsilon+1) \ln (n^\epsilon+1 ) \mathrm{d}x+ 4 \|\nabla \sqrt{n^{\epsilon}+1}\|^2_{L^2} \mathrm{d} t
%&=  \int _{\mathbb{R}^2} \nabla (n^{\epsilon}+1)\cdot[\chi(c^{\epsilon})\nabla c^{\epsilon}]*\rho^{\epsilon}  \mathrm{d}x \mathrm{d} t+\int _{\mathbb{R}^2}(1+\ln (n^\epsilon+1 )) \textrm{div}  [(\chi(c^\epsilon)\nabla c^\epsilon)*\rho^\epsilon]\mathrm{d}x \mathrm{d} t
=\int_{\mathbb{R}^2} \nabla n^{\epsilon} \cdot[ \sqrt{(\kappa\chi)(c^{\epsilon}) } \nabla h(c^{\epsilon})]*\rho^{\epsilon}  \mathrm{d}x \mathrm{d} t\\
&\quad+\int _{\mathbb{R}^2} \ln (n^\epsilon+1 )  \textrm{div}  [(\sqrt{(\kappa\chi)(c^{\epsilon}) }\nabla h(c^{\epsilon}))*\rho^\epsilon]\mathrm{d}x \mathrm{d} t.
\end{split}
\end{equation}
In terms of the $c^\epsilon$-equation in \eqref{Mod-1}, we infer that
\begin{equation}\label{3.2}
\begin{split}
\mathrm{d}  \emph{h} (c^{\epsilon}) = \left(-u^{\epsilon}\cdot \nabla \emph{h}(c^{\epsilon})  + \Delta \emph{h}(c^{\epsilon})-\emph{h} ''(c^{\epsilon}) |\nabla c^{\epsilon}|^2 - \sqrt{(\kappa\chi)(c^{\epsilon})}(n^{\epsilon}*\rho^{\epsilon})\right) \mathrm{d} t,
\end{split}
\end{equation}
where we used the identity $$\emph{h}'(c^{\epsilon})\Delta c^{\epsilon}= \Delta \emph{h}(c^{\epsilon}) - \emph{h}''(c^{\epsilon}) |\nabla c^{\epsilon}|^2.$$ To deal with the terms on the R.H.S. of \eqref{3.1}, we multiply both sides of \eqref{3.2} by $-\Delta \emph{h} (c^{\epsilon}) $ and integrating by parts over $\mathbb{R}^2$, we get from the fact of $h''(t)/(h'(t))^2=-g(t)$ that
\begin{equation}\label{3.3}
\begin{split}
&\frac{1}{2}\mathrm{d}  \|\nabla\emph{h} (c^{\epsilon}) \|_{L^2} ^2+ \|\Delta \emph{h} (c^{\epsilon})\|_{L^2}^2 \mathrm{d} t =\underbrace{-\int_{\mathbb{R}^2} g(c^\epsilon)  |\nabla \emph{h} (c^{\epsilon})|^2  \Delta \emph{h} (c^{\epsilon})\mathrm{d}x}_{:=(\star)} \mathrm{d} t\\
 &\quad+\int_{\mathbb{R}^2} \left(u^{\epsilon}\cdot \nabla \emph{h}(c^{\epsilon})\right)\Delta \emph{h} (c^{\epsilon}) \mathrm{d}x \mathrm{d} t + \int_{\mathbb{R}^2}  \sqrt{(\kappa\chi)(c^{\epsilon})}(n^{\epsilon}*\rho^{\epsilon})\Delta \emph{h} (c^{\epsilon})\mathrm{d}x \mathrm{d} t.
\end{split}
\end{equation}
For $(\star)$, we see that
\begin{equation}\label{3.4}
\begin{split}
 (\star)&=  \sum_{i,j } \int_{\mathbb{R}^2} \left(g'(c^\epsilon) \partial_j c^\epsilon (\partial_{i} \emph{h} (c^{\epsilon}))^2+2g(c^\epsilon)\partial_{i} \emph{h} (c^{\epsilon})\partial_{i} \partial_j\emph{h} (c^{\epsilon}) \right) \partial_j \emph{h} (c^{\epsilon})\mathrm{d}x\\
&=  \sum_{i,j } \int_{\mathbb{R}^2}  \frac{g'(c^\epsilon)}{ h'(c^\epsilon)} (\partial_{i} \emph{h} (c^{\epsilon}))^2(\partial_j  \emph{h} (c^{\epsilon}))^2\mathrm{d}x     + 2\sum_{i}   \int_{\mathbb{R}^2} g(c^\epsilon)(\partial_{i} \emph{h} (c^{\epsilon}))^2\partial_{i}^2 \emph{h} (c^{\epsilon})\mathrm{d}x \\
& \quad+2\sum_{i\neq j }  \int_{\mathbb{R}^2}g(c^\epsilon) \partial_{i} \emph{h} (c^{\epsilon})\partial_j  \emph{h} (c^{\epsilon})\partial_{i }\partial_{j } \emph{h} (c^{\epsilon})\mathrm{d}x
.
\end{split}
\end{equation}
For the first term on the R.H.S. of \eqref{3.4}, we observe that
\begin{equation*}
\begin{split}
 2\sum_{i}   \int_{\mathbb{R}^2} g(c^\epsilon)(\partial_{i} \emph{h} (c^{\epsilon}))^2\partial_{i}^2 \emph{h} (c^{\epsilon})\mathrm{d}x =  -2(\star)-2\sum_{i\neq j}  \int_{\mathbb{R}^2}g(c^\epsilon)(\partial_{i} \emph{h} (c^{\epsilon}))^2\partial_{j}^2 \emph{h} (c^{\epsilon})\mathrm{d}x,
\end{split}
\end{equation*}
which implies that
\begin{equation}
\begin{split}
(\star)&=  \int_{\mathbb{R}^2}  \frac{1}{3}\frac{g'(c^\epsilon)}{g^2(c^\epsilon) h'(c^\epsilon)} g^2(c^\epsilon)|\nabla\emph{h} (c^{\epsilon})|^4\mathrm{d}x   -\frac{2}{3}\sum_{i\neq j}  \int_{\mathbb{R}^2}g(c^\epsilon)(\partial_{i} \emph{h} (c^{\epsilon}))^2\partial_{j}^2 \emph{h} (c^{\epsilon})\mathrm{d}x \\
&\quad +\frac{2}{3}\sum_{i\neq j }  \int_{\mathbb{R}^2}g(c^\epsilon) \partial_{i} \emph{h} (c^{\epsilon})\partial_j  \emph{h} (c^{\epsilon})\partial_{i }\partial_{j } \emph{h} (c^{\epsilon})\mathrm{d}x\\
&\leq  \int_{\mathbb{R}^2}  \frac{1}{3}\frac{g'(c^\epsilon)}{g^2(c^\epsilon) h'(c^\epsilon)} g^2(c^\epsilon)|\nabla\emph{h} (c^{\epsilon})|^4\mathrm{d}x +\frac{1}{6}\sum_{i }  \int_{\mathbb{R}^2}g^2(c^\epsilon)|\partial_{i} \emph{h} (c^{\epsilon})|^4 \mathrm{d}x\\
  &  \quad+\frac{2}{3}\sum_{i }  \int_{\mathbb{R}^2}|\partial_{i}^2 \emph{h} (c^{\epsilon})|^2\mathrm{d}x +\frac{1}{6}\sum_{i\neq j }  \int_{\mathbb{R}^2}g^2(c^\epsilon) |\partial_{i} \emph{h} (c^{\epsilon})|^2|\partial_j  \emph{h} (c^{\epsilon})|^2 \mathrm{d}x  +\frac{2}{3}\sum_{i\neq j }  \int_{\mathbb{R}^2}|\partial_{i }\partial_{j } \emph{h} (c^{\epsilon})|^2\mathrm{d}x\\
&= \int_{\mathbb{R}^2} \bigg(\frac{1}{6}+ \frac{1}{3}\frac{g'(c^\epsilon)}{g^2(c^\epsilon) h'(c^\epsilon)}\bigg) g^2(c^\epsilon)|\nabla\emph{h} (c^{\epsilon})|^4\mathrm{d}x  +\frac{2}{3} \int_{\mathbb{R}^2}|\Delta\emph{h} (c^{\epsilon})|^2\mathrm{d}x,
\end{split}
\end{equation}
where the last inequality used the facts of
$$
\sum_{i, j } (\partial_{i} \emph{h} (c^{\epsilon}))^2(\partial_j  \emph{h} (c^{\epsilon}))^2= |\nabla \emph{h} (c^{\epsilon})|^4 ~~\textrm{and}~~ \|\Delta f\|_{L^2}^2=\|\nabla^2 f\|_{L^2}^2.
$$
In view of the definition of $g(\cdot)$ and $h(\cdot)$ as well as the assumption {(\textsf{A$_2$})}, we have
$$
h'=\sqrt{\chi/\kappa},~~ g'= \frac{1}{4}\sqrt{\kappa/\chi}\left[\left(\kappa/\chi\right)'\right]^2+\frac{1}{2} \sqrt{\chi/\kappa}\left(\kappa/\chi\right)'',
$$
and
$$
\frac{g'}{g^2}=  \frac{\left(\kappa/\chi\right)^{3/2} (\chi/\kappa)'}{(\kappa/\chi)'}+\frac{2\sqrt{\kappa/\chi}(\kappa/\chi)''}{[(\kappa/\chi)']^2}\leq  \frac{\left(\kappa/\chi\right)^{3/2}(\chi/\kappa)'}{(\kappa/\chi)'}=-\sqrt{\chi/\kappa}=-h',
$$
which imply that $\frac{g'(c^\epsilon)}{g^2(c^\epsilon) h'(c^\epsilon)}\leq - 1$ and hence $\frac{1}{6}+ \frac{1}{3}\frac{g'(c^\epsilon)}{g^2(c^\epsilon) h'(c^\epsilon)}< -\frac{1}{6}$. It then follows from \eqref{3.3} that
\begin{equation} \label{3.6}
\begin{split}
&\frac{1}{2}\mathrm{d}  \|\nabla\emph{h} (c^{\epsilon}) \|_{L^2} ^2+ \frac{1}{3} \|\Delta \emph{h} (c^{\epsilon})\|_{L^2}^2 \mathrm{d} t +\frac{1}{6}\int_{\mathbb{R}^2}  g^2(c^\epsilon)|\nabla\emph{h} (c^{\epsilon})|^4\mathrm{d}x  \mathrm{d} t \\
 &\quad= \int_{\mathbb{R}^2}  \sqrt{(\kappa\chi)(c^{\epsilon})}(n^{\epsilon}*\rho^{\epsilon})\Delta \emph{h} (c^{\epsilon})\mathrm{d}x \mathrm{d} t+\int_{\mathbb{R}^2} \left(u^{\epsilon}\cdot \nabla \emph{h}(c^{\epsilon})\right)\Delta \emph{h} (c^{\epsilon}) \mathrm{d}x \mathrm{d} t\\
 &\quad:=(\star\star) \mathrm{d} t+(\star\star\star) \mathrm{d} t.
\end{split}
\end{equation}
For $(\star\star)$, we have
\begin{equation*}
\begin{split}
(\star\star)&=-\int_{\mathbb{R}^2} (n^{\epsilon}*\rho^{\epsilon}) \nabla \sqrt{(\kappa\chi)(c^{\epsilon})}\cdot\nabla \emph{h} (c^{\epsilon})\mathrm{d}x-\int_{\mathbb{R}^2}  \sqrt{(\kappa\chi)(c^{\epsilon})}(\nabla n^{\epsilon}*\rho^{\epsilon})\cdot\nabla \emph{h} (c^{\epsilon})\mathrm{d}x\\
&=\underbrace{-\int_{\mathbb{R}^2} \frac{\sqrt{\kappa\chi}'(c^{\epsilon})}{\emph{h}' (c^{\epsilon})}(n^{\epsilon}*\rho^{\epsilon}) |\nabla \emph{h} (c^{\epsilon})|^2\mathrm{d}x}_{\leq0}-\int_{\mathbb{R}^2}  \nabla n^{\epsilon} \cdot[\sqrt{(\kappa\chi)(c^{\epsilon})}\nabla \emph{h} (c^{\epsilon})]*\rho^{\epsilon}\mathrm{d}x\\
&\leq -\int_{\mathbb{R}^2}  \nabla n^{\epsilon} \cdot[\sqrt{(\kappa\chi)(c^{\epsilon})}\nabla \emph{h} (c^{\epsilon})]*\rho^{\epsilon}\mathrm{d}x.
\end{split}
\end{equation*}
By using the Cauchy inequality, we have
\begin{equation*}
\begin{split}
(\star\star\star)
 \leq  \int_{\mathbb{R}^2}  |\partial_i \emph{h} (c^{\epsilon})\partial_ju^{\epsilon,i}  \partial_j \emph{h} (c^{\epsilon})| \mathrm{d}x \leq \frac{1}{12} \int_{\mathbb{R}^2} g^2(c^\epsilon) |\nabla  \emph{h} (c^{\epsilon}) |^4 \mathrm{d}x +C_{c_0} \int_{\mathbb{R}^2} |\nabla u^{\epsilon}|^2  \mathrm{d}x,
\end{split}
\end{equation*}
where we used the fact of $0<1/g^2(c^\epsilon) \leq C$, due to the continuity of $g(\cdot)$ and the boundedness of $\|c^\epsilon\|_{L^\infty}$. Moreover, by assumption {(\textsf{A$_2$})}, there exists a constant $c_1>0$ such that $\frac{\sqrt{\kappa\chi}'(c^{\epsilon})}{\emph{h}' (c^{\epsilon})}\geq c_1$.

Putting the last two inequalities into \eqref{3.6}, we get
\begin{equation}\label{3.7}
\begin{split}
&\mathrm{d}  \left(\int_{\mathbb{R}^2} (n^\epsilon+1) \ln (n^\epsilon+1 ) \mathrm{d}x+\frac{1}{2} \|\nabla\emph{h} (c^{\epsilon}) \|_{L^2} ^2\right)+ 4 \|\nabla \sqrt{n^{\epsilon}+1}\|^2_{L^2}  \mathrm{d} t\\
& \quad+ \frac{1}{3} \|\Delta \emph{h} (c^{\epsilon})\|_{L^2}^2 \mathrm{d} t +\frac{1}{12}\int_{\mathbb{R}^2}  g^2(c^\epsilon)|\nabla\emph{h} (c^{\epsilon})|^4\mathrm{d}x  \mathrm{d} t \\
 &\quad\leq  C_{c_0} \int_{\mathbb{R}^2} |\nabla u^{\epsilon}|^2  \mathrm{d}x +\int _{\mathbb{R}^2} \ln (n^\epsilon+1 )  \textrm{div}  [(\sqrt{(\kappa\chi)(c^{\epsilon}) }\nabla h(c^{\epsilon}))*\rho^\epsilon]\mathrm{d}x \mathrm{d} t.
\end{split}
\end{equation}
For the last term on the R.H.S. of \eqref{3.7}, we get by Young inequality that
\begin{equation*}
\begin{split}
& \int _{\mathbb{R}^2} \ln (n^\epsilon+1 )  \textrm{div}  [(\sqrt{(\kappa\chi)(c^{\epsilon}) }\nabla h(c^{\epsilon}))*\rho^\epsilon]\mathrm{d}x
%&\leq \eta\int _{\mathbb{R}^2} | (\sqrt{(\kappa\chi)(c^{\epsilon}) }\Delta h(c^{\epsilon}))*\rho^\epsilon|^2\mathrm{d}x+C_\eta\int _{\mathbb{R}^2}  (n^\epsilon+1 )\ln (n^\epsilon+1 )  \mathrm{d}x \\
%&+\eta\int _{\mathbb{R}^2} |(\frac{\sqrt{\kappa\chi}'(c^{\epsilon})}{h'(c^{\epsilon})}|\nabla h(c^{\epsilon})|^2)*\rho^\epsilon|^2\mathrm{d}x \\
\leq \eta \|\sqrt{(\kappa\chi)(c^{\epsilon}) }\|_{L^\infty} \| \Delta h(c^{\epsilon}) \|^2_{L^2}\\
&\quad +C_\eta\int _{\mathbb{R}^2}  (n^\epsilon+1 )\ln (n^\epsilon+1 )  \mathrm{d}x +\eta \|\frac{\sqrt{\kappa\chi}'(c^{\epsilon})}{g (c^{\epsilon})h'(c^{\epsilon})}\|_{L^\infty}^2\int _{\mathbb{R}^2} g^2(c^{\epsilon})|\nabla h(c^{\epsilon})|^4\mathrm{d}x.
\end{split}
\end{equation*}
In view of the assumption on $\kappa,\chi$ and Lemma \ref{lem9}, one can choose $\eta>0$ small enough such that
$$
\eta \|\sqrt{(\kappa\chi)(c^{\epsilon}) }\|_{L^\infty}< \frac{1}{4}~~\textrm{and}~~\eta \left\|\frac{\sqrt{\kappa\chi}'(c^{\epsilon})}{g (c^{\epsilon})h'(c^{\epsilon})}\right\|_{L^\infty}^2< \frac{1}{24}.
$$
Then estimate \eqref{3.7} implies that
\begin{equation} \label{rr}
\begin{split}
&\mathrm{d}  \left(\int_{\mathbb{R}^2} (n^\epsilon+1) \ln (n^\epsilon+1 ) \mathrm{d}x+\frac{1}{2} \|\nabla\emph{h} (c^{\epsilon}) \|_{L^2} ^2\right)+ 4 \|\nabla \sqrt{n^{\epsilon}+1}\|^2_{L^2}  \mathrm{d} t\\
&\quad + \frac{1}{12} \|\Delta \emph{h} (c^{\epsilon})\|_{L^2}^2 \mathrm{d} t +\frac{1}{24}\int_{\mathbb{R}^2}  g^2(c^\epsilon)|\nabla\emph{h} (c^{\epsilon})|^4\mathrm{d}x  \mathrm{d} t  \\
 &\quad\leq  C_{c_0} \int_{\mathbb{R}^2} |\nabla u^{\epsilon}|^2  \mathrm{d}x  +C_{\kappa,\chi,c_0}\int _{\mathbb{R}^2}  (n^\epsilon+1 )\ln (n^\epsilon+1 )  \mathrm{d}x .
\end{split}
\end{equation}
In order to estimate the term $\|\nabla u^{\epsilon}\|_{L^2}^2$ on the R.H.S. of \eqref{rr}, we apply It\^{o}'s formula to $ \|u^\epsilon \|_{L^2}^2$, it then follows from the facts of $(u^\epsilon, \textbf{P} (u^\epsilon\cdot \nabla) u^\epsilon)_{L^2}=0$ and $\| A^{\frac{1}{2}} u^\epsilon\|_{L^2}=\| \nabla u^\epsilon\|_{L^2}$ that
\begin{equation}\label{3.9}
\begin{split}
&\|u^\epsilon(t) \|_{L^2}^2 + 2\int_0^t \| \nabla u^\epsilon\|_{L^2}^2\mathrm{d}r  \leq \|u^\epsilon _0 \|_{L^2}^2+C\int_0^t(1+\|u^\epsilon(r) \|_{L^2}^2) \mathrm{d}r\\
 &\quad- 2\int_0^t(u^\epsilon*\rho^\epsilon , \textbf{P} n^\epsilon\nabla \phi )_{L^2 }\mathrm{d}r +2\sum_{j\geq 1}\int_0^t(u^\epsilon, \textbf{P} f_j(t,u^\epsilon))_{L^2}\mathrm{d} W^j.
\end{split}
\end{equation}
Exploring the following decomposition
$$
\mathbb{R}^2= E_1\bigcup E_2 ,\quad
E_1=\{x \in \mathbb{R}^2;~ 0< n^\epsilon(x) < 1\},~  E_2=\{x \in \mathbb{R}^2;~   n^\epsilon (x)\geq 1\}.
$$
We get from the Young inequality and GN inequality that
\begin{equation*}
\begin{split}
|2(u^\epsilon*\rho^\epsilon ,n^\epsilon\nabla \phi )_{L^2_ \textrm{div} }|&\leq \left|\int_{E_1}(u^\epsilon*\rho^\epsilon) n^\epsilon\nabla \phi \mathrm{d}x \right|+\left|\int_{E_2}(u^\epsilon*\rho^\epsilon)  n^\epsilon\nabla \phi \mathrm{d}x\right| \\
%&\leq C_\phi ( \|u^\epsilon*\rho^\epsilon\|_{L^2} \|n^\epsilon 1_{E_1}\|_{L^2} + \|u^\epsilon*\rho^\epsilon \|_{L^2}  \|(n^\epsilon+1) 1_{E_2} \|_{L^2}   ) \\
%&\leq C_\phi( \|u^\epsilon \|_{L^2}^2+ \|n^\epsilon \|_{L^1}  + \|u^\epsilon \|_{L^2}  \|\textbf{1}_{E_2} \sqrt{n^\epsilon+1}  \|_{L^4}^2   ) \\
&\leq C_\phi \left( \|u^\epsilon \|_{L^2}^2+ \|n^\epsilon \|_{L^1}  + \|u^\epsilon \|_{L^2}  \|\textbf{1}_{E_2} \sqrt{n^\epsilon+1}  \|_{L^2}\| \nabla[\textbf{1}_{E_2}\sqrt{n^\epsilon+1}] \|_{L^2}   \right )\\
%&\leq C_\phi ( \|u^\epsilon \|_{L^2}^2+ \|n^\epsilon \|_{L^1}  + \|u^\epsilon \|_{L^2}  \|n^\epsilon \|_{L^1}\| \nabla \sqrt{n^\epsilon+1}  \|_{L^2}    )\\
&\leq 2\| \nabla \sqrt{n^\epsilon+1}  \|_{L^2}^2+C_{\phi,n_0 } \|u^\epsilon \|_{L^2}^2+ C_{\phi,n_0 } .
\end{split}
\end{equation*}
Plugging the last estimate into \eqref{3.9} and multiplying both sides of the inequality by $C_{c_0}$. Adding the resulting inequality with \eqref{rr}, we get
\begin{equation}\label{ww}
\begin{split}
\mathcal {F}(t)+ \int_0^t    \mathcal {G}(r) \mathrm{d}r&\leq C_{c_0,n_0,\phi,\kappa,\chi }\left(\mathcal {F}(0)+ 1 + \int_0^t\mathcal {F}(r) \mathrm{d}r\right) +2C_{c_0}\sum_{j\geq 1}\int_0^t(u^\epsilon, \textbf{P} f_j(t,u^\epsilon))_{L^2}\mathrm{d} W^j,
\end{split}
\end{equation}
where
\begin{equation*}
\begin{split}
\mathcal {F}(t)&:=\| (n^\epsilon+1) \ln (n^\epsilon+1 )\|_{L^1}+\frac{1}{2} \|\nabla\emph{h} (c^{\epsilon}) \|_{L^2} ^2+C_{c_0}\|u^\epsilon(t) \|_{L^2}^2,\\
\mathcal {G}(t)&:=2\|\nabla \sqrt{n^{\epsilon}+1}\|^2_{L^2} +  \frac{1}{12}\|\Delta \emph{h} (c^{\epsilon})\|_{L^2}^2  +  \frac{1}{24}\| \sqrt{g (c^\epsilon)}|\nabla\emph{h} (c^{\epsilon})| \|_{L^4}^4  + C_{c_0}\| \nabla u^\epsilon\|_{L^2}^2.
\end{split}
\end{equation*}
By applying the Gronwall lemma to \eqref{ww}, we get
\begin{equation*}
\begin{split}
\mathcal {F}(t) \leq e^{C_{c_0,n_0,\phi,\kappa,\chi }t}\left(\mathcal {F}(0)+ 1+\left|\sum_{j\geq 1}\int_0^t(u^\epsilon, \textbf{P} f_j(t,u^\epsilon))_{L^2}\mathrm{d} W^j\right|\right),\quad\forall t>0.
\end{split}
\end{equation*}
By raising the $p$-th power on both sides of the last inequality, we get from the BDG inequality that
\begin{equation}
\begin{split}\label{3.10}
\mathbb{E}\sup_{t\in [0,T]}|\mathcal {F}(t)|^p &\leq e^{C_{c_0,n_0,\phi,\kappa,\chi }T}\left(\mathcal {F}(0)^p+ 1+\mathbb{E}\sup_{t\in [0,T]}\left|\sum_{j\geq 1}\int_0^t(u^\epsilon, \textbf{P} f_j(t,u^\epsilon))_{L^2}\mathrm{d} W^j\right|^p\right)\nonumber\\
%&\leq e^{C_{c_0,n_0,\phi,\kappa,\chi }T}\bigg(\mathcal {F}(0)^p+ 1+\mathbb{E}\bigg[\sup_{t\in [0,T]}\|u^\epsilon\|^p\bigg( \int_0^t (1+ \|u^\epsilon\|^2 )\mathrm{d}r \bigg)^{p/2}\bigg]\bigg)\\
%&\leq  \frac{ 1}{2} \mathbb{E}\sup_{t\in [0,T]}\|u^\epsilon(r)\|^{2p}+ C_{c_0,\phi,n_0 ,T}\mathbb{E}\int_0^t (1+ \|u^\epsilon\|^{2p} )\mathrm{d}r+ C_{c_0,\phi,n_0 ,T}(\mathcal {F}(0)+ 1)\\
&\leq  \frac{ 1}{2} \mathbb{E}\sup_{t\in [0,T]}\|u^\epsilon(r)\|^{2p}+ C_{c_0,n_0,\phi,\kappa,\chi ,T}\left( \mathcal {F}(0)^p+ 1+\mathbb{E}\int_0^t (1+ |\mathcal {F}(r)|^p)\mathrm{d}r \right),
\end{split}
\end{equation}
which implies that
$
\mathbb{E}\sup_{t\in [0,T]}|\mathcal {F}(t)|^p \lesssim _{c_0,n_0,\phi,\kappa,\chi ,T} \mathcal {F}(0)^p+ 1.
$
By raising the $p$-th power on both sides of \eqref{ww}, we arrive at
 \begin{equation} \label{uu}
\begin{split}
\mathbb{E}\sup_{t\in [0,T]}|\mathcal {F}(t)|^p+ \mathbb{E}\left(\int_0^t    \mathcal {G}(r) \mathrm{d}r\right) ^p \lesssim _{c_0,n_0,\phi,\kappa,\chi ,p,T} \mathcal {F}(0)^p+ 1.
\end{split}
\end{equation}
Moreover, direct calculation shows that
$$
\|(n^\epsilon+1) \ln (n^\epsilon+1)\|_{L^1}\asymp\|n^\epsilon\|_{L^1 \cap L \textrm{log} L}=\|n^\epsilon\|_{L^1 }+\|n^\epsilon\|_{ L \textrm{log} L},
$$
which combined with \eqref{uu} implies the desired inequality. The proof of Lemma \ref{lem10} is completed.
\end{proof}

\subsection{Further energy estimates}

For any $R>0$, define a subset
$$
\Omega_R^\epsilon := \left\{\omega\in \Omega;~\sup_{t\in [0,T]}\|\nabla c^\epsilon\|_{L^2}^2\bigvee\int_0^T \|\Delta c^\epsilon   (t) \|_{L^2}^2  \mathrm{d} t \bigvee\int_0^T \|\sqrt{g(c^\epsilon)}\nabla c^\epsilon  (t) \|_{L^4}^4  \mathrm{d} t \leq R \right\}.
$$

\begin{lemma} \label{cor}
Assume that the conditions in Lemma \ref{lem10} hold, and $(n^\epsilon,c^\epsilon,u^\epsilon)$ are unique  solutions for \eqref{Mod-1} ensured by Lemma \ref{lem9}. Then for all $T>0$ and $p\geq1$, the estimates
\begin{align}
\mathbb{E} \sup_{t\in [0,T]}\|\nabla c^\epsilon\|_{L^2}^p + \mathbb{E}\left( \int_0^T\|\Delta c^\epsilon(t)\|_{L^2}^2  \mathrm{d} t\right)^{p/2} &\lesssim _{n_0,c_0,u_0,\phi,\kappa,\chi ,p,T}  1,\label{3.16}\\
\mathbb{E} \sup_{t\in [0,T]}\|u^\epsilon\|_{L^2}^p + \mathbb{E}\left( \int_0^T\|\nabla u^\epsilon(t)\|_{L^2}^2 \mathrm{d} t\right)^{p/2} &\lesssim _{n_0,c_0,u_0,\phi,\kappa,\chi ,p,T}  1 \label{3.15}
\end{align}
hold uniformly in $\epsilon \in (0,1)$. Moreover, we have for all $\omega\in \Omega_R^\epsilon$
\begin{equation} \label{3.14}
\begin{split}
\sup_{t\in [0,T]}\|n^\epsilon(t,\omega)\|_{L^1\cap ~  {L \emph{\textrm{log}} L} }^2 +\sup_{t\in [0,T]}\|n^\epsilon(t,\omega)\|_{L^2}^2+ \int_0^T \|\nabla n^\epsilon(t,\omega)\|^2_{L^2} \mathrm{d} t  \lesssim_{n_0,c_0,R,\chi,\kappa} 1 .
\end{split}
\end{equation}
\end{lemma}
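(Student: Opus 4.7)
\medskip

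\noindent\textbf{Proof proposal.} The strategy is to read off the estimates \eqref{3.15} and \eqref{3.16} more or less directly from Lemma \ref{lem10}, after changing variables between $c^\epsilon$ and $h(c^\epsilon)$, and then to derive the pathwise $L^2$-estimate \eqref{3.14} by a standard energy argument on the $n^\epsilon$-equation in which the chemotactic term is controlled via Gagliardo–Nirenberg in two dimensions and a Gr\"onwall argument whose exponent is bounded precisely on the event $\Omega_R^\epsilon$. Throughout I will use that, by Lemma \ref{lem9}, $0\le c^\epsilon(t,x)\le \|c_0\|_{L^\infty}$, so that the continuous functions $h'=\sqrt{\chi/\kappa}$, $h''$ and $g$ are bounded above and bounded away from zero on the range of $c^\epsilon$ by constants depending only on $\|c_0\|_{L^\infty}$, $\chi$ and $\kappa$.

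\smallskip

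\noindent\emph{Step 1: the estimate \eqref{3.15} for $u^\epsilon$.} This is immediate: Lemma \ref{lem10} already gives $\mathbb{E}\sup_{t}\|u^\epsilon\|_{L^2}^{2p}$ and $\mathbb{E}(\int_0^T\|\nabla u^\epsilon\|_{L^2}^2\,dt)^p$ uniformly in $\epsilon$ for every $p\ge 1$.

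\smallskip

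\noindent\emph{Step 2: the estimate \eqref{3.16} for $c^\epsilon$.} Using $\nabla h(c^\epsilon)=h'(c^\epsilon)\nabla c^\epsilon$ and $h'\asymp 1$ on $[0,\|c_0\|_{L^\infty}]$, the $\sup_t\|\nabla c^\epsilon\|_{L^2}^p$ bound follows from the bound on $\|\nabla h(c^\epsilon)\|_{L^2}^{2p}$ in \eqref{unf}. For $\Delta c^\epsilon$, expand
\begin{equation*}
\Delta h(c^\epsilon)=h'(c^\epsilon)\Delta c^\epsilon + h''(c^\epsilon)|\nabla c^\epsilon|^2,
\end{equation*}
so $\|\Delta c^\epsilon\|_{L^2}\lesssim \|\Delta h(c^\epsilon)\|_{L^2}+\|\nabla c^\epsilon\|_{L^4}^2$. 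Since $h'(c^\epsilon)$ and $g(c^\epsilon)$ are bounded below, $\|\nabla c^\epsilon\|_{L^4}^4\lesssim \|\sqrt{g(c^\epsilon)}\nabla h(c^\epsilon)\|_{L^4}^4$, and the time integral of both right-hand contributions is controlled by \eqref{unf}. Raising to the power $p/2$, taking $\mathbb{E}$, and applying \eqref{unf} with exponent $p/2$ yields \eqref{3.16}.

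\smallskip

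\noindent\emph{Step 3: the pathwise $L^2$-estimate \eqref{3.14} on $\Omega_R^\epsilon$.} Since the $n^\epsilon$-equation is deterministic (no noise), I test it against $n^\epsilon$, use $\mathrm{div}\,u^\epsilon=0$ to kill the transport, and integrate by parts in the chemotactic term to obtain
\begin{equation*}
\tfrac12\tfrac{d}{dt}\|n^\epsilon\|_{L^2}^2+\|\nabla n^\epsilon\|_{L^2}^2 = \int_{\mathbb{R}^2} n^\epsilon\bigl[(\chi(c^\epsilon)\nabla c^\epsilon)*\rho^\epsilon\bigr]\cdot\nabla n^\epsilon\,dx.
\end{equation*}
I bound the right-hand side by $\tfrac12\|\nabla n^\epsilon\|_{L^2}^2+C\|n^\epsilon\|_{L^4}^2\|\nabla c^\epsilon\|_{L^4}^2$ using Cauchy–Schwarz, Young's convolution inequality, and the uniform bound on $\chi(c^\epsilon)$. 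The 2D Gagliardo–Nirenberg inequality $\|n^\epsilon\|_{L^4}^2\lesssim \|n^\epsilon\|_{L^2}\|\nabla n^\epsilon\|_{L^2}$ and Young's inequality then give
\begin{equation*}
\tfrac{d}{dt}\|n^\epsilon\|_{L^2}^2+\|\nabla n^\epsilon\|_{L^2}^2 \lesssim \|n^\epsilon\|_{L^2}^2\,\|\nabla c^\epsilon\|_{L^4}^4 .
\end{equation*}
Gr\"onwall yields $\|n^\epsilon(t)\|_{L^2}^2\le \|n_0^\epsilon\|_{L^2}^2\exp\bigl(C\int_0^T\|\nabla c^\epsilon\|_{L^4}^4\,dt\bigr)$ and, after re-integrating, a matching bound for $\int_0^T\|\nabla n^\epsilon\|_{L^2}^2\,dt$. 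On the event $\Omega_R^\epsilon$ the exponent is at most $CR$ (using again $g(c^\epsilon)\asymp 1$ to pass from $\|\sqrt{g(c^\epsilon)}\nabla c^\epsilon\|_{L^4}^4$ to $\|\nabla c^\epsilon\|_{L^4}^4$), so the required pathwise estimate follows. The $L^1\cap L\log L$ piece is just the $p=1$ version of \eqref{unf}.

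\smallskip

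\noindent\emph{Anticipated main obstacle.} Nothing is deep here — the entropy work has already been done in Lemma \ref{lem10}. The most delicate point is keeping the $L^4$-estimate for $\nabla c^\epsilon$ and the chemotactic bilinear term consistent between the quantities actually controlled by Lemma \ref{lem10} (i.e.\ $\sqrt{g(c^\epsilon)}\nabla h(c^\epsilon)$) and the quantities appearing in the definition of $\Omega_R^\epsilon$ and in the Gr\"onwall exponent; this reduces to checking that $h'$ and $g$ are pinched between positive constants on $[0,\|c_0\|_{L^\infty}]$, which is exactly what hypothesis (\textsf{A$_2$}) and Lemma \ref{lem9} guarantee.
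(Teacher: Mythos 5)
Your Steps 1 and 3 are sound and essentially follow the paper's route (your treatment of the chemotactic term in Step 3 via $\|n^\epsilon\|_{L^4}^2\|\nabla c^\epsilon\|_{L^4}^2$ and Gagliardo--Nirenberg is, if anything, slightly cleaner than the paper's splitting into $\chi'(c^\epsilon)|\nabla c^\epsilon|^2$ and $\chi(c^\epsilon)\Delta c^\epsilon$, and your control of the Gr\"onwall exponent on $\Omega_R^\epsilon$ via the lower bound on $g$ is correct). The $L^1\cap L\log L$ piece of \eqref{3.14} is a pathwise statement, so it does not follow from ``the $p=1$ version of \eqref{unf}'' (an expectation bound); but it does follow from mass conservation together with the pathwise $L^2$ bound you establish on $\Omega_R^\epsilon$, so this is only a misattribution, not a gap.

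The genuine problem is in Step 2, and it originates in your opening claim that $h'=\sqrt{\chi/\kappa}$, $h''$ and $g$ are \emph{bounded above} on the range of $c^\epsilon$. They are not: since $\kappa(0)=0$ and $\chi(0)>0$ under (\textsf{A$_2$}), one has $\chi/\kappa\to\infty$ as $c\to 0^+$, and $c^\epsilon$ is only known to be nonnegative (and decays at infinity), so its range accumulates at $0$. What Lemma \ref{lem9} and (\textsf{A$_2$}) actually give are the \emph{reciprocal} bounds $1/h'=\sqrt{\kappa/\chi}\in L^\infty$ and $1/g\in L^\infty$ on $[0,\|c_0\|_{L^\infty}]$. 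Consequently your intermediate inequality $\|\Delta c^\epsilon\|_{L^2}\lesssim\|\Delta h(c^\epsilon)\|_{L^2}+\|\nabla c^\epsilon\|_{L^4}^2$ is not justified: its implicit constant is $\|h''/h'\|_{L^\infty}$, and $h''/h'=-\tfrac12(\kappa/\chi)'/(\kappa/\chi)$ blows up as $c\to 0^+$. The repair is the algebraic identity $h''/(h')^2=-g$ (used repeatedly in the paper's proof of Lemma \ref{lem10}), which turns the offending term \emph{exactly} into
\begin{equation*}
\frac{h''(c^\epsilon)}{h'(c^\epsilon)}\,|\nabla c^\epsilon|^2
=-\frac{g(c^\epsilon)}{h'(c^\epsilon)}\,|\nabla h(c^\epsilon)|^2,
\qquad
\Bigl\|\tfrac{g(c^\epsilon)}{h'(c^\epsilon)}|\nabla h(c^\epsilon)|^2\Bigr\|_{L^2}
\le \bigl\|\sqrt{\kappa/\chi}\bigr\|_{L^\infty}\,\bigl\|\sqrt{g(c^\epsilon)}\,\nabla h(c^\epsilon)\bigr\|_{L^4}^2,
\end{equation*}
i.e.\ precisely the quantity whose fourth-power time integral is controlled by \eqref{unf}. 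This is exactly the decomposition the paper performs, and with it your Step 2 goes through; without it, the step as written fails because the constant you pull out is infinite.
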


\begin{proof}[\emph{\textbf{Proof}}]
The property \eqref{3.15} is a direct consequence of Lemma \ref{lem10}. For \eqref{3.16}, it follows from the assumption {(\textsf{A$_2$})} and Lemma \ref{lem9} that
\begin{equation}\label{3.18}
\begin{split}
\sup_{t\in [0,T]}\left\| \frac{1}{h'(c^\epsilon)}\right\|_{L^\infty}=\|\sqrt{\kappa/\chi}(c^\epsilon)\|_{L^\infty}\lesssim_{c_0,\chi,\kappa} 1,\quad \mathbb{P}\textrm{-a.s.}
\end{split}
\end{equation}
The boundedness of $\mathbb{E}\sup_{t\in [0,T]}\|c^{\epsilon}\|_{L^2}^p$ can be obtained from \eqref{2.55} by a interpolation argument. By the uniform bound \eqref{unf}, we have
\begin{equation}\label{3.19}
\begin{split}
 \mathbb{E}\sup_{t\in [0,T]}\| \nabla c^\epsilon\|_{L^2}^p &= \mathbb{E}\sup_{t\in [0,T]}(\|1/h'(c^\epsilon)\|_{L^\infty}^p\| \nabla h(c^\epsilon)\|_{L^2}^{p})\\
  &\lesssim_{c_0,\chi,\kappa,p} \mathbb{E} \sup_{t\in [0,T]}  \|\nabla h (c^{\epsilon}) \|_{L^2} ^{p } \lesssim _{n_0,c_0,u_0,\phi,\kappa,\chi ,T,p}  1.
\end{split}
\end{equation}
Direct calculation shows that
$$
\Delta c^\epsilon =\frac{\Delta h(c^\epsilon)}{h'(c^\epsilon)}- \frac{h''(c^\epsilon)g(c^\epsilon)|\nabla h(c^\epsilon)|^2}{g(c^\epsilon)(h'(c^\epsilon))^3}\quad \textrm{and}\quad\frac{h''(c^\epsilon) }{g(c^\epsilon)(h'(c^\epsilon))^3}=-\frac{1 }{h'(c^\epsilon)} .
$$
It follows from \eqref{3.18} and the uniform bound \eqref{unf} that
\begin{equation*}
\begin{split}
& \mathbb{E}\left(\int_0^T\|\Delta c^\epsilon\|_{L^2} ^2 \mathrm{d} t\right)^{p/2}\\
 &\quad\lesssim_{p,T} \mathbb{E}\left[\left(\int_0^T\|\frac{\Delta h(c^\epsilon)}{h'(c^\epsilon)}\|_{L^2}^2 \mathrm{d}t\right)^{p/2}+\left(\int_0^T\|\frac{h''(c^\epsilon)g(c^\epsilon)|\nabla h(c^\epsilon)|^2}{g(c^\epsilon)(h'(c^\epsilon))^3}\|_{L^2}^2 \mathrm{d} t\right)^{p/2}\right]\\
 &\quad\lesssim_{c_0,\chi,\kappa,p,T}  \mathbb{E}\left[\bigg(\int_0^T\|\Delta h(c^\epsilon)\|_{L^2}^2 \mathrm{d} t\bigg)^{p}+\bigg(\int_0^T\|\sqrt{g(c^\epsilon)}\nabla h(c^\epsilon)\|_{L^4}^4 \mathrm{d} t\bigg)^{p}\right]\\
 &\quad\lesssim _{n_0,c_0,u_0,\phi,\kappa,\chi ,p,T}  1,
\end{split}
\end{equation*}
which together with \eqref{3.19} imply that $c^\epsilon \in L^p(\Omega;L^2(0,T;H^2(\mathbb{R}^2)))$ is bounded uniformly in $\epsilon$.

Applying the chain rule to $\mathrm{d}  \|n^\epsilon\|_{L^2}^2$, we find
\begin{equation}\label{3.20}
\begin{split}
 \|n^\epsilon(t)\|_{L^2}^2 +  \int_0^t\|\nabla n^\epsilon\|_{L^2}^2\mathrm{d}r
 %&= \|n^\epsilon_0\|_{L^2}^2+2\int_0^t(\nabla n^\epsilon,n^\epsilon[(\chi(c^\epsilon)\nabla c^\epsilon)*\rho^\epsilon])_{L^2}\mathrm{d}r\\
% &=\|n^\epsilon_0\|_{L^2}^2+\int_0^t(\nabla (n^\epsilon)^2,(\chi(c^\epsilon)\nabla c^\epsilon)*\rho^\epsilon)_{L^2}\mathrm{d}r\\
 &= \|n^\epsilon_0\|_{L^2}^2+\int_0^t((n^\epsilon)^2, \textrm{div} (\chi(c^\epsilon)\nabla c^\epsilon)*\rho^\epsilon)_{L^2}\mathrm{d}r.
\end{split}
\end{equation}
By Ladyzhenskaya's inequality, we gain
\begin{align}\label{3.21}
|((n^\epsilon)^2, \textrm{div} (\chi(c^\epsilon)\nabla c^\epsilon)*\rho^\epsilon)_{L^2}|%&\leq \|n^\epsilon\|_{L^4}^2 \| \textrm{div} (\chi(c^\epsilon)\nabla c^\epsilon)*\rho^\epsilon\|_{L^2}\nonumber\\
&\leq \|n^\epsilon\|_{L^2}\|\nabla n^\epsilon\|_{L^2} \| \textrm{div} (\chi(c^\epsilon)\nabla c^\epsilon\|_{L^2}\nonumber\\
&\leq \frac{1}{2}\|\nabla n^\epsilon\|_{L^2} ^2+C\|n^\epsilon\|_{L^2}^2(\|\chi'(c^\epsilon)|\nabla c^\epsilon|^2\|_{L^2}^2+ \|\chi(c^\epsilon)\Delta c^\epsilon\|_{L^2}^2)\nonumber\\
&\leq \frac{1}{2}\|\nabla n^\epsilon\|_{L^2} ^2+C_{c_0,\chi,\kappa}\|n^\epsilon\|_{L^2}^2(\|\sqrt{g(c^\epsilon)}\nabla h(c^\epsilon)\|_{L^4}^4+ \|\Delta c^\epsilon\|_{L^2}^2),
\end{align}
where the last inequality used the fact of
$$
\left\|\frac{\chi'(c^\epsilon)}{(h'(c^\epsilon))^2g(c^\epsilon)}\right\|_{L^\infty}=2 \|\chi'(c^\epsilon) (\kappa/\chi)^{3/2}\frac{1}{(\kappa/\chi)'}\|_{L^\infty}\lesssim_{c_0,\kappa,\chi}1.
$$
Plugging \eqref{3.21} into \eqref{3.20}, it then follows from the Gronwall Lemma that
\begin{equation*}
\begin{split}
 \sup_{t\in [0, T ]}\|n^\epsilon(t)\|_{L^2}^2 +   \int_0^  { T}\|\nabla n^\epsilon(t)\|_{L^2}^2 \mathrm{d} t &\lesssim_{n_0,c_0,\chi,\kappa} e^{ \int_0^ { T}(\|\sqrt{g(c^\epsilon)}\nabla h(c^\epsilon)\|_{L^4}^4 +  \|\Delta c^\epsilon\|_{L^2}^2)\mathrm{d}r},\quad \mathbb{P}\textrm{-a.s.},
\end{split}
\end{equation*}
which implies the desired inequality \eqref{3.14} by restricting the above estimate on the subset $\Omega_R^\epsilon \subset \Omega$. This completes the proof of Lemma \ref{cor}.
%To prove \eqref{3.14}, we apply the operator $\triangle_q\nabla \wedge$ to Eq.\eqref{Mod-1}$_3$, where $\{\triangle_q\}_{q\geq -1}$ denotes the nonhomogeneous Littlewood-Paley blocks (cf. \cite{bahouri2011fourier}), we infer that
%\begin{equation*}
%\begin{split}
% \mathrm{d} \triangle_q\varpi^\epsilon-\Delta \triangle_q\varpi^\epsilon \mathrm{d} t = - \triangle_q[(u^\epsilon\cdot \nabla) \varpi^\epsilon]  \mathrm{d} t +  \nabla \wedge \triangle_q[ (n^\epsilon\nabla \phi)*\rho^\epsilon] \mathrm{d} t+   \nabla \wedge \triangle_qf(t,u^\epsilon) \mathrm{d} W.
%\end{split}
%\end{equation*}
%By using the homogenization principle, we get
%\begin{equation*}
%\begin{split}
%\triangle_q\varpi^\epsilon (t)&=  e^{t\Delta} \triangle_q\varpi^\epsilon_0+ \int_0^t  e^{(t-r)\Delta}(- \triangle_q[(u^\epsilon\cdot \nabla) \varpi^\epsilon] +  \nabla \wedge \triangle_q[ (n^\epsilon\nabla \phi)*\rho^\epsilon])\mathrm{d}r\\
%&+ \sum_{j\geq 1}\int_0^t e^{(t-r)\Delta} \nabla \wedge \triangle_qf_j(t,u^\epsilon) \mathrm{d} W^j
%\end{split}
%\end{equation*}
%
%
%
\end{proof}

\begin{remark}\label{rem3.3}
In view of Lemma \ref{lem9} and \eqref{3.15}, we infer that
\begin{equation*}
\begin{split}
\mathbb{P}\{\Omega_R^\epsilon\} &\geq 1 - \mathbb{P}\left\{\sup_{t\in [0,T]}\|\nabla c^\epsilon\|_{L^2}^2> R    \right\}- \mathbb{P}\left\{\int_0^T \|\Delta c^\epsilon   (t)\|_{L^2}^2  \mathrm{d} t> R\right\}\\
& - \mathbb{P}\left\{\int_0^T \|\sqrt{g(c^\epsilon)}\nabla c^\epsilon  (t) \|_{L^4}^4  \mathrm{d} t> R   \right \}\geq 1-\frac{3C_{n_0,c_0,u_0,\phi,\kappa,\chi , T}}{R}.
\end{split}
\end{equation*}
\end{remark}

In order to show the convergence of $(n^\epsilon,c^\epsilon,u^\epsilon)$ as $\epsilon\rightarrow 0$ rigorously, some boundedness information on the time regularity of  $(n^\epsilon,c^\epsilon,u^\epsilon)$ are needed.  To achieve this, we define another subset
$$
\widetilde{\Omega}_N^\epsilon := \left\{\omega\in \Omega;~\sup_{t\in[0,T]} \|u^\epsilon   (t) \|_{L^2}^2   \bigvee\int_0^T \| \nabla u^\epsilon  (t) \|_{L^2}^2  \mathrm{d} t \leq N \right\}.
$$

\begin{lemma} \label{lem11}
Let $(n^\epsilon,c^\epsilon,u^\epsilon)$ be the unique global pathwise solution constructed in Lemma \ref{lem9}, then
\begin{align}
 &\int_0^T\left\|  \frac{\mathrm{d}  n^\epsilon(t)}{ \mathrm{d} t} \right\|_{H^{-3}}^2  \mathrm{d} t\lesssim _{n_0,c_0,R,N,\chi,\kappa,T}1 , ~~~~\forall \omega\in  \Omega _R^\epsilon \bigcap \widetilde{\Omega}_N^\epsilon,\label{3.23}\\
  &\int_0^T\left\| \frac{\mathrm{d}  c^\epsilon(t)}{ \mathrm{d} t}\right\|_{L^2}^2  \mathrm{d} t \lesssim _{n_0,c_0,R,N,\chi,\kappa,T}1,~~~~ \forall \omega\in  \Omega _R^\epsilon \bigcap \widetilde{\Omega}_N^\epsilon.\label{3.255}
\end{align}
Moreover, let $\{\tau_{k}\}_{k \geq 1}$  be a family of stopping times satisfying $0 \leq \tau_{k} \leq T$, then
\begin{align} \label{3.24}
 \sup_{\epsilon\in (0,1)}\mathbb{E}  \|u_\epsilon(\tau_{k}+\theta)-u_\epsilon(\tau_{k})\|_{(D(A))^*}   \leq C \theta^{\frac{1}{2}},
\end{align}
for some constant $C>0$ independent of $\epsilon$, where $(D(A))^* $ denotes the dual space of $D(A)$.
\end{lemma}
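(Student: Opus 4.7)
The three bounds decouple into two pathwise deterministic estimates on the set $\Omega_R^\epsilon\cap\widetilde{\Omega}_N^\epsilon$ (for $n^\epsilon$ and $c^\epsilon$) and a stochastic Aldous-type estimate for $u^\epsilon$. The pathwise bounds arise by reading off $\partial_t n^\epsilon$ and $\partial_t c^\epsilon$ directly from equations \eqref{Mod-1}$_1$--\eqref{Mod-1}$_2$ and estimating each drift term by duality, using the uniform bounds from Lemma~\ref{lem10} and Lemma~\ref{cor} together with Ladyzhenskaya's inequality $\|u^\epsilon\|_{L^4}^4\lesssim\|u^\epsilon\|_{L^2}^2\|\nabla u^\epsilon\|_{L^2}^2$, whose right-hand side is $\leq N^2$ on $\widetilde{\Omega}_N^\epsilon$. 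The key auxiliary fact is that the Friedrichs mollifier $\rho^\epsilon$ is an $L^p$-contraction for every $p\in[1,\infty]$.

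For \eqref{3.23}, I would test $\partial_t n^\epsilon=\Delta n^\epsilon-\operatorname{div}(u^\epsilon n^\epsilon)-\operatorname{div}(n^\epsilon[(\chi(c^\epsilon)\nabla c^\epsilon)\ast\rho^\epsilon])$ against $\varphi\in H^3(\mathbb{R}^2)$. Since $H^3\hookrightarrow W^{1,\infty}$ in two dimensions, duality produces $\|\Delta n^\epsilon\|_{H^{-3}}\lesssim\|n^\epsilon\|_{L^2}$, $\|\operatorname{div}(u^\epsilon n^\epsilon)\|_{H^{-3}}\lesssim\|u^\epsilon\|_{L^4}\|n^\epsilon\|_{L^2}$, and $\|\operatorname{div}(n^\epsilon[(\chi(c^\epsilon)\nabla c^\epsilon)\ast\rho^\epsilon])\|_{H^{-3}}\lesssim\|n^\epsilon\|_{L^2}\|\chi(c^\epsilon)\nabla c^\epsilon\|_{L^4}$. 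The decisive observation is that $\chi(c^\epsilon)\nabla c^\epsilon=\tfrac{\chi(c^\epsilon)}{h'(c^\epsilon)\sqrt{g(c^\epsilon)}}\cdot\sqrt{g(c^\epsilon)}\nabla h(c^\epsilon)$, whose prefactor is uniformly bounded on $[0,\|c_0\|_{L^\infty}]$ by continuity of $\chi,h',g$ and Lemma~\ref{lem9}, so $\int_0^T\|\chi(c^\epsilon)\nabla c^\epsilon\|_{L^4}^4\,\mathrm{d}t\lesssim\int_0^T\|\sqrt{g(c^\epsilon)}\nabla h(c^\epsilon)\|_{L^4}^4\,\mathrm{d}t$ is controlled by Lemma~\ref{lem10}, while the $H^1$-bound for $n^\epsilon$ in Lemma~\ref{cor} handles the remaining factors. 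Squaring and integrating in time then delivers \eqref{3.23}. Bound \eqref{3.255} is analogous but simpler: $\|\Delta c^\epsilon\|_{L^2}^2$ is $L^1_t$ by definition of $\Omega_R^\epsilon$, $\|u^\epsilon\cdot\nabla c^\epsilon\|_{L^2}\leq\|u^\epsilon\|_{L^4}\|\nabla c^\epsilon\|_{L^4}$ is $L^2_t$ by the same $L^4_tL^4_x$-information, and $\|\kappa(c^\epsilon)(n^\epsilon\ast\rho^\epsilon)\|_{L^2}\lesssim M_\kappa\|n^\epsilon\|_{L^2}$ by Lemma~\ref{lem9}.

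For the stochastic estimate \eqref{3.24}, I would integrate the $u^\epsilon$-equation from $\tau_k$ to $\tau_k+\theta$, splitting into a Bochner drift integral and an It\^o stochastic integral. Testing the drift against $\varphi\in D(A)\hookrightarrow W^{1,\infty}$ gives $\|Au^\epsilon\|_{(D(A))^*}\lesssim\|u^\epsilon\|_{L^2}$, $|\langle\textbf{P}(u^\epsilon\cdot\nabla)u^\epsilon,\varphi\rangle|\lesssim\|u^\epsilon\|_{L^2}^2\|\varphi\|_{D(A)}$, and $\|\textbf{P}[(n^\epsilon\nabla\phi)\ast\rho^\epsilon]\|_{L^2}\lesssim\|\nabla\phi\|_{L^\infty}\|n^\epsilon\|_{L^2}$, each term having uniform moments by Lemmas~\ref{lem10} and~\ref{cor}, so the drift contributes $C\theta$ to the first moment. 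For the stochastic integral, the It\^o isometry in $(D(A))^*\supset L^2$ and assumption (\textsf{A}$_3$) yield
\begin{equation*}
\mathbb{E}\Bigl\|\int_{\tau_k}^{\tau_k+\theta}\textbf{P}f(s,u^\epsilon)\,\mathrm{d}W\Bigr\|_{(D(A))^*}^2\leq\mathbb{E}\int_{\tau_k}^{\tau_k+\theta}\|f(s,u^\epsilon)\|_{L_2(U;L^2)}^2\,\mathrm{d}s\lesssim\mathbb{E}\int_{\tau_k}^{\tau_k+\theta}(1+\|u^\epsilon\|_{L^2}^2)\,\mathrm{d}s\lesssim\theta,
\end{equation*}
and Cauchy--Schwarz then converts this to the claimed $\theta^{1/2}$-bound for the first moment, dominating the drift contribution. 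The main technical obstacle is the $L^4_tL^4_x$ bookkeeping for the chemotactic term in \eqref{3.23}, as it requires translating the entropy quantity $\sqrt{g(c^\epsilon)}\nabla h(c^\epsilon)$ back to the original $\chi(c^\epsilon)\nabla c^\epsilon$ using the uniform lower bound of $g(c^\epsilon)$ on $[0,\|c_0\|_{L^\infty}]$; once this translation is carried out, the rest is a standard combination of Sobolev embeddings, Ladyzhenskaya's inequality, and the It\^o isometry.
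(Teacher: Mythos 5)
Your overall architecture matches the paper's: pathwise duality estimates for $\partial_t n^\epsilon$ and $\partial_t c^\epsilon$ on $\Omega_R^\epsilon\cap\widetilde{\Omega}_N^\epsilon$, and a term-by-term $(D(A))^*$ estimate plus a stochastic-integral bound for the Aldous condition. Two of your choices differ harmlessly from the paper. For \eqref{3.23} the paper pairs $u^\epsilon n^\epsilon$ and $n^\epsilon[(\chi(c^\epsilon)\nabla c^\epsilon)\ast\rho^\epsilon]$ against $\nabla\varphi\in L^\infty$ in $L^1$, so it only needs $\sup_t\|u^\epsilon\|_{L^2}$, $\sup_t\|n^\epsilon\|_{L^2}$ and $\sup_t\|\nabla c^\epsilon\|_{L^2}$, all controlled by the definitions of the two events and \eqref{3.14}; your $L^4$-based route with the translation $\chi(c^\epsilon)\nabla c^\epsilon\mapsto\sqrt{g(c^\epsilon)}\,\nabla h(c^\epsilon)$ also works (the prefactor is bounded since $g$ is continuous, positive and blows up only at $c=0$), but it is unnecessary machinery. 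For \eqref{3.255} your $\|u^\epsilon\|_{L^4}\|\nabla c^\epsilon\|_{L^4}$ estimate is in fact more careful than the paper's displayed shortcut, and it closes correctly using Ladyzhenskaya on $\widetilde{\Omega}_N^\epsilon$ and the $L^4_tL^4_x$ information in $\Omega_R^\epsilon$.

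The one genuine gap is in your treatment of the forcing term $\mathcal{I}_3=\int_{\tau_k}^{\tau_k+\theta}\mathbf{P}[(n^\epsilon\nabla\phi)\ast\rho^\epsilon]\,\mathrm{d}t$ in \eqref{3.24}. You assert that $\|n^\epsilon\|_{L^2}$ has ``uniform moments by Lemmas \ref{lem10} and \ref{cor}'' and conclude the drift contributes $C\theta$. But \eqref{3.24} is an unrestricted expectation, whereas the only $L^2$-bound on $n^\epsilon$ available is \eqref{3.14}, which is pathwise and valid only on $\Omega_R^\epsilon$ with a constant depending on $R$; Lemma \ref{lem10} provides moments only for $\|n^\epsilon\|_{L^1\cap L\log L}$ and for the dissipation $\int_0^T\|\nabla\sqrt{n^\epsilon+1}\|_{L^2}^2\,\mathrm{d}t$. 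So $\mathbb{E}\sup_t\|n^\epsilon\|_{L^2}$ is not at your disposal and the $C\theta$ claim for this term is unjustified. The correct route, which the paper spells out, is the Gagliardo--Nirenberg interpolation $\|n^\epsilon\|_{L^2}\lesssim_{n_0}1+\|\nabla\sqrt{n^\epsilon+1}\|_{L^2}$ (splitting $\{n^\epsilon<1\}$ and $\{n^\epsilon\ge1\}$ and using mass conservation), which yields $\mathbb{E}\int_0^T\|n^\epsilon\|_{L^2}^2\,\mathrm{d}t\lesssim1$ uniformly in $\epsilon$; Cauchy--Schwarz in time then gives $\mathbb{E}\|\mathcal{I}_3\|_{(D(A))^*}\lesssim\theta^{1/2}$, not $\theta$. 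The final bound \eqref{3.24} is unaffected, but this interpolation step is the nontrivial ingredient your proposal skips.
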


\begin{proof}[\emph{\textbf{Proof}}]
For any $\varphi\in H^3(\mathbb{R}^2)$, we have
\begin{equation*}
\begin{split}
\left| \langle\frac{\mathrm{d}  n^\epsilon(t)}{ \mathrm{d} t }, \varphi\rangle_{H^{-3}, H^3}\right|&=|\langle -u^\epsilon\cdot \nabla n^\epsilon   + \Delta n ^\epsilon -  \textrm{div} \left(n^\epsilon[(\chi(c^\epsilon)\nabla c^\epsilon)*\rho^\epsilon]\right) , \varphi\rangle_{H^{-3}, H^3}|\\
%& \leq |( u^\epsilon n^\epsilon  , \nabla\varphi)_{L^2}|+ |( \nabla n^\epsilon  , \nabla\varphi)_{L^2}|+ |( n^\epsilon[(\chi(c^\epsilon)\nabla c^\epsilon)*\rho^\epsilon], \nabla\varphi)_{L^2}|\\
& \lesssim_{c_0,\chi} \big(\|u^\epsilon\|_{L^2} \| n^\epsilon\|_{L^2}+  \|  n^\epsilon  \|_{L^2} +  \|n^\epsilon \|_{L^2} \| \nabla c^\epsilon \|_{L^2}    \big) (\|\nabla\varphi\|_{L^\infty}+\|\Delta\varphi\|_{L^2}).
\end{split}
\end{equation*}
Since $H^3(\mathbb{R}^2)\subset W^{1,\infty}(\mathbb{R}^2)$,  it follows from the last inequality that, for all $\omega\in  \Omega _R^\epsilon \bigcap \widetilde{\Omega} _N^\epsilon$
\begin{equation*}
\begin{split}
\int_0^T\left\| \frac{\mathrm{d}  n^\epsilon(t)}{ \mathrm{d} t }\right\|_{H^{-3}}^2  \mathrm{d} t& \lesssim_{c_0,\chi}  \int_0^T\|u^\epsilon\|_{L^2}^2 \| n^\epsilon\|_{L^2}^2 \mathrm{d} t+  \int_0^T\|  n^\epsilon  \|_{L^2}^2 \mathrm{d} t + \int_0^T \|n^\epsilon \|_{L^2}^2 \| \nabla c^\epsilon \|_{L^2} ^2 \mathrm{d} t\\
& \lesssim_{c_0,\chi,T}  \sup_{t\in [0,T]}\| n^\epsilon\|_{L^2}^2\left(\sup_{t\in [0,T]}\|u^\epsilon\|_{L^2}^2  +  \sup_{t\in [0,T]} \| \nabla c^\epsilon \|_{L^2} ^2 +1\right)\\
&\lesssim_{c_0,\chi,T} C_{n_0,c_0,R,\chi,\kappa} (N+R+1),
\end{split}
\end{equation*}
where the last inequality used (2) in Corollary \ref{cor}, and this implies \eqref{3.23}.

For \eqref{3.255}, it follows from \eqref{Mod-1}$_2$ that
\begin{equation*}
\begin{split}
\left\| \frac{\mathrm{d}  c^\epsilon(t)}{ \mathrm{d} t }\right\|_{L^2}^2 &\lesssim  \|u^\epsilon\cdot \nabla c^\epsilon \|_{L^2}^2+\|  \Delta c^\epsilon\|_{L^2}^2+ \|\kappa(c^\epsilon)(n^\epsilon*\rho^\epsilon) \|_{L^2}^2\\
&\lesssim _{c_0,\chi} \|u^\epsilon\|_{L^2}^2 \|\nabla c^\epsilon \|_{L^2}^2+\|  \Delta c^\epsilon\|_{L^2}^2+ \| n^\epsilon \|_{L^2}^2.
\end{split}
\end{equation*}
Then for all  $\omega\in  \Omega _R^\epsilon \bigcap \widetilde{\Omega} _N^\epsilon$, we have
\begin{equation*}
\begin{split}
\int_0^T\left\| \frac{\mathrm{d}  c^\epsilon(t)}{ \mathrm{d} t }\right\|_{L^2}^2  \mathrm{d} t &  \lesssim _{c_0,\chi,T} \sup_{t\in [0,T]}\|u^\epsilon\|_{L^2}^2 \sup_{t\in [0,T]}\|\nabla c^\epsilon \|_{L^2}^2  +\int_0^T \|  \Delta c^\epsilon\|_{L^2}^2 \mathrm{d} t+\sup_{t\in [0,T]} \| n^\epsilon \|_{L^2}^2 \\
&   \lesssim _{c_0,\chi,T} NR  +R+C_{n_0,c_0,R,\chi,\kappa}.
\end{split}
\end{equation*}
Now we prove \eqref{3.24}. For any stopping times $\{\tau_{k}\}_{k \geq 1}$ satisfying $0 \leq \tau_{k} \leq T$, we have
\begin{equation}\label{3.25}
\begin{split}
&u^\epsilon(\tau_{k}+\theta)-u^\epsilon(\tau_{k} )=-   \int_{\tau_{k}}^{\tau_{k}+\theta}\textbf{P} (u^\epsilon\cdot \nabla) u^\epsilon  \mathrm{d} t +   \int_{\tau_{k}}^{\tau_{k}+\theta}A u^\epsilon \mathrm{d} t \\
&\quad +   \int_{\tau_{k}}^{\tau_{k}+\theta}\textbf{P}[ (n^\epsilon\nabla \phi)*\rho^\epsilon] \mathrm{d} t+ \sum_{j\geq 1}  \int_{\tau_{k}}^{\tau_{k}+\theta}\textbf{P} f_j(t,u^\epsilon) \mathrm{d} W^j:= \mathcal {I}_1+\mathcal {I}_2+\mathcal {I}_3+\mathcal {I}_4.
\end{split}
\end{equation}
For $\mathcal {I}_1$, we get by using the divergence-free condition $u^\epsilon =0$ that
\begin{equation*}
\begin{split}
\mathbb{E}\|\mathcal {I}_1\|_{D(A)^*}&\lesssim \mathbb{E}\int_{\tau_{k}}^{\tau_{k}+\theta}\| u^\epsilon \otimes u^\epsilon  \|_{L^2}  \mathrm{d} t\lesssim \mathbb{E}\int_{\tau_{k}}^{\tau_{k}+\theta}\|u^\epsilon  \|_{L^2}\| \nabla u^\epsilon\|_{L^2} \mathrm{d} t\\
&\lesssim \theta^{1/2}\mathbb{E}\left(\sup_{t\in [\tau_{k} ,\tau_{k}+\theta]}\| u^\epsilon\|_{L^2}^2\int_{\tau_{k}}^{\tau_{k}+\theta}\| \nabla u^\epsilon\|_{L^2} ^2  \mathrm{d} t\right)^{1/2}\lesssim _{n_0,c_0,u_0,\phi,\kappa,\chi ,p,T}\theta^{ \frac{1}{2}}.
\end{split}
\end{equation*}
For $\mathcal {I}_2$, it follows from \eqref{3.15} that
\begin{equation*}
\begin{split}
\mathbb{E}\|\mathcal {I}_2\|_{D(A)^*}&\lesssim  \mathbb{E}\int_{\tau_{k}}^{\tau_{k}+\theta}\|u^\epsilon  \|_{L^2}  \mathrm{d} t  \lesssim  \theta\mathbb{E}\sup_{t\in [0,T]}\|u^\epsilon  \|_{L^2}^2 \lesssim _{n_0,c_0,u_0,\phi,\kappa,\chi ,p,T}\theta.
\end{split}
\end{equation*}
For $\mathcal {I}_3$, first note that
\begin{equation*}
\begin{split}
\mathbb{E}\|\mathcal {I}_3\|_{D(A)^*}&\lesssim  \mathbb{E} \int_{\tau_{k}}^{\tau_{k}+\theta}\| (n^\epsilon\nabla \phi)*\rho^\epsilon \|_{D(A)^*} \mathrm{d} t \lesssim_{\phi}  \theta ^{\frac{1}{2}} \mathbb{E} \int_{0}^{T} \|  n^\epsilon (t)\|_{L^2}^2  \mathrm{d} t.
\end{split}
\end{equation*}
By using Lemma \ref{lem9} and the Gagliardo-Nirenberg (GN) inequality (\cite[Lecture II]{nirenberg1959}), we have
\begin{equation*}
\begin{split}
 \|  n^\epsilon \|_{L^2}
  &\lesssim  \|  n^\epsilon \textbf{1}_{\{0<n^\epsilon<1\}}\|_{L^2} +\|\sqrt{n^\epsilon} \textbf{1}_{ \{n^\epsilon\geq1\}}\|_{L^4}^2 \\
%& \lesssim \|  \sqrt{n^\epsilon }\textbf{1}_{\{0<n^\epsilon<1\}}\|_{L^2} +\|\sqrt{n^\epsilon} \textbf{1}_{ \{n^\epsilon\geq1\}}\|_{L^4}^2\\
& \lesssim\|  \sqrt{n^\epsilon }\textbf{1}_{\{0<n^\epsilon<1\}}\|_{L^2}+\|\sqrt{n^\epsilon} \textbf{1}_{ \{n^\epsilon\geq1\}}\|_{L^2}\|\nabla[\sqrt{n^\epsilon} \textbf{1}_{ \{n^\epsilon\geq1\}}]\|_{L^2}\\
%& \lesssim \| n^\epsilon \|_{L^1}^{ \frac{1}{2}} +\frac{1}{4}\| n^\epsilon \|_{L^1}^{ \frac{1}{2}}\bigg(\int_{\{n^\epsilon\geq 1\}} \frac{|\nabla n^\epsilon |^2}{n^\epsilon}\mathrm{d}x\bigg)^{1/2}\\
& \lesssim_{n _0} 1 +  \left(\int_{\{n^\epsilon\geq 1\}} \frac{|\nabla (n^\epsilon+1) |^2}{n^\epsilon+1}\frac{n^\epsilon+1}{n^\epsilon}\mathrm{d}x\right)^{1/2}\\
& \lesssim_{n _0} 1 + \sqrt{2}\left(\int_{\{n^\epsilon\geq 1\}} \frac{|\nabla (n^\epsilon+1) |^2}{n^\epsilon+1} \mathrm{d}x\right)^{1/2}\\
& \lesssim_{n _0} 1 + \sqrt{2}\|\nabla  \sqrt{n^\epsilon+1}  \| _{L^2}.
\end{split}
\end{equation*}
%\begin{equation*}
%\begin{split}
% \|  n^\epsilon \|_{L^2}&\lesssim  \|  n^\epsilon \textbf{1}_{\{0<n^\epsilon<1\}}\|_{L^2} +\|n^\epsilon \textbf{1}_{ \{n^\epsilon\geq1\}}\|_{L^2} \\
%& \lesssim \|  \sqrt{n^\epsilon }\textbf{1}_{\{0<n^\epsilon<1\}}\|_{L^2} +\|\sqrt{n^\epsilon} \textbf{1}_{ \{n^\epsilon\geq1\}}\|_{L^4}^2\\
%& \lesssim \| n^\epsilon \|_{L^1}^{ \frac{1}{2}} +\|\sqrt{n^\epsilon} \textbf{1}_{ \{n^\epsilon\geq1\}}\|_{L^2}\|\nabla[\sqrt{n^\epsilon} \textbf{1}_{ \{n^\epsilon\geq1\}}]\|_{L^2}\\
%& \lesssim \| n^\epsilon \|_{L^1}^{ \frac{1}{2}} +\frac{1}{4}\| n^\epsilon \|_{L^1}^{ \frac{1}{2}}\bigg(\int_{\{n^\epsilon\geq 1\}} \frac{|\nabla n^\epsilon |^2}{n^\epsilon}\mathrm{d}x\bigg)^{1/2}\\
%& \lesssim_{n _0} 1 +  \bigg(\int_{\{n^\epsilon\geq 1\}} \frac{|\nabla (n^\epsilon+1) |^2}{n^\epsilon+1}\frac{n^\epsilon+1}{n^\epsilon}\mathrm{d}x\bigg)^{1/2}\\
%& \lesssim_{n _0} 1 + 2\bigg(\int_{\{n^\epsilon\geq 1\}} \frac{|\nabla (n^\epsilon+1) |^2}{n^\epsilon+1} \mathrm{d}x\bigg)^{1/2}\\
%& \lesssim_{n _0} 1 + 2\|\nabla  \sqrt{n^\epsilon+1}  \| _{L^2}
%\end{split}
%\end{equation*}
It follows from the uniform bound \eqref{unf} in Lemma \ref{lem10} that
\begin{equation*}
\begin{split}
\mathbb{E}\|\mathcal {I}_3\|_{D(A)^*} \lesssim_{\phi,n_0}  \theta ^{\frac{1}{2}} \mathbb{E} \int_{0}^{T} (1 + \|\nabla  \sqrt{n^\epsilon+1}  \| _{L^2}^2)  \mathrm{d} t \lesssim _{n_0,c_0,u_0,\phi,\kappa,\chi ,p,T} \theta ^{\frac{1}{2}} .
\end{split}
\end{equation*}
For $\mathcal {I}_4$, we get by using the BDG inequality and \eqref{3.15} in Lemma \ref{cor} that
\begin{equation*}
\begin{split}
\mathbb{E}\|\mathcal {I}_4\|_{D(A)^*} %&\lesssim \mathbb{E}\|\sum_{j\geq 1}  \int_{\tau_{k}}^{\tau_{k}+\theta}\textbf{P} f_j(t,u^\epsilon) \mathrm{d} W^j\| _{L^2}^2\\
& \lesssim \mathbb{E} \left(\int_{\tau_{k}}^{\tau_{k}+\theta} \sum_{j\geq 1} \| f_j(t,u^\epsilon) \|_{L_2(U;L^2)}^2  \mathrm{d} t\right)^{1/2}\\
& \lesssim \theta ^{\frac{1}{2}}\mathbb{E}   \sup_{t \in [\tau_{k},\tau_{k} +\theta]}   (1+ \|u^\epsilon (t)\|_{ L^2 }^2) \lesssim _{n_0,c_0,u_0,\phi,\kappa,\chi ,p,T} \theta ^{\frac{1}{2}}.
\end{split}
\end{equation*}
Plugging the estimates for $\mathcal {I}_1\sim\mathcal {I}_4$ into \eqref{3.25}, we get the desired inequality \eqref{3.24}. The proof of Lemma \ref{lem11} is completed.
\end{proof}

\begin{remark} \label{rem3.5}
By \eqref{3.16} in Lemma \ref{cor}, we have
\begin{equation*}
\begin{split}
 \mathbb{P}\{\widetilde{\Omega}_N^\epsilon\} \geq 1- \mathbb{P}\left\{\sup_{t\in[0,T]} \|u^\epsilon   (t) \|_{L^2}^2 >N\right\}-\mathbb{P}\left\{\int_0^T \| \nabla u^\epsilon  (t) \|_{L^2}^2  \mathrm{d} t>N\right\} \geq 1-\frac{2C_{n_0,c_0,u_0,\phi,\kappa,\chi ,p,T}}{N}.
\end{split}
\end{equation*}
\end{remark}

\subsection{Global solutions for KS-SNS system}
Based on the above established results, we are now in a position to prove the first main theorem.

\vspace{3mm}\noindent
\textbf{Proof of (a$_1$) in Theorem \ref{th1}.}  The proof is divided into two steps.

\textsf{Step 1 (Compactness)}.   Let $\{(n^\epsilon,c^\epsilon,u^\epsilon)\}_{\epsilon>0}$ be the sequence of global pathwise solutions to the modified system \eqref{Mod-1} constructed in Lemma \ref{lem8}.

$\bullet$ We denote by $\mathcal {L}[n^\epsilon]$ the law of $n^\epsilon$ on the phase space $\chi_{n^\epsilon}:= L^2(0,T;L^2_{\textrm{loc}}(\mathbb{R}^2))$. In view of the uniform bounds \eqref{3.23} in Lemma \ref{lem11} and \eqref{3.14} in Lemma \ref{cor}, it follows that for any $B_m \subset \mathbb{R}^2$ with radius $m\in \mathbb{N}$, there exists a constant $a_m>0$ such that the bound
$$
 \| n^\epsilon  \|_{L^2(0,T;H^{1}(B_m))}  + \| \partial_t n^\epsilon  \|_{L^2(0,T;H^{-3}(B_m))} \lesssim a_m,\quad \textrm{for all}~ \omega\in  \Omega _R^\epsilon \bigcap \widetilde{\Omega}_N^\epsilon
$$
holds  uniformly in $\epsilon$. Moreover, by  Theorem 2.1 in Chapter III of \cite{temam2001navier}, for any sequence of balls $\{B_m\}_{m\in\mathbb{N}}$, the space
$$
\textbf{Y}^m_n:=\{f\in \chi_{n^\epsilon};~ f\in L^2(0,T;H^1(B_m)) ,~f\in W^{1,2}(0,T;L^2(B_m)) \}
$$
is relatively compact in $L^2(0,T;L^2(B_m))$. Then we get from Remark \ref{rem3.3} and Remark \ref{rem3.5}  that
\begin{equation*}
\begin{split}
 \mathcal {L}[n^\epsilon]\{\|n^\epsilon\|_{\textbf{Y}_n^m} \leq a_m\} &\geq \mathbb{P}\{\Omega _R^\epsilon \bigcap \widetilde{\Omega}_N^\epsilon\}\\
 &=1-\mathbb{P}\{(\Omega _R^\epsilon)^C\}-\mathbb{P}\{  (\widetilde{\Omega}_N^\epsilon)^C\}\\
 &\geq 1-\frac{2C_{n_0,c_0,u_0,\phi,\kappa,\chi ,p,T}}{R}-\frac{2C_{n_0,c_0,u_0,\phi,\kappa,\chi ,p,T}}{N}.
\end{split}
\end{equation*}
Hence one can choose $R>N>0$ as large as one wish, which implies that the family $\{\mathcal {L}[n^\epsilon]:\epsilon \in (0,1)\}$ is tight on $\chi_{n^\epsilon}:= L^2(0,T;L^2_{\textrm{loc}}(\mathbb{R}^2))$.

$\bullet$  We denote by $\mathcal {L}[c^\epsilon]$ the law of $c^\epsilon$ on the space $\chi_{c^\epsilon}:= L^2(0,T;H^1_{\textrm{loc}}(\mathbb{R}^2))$. It follows from the estimate \eqref{3.16} in Lemma \ref{cor} that, for any ball $B_m$ as above, there exists a constant $a_m>0$ such that
$$
 \mathbb{E} \| c^\epsilon \|_{L^2(0,T;H^2(B_m))}^p  \lesssim a_m.
$$
Moreover, by \eqref{3.255} in Lemma \ref{lem11}, one can also assume that
$$
 \left\| \frac{\mathrm{d}  c^\epsilon }{ \mathrm{d} t}\right\|_{L^2(0,T;L^2(B_m))} \lesssim  \frac{a_m}{4},~~~~\textrm{for all}~ \omega\in  \Omega _R^\epsilon \bigcap \widetilde{\Omega}_N^\epsilon.
$$
Note that the following inclusion
$$
\textbf{Y}^m_c:=\left\{f\in \chi_{n^\epsilon};~ f\in L^2(0,T;H^2(B_m)) ,~\frac{\mathrm{d}  f}{ \mathrm{d} t}\in L^2(0,T;L^2(B_m)) \right\}\subset L^2(0,T;H^1(B_m))
$$
is relatively compact. It follows from the last two estimates that for $p>2$
\begin{equation*}
\begin{split}
 \mathcal {L}[c^\epsilon]\{\|c^\epsilon\|_{\textbf{Y}^m_c} > a_m\} &\leq \mathbb{P}\left\{\| c^\epsilon \|_{L^2(0,T;H^2(B_m))}>\frac{a_m}{2}\right\}+\mathbb{P}\left\{\| \frac{\mathrm{d}  c^\epsilon }{ \mathrm{d} t}\|_{L^2(0,T;L^2(B_m))}>\frac{a_m}{2}\right\}\\
 &\leq \frac{2^p}{a_m^p} \mathbb{E} \| c^\epsilon \|_{L^2(0,T;H^2(B_m))}^p+1-\mathbb{P}\left\{\| \frac{\mathrm{d}  c^\epsilon }{ \mathrm{d} t}\|_{L^2(0,T;L^2(B_m))}\leq\frac{a_m}{4}\right\}\\
 &\leq \frac{2^p}{a_m^{p-1}} +1-\mathbb{P}\left\{\Omega _R^\epsilon \bigcap \widetilde{\Omega}_N^\epsilon\right\}\\
 &\leq \frac{2^p}{a_m^{p-1}} + \frac{2C_{n_0,c_0,u_0,\phi,\kappa,\chi ,p,T}}{R}+\frac{2C_{n_0,c_0,u_0,\phi,\kappa,\chi ,p,T}}{N}.
\end{split}
\end{equation*}
Therefore, by taking $a_m>0$ and $R>N>0$ large enough, we conclude from the last inequality that the collection $\{\mathcal {L}[c^\epsilon]:\epsilon \in (0,1)\}$ is tight on $\chi_{c^\epsilon}:= L^2(0,T;H^1_{\textrm{loc}}(\mathbb{R}^2))$.

$\bullet$ We show that the probability measures $\mathcal {L}[u^\epsilon]$ associated to $u^\epsilon$ is tight on the space
$$
\chi_{u^\epsilon}:= \mathcal {C}([0, T]; (D(A))^*) \bigcap (L^2(0,T;H^1 (\mathbb{R}^2)),weak)\bigcap L^2([0, T]; L^2_{\textrm{loc}}(\mathbb{R}^2)).
$$
Indeed, the estimate \eqref{3.24} in Lemma \ref{lem11} informs us that the sequence $\{u^\epsilon\}_{\epsilon \in (0,1)}$ satisfies the Aldous condition with $\alpha=1$ and $\gamma=\frac{1}{2}$ (cf. \cites{aldous1978stopping,aldous1989stopping}, \cite{metivier1988stochastic} (Theorem 3.2, page 29)).
%
%
%
%
%The following stochastic compactness principle is crucial.
%\begin{lemma} [\cite{aldous1978stopping, metivier1988stochastic}] \label{aldous}
%Let $E$ be a separable Banach space, and $\left(Y_{k}\right)_{k \geq 1}$ be a sequence of $E$-valued random variables. Assume that for every stopping time sequence $ (\tau_{k} )_{k\geq 1}$ with $\tau_{k} \leq T$ and for every $\theta \geq 0$ the following Aldous condition holds
%$$
%\sup_{k\geq 1} \mathbb{E}\left(\left\|Y_{k}\left(\tau_{k}+\theta\right)-Y_{k}\left(\tau_{k}\right)\right\|_{E}^{\alpha}\right) \leq C \theta^{\gamma} ,
%$$
%for some $\alpha, \gamma>0$ and a constant $C>0$. Then the laws of
% $ \{Y_{k}\}_{k\geq 1} $ form a tight sequence on $\mathcal {D}([0, T] ; E)$. Here $\mathcal {D}([0, T] ; E)$ denotes the space of functions $u : [0, T]\rightarrow E$ that are right-continuous on $[0, T ]$ and have left-limits at every point in $(0, T ]$, which is endowed with the Skorohod topology
%\end{lemma}
%
%Thanks to it is clear that the sequence $\{u^\epsilon\}_{\epsilon \in (0,1)}$ satisfies the Aldous condition with $\alpha=1$ and $\gamma=\frac{1}{2}$.
Hence,  the laws $\mathcal {L}[u^\epsilon]$ form a tight sequence on $\mathcal {D}([0, T]; (D(A))^*)$.  Moreover, since $ u^\epsilon $ takes values in $\mathcal {C}([0,T];L^2(\mathbb{R}^2))$ (by Lemma \ref{lem10} and the Aubin-Lions Lemma in \cite{temam2001navier}), it follows from the Proposition 1.6 in \cite{jakubowski1986skorokhod} that $\mathcal {L}[u^\epsilon]$ is also tight on $\mathcal {C}([0, T]; (D(A))^*)$ equipped with the uniform topology.

Now, by using the uniform bound \eqref{3.15} in Lemma \ref{cor} and the fact that any bounded ball $\mathbb{B}$ equipped with the weak topology in $L^2(0,T;H^1 (\mathbb{R}^2))$ is relatively compact, one can conclude that the collection of $\mathcal {L}[u^\epsilon]$ is tight on $(L^2(0,T;H^1 (\mathbb{R}^2)),weak)$. Furthermore, for any ball $B_m \in \mathbb{R}^2$, we get by \eqref{3.15} that there is a constant $a_m>0$ such that
\begin{equation}\label{232}
\begin{split}
\mathbb{E} \| u^\epsilon\|_{L^2(0,T;H^1(B_m))}^p \leq a_m,\quad p\geq2.
 \end{split}
\end{equation}
By \eqref{Mod-1}$_3$ and a similar method in Theorem 3.1 of \cite{flandoli1995martingale}, one can show that  $
\mathbb{E} \| u^\epsilon\|_{W^{\alpha,2}(0,T;(H^1(\mathbb{R}^2))^*)}^2 \leq C$, for some $C>0$ independent of $\epsilon$, and hence
\begin{equation}\label{233}
\begin{split}
\mathbb{E} \| u^\epsilon\|_{W^{\alpha,2}(0,T;(H^1(B_m))^*)}^2 \leq a_m.
 \end{split}
\end{equation}
Note that the following embedding
$$ L^2(0,T;H^1(B_m))\bigcap W^{\alpha,2}(0,T;(H^1(B_m))^*)\subset  L^2(0,T;L^2(B_m))$$ is compact (cf. \cite{temam2001navier}), we deduce from the boundedness of \eqref{232}-\eqref{233} that the family of measures $\mathcal {L}[u^\epsilon]$ is tight on $L^2(0,T;L^2(\mathbb{R}^2))$.

$\bullet$ Since $\mathcal {L}[W^\epsilon]$ is a single Radon measure on the Polish space $\chi_W:= \mathcal {C}([0,T];U_0)$, it is tight.

As a consequence, the joint laws $\mathcal {L}[n^\epsilon,c^\epsilon,u^\epsilon,W^\epsilon]=\mathbb{P}\circ(n^\epsilon,c^\epsilon,u^\epsilon,W)^{-1}$ form a tight sequence on
\begin{equation}
\begin{split}
\chi&:= \chi_{n^\epsilon}\times\chi_{c^\epsilon}\times\chi_{u^\epsilon}\times\chi_W\\
&=L^2(0,T;L^2_{\textrm{loc}}(\mathbb{R}^2))\times L^2(0,T;H^1_{\textrm{loc}}(\mathbb{R}^2)) \times \mathcal {C}([0, T]; (D(A))^*) \\
&\quad \cap (L^2(0,T;H^1 (\mathbb{R}^2)),weak)\cap L^2([0, T]; L^2_{\textrm{loc}}(\mathbb{R}^2))\times \mathcal {C}([0,T];U_0).
 \end{split}
\end{equation}
Therefore, we get from the Prohorov's theorem (cf. Theorem 5.1 in \cite{billingsley2013convergence}) that there exists a measure $\widetilde{\mathcal {L}}$ defined on $\chi$ such that, up to a subsequence $\{\epsilon_j\}_{j\geq 1}$, there holds
$$
\mathcal {L}[n^{\epsilon_j},c ^{\epsilon_j},u ^{\epsilon_j},W^{\epsilon_j}] \rightarrow  \widetilde{\mathcal {L}}~~ \textrm{weaky as} ~~ j\rightarrow\infty.
 $$
One can conclude from the Jakubowski-Skorokhod Representation Theorem (cf. \cite[Theorem 2]{jakubowski1998almost}) that, there exists a probability space $(\widetilde{\Omega},\widetilde{\mathcal {F}},\widetilde{\mathbb{P}})$, on this space defined a $\chi$-valued random variables $
(\widetilde{n},\widetilde{c},\widetilde{u},\widetilde{W})$  and a family of processes $(\widetilde{n}^{\epsilon_j},\widetilde{c} ^{\epsilon_j},\widetilde{u} ^{\epsilon_j},\widetilde{W}^{\epsilon_j})$
such that

\vspace{2mm}
\textsf{(d$_1$)} for any $\epsilon_j \in (0,1)$, the joint laws $
\mathcal {L}[\widetilde{n}^{\epsilon_j},\widetilde{c} ^{\epsilon_j},\widetilde{u} ^{\epsilon_j},\widetilde{W}^{\epsilon_j}]$ and $\mathcal {L}[n^{\epsilon_j},c ^{\epsilon_j},u ^{\epsilon_j},W^{\epsilon_j}]$ coincides on $\chi$;

\textsf{(d$_2$)}  the law $\mathcal {L}[\widetilde{n},\widetilde{c},\widetilde{u},\widetilde{W}]=\widetilde{\mathcal {L}}$ is a Radon measure on $\chi$, and as $j\rightarrow\infty$, we have $ \widetilde{\mathbb{P}}$-a.s.
\begin{subequations}
\begin{align}
\widetilde{n}^{\epsilon_j}\rightarrow\widetilde{n} \quad&\textrm{in} \quad  \chi_{n^\epsilon},\label{3.29a}\\
 \widetilde{c}^{\epsilon_j}\rightarrow\widetilde{c} \quad&\textrm{in} \quad \chi_{c^\epsilon},\label{3.29b}\\
 \widetilde{u}^{\epsilon_j}\rightarrow\widetilde{u} \quad&\textrm{in} \quad \chi_{u^\epsilon},\label{3.29c}\\
 \widetilde{W}^{\epsilon_j}\rightarrow\widetilde{W} \quad&\textrm{in}\quad \chi_{W};\label{3.29d}
 \end{align}
 \end{subequations}

\textsf{(d$_3$)} the quadruple $(\widetilde{n}^{\epsilon_j},\widetilde{c}^{\epsilon_j},\widetilde{u}^{\epsilon_j},\widetilde{W}^{\epsilon_j})$ satisfies the equalities in \eqref{1.2}, $\widetilde{\mathbb{P}}$-a.s.

\vspace{2mm}
\textsf{Step 2 (Identification of the limits)}.  On the probability space $(\widetilde{\Omega},\widetilde{\mathcal {F}},\widetilde{\mathbb{P}})$, we deduce from  \cite[Theorem 2.9.1]{breit2018stochastically} that $\widetilde{W}^{\epsilon_j}=\sum_{k\geq 1}e_k \widetilde{W}^{\epsilon_j,k}$ is a cylindrical Wiener process with respect to the filtration
$$
\widetilde{\mathcal {F}}_t^{\epsilon_j}:=\sigma\left\{\sigma_t[\widetilde{n}^{\epsilon_j}],\sigma_t[\widetilde{c}^{\epsilon_j}],\sigma_t[\widetilde{u}^{\epsilon_j}],
\cup_{k\geq 1} \sigma_t[\widetilde{W}^{\epsilon_j,k}]\right\},~~t \in [0,T],
$$
for all $\epsilon_j \in (0,1)$, and so $\widetilde{\mathcal {F}}_t$ is nonanticipative with respect to $\widetilde{W}^{\epsilon_j}$. By \eqref{3.29d}  and \cite[Lemma 2.9.3]{breit2018stochastically}, one can pass to the limit as $j \rightarrow\infty$ to conclude that
$$
\widetilde{\mathcal {F}}_t:=\sigma \left\{\sigma_t[\widetilde{n} ],\sigma_t[\widetilde{c} ],\sigma_t[\widetilde{u} ],
\cup_{k\geq 1} \sigma_t[\widetilde{W}^{ k}]\right\},~~t \in [0,T],
$$
is nonanticipative with respect to $\widetilde{W} $, which implies that $\widetilde{W}$ is a $\widetilde{\mathcal {F}}_t$-cylindrical Wiener process (cf. Lemma 2.1.35 and Corollary 2.1.36 in \cite{breit2018stochastically}).

Now we have all in hand to proceed with the verification of the identities in \eqref{1.2}. We will only show in detail how to deal with the stochastic equation with respect to $u$ since the identification of the first two equations in \eqref{KS-SNS} is standard and similar to \cites{duan2010global,chae2014global,zhang2022global,zhangliu202202}. Indeed, one can use \eqref{3.29c} to conclude that for all $t\in [0,T]$ and $\psi\in C^\infty_ \textrm{div}  (\mathbb{R}^2) $,
\begin{equation}\label{1}
\begin{split}
 &(\widetilde{u}^{\epsilon_j}(t),\psi)_{L^2}- (\widetilde{u}^{\epsilon_j}_0,\psi)_{L^2} + \int_0^t (\nabla \widetilde{u}^{\epsilon_j}, \nabla \psi)_{L^2} \mathrm{d} t -  \int_0^t ( (\widetilde{n}^{\epsilon_j}\nabla \phi)*\rho^\epsilon,\psi)_{L^2} \mathrm{d} t \\
 &\quad \rightarrow (\widetilde{u}(t),\psi)_{L^2}- (\widetilde{u}_0,\psi)_{L^2} + \int_0^t (\nabla \widetilde{u}, \nabla \psi)_{L^2} \mathrm{d} t -  \int_0^t ( (\widetilde{n}\nabla \phi)*\rho^\epsilon,\psi)_{L^2} \mathrm{d} t,~~\textrm{as} ~j\rightarrow\infty,~\widetilde{\mathbb{P}}\textrm{-a.s.}
\end{split}
\end{equation}
Note that
$$
(  \widetilde{u}^{\epsilon_j}\otimes \widetilde{u}^{\epsilon_j},\nabla\psi)_{L^2}-(\widetilde{u} \otimes \widetilde{u} ,\nabla\psi)_{L^2}=(( \widetilde{u}^{\epsilon_j}-\widetilde{u})\otimes\widetilde{u}^{\epsilon_j},\nabla\psi)_{L^2}+(\widetilde{u} \otimes ( \widetilde{u}^{\epsilon_j}-\widetilde{u}) ,\nabla\psi)_{L^2}.
$$
From the conclusion \eqref{3.29c}, we have as $j\rightarrow\infty$
$$
\widetilde{u}^{\epsilon_j}-\widetilde{u}\rightarrow 0 ~~ \textrm{weakly in} ~~ L^2(0,T;L^2 (\mathbb{R}^2)) ~~\textrm{and}~~\widetilde{u}^{\epsilon_j}- \widetilde{u}\rightarrow 0 ~~\textrm{a.e. on}  ~~[0,T] \times \mathbb{R}^2 , ~~ \widetilde{\mathbb{P}}\textrm{-a.s.,}
$$
and it implies
$$
( \widetilde{u}^{\epsilon_j}-\widetilde{u})\otimes\widetilde{u}^{\epsilon_j}\rightarrow 0 ~~\textrm{and} ~~ \widetilde{u} \otimes ( \widetilde{u}^{\epsilon_j}-\widetilde{u})\rightarrow 0 ~~\textrm{weakly in} ~~ L^1(0,T;L^1_{\textrm{loc}} (\mathbb{R}^2))~~\textrm{as}~ j\rightarrow\infty,~~ \widetilde{\mathbb{P}}\textrm{-a.s.}
$$
Hence, there holds
\begin{equation}\label{2}
\begin{split}
 \int_0^t(  \widetilde{u}^{\epsilon_j}\otimes \widetilde{u}^{\epsilon_j},\nabla\psi)_{L^2} \mathrm{d} t\rightarrow \int_0^t(\widetilde{u} \otimes \widetilde{u} ,\nabla\psi)_{L^2} \mathrm{d} t ~~\textrm{as} ~~j\rightarrow\infty,~\widetilde{\mathbb{P}}\textrm{-a.s.}
\end{split}
\end{equation}
To deal with the stochastic integration, we first conclude from \eqref{3.29c} that
$$
\widetilde{u}^{\epsilon_j} \rightarrow \widetilde{u}~~  \textrm{strongly in} ~~ \mathcal {C}([0,T];(D(A))^*)\subseteq L^2(0,T;H^{-l}(\mathbb{R}^2)) ~~ \textrm{as} ~~ j\rightarrow\infty,
$$
for some $l>0$ large enough, $\widetilde{\mathbb{P}}$-a.s., and then condition \textrm{(A$_3$)} implies that
\begin{equation}\label{3.30}
\begin{split}
f (t,\widetilde{u}^{\epsilon_j})\rightarrow f (t,\widetilde{u} ) ~~\textrm{in} ~~ L^2(0,T;L_2(U;H^{-l}(\mathbb{R}^2))) ~~as~~ j\rightarrow\infty,~~\widetilde{\mathbb{P}}\textrm{-a.s.}
\end{split}
\end{equation}
By \eqref{3.30} and \eqref{3.29d}, it follows from the Lemma 2.6.6 in \cite{breit2018stochastically} (see also Lemma 2.4.35 in \cite{mensah2019stochastic} and Lemma 5.3 in \cite{donatelli2020combined}) that
\begin{equation}\label{3}
\begin{split}
 \int_0^T   f (t,\widetilde{u}^{\epsilon_j}) \mathrm{d}  W^{\epsilon_j}\rightarrow\int_0^T   f (t,\widetilde{u} ) \mathrm{d}  W ~~~ \textrm{in probability},~ as ~  j\rightarrow\infty .
\end{split}
\end{equation}
According to the convergence \eqref{1}, \eqref{2} and \eqref{3}, one can now take the limit as $j\rightarrow\infty$ in the $u^{\epsilon_j}$-equation of \eqref{Mod-1} to obtain that, for each $\varphi \in C^\infty_{0, \textrm{div} }(\mathbb{R}^2;\mathbb{R}^2)$
\begin{equation*}
\begin{split}
(\widetilde{u}(t),\varphi)_{L^2}=& (\widetilde{u}_0,\varphi)_{L^2} +\int_0^t  ( \widetilde{u}\otimes \widetilde{u},\nabla\varphi)_{L^2} \mathrm{d}r -\int_0^t ( \nabla\widetilde{u},\nabla\varphi)_{L^2}\mathrm{d}r \\
 &+ \int_0^t(\widetilde{n}\nabla \phi,\varphi)_{L^2}\mathrm{d}r+ \sum_{k\geq 1}\int_0^t (f_k(s,\widetilde{u}),\varphi)_{L^2} \mathrm{d} \widetilde{W}^k,\quad \widetilde{\mathbb{P}}\textrm{-a.s.,}
\end{split}
\end{equation*}
for all $t \in [0,T]$.  By applying the Fatou Lemma and Lemma \ref{unf}, we infer that  $$\widetilde{u}\in \mathcal {C}([0,T];L^2(\mathbb{R}^2))\bigcap L^2(0,T;H^1(\mathbb{R}^2)),~~~ \widetilde{\mathbb{P}}\textrm{-a.s.}$$ In a similar manner, one can also verify that $(\widetilde{n},\widetilde{c})$ satisfies the $n$ and $c$-equations in \eqref{KS-SNS} in the sense of distribution. As a result, the quadruple $(\widetilde{n},\widetilde{c},\widetilde{u},\widetilde{W})$ defined on the stochastic basis $(\widetilde{\Omega},\widetilde{\mathcal {F}},\{\widetilde{\mathcal {F}}_t\}_{t>0},\widetilde{\mathbb{P}})$ is a global martingale weak solution to the KS-SNS system \eqref{KS-SNS}. \qed

\vspace{2mm}\noindent
\textbf{Proof of (a$_2$) in Theorem \ref{th1}.} According to the well-known Yamada-Wantanabe Theorem \cites{yamada1971uniqueness,watanabe1971uniqueness,da2014stochastic} and the Gy\"{o}ngy-Krylov characterization of the convergence in probability \cite{gyongy1980stochastic}, one can prove the existence and uniqueness of pathwise solutions (strong in probability), provided the existence of martingale solutions and the pathwise uniqueness result.

Based on the result (a$_1$) of Theorem \ref{th1}, it suffices to prove the pathwise uniquness. Precisely, for any $T>0$, let $(n_1,c_2,u_3)$ and $(n_1,c_2,u_3)$ be two global martingale solutions to the KS-SNS system \eqref{KS-SNS} with the same initial data $(n_0,c_0,u_0)$ under the stochastic basis $(\widetilde{\Omega},\widetilde{\mathcal {F}},\{\widetilde{\mathcal {F}}_t\}_{t>0},\widetilde{\mathbb{P}})$, we shall prove that
$$
\widetilde{\mathbb{P}}\{(n_1,c_1,u_1)(t)=(n_2,c_2,u_2)(t),~\forall t\in [0,T]  \}=1.
$$
Without loss of generality, we assume that $\chi(\cdot)\equiv 1$.  For simplicity, we set
$$n^*=n_1-n_2, ~~c^*=c_1-c_2,~~ u^*=u_1-u_2, ~~ P^*=P_1-P_2.$$
By the chain rule and Young inequality, we get from the $n$-equation that
%\begin{equation*}
%\begin{split}
%\frac{\mathrm{d} }{dt} n^* +u_1\cdot \nabla  n^*+ u^*\cdot \nabla n_2   = \Delta n^*  - \textrm{div} \left(n^*\chi(c_2)\nabla c_2\right)-  \textrm{div} \left(n_1(\chi(c_2)\nabla c_2-\chi(c_1)\nabla c_1)\right)
%\end{split}
%\end{equation*}
\begin{equation}\label{3.35}
\begin{split}
 & \frac{1}{2}\|n^*(t)\|^2_{L^2} +\int_0^t \|\nabla n^*(t)\|^2_{L^2} \mathrm{d}r = \int_0^t (u^*   n_2,\nabla n^*)_{L^2} \mathrm{d}r\\
  &\quad +\int_0^t (n^* \nabla c_2,\nabla n^*)_{L^2} \mathrm{d}r -\int_0^t (n_1 \nabla c^*,\nabla n^*)_{L^2} \mathrm{d}r := A_1+ A_2+ A_3 .
\end{split}
\end{equation}
By using the Young inequality and the GN inequality that
\begin{equation}\label{3.36}
\begin{split}
 |A_1 |%&\leq \epsilon \int_0^t\|\nabla n^*\|_{L^2}^2 \mathrm{d}r +C_\epsilon\int_0^t\|u^*\|_{L^4}^2 \|   n_2\|_{L^4}^2 \mathrm{d}r\\
 &\leq \epsilon \int_0^t\|\nabla n^*\|_{L^2}^2 \mathrm{d}r +C_\epsilon\int_0^t\|u^*\|_{L^2} \|\nabla u^*\|_{L^2}  \|n_2\|_{L^2} \|\nabla n_2\|_{L^2}  \mathrm{d}r\\
 &\leq \epsilon \int_0^t(\|\nabla n^*\|_{L^2}^2+\|\nabla u^*\|_{L^2}^2 ) \mathrm{d}r+C_\epsilon\int_0^t \|u^*\|_{L^2} ^2 \|n_2\|_{L^2}^2 \|\nabla n_2\|_{L^2}^2 \mathrm{d}r.
\end{split}
\end{equation}
For any $\epsilon>0$, we get by Young inequality that
\begin{equation}\label{3.37}
\begin{split}
 |A_2 |%&\leq  \epsilon \int_0^t\|\nabla n^*\|_{L^2}^2 \mathrm{d}r +C_{\epsilon}\int_0^t \|n ^*\|_{L^4}^2\|\nabla c_2 \|_{L^4}^2 \mathrm{d}r\\
 &\leq  \epsilon \int_0^t\|\nabla n^*\|_{L^2}^2 \mathrm{d}r +C_{\epsilon}\int_0^t  \|n ^*\|_{L^2}\|\nabla n ^*\|_{L^2}  \|\nabla c_2\|_{L^2}\|  c_2\|_{H^2}  \mathrm{d}r\\
 &\leq  2\epsilon \int_0^t\|\nabla n^*\|_{L^2}^2 \mathrm{d}r +C_{\epsilon}\int_0^t  \|n ^*\|_{L^2} ^2  \|\nabla c_2\|_{L^2}^2\|  c_2\|_{H^2}^2    \mathrm{d}r.
\end{split}
\end{equation}
By GN inequality,  we have
\begin{equation}\label{3.38}
\begin{split}
 |A_3 |%&\leq    \epsilon \int_0^t\|\nabla n^*\|_{L^2}^2 \mathrm{d}r +C_{\epsilon}\int_0^t \|n _1\|_{L^4}^2\|\nabla c^* \|_{L^4}^2 \mathrm{d}r \\
 &\leq    \epsilon \int_0^t\|\nabla n^*\|_{L^2}^2 \mathrm{d}r +C_{\epsilon}\int_0^t \|\nabla c^*\|_{L^2}\| c^*\|_{H^2}  \|n_1\|_{L^2}\|\nabla n_1\|_{L^2}  \mathrm{d}r\\
 &\leq    \epsilon \int_0^t(\|\nabla n^*\|_{L^2}^2+\| c^*\|_{H^2}^2) \mathrm{d}r +C_\epsilon  \int_0^t \|\nabla c^*\|_{L^2}^2 \|n_1\|_{L^2}^2\|\nabla n_1\|_{L^2}^2  \mathrm{d}r.
\end{split}
\end{equation}
Plugging the estimates \eqref{3.36}-\eqref{3.38} into \eqref{3.35}, we infer that
\begin{equation}\label{3.39}
\begin{split}
 & \frac{1}{2}\|n^*(t)\|^2_{L^2} +\int_0^t \|\nabla n^*(r)\|^2_{L^2} \mathrm{d}r \leq 4\epsilon \int_0^t(\|\nabla n^*\|_{L^2}^2+\|\nabla u^*\|_{L^2}^2+ \| c^*\|_{H^2}^2) \mathrm{d}r\\
 &\quad +C_\epsilon\int_0^t (\|u^*\|_{L^2} ^2 \|n_2\|_{L^2}^2 \|\nabla n_2\|_{L^2}^2+\|n ^*\|_{L^2} ^2 \|\nabla c_2\|_{L^2}^2\|  c_2\|_{H^2}^2 + \|\nabla c^*\|_{L^2}^2 \|n_1\|_{L^2}^2\|\nabla n_1\|_{L^2}^2 )\mathrm{d}r.
\end{split}
\end{equation}
To close the estimate \eqref{3.39}, we have to derive estimates for $\| c^*\|_{H^1}$ and $\|u^*(t)\| _{L^2}$, respectively. Indeed, by applying the chain rule to $\mathrm{d} \| c^*(t)\|_{L^2}^2$ and integrating by parts, we find
%$$
%\partial_t c^* -\Delta c^*=- u^* \cdot \nabla c_1-u_2 \cdot\nabla c^*-n^* \kappa(c_1)-n_2 (\kappa(c_1)-\kappa(c_2))
%$$
%$$
%\partial_t \nabla c^* -\Delta \nabla c^*=-\nabla( u^* \cdot \nabla c_1)-\nabla u_2 \cdot\nabla c^*)- \nabla(n^* \kappa(c_1))-\nabla(n_2 (\kappa(c_1)-\kappa(c_2)))
%$$
\begin{equation}\label{3.40}
\begin{split}
 &  \frac{1}{2} \|c^*(t)\|^2_{L^2} + \int_0^t \|\nabla c^*(r)\|^2_{L^2}\mathrm{d}r=    \int_0^t (\nabla c^*,u^* c_2 )_{L^2}\mathrm{d}r-\int_0^t (c^*,n^* \kappa(c_2) )_{L^2}\mathrm{d}r\\
  &\quad +  \int_0^t (c^*,  n_1 (\kappa(c_2)-\kappa(c_1)))_{L^2}\mathrm{d}r:= B_1+B_2+B_3,
\end{split}
\end{equation}
and
\begin{equation}\label{3.40-1}
\begin{split}
 &  \frac{1}{2} \|\nabla c^*(t)\|^2_{L^2} + \int_0^t \|\Delta c^*(r)\|^2_{L^2}\mathrm{d}r\\
 &\quad=   \int_0^t (\Delta c^*,  u^* \cdot \nabla c_1 )_{L^2}\mathrm{d}r- \int_0^t (\Delta c^*, u_2 \cdot\nabla c^* )_{L^2}\mathrm{d}r - \int_0^t (\Delta c^*, n^* \kappa(c_1) )_{L^2}\mathrm{d}r\\
 &\quad\quad- \int_0^t (\Delta c^*, n_2 (\kappa(c_1)-\kappa(c_2)) )_{L^2}\mathrm{d}r\\
 & \quad\leq \epsilon \int_0^t \|\Delta c^*(r)\|^2_{L^2}\mathrm{d}r+ \int_0^t (\|u^* \cdot \nabla c_1\|_{L^2}^2+\|u_2 \cdot\nabla c^*\|_{L^2}^2+\|n^* \kappa(c_1) \|_{L^2}^2\\
 &\quad \quad+\|n_2 (\kappa(c_1)-\kappa(c_2)) \|_{L^2}^2) \mathrm{d}r        := \epsilon \int_0^t \|\Delta c^*(r)\|^2_{L^2}\mathrm{d}r+D_1+D_2+D_3+D_4.
\end{split}
\end{equation}
For any $\epsilon>0$, it follows from the GN inequality that
\begin{equation} \label{3.41}
\begin{split}
 |B_1| &\leq \epsilon \int_0^t\|\nabla c^*\|_{L^2}^2 \mathrm{d}r + C_\epsilon \int_0^t\|u^*\|_{L^2}\|\nabla u^*\|_{L^2} \|c_2\|_{L^2}\|\nabla c_2\|_{L^2} \mathrm{d}r\\
 &\leq \epsilon \int_0^t(\|\nabla c^*\|_{L^2}^2+\|\nabla u^*\|_{L^2}^2) \mathrm{d}r + C_\epsilon \int_0^t\|u^*\|_{L^2} ^2\|c_2\|_{L^2}^2\|\nabla c_2\|_{L^2}^2 \mathrm{d}r.
\end{split}
\end{equation}
Since $\kappa(0)=0$, we get by the continuity of $\kappa(\cdot)$ and Mean Value Theorem that
\begin{equation}\label{3.42}
\begin{split}
 |B_2| &\leq  \sup_{r\in [0,\|c_2\|_{L^\infty}]}|\kappa'(r)|\int_0^t \|c^*\|_{L^2}\|n^*   \| _{L^2}\|c_2\|_{L^\infty}\mathrm{d}r \lesssim _{c_0,\kappa} \int_0^t (\|c^*\|_{L^2}^2 +  \|n^* \| _{L^2}^2)\mathrm{d}r.
\end{split}
\end{equation}
Similarly, we have
\begin{equation}\label{3.43}
\begin{split}
 |B_3| &\leq \sup_{r\in [0,\|c_1\|_{L^\infty}+\|c_2\|_{L^\infty}]}|\kappa'(r)|\int_0^t \|c^*\|_{L^4}^2 \| n_1 \|_{L^2}\mathrm{d}r \\
 &\leq C_{\kappa,c_0}\int_0^t \|c^*\|_{L^2} \|\nabla c^*\|_{L^2}  \| n_1 \|_{L^2}\mathrm{d}r\\
  &\leq \epsilon\int_0^t \|\nabla c^*\|_{L^2} ^2\mathrm{d}r+ C_{\kappa,c_0,\epsilon }\int_0^t \|c^*\|_{L^2} ^2 \| n_1 \|_{L^2}^2\mathrm{d}r.
\end{split}
\end{equation}
For $D_1$, we use the GN inequality to obtain
\begin{equation}\label{3.44}
\begin{split}
 |D_1|&\leq  \int_0^t \|u^*\|_{L^4}^2\| \nabla c_1\|_{L^4}^2 \mathrm{d}r\lesssim \int_0^t \|u^*\|_{L^2}\|\nabla u^*\|_{L^2} \| c_1\|_{L^2}\| \nabla c_1\|_{L^2}  \mathrm{d}r\\
 & \leq \epsilon \int_0^t  \|\nabla u^*\|_{L^2}^2 \mathrm{d}r+ C_{\epsilon} \int_0^t \|u^*\|_{L^2}^2 \| c_1\|_{L^2}^2\| \nabla c_1\|_{L^2}^2  \mathrm{d}r.
\end{split}
\end{equation}
For $D_2$, there holds
\begin{equation}\label{3.45}
\begin{split}
 |D_2|&\leq  \int_0^t \|u_2\|_{L^2} \|\nabla u_2\|_{L^2}\| \nabla c^*\|_{L^2}\|  c^*\|_{H^2}  \mathrm{d}r\\
 & \leq \epsilon \int_0^t\|  c^*\|_{H^2}^2  \mathrm{d}r+C_{\epsilon}\int_0^t  \|u_2\|_{L^2}^2 \|\nabla u_2\|_{L^2}^2\| \nabla c^*\|_{L^2}^2   \mathrm{d}r .
\end{split}
\end{equation}
The terms $D_3$ and $D_4$ can be estimated as $B_2$ and $B_3$, and we have
\begin{equation}\label{3.46}
\begin{split}
 |D_3|+|D_4|&\leq  \epsilon \int_0^t(\| \nabla n^*\|_{L^2}^2 +\| \nabla c^*\|_{L^2}^2) \mathrm{d}r \\
 &+C_\epsilon \int_0^t(\| n^*\|_{L^2}^2\|c_1\|_{L^2}^2 \|\nabla c_1\|_{L^2}^2+\| n_2\|_{L^2}^2\| \nabla n_2\|_{L^2}^2\| c^*\|_{L^2}^2) \mathrm{d}r.
\end{split}
\end{equation}
Combining the estimates \eqref{3.41}-\eqref{3.46}, we deduce from \eqref{3.40} and \eqref{3.40-1} that
\begin{equation} \label{3.47}
\begin{split}
 &  \frac{1}{2} (\|c^*(t)\|^2_{L^2}+ \|\nabla c^*(t)\|^2_{L^2}) + \int_0^t (\|\nabla c^*(r)\|^2_{L^2}+\|\Delta c^*(r)\|^2_{L^2}) \mathrm{d}r \\
  & \quad \leq  3\epsilon \int_0^t(\| \nabla n^*\|_{L^2}^2 +\|  c^*\|_{H^2}^2+\| \nabla u^*\|_{L^2}^2) \\
&\quad\quad+C_{\kappa,c_0,\epsilon } \int_0^t \Big(\| \nabla c^*\|_{L^2}^2 \|u_2\|_{L^2}^2 \|\nabla u_2\|_{L^2}^2+    \|c^*\|_{L^2} ^2 (\| n_1 \|_{L^2}^2+\| n_2\|_{L^2}^2\| \nabla n_2\|_{L^2}^2+1)  \\
  & \quad\quad+  (\|u^*\|_{L^2}^2+\| n^*\|_{L^2}^2) (\| c_1\|_{L^2}^2\| \nabla c_1\|_{L^2}^2+\|c_2\|_{L^2}^2\|\nabla c_2\|_{L^2}^2+1) \Big) \mathrm{d}r.
\end{split}
\end{equation}
To estimate $\|u^* \|_{L^2}$, we now apply It\^{o}'s formula to $ \|u^* (t)\|_{L^2}^2$, after integrating by parts and using the GN inequality, we infer that
%$$
%\mathrm{d}  u^* +  (u^*\cdot \nabla) u_1+(u_2\cdot \nabla) u^*  \mathrm{d} t =   \Delta u^* \mathrm{d} t + n^*\nabla \phi \mathrm{d} t+ \sum_{j\geq 1}(f_j(t,u_1)-f_j(t,u_2)) \mathrm{d}  W^j
%$$
\begin{equation}\label{3.48}
\begin{split}
&\|u^* (t)\|_{L^2}^2 +   \int_0^t \|\nabla u^* (r)\|_{L^2}^2  \mathrm{d}r\\
&= \int_0^t  (\nabla u^*, u^*\otimes u_1 )_{L^2}  \mathrm{d}r+ \int_0^t  (u^*,n^*\nabla \phi)_{L^2}  \mathrm{d}r+   \int_0^t  \|f(t,u_1)-f(t,u_2)\|^2_{L_2(U;L^2)} \mathrm{d}r \\
&\quad + \sum_{j\geq 1}\int_0^t  (u^*,f_j(t,u_1)-f_j(t,u_2))_{L^2} \mathrm{d}  W^j\\
&\leq \epsilon \int_0^t \|\nabla u^*\|_{L^2}^2 \mathrm{d}r+ C_\phi \int_0^t \Big(\| u^*\|_{L^2} ^2(\| u_1\|_{L^2}^2\|\nabla  u_1\|_{L^2}^2+1)+ \|n^*\|_{L^2}^2\Big)\mathrm{d}r\\
&\quad+ 2\sum_{j\geq 1}\int_0^t  (u^*,f_j(t,u_1)-f_j(t,u_2))_{L^2} \mathrm{d}  W^j.
\end{split}
\end{equation}
Recall the following inequality (cf.  \cite[Proposition 7.2]{taylor1996partial}):
$$
\| h \|_{H^2} ^2 \lesssim \| h \|_{L^2} ^2+\| \nabla h \|_{L^2} ^2+\|\Delta h \|_{L^2} ^2,~~\forall h\in H^2(\mathbb{R}^2).
$$
Putting the estimates \eqref{3.38}, \eqref{3.47} and \eqref{3.48} together, and  choosing $\epsilon>0$ small enough, the term $\mathscr{D}(t)$ can be estimated as
\begin{equation*}
\begin{split}
 \mathscr{D}(t)\leq C_{\phi,\chi,c_0}\int_0^t  \mathscr{D}(r) \mathscr{F}(r)  \mathrm{d}r+ 2\sum_{j\geq 1}\int_0^t  (u^*,f_j(t,u_1)-f_j(t,u_2))_{L^2} \mathrm{d}  W^j,~~ \forall t\in [0,T],
\end{split}
\end{equation*}
where
\begin{equation*}
\begin{split}
 \mathscr{D}(t)&:=\|n^*(t)\|^2_{L^2}+\|c^*(t)\|^2_{L^2}+ \|\nabla c^*(t)\|^2_{L^2}+\|u^* (t)\|_{L^2}^2,\\
\mathscr{F}(t)&:=   \|n_2\|_{L^2}^2 \|\nabla n_2\|_{L^2}^2+  \|\nabla c_2\|_{L^2}^2\|  c_2\|_{H^2}^2 +   \|n_1\|_{L^2}^2\|\nabla n_1\|_{L^2}^2+   \|u_2\|_{L^2}^2 \|\nabla u_2\|_{L^2}^2\\
 & \quad +  \| c_1\|_{L^2}^2\| \nabla c_1\|_{L^2}^2 +  \| u_1\|_{L^2}^2\|\nabla  u_1\|_{L^2}^2+  \| n_1 \|_{L^2}^2+1.
\end{split}
\end{equation*}
By using the Gronwall Lemma, we arrive at
\begin{equation}\label{3.481}
\begin{split}
 \mathscr{D}(t)\leq  2 e^{C_{\phi,\kappa, c_0}\int_0^t  \mathscr{F}(r)  \mathrm{d}r}\sup_{r\in [0,t]}\left|\sum_{j\geq 1}\int_0^r  (u^*,f_j(r,u_1)-f_j(r,u_2))_{L^2} \mathrm{d}  W^j\right|.
\end{split}
\end{equation}
In order to apply the BDG inequality to \eqref{3.481}, we define $\textbf{t}^R =\textbf{t}^R_1\bigwedge\textbf{t}^R_2\bigwedge T$ with
\begin{equation*}
\begin{split}
\textbf{t}^R_i:=&\inf \left\{t>0;~\sup _{t\in[0, T]} \|n_i(t) \|_{L^2}^2 \bigvee \int_0^T \|\nabla n_i(t) \|_{L^2}^2 \mathrm{d}r \bigvee \sup _{t\in[0, T]} \|c_i(t) \|_{H^1}^2\right.\\
 &\left.\bigvee \int_0^T \|\nabla c_i(t) \|_{H^2}^2\mathrm{d}r \bigvee \sup _{t\in[0, T]} \|u_i(t) \|_{L^2}^2 \bigvee \int_0^T \|\nabla u_i(t) \|_{L^2}^2\mathrm{d}r \geq R\right\},~~i=1,2.
\end{split}
\end{equation*}
Since $ (n_i, c_i, u_i )$, $i=1,2$ are solutions to the system  \eqref{Mod-1}, it is clear that $
\textbf{t}^R \nearrow T$  as $R\rightarrow\infty$, $\widetilde{\mathbb{P}}$-a.s.
Therefore, one can derive from \eqref{3.481} that
\begin{equation}
\begin{split}
 \mathscr{D}(t\wedge \textbf{t}^R)&\leq  2 e^{C_{\phi,\kappa, c_0}\int_0^{t\wedge \textbf{t}^R}  \mathscr{F}(r)  \mathrm{d}r}\sup_{r\in [0,t\wedge \textbf{t}^R]}\left|\sum_{j\geq 1}\int_0^r  (u^*,f_j(r,u_1)-f_j(r,u_2))_{L^2} \mathrm{d}  W^j\right|\\
 &\lesssim _{\phi,\kappa, c_0,R}\sup_{r\in [0,t\wedge \textbf{t}^R]}\left|\sum_{j\geq 1}\int_0^r  (u^*,f_j(r,u_1)-f_j(r,u_2))_{L^2} \mathrm{d}  W^j\right|,
\end{split}
\end{equation}
which together with  the BDG inequality and the definition of $\mathscr{D}(t)$ yield that
\begin{equation*}
\begin{split}
\mathbb{E} \sup_{r\in [0,t\wedge \textbf{t}^R]}\mathscr{D}(r)&\lesssim _{\phi,\kappa, c_0,R}\mathbb{E} \left(\int_0^{t\wedge \textbf{t}^R} \sum_{j\geq 1} \|u^*\|_{L^2}^2\|f_j(r,u_1)-f_j(r,u_2)\|_{L^2} ^2 \mathrm{d}r\right)^{1/2}\\
&\leq \frac{1}{2}\mathbb{E}\sup_{r\in [0,t\wedge \textbf{t}^R]}\|u^*(r)\|_{L^2}^2 + C_{\phi,\kappa, c_0,R} \mathbb{E}\int_0^{t\wedge \textbf{t}^R}  \|u^*(r)\|_{L^2} ^2 \mathrm{d}r \\
&\leq \frac{1}{2}\mathbb{E}\sup_{r\in [0,t\wedge \textbf{t}^R]}\mathscr{D}(r) + C_{\phi,\kappa, c_0,R} \int_0^{t } \mathbb{E}\sup_{\tau\in [0,r\wedge \textbf{t}^R]} \mathscr{D}(\tau) \mathrm{d}r,~~\forall t >0.
\end{split}
\end{equation*}
Using the Gronwall Lemma to the last inequality yield that
$$
\mathbb{E} \sup_{t\in [0,T\wedge \textbf{t}^R]}\mathscr{D}(t)=0, ~~\textrm{for any}~ T>0.
$$
Taking the limit as $R\rightarrow\infty$ leads to $\mathbb{E} \sup_{t\in [0,T ]}\mathscr{D}(t)=0,$ which implies the uniqueness result. The proof of Theorem \ref{th1} is now completed.
\qed

\section{Proof of Theorem \ref{th2}}\label{sec4}
The scheme for proving Theorem \ref{th2} is similar to  Theorem \ref{th1}, which is based on the a new uniform entropy functional inequality for the approximate solutions $(n^\epsilon,u^\epsilon,u^\epsilon)$ constructed in Lemma \ref{lem8}.  Precisely, we have the following result:

\begin{lemma} \label{lem4.1}
Let $\epsilon >0$ and $T>0$, and assume that  $(n^\epsilon,c^\epsilon,u^\epsilon)$ is the unique global pathwise solution in $L^2(\Omega;\mathcal {C}([0,T];\textbf{H}^{s}(\mathbb{R}^2)))\bigcap L^2(\Omega;L^2([0,T];\textbf{H}^{s+1}(\mathbb{R}^2)))$ in Lemma \ref{lem8}.  If the diffusion coefficients $d_i$, $i=1,2,3$ satisfy the assumption {(\textsf{B$_1$})}, then  for any $p\geq 1$, the estimate
\begin{equation}\label{entropy}
\begin{split}
 &\mathbb{E} \sup_{t\in [0,T]} \left(\int_{\mathbb{R}^2}n^\epsilon (|x|+|\ln n^\epsilon| )\mathrm{d}x +\|\nabla c^\epsilon\|_{L^2}^2+ \frac{4\Lambda^4 \|  c_0\|_{L^\infty}^2}{ d_3 d_2} \|u^\epsilon \|_{L^2}^2 \right)^p \\
 &\quad +  \mathbb{E}\left(\int_0^T \bigg(  \frac{d_1}{2}\|  \nabla\sqrt{n ^\epsilon}\|_{L^2}^2 + \frac{d_2}{2} \|\Delta c^\epsilon\|_{L^2}^2 +\frac{2\Lambda^4 \|  c_0\|_{L^\infty}^2}{ d_2} \|\nabla u^{\epsilon}\|_{L^2}^2\bigg) \mathrm{d} t \right)^p \\
 &\quad\lesssim_{d_1,d_2,d_3,n_0,c_0,u_0,p,T,\kappa,\chi,\varrho, \Lambda,\tilde{\Lambda}_p } 1
\end{split}
\end{equation}
holds uniformly in $\epsilon\in (0,1)$, where $\Lambda$ is provided  in condition {(\textsf{B$_2$})} and $\tilde{\Lambda}_p$ is a general constant in the BDG inequality depending only on $p$.

\end{lemma}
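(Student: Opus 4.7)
I shall construct a nonlinear Lyapunov functional
\begin{equation*}
\mathcal{E}(t):=\int_{\mathbb{R}^2}n^\epsilon(\ln n^\epsilon+\langle x\rangle)\,dx+\tfrac{1}{2}\|\nabla c^\epsilon\|_{L^2}^2+\tfrac{\alpha}{2}\|u^\epsilon\|_{L^2}^2,\qquad\alpha:=\tfrac{4\Lambda^4\|c_0\|_{L^\infty}^2}{d_3 d_2},
\end{equation*}
tailored so that the parabolic dissipations of the three equations together dominate all cross-couplings under the smallness condition (\textsf{B$_2$}). The smooth weight $\langle x\rangle=\sqrt{1+|x|^2}$ is used instead of $|x|$ because $|\nabla\langle x\rangle|\leq 1$ and $|\Delta\langle x\rangle|\leq C$; since $|x|\leq\langle x\rangle$, any bound on $\int n^\epsilon\langle x\rangle$ bounds $\int n^\epsilon|x|$. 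The weight $\alpha$ is chosen so that the fluid dissipation $2\alpha d_3\|\nabla u^\epsilon\|_{L^2}^2$ arising from It\^{o}'s formula on $\|u^\epsilon\|^2$ absorbs the $\tfrac{\Lambda^4\|c_0\|_{L^\infty}^2}{d_2}\|\nabla u^\epsilon\|_{L^2}^2$ term produced by the $c^\epsilon$-convective coupling. The norm $\|n^\epsilon\|_{L^1\cap L\log L}$ required in \eqref{entropy} is recovered from $\mathcal{E}$ via the Jensen-type comparison $\int n|\ln n|\,dx\leq\int n\ln n\,dx+\int n|x|\,dx+C$ against the reference density $e^{-|x|}$ on the set $\{n<1\}$.

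\textbf{Four evolution identities.} The deterministic components will be derived as: (i) testing $\eqref{Mod-1}_1$ against $1+\ln n^\epsilon$ yields $\tfrac{d}{dt}\!\int\! n^\epsilon\ln n^\epsilon+4d_1\|\nabla\sqrt{n^\epsilon}\|_{L^2}^2=\int\nabla n^\epsilon\cdot[\chi(c^\epsilon)\nabla c^\epsilon*\rho^\epsilon]$, the transport term vanishing by $\mathrm{div}\,u^\epsilon=0$; (ii) testing $\eqref{Mod-1}_1$ against $\langle x\rangle$ gives $\tfrac{d}{dt}\!\int\! n^\epsilon\langle x\rangle\lesssim\|u^\epsilon\|_{L^2}\|n^\epsilon\|_{L^2}+d_1\|n_0\|_{L^1}+M_\chi\|n^\epsilon\|_{L^2}\|\nabla c^\epsilon\|_{L^2}$; (iii) testing $\eqref{Mod-1}_2$ against $-\Delta c^\epsilon$ produces $\tfrac{1}{2}\tfrac{d}{dt}\|\nabla c^\epsilon\|^2+d_2\|\Delta c^\epsilon\|^2=\int(u^\epsilon\cdot\nabla c^\epsilon)\Delta c^\epsilon+\int\kappa(c^\epsilon)(n^\epsilon*\rho^\epsilon)\Delta c^\epsilon$; (iv) applying It\^{o}'s formula to $\|u^\epsilon\|_{L^2}^2$ in $\eqref{Mod-1}_3$ gives $d\|u^\epsilon\|^2+2d_3\|\nabla u^\epsilon\|^2\,dt\leq(C_\phi\|u^\epsilon\|\|n^\epsilon\|_{L^2}+\varrho(1+\|u^\epsilon\|^2))\,dt+2(u^\epsilon,f(t,u^\epsilon)\,dW)$.

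\textbf{Absorption and the role of (\textsf{B$_2$}).} Two Gagliardo--Nirenberg facts drive the estimates: $\|n^\epsilon\|_{L^2}^2\leq\Lambda^2\|n_0\|_{L^1}\|\nabla\sqrt{n^\epsilon}\|_{L^2}^2$ (from the defining inequality in (\textsf{B$_2$}) combined with the mass conservation from Lemma~\ref{lem9}), and $\|\nabla c^\epsilon\|_{L^4}^2\leq C\|c_0\|_{L^\infty}\|\Delta c^\epsilon\|_{L^2}$, the latter obtained by an IBP on $\int|\nabla c|^4$ in the whole plane using the $L^\infty$-bound on $c^\epsilon$; its constant is absorbed into $\Lambda^2$. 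Using $\mathrm{div}\,u^\epsilon=0$ to rewrite $\int(u^\epsilon\cdot\nabla c^\epsilon)\Delta c^\epsilon=-\sum_{i,j}\int\partial_i u^\epsilon_j\,\partial_j c^\epsilon\,\partial_i c^\epsilon$, this term is bounded by $\Lambda^2\|c_0\|_{L^\infty}\|\nabla u^\epsilon\|\|\Delta c^\epsilon\|$ and Young-split into $\tfrac{d_2}{4}\|\Delta c^\epsilon\|^2+\tfrac{\Lambda^4\|c_0\|_{L^\infty}^2}{d_2}\|\nabla u^\epsilon\|^2$; the reaction term $\int\kappa n\Delta c$ contributes $\tfrac{d_2}{4}\|\Delta c^\epsilon\|^2+\tfrac{M_\kappa^2\Lambda^2\|n_0\|_{L^1}}{d_2}\|\nabla\sqrt{n^\epsilon}\|^2$; and the chemotaxis coupling $\int\nabla n^\epsilon\cdot\chi(c^\epsilon)\nabla c^\epsilon*\rho^\epsilon$, rewritten as $2\int\sqrt{n^\epsilon}\nabla\sqrt{n^\epsilon}\cdot[\chi(c^\epsilon)\nabla c^\epsilon*\rho^\epsilon]$ and bounded via an $L^4\!-\!L^4\!-\!L^2$ H\"older followed by Young with exponents $(4/3,4)$, contributes multiples of $\|\nabla\sqrt{n^\epsilon}\|^2$ and $\|\Delta c^\epsilon\|^2$ with total weight scaling like $M_\chi^4\Lambda^6\|n_0\|_{L^1}\|c_0\|_{L^\infty}^2/d_1^3$. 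Summing the four evolutions with weights $(1,1,1,\alpha/2)$ and collecting coefficients, the inequality (\textsf{B$_2$}) is precisely the algebraic condition that keeps the net right-hand coefficients of $\|\nabla\sqrt{n^\epsilon}\|^2$, $\|\Delta c^\epsilon\|^2$, $\|\nabla u^\epsilon\|^2$ strictly below $4d_1$, $d_2$, $2\alpha d_3$ respectively, leaving the dissipation $\mathcal{D}$ from \eqref{entropy} on the left and only lower-order terms $\lesssim 1+\mathcal{E}$ on the right.

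\textbf{Closure.} Having arrived at $d\mathcal{E}+\mathcal{D}\,dt\leq C(1+\mathcal{E})\,dt+2\alpha(u^\epsilon,f(t,u^\epsilon)\,dW)$, I raise both sides to the $p$-th power, take $\mathbb{E}\sup_{[0,T]}$, apply the BDG inequality with constant $\tilde\Lambda_p$ to the stochastic integral, absorb $\tfrac12\mathbb{E}\sup|\mathcal{E}|^p$ on the left, and conclude by the stochastic Gronwall lemma, giving \eqref{entropy} uniformly in $\epsilon$. The hard part is the bookkeeping in step three: one must verify that (\textsf{B$_2$}) is exactly the sharp algebraic condition for simultaneous absorption of the $c$-convection, the $\kappa$-reaction, and the $\chi$-chemotaxis couplings, rather than a merely sufficient one with slack, since the deterministic monotonicity hypotheses of (\textsf{A$_2$}) used in Lemma~\ref{lem10} are now unavailable.
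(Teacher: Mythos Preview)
Your proposal is correct and follows essentially the same route as the paper: the same Lyapunov functional with the same weight $\alpha=\tfrac{4\Lambda^4\|c_0\|_{L^\infty}^2}{d_3d_2}$, the same four evolution identities (entropy, $\langle x\rangle$-moment, $-\Delta c^\epsilon$ test, It\^o on $\|u^\epsilon\|^2$), the same Gagliardo--Nirenberg inputs, the same Jensen-type lower bound on $\int n\ln n$ via the reference density $e^{-|x|}$, and the same BDG-then-Gronwall closure. The only notable deviation is your treatment of the chemotaxis coupling $\int\nabla n^\epsilon\cdot[(\chi(c^\epsilon)\nabla c^\epsilon)*\rho^\epsilon]$: you split it as $2\int\sqrt{n}\,\nabla\sqrt{n}\cdot[\cdots]$ and use an $L^4$--$L^4$--$L^2$ H\"older, whereas the paper writes $\nabla n=\tfrac{\nabla n}{\sqrt{n}}\sqrt{n}$, applies Cauchy to peel off $\tfrac{d_1}{2}\int\tfrac{|\nabla n|^2}{n}$, and then uses $L^2$--$L^4$ on the remainder; both lead to the same absorbable terms. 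Your scaling remark ``$M_\chi^4\Lambda^6\|n_0\|\|c_0\|^2/d_1^3$'' is slightly off (the paper gets $\tfrac{\Lambda^6\|n_0\|\|c_0\|^2 M_\chi^4}{8d_2d_1^2}$, with $d_2$ rather than an extra $d_1$), but as you note this is bookkeeping, and the condition~(\textsf{B$_2$}) is used exactly as you describe to absorb all three dissipation-competing coefficients.
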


\begin{proof}[\emph{\textbf{Proof}}]
Recalling Lemma \ref{lem9}, it follows from the condition  {(\textsf{B$_1$})} that
$$
M_\kappa:=   \max_{t\in[0,\|c_0\|_{L^\infty}]} |\kappa (t)|,\quad M_\chi:=   \max_{t\in[0,\|c_0\|_{L^\infty}]} |\chi (t)|.
$$
Applying the chain rule to $\mathrm{d} (n^\epsilon \ln n^\epsilon)$ and integrating by parts over $\mathbb{R}^2$, we get from \eqref{Mod-1}$_1$ and the divergence-free condition $ \textrm{div}  u^\epsilon =0$ that
\begin{equation}\label{4.1}
\begin{split}
 \int_{\mathbb{R}^2}n^\epsilon \ln n^\epsilon \mathrm{d}x +d_1\int_0^t\int_{\mathbb{R}^2} \frac{|\nabla n ^\epsilon|^2}{n^\epsilon }\mathrm{d}x \mathrm{d}r = \int_{\mathbb{R}^2}n^\epsilon_0 \ln n^\epsilon_0 \mathrm{d}x+ \int_0^t\int_{\mathbb{R}^2} \nabla n^\epsilon \cdot[(\chi(c^\epsilon)\nabla c^\epsilon)*\rho^\epsilon] \mathrm{d}x \mathrm{d}r.
\end{split}
\end{equation}
For the last term in \eqref{4.1}, we get by Young inequality that
\begin{equation*}
\begin{split}
&\int_0^t\int_{\mathbb{R}^2} \nabla n^\epsilon \cdot[(\chi(c^\epsilon)\nabla c^\epsilon)*\rho^\epsilon] \mathrm{d}x \mathrm{d}r\\
& \quad \leq \frac{d_1}{2}\int_0^t\int_{\mathbb{R}^2}\frac{|\nabla n ^\epsilon|^2}{n^\epsilon } \mathrm{d}x \mathrm{d}r + \frac{\epsilon}{4}\int_0^t\int_{\mathbb{R}^2}  | \nabla c^\epsilon|^4  \mathrm{d}x \mathrm{d}r+  \frac{M_\chi^4}{4\epsilon d_1^2} \int_0^t\int_{\mathbb{R}^2}  |n^\epsilon |^2  \mathrm{d}x \mathrm{d}r.
\end{split}
\end{equation*}
We get from \eqref{4.1} that
\begin{equation} \label{4.2}
\begin{split}
 &\int_{\mathbb{R}^2}n^\epsilon \ln n^\epsilon \mathrm{d}x +\frac{d_1}{2}\int_0^t\int_{\mathbb{R}^2} \frac{|\nabla n ^\epsilon|^2}{n^\epsilon }\mathrm{d}x \mathrm{d}r\\
 &\quad\leq \int_{\mathbb{R}^2}n^\epsilon_0 \ln n^\epsilon_0 \mathrm{d}x+ \frac{  d_2}{ 2\Lambda^4 \|  c_0\|_{L^\infty}^2} \int_0^t\| \nabla c^\epsilon\|^4_{L^4} \mathrm{d}r+   \frac{\Lambda^4 \|  c_0\|_{L^\infty}^2 M_\chi^4}{8 d_2  d_1^2} \int_0^t\int_{\mathbb{R}^2}  |n^\epsilon |^2  \mathrm{d}x\\
 &\quad\leq \int_{\mathbb{R}^2}n^\epsilon_0 \ln n^\epsilon_0 \mathrm{d}x+ \frac{d_2}{2}\int_0^t\|\Delta c^{\epsilon}\|_{L^2}^2 \mathrm{d}r+  \frac{\Lambda^4 \|  c_0\|_{L^\infty}^2 M_\chi^4}{8 d_2  d_1^2} \int_0^t\|n^\epsilon \|_{L^2}^2  \mathrm{d}x,
\end{split}
\end{equation}
where the last inequality used Lemma \ref{lem9} and the GN inequality
\begin{equation} \label{gn}
\begin{split}
\|\nabla c^{\epsilon}\|_{L^4} \leq \Lambda \|  c^{\epsilon}\|_{L^\infty}^\frac{1}{2}\|\Delta c^{\epsilon}\|_{L^2}^\frac{1}{2}\leq \Lambda \|  c_0\|_{L^\infty}^\frac{1}{2}\|\Delta c^{\epsilon}\|_{L^2}^\frac{1}{2},\quad \textrm{for some}~ \Lambda>0.
\end{split}
\end{equation}
To close the estimate \eqref{4.2}, let us first apply the operator $\nabla $ to \eqref{Mod-1}$_1$ to obtain
$$
\mathrm{d}  \nabla c^\epsilon+ \nabla(u^\epsilon\cdot \nabla c^\epsilon)   \mathrm{d} t = d_2\Delta \nabla c^\epsilon \mathrm{d} t-\nabla[\kappa(c^\epsilon)(n^\epsilon*\rho^\epsilon)]  \mathrm{d} t.
$$
Taking the chain rule to $\mathrm{d}  \|\nabla c^\epsilon\|_{L^2}^2$ and integrating by parts over $\mathbb{R}^2$, we deduce from last equality that
\begin{equation} \label{4.3}
\begin{split}
 &  \|\nabla c^\epsilon\|_{L^2}^2+2d_2\int_0^t\|\Delta c^\epsilon\|_{L^2}^2\mathrm{d}r=\|\nabla c^\epsilon_0\|_{L^2}^2+2\int_0^t(\Delta c^\epsilon,  u^\epsilon\cdot \nabla c^\epsilon )_{L^2} \mathrm{d}r\\
 &\quad+2\int_0^t(\Delta c^\epsilon, \kappa(c^\epsilon)(n^\epsilon*\rho^\epsilon) )_{L^2}\mathrm{d}r.
\end{split}
\end{equation}
Using \eqref{gn} and the fact of $ \textrm{div}  u^\epsilon =0$, the second terms on the R.H.S of \eqref{4.3} can be estimated as
\begin{equation} \label{4.4}
\begin{split}
 2\int_0^t(\Delta c^\epsilon,  u^\epsilon\cdot \nabla c^\epsilon )_{L^2} \mathrm{d}r&=-2\int_0^t\int_{\mathbb{R}^2} \partial_ic^{\epsilon} \partial_i u^{\epsilon,j}\partial_jc^{\epsilon}\mathrm{d}x\mathrm{d}r-2\int_0^t\int_{\mathbb{R}^2} u^{\epsilon,j}\partial_ic^{\epsilon}\partial_i\partial_jc^{\epsilon}\mathrm{d}x\mathrm{d}r\\
 &=-2\int_0^t\int_{\mathbb{R}^2} \partial_ic^{\epsilon} \partial_i u^{\epsilon,j}\partial_jc^{\epsilon}\mathrm{d}x\mathrm{d}r \\
 &\leq  \frac{  d_2}{2\Lambda^4 \|  c_0\|_{L^\infty}^2} \int_0^t\|\nabla c^{\epsilon}\|_{L^4}^4\mathrm{d}r+ \frac{2\Lambda^4 \|  c_0\|_{L^\infty}^2}{   d_2 }\int_0^t\|\nabla u^{\epsilon}\|_{L^2}^2\mathrm{d}r\\
 &\leq \frac{ d_2 }{2} \int_0^t\|\Delta c^{\epsilon}\|_{L^2}^2\mathrm{d}r+ \frac{2\Lambda^4 \|  c_0\|_{L^\infty}^2}{   d_2 }\int_0^t\|\nabla u^{\epsilon}\|_{L^2}^2\mathrm{d}r.
\end{split}
\end{equation}
Moreover, we have
\begin{equation} \label{4.5}
\begin{split}
2\int_0^t(\Delta c^\epsilon, \kappa(c^\epsilon)(n^\epsilon*\rho^\epsilon) )_{L^2}\mathrm{d}r&\leq \frac{d_2}{2}  \int_0^t\|\Delta c^{\epsilon}\|_{L^2}^2\mathrm{d}r + \frac{ 2}{d_2}\int_0^t\|\kappa(c^\epsilon)(n^\epsilon*\rho^\epsilon) \|_{L^2}^2\mathrm{d}r\\
&\leq \frac{d_2}{2}  \int_0^t\|\Delta c^{\epsilon}\|_{L^2}^2\mathrm{d}r + \frac{2M_\kappa^2}{d_2}\int_0^t\| n^\epsilon\|_{L^2}^2\mathrm{d}r.
\end{split}
\end{equation}
Putting the estimates \eqref{4.4}-\eqref{4.5} into \eqref{4.3} leads to
\begin{equation} \label{4.6}
\begin{split}
 &  \|\nabla c^\epsilon\|_{L^2}^2+  d_2 \int_0^t\|\Delta c^\epsilon\|_{L^2}^2\mathrm{d}r\leq\|\nabla c^\epsilon_0\|_{L^2}^2+\frac{2\Lambda^4 \|  c_0\|_{L^\infty}^2}{ d_2} \int_0^t\|\nabla u^{\epsilon}\|_{L^2}^2\mathrm{d}r + \frac{M_\kappa^2}{d_2}\int_0^t\| n^\epsilon\|_{L^2}^2\mathrm{d}r.
\end{split}
\end{equation}
Putting the estimates \eqref{4.2} and \eqref{4.6} together, we arrive at
\begin{equation} \label{4.7}
\begin{split}
 &  \int_{\mathbb{R}^2}n^\epsilon \ln n^\epsilon \mathrm{d}x +\|\nabla c^\epsilon\|_{L^2}^2+2d_1 \int_0^t \|  \nabla\sqrt{n ^\epsilon}\|_{L^2}^2  \mathrm{d}r+ \frac{d_2}{2}\int_0^t\|\Delta c^\epsilon\|_{L^2}^2\mathrm{d}r\\
 &\quad\leq \int_{\mathbb{R}^2}n^\epsilon_0 \ln n^\epsilon_0 \mathrm{d}x+\|\nabla c^\epsilon_0\|_{L^2}^2+\frac{2\Lambda^4 \|  c_0\|_{L^\infty}^2}{ d_2} \int_0^t\|\nabla u^{\epsilon}\|_{L^2}^2\mathrm{d}r\\
  &\quad+ \left( \frac{\Lambda^2 \|n_0 \|_{L^1}M_\kappa^2}{d_2}+  \frac{\Lambda^6 \|n_0 \|_{L^1} \|  c_0\|_{L^\infty}^2 M_\chi^4}{8 d_2  d_1^2}\right) \int_0^t  \|\nabla\sqrt{n^\epsilon}\|_{L^2}^2 \mathrm{d}x,
\end{split}
\end{equation}
where we used the GN inequality to obtain
\begin{equation} \label{cnn}
\begin{split}
\|n^\epsilon \|_{L^2} \leq \Lambda \|\sqrt{n^\epsilon} \|_{L^2} \|\nabla\sqrt{n^\epsilon}\|_{L^2}  \leq \Lambda \|n_0 \|_{L^1}^{\frac{1}{2}} \|\nabla\sqrt{n^\epsilon}\|_{L^2}.
\end{split}
\end{equation}
To estimate the last term on the R.H.S. of \eqref{4.7}, we apply It\^{o}'s formula to $ \|u^\epsilon\|_{L^2}^2$, after integrating by parts with respect to $x $ and using the fact of $(u^\epsilon,(u^\epsilon\cdot\nabla)u^\epsilon)_{L^2}=0$, we obtain
\begin{equation} \label{4.9}
\begin{split}
 \|u^\epsilon(t)\|_{L^2}^2 + d_3\int_0^t\|\nabla u^\epsilon\|_{L^2}^2\mathrm{d}r&=\|u^\epsilon_0\|_{L^2}^2 + 2\int_0^t(u^\epsilon,\textbf{P}[ (n^\epsilon\nabla \phi)*\rho^\epsilon])_{L^2}\mathrm{d}r\\
 &+ \int_0^t\|\textbf{P} f(t,u^\epsilon) \|_{L_2(U;L^2)}^2\mathrm{d}r + 2\sum_{j\geq 1}\int_0^t(u^\epsilon,\textbf{P} f_j(t,u^\epsilon) )_{L^2}\mathrm{d}  W^j\\
 &\leq\|u^\epsilon_0\|_{L^2}^2 +(\| \phi\|_{W^{1,\infty}}^2+\varrho+1)\int_0^t (1+\|u^\epsilon\|_{L^2}^2)\mathrm{d}r\\
 &+\Lambda^2 \|n_0 \|_{L^1}\int_0^t \|\nabla\sqrt{n^\epsilon}\|_{L^2}^2\mathrm{d}r + 2\left|\sum_{j\geq 1}\int_0^t(u^\epsilon,\textbf{P} f_j(t,u^\epsilon) )_{L^2}\mathrm{d}  W^j\right|,
\end{split}
\end{equation}
where we used the condition {(\textsf{A$_3$})} and the property $(Au^\epsilon,u^\epsilon)_{L^2}=\|\nabla u^\epsilon\|_{L^2}$. Multiplying both sides of \eqref{4.9} by $  \frac{4\Lambda^4 \|  c_0\|_{L^\infty}^2}{ d_3 d_2} $, it then follows from \eqref{4.7} and \eqref{4.9} that
\begin{equation} \label{4.10}
\begin{split}
 &  \int_{\mathbb{R}^2}n^\epsilon \ln n^\epsilon \mathrm{d}x +\|\nabla c^\epsilon\|_{L^2}^2+ \frac{4\Lambda^4 \|  c_0\|_{L^\infty}^2}{ d_3 d_2}\|u^\epsilon(t)\|_{L^2}^2 +2d_1 \int_0^t \|  \nabla\sqrt{n ^\epsilon}\|_{L^2}^2  \mathrm{d}r\\
 &\quad + \frac{d_2}{2}\int_0^t\|\Delta c^\epsilon\|_{L^2}^2\mathrm{d}r+\frac{2\Lambda^4 \|  c_0\|_{L^\infty}^2}{ d_2} \int_0^t\|\nabla u^{\epsilon}\|_{L^2}^2\mathrm{d}r\\
 &\quad\leq \int_{\mathbb{R}^2}n^\epsilon_0 \ln n^\epsilon_0 \mathrm{d}x+\|\nabla c^\epsilon_0\|_{L^2}^2+\frac{4\Lambda^4 \|  c_0\|_{L^\infty}^2}{ d_3 d_2}\|u^\epsilon_0\|_{L^2}^2 \\
  &\quad+ \left( \frac{\Lambda^2 \|n_0 \|_{L^1}M_\kappa^2}{d_2}+  \frac{\Lambda^6 \|n_0 \|_{L^1} \|  c_0\|_{L^\infty}^2 M_\chi^4}{8 d_2  d_1^2}+\frac{4\Lambda^6 \|n_0 \|_{L^1}\|  c_0\|_{L^\infty}^2}{ d_3 d_2}\right ) \int_0^t  \|\nabla\sqrt{n^\epsilon}\|_{L^2}^2 \mathrm{d}x \\
  &\quad+\frac{4\Lambda^4 \|  c_0\|_{L^\infty}^2}{ d_3 d_2}(\| \phi\|_{W^{1,\infty}}^2+\varrho+1)\int_0^t (1+\|u^\epsilon\|_{L^2}^2)\mathrm{d}r\\
 &\quad + \frac{8\Lambda^4 \|  c_0\|_{L^\infty}^2}{ d_3 d_2}\left|\sum_{j\geq 1}\int_0^t(u^\epsilon,\textbf{P} f_j(t,u^\epsilon) )_{L^2}\mathrm{d}  W^j\right|.
\end{split}
\end{equation}
In order to bound $\int_{\mathbb{R}^2}n^\epsilon \ln n^\epsilon \mathrm{d}x$ from below, let us multiply \eqref{Mod-1}$_1$ by the smooth function $\langle x\rangle=(1+|x|^2)^{1/2}$ and integrating by parts over $\mathbb{R}^2$, it follows that
\begin{equation} \label{4.11}
\begin{split}
2\int_{\mathbb{R}^2} \langle x\rangle n^\epsilon \mathrm{d}x&=2\int_{\mathbb{R}^2} \langle x\rangle n^\epsilon_0 \mathrm{d}x+2\int_0^t\int_{\mathbb{R}^2}n^\epsilon u^\epsilon \cdot\nabla \langle x\rangle \mathrm{d}x\mathrm{d}r\\
& + 2d_1\int_0^t\int_{\mathbb{R}^2} n ^\epsilon \Delta \langle x\rangle \mathrm{d}x\mathrm{d}r+ 2 \int_0^t\int_{\mathbb{R}^2} \nabla \langle x\rangle \cdot\left(n^\epsilon[(\chi(c^\epsilon)\nabla c^\epsilon)*\rho^\epsilon]\right)\mathrm{d}x\mathrm{d}r.
\end{split}
\end{equation}
Notice that $\|\nabla\langle x\rangle\|_{L^\infty},\|\Delta \langle x\rangle\|_{L^\infty}\leq 1$, one can estimate the terms on the R.H.S. of \eqref{4.11} as
\begin{equation*}
\begin{split}
2\int_0^t\int_{\mathbb{R}^2}n^\epsilon u^\epsilon \cdot\nabla \langle x\rangle \mathrm{d}x\mathrm{d}r
%&\leq \int_0^t\|n^\epsilon\|_{L^2}\|u^\epsilon\|_{L^2}\mathrm{d}r\\
&\leq2\Lambda \|n_0 \|_{L^1}^{\frac{1}{2}} \int_0^t \|\nabla\sqrt{n^\epsilon}\|_{L^2}\|u^\epsilon\|_{L^2}\mathrm{d}r\\
&\leq \frac{d_1}{2} \int_0^t \|\nabla\sqrt{n^\epsilon}\|_{L^2}^2\mathrm{d}r + \frac{2\Lambda^2 \|n_0 \|_{L^1} }{  d_1}\int_0^t \|u^\epsilon \|_{L^2}^2\mathrm{d}r,\\
2d_1\int_0^t\int_{\mathbb{R}^2} n ^\epsilon \Delta \langle x\rangle  \mathrm{d}x\mathrm{d}r &\leq 2d_1\int_0^t\|n ^\epsilon \|_{L^1}\mathrm{d}r= 2d_1 t\|n ^\epsilon _0\|_{L^1}\leq  2d_1 t\|n _0\|_{L^1} ,\\
2 \int_0^t\int_{\mathbb{R}^2} \nabla \langle x\rangle \cdot\left(n^\epsilon[(\chi(c^\epsilon)\nabla c^\epsilon)*\rho^\epsilon]\right)\mathrm{d}x\mathrm{d}r&\leq 2 \int_0^t\|n^\epsilon\|_{L^2}\|(\chi(c^\epsilon)\nabla c^\epsilon)*\rho^\epsilon\|_{L^2} \mathrm{d}r\\
&\leq 2\Lambda \|n_0 \|_{L^1}^{\frac{1}{2}}M_\chi\int_0^t \|\nabla\sqrt{n^\epsilon}\|_{L^2}\| \nabla c^\epsilon \|_{L^2} \mathrm{d}r\\
&\leq  \frac{d_1}{2}\int_0^t \|\nabla\sqrt{n^\epsilon}\|_{L^2}^2\mathrm{d}r+ \frac{2\Lambda^2 \|n_0 \|_{L^1} M_\chi^2}{ d_1} \int_0^t \| \nabla c^\epsilon \|_{L^2}^2 \mathrm{d}r,
\end{split}
\end{equation*}
where we used \eqref{cnn} in the first and third estimates. Putting the last three inequalities into \eqref{4.11}, we get
\begin{equation} \label{4.12}
\begin{split}
\int_{\mathbb{R}^2} 2 \langle x\rangle  n^\epsilon \mathrm{d}x&\leq 2\int_{\mathbb{R}^2}\langle x\rangle n^\epsilon_0 \mathrm{d}x+2 d_1 t\|n _0\|_{L^1}+d_1 \int_0^t \|\nabla\sqrt{n^\epsilon}\|_{L^2}^2\mathrm{d}r \\
& +\frac{2\Lambda^2 \|n_0 \|_{L^1} M_\chi^2}{d_1} \int_0^t \| \nabla c^\epsilon \|_{L^2}^2 \mathrm{d}r+ \frac{2\Lambda^2 \|n_0 \|_{L^1} }{ d_1}\int_0^t \|u^\epsilon\|_{L^2}^2\mathrm{d}r.
\end{split}
\end{equation}
Being inspired by the decomposition in (cf. \cite[p.649]{liu2011coupled}), let us consider
\begin{equation*}
\begin{split}
 \int_{\mathbb{R}^2} n^\epsilon |\ln n^\epsilon|\mathrm{d}x= \int_{\mathbb{R}^2} n^\epsilon  \ln n^\epsilon \mathrm{d}x +2 \int_{\mathbb{R}^2} n^\epsilon  \ln \frac{1}{n^\epsilon} \textbf{1}_{\{0< n^\epsilon\leq e^{-|x|}\} }\mathrm{d}x+2 \int_{\mathbb{R}^2} n^\epsilon  \ln \frac{1}{n^\epsilon} \textbf{1}_{ \{e^{-|x|}< n^\epsilon\leq 1\}}\mathrm{d}x.
\end{split}
\end{equation*}
Hence by \eqref{4.12} and the fact of $x\ln \frac{1}{x}\leq \sqrt{x}$ for all $x>0$, there holds
\begin{equation}\label{cccc}
\begin{split}
 0\leq\int_{\mathbb{R}^2} n^\epsilon |\ln n^\epsilon|\mathrm{d}x &\leq \int_{\mathbb{R}^2} n^\epsilon  \ln n^\epsilon \mathrm{d}x +2 \int_{\mathbb{R}^2}e^{-|x|}\mathrm{d}x+2 \int_{\mathbb{R}^2} |x| n^\epsilon \mathrm{d}x\\
 &\leq \int_{\mathbb{R}^2}n^\epsilon (\ln n^\epsilon+  2|x|) \mathrm{d}x +4,
\end{split}
\end{equation}
which implies that $\int_{\mathbb{R}^2}n^\epsilon (\ln n^\epsilon+  2|x|) \mathrm{d}x$ is bounded from below. Combining \eqref{4.10}, \eqref{4.12}  and \eqref{cccc}, it follows form the condition \textbf{(B$_3$)} that for all $t\in (0,T]$
\begin{equation} \label{4.15}
\begin{split}
\mathscr{F}(t) + \int_0^t \mathscr{G}(t)\mathrm{d}r&\leq \mathscr{F}(0) +2d_1 t\|n _0\|_{L^1} +\Theta_1 \int_0^t \| \nabla c^\epsilon \|_{L^2}^2 \mathrm{d}r+ \Theta_2\int_0^t (1+\|u^\epsilon\|_{L^2}^2)\mathrm{d}r\\
  &  + \Theta_3\left|\sum_{j\geq 1}\int_0^t(u^\epsilon,\textbf{P} f_j(t,u^\epsilon) )_{L^2}\mathrm{d}  W^j\right|,
\end{split}
\end{equation}
where
\begin{equation*}
\begin{split}
&\mathscr{F}(t) := \int_{\mathbb{R}^2}n^\epsilon (\ln n^\epsilon+  2\xi) \mathrm{d}x+4 +\|\nabla c^\epsilon\|_{L^2}^2+ \frac{4\Lambda^4 \|  c_0\|_{L^\infty}^2}{ d_3 d_2}(1+\|u^\epsilon \|_{L^2}^2) ,\\
& \mathscr{G}(t) :=\frac{d_1}{2}\|  \nabla\sqrt{n ^\epsilon}\|_{L^2}^2 + \frac{d_2}{2} \|\Delta c^\epsilon\|_{L^2}^2 +\frac{2\Lambda^4 \|  c_0\|_{L^\infty}^2}{ d_2} \|\nabla u^{\epsilon}\|_{L^2}^2,\\
&\Theta_1 := \frac{2\Lambda^2 \|n_0 \|_{L^1} M_\chi^2}{d_1},~~
\Theta_2 :=\frac{2\Lambda^2 \|n_0 \|_{L^1} }{  d_1} +\frac{4\Lambda^4 \|  c_0\|_{L^\infty}^2(\| \phi\|_{W^{1,\infty}}^2+\varrho+1)}{ d_3 d_2},~~
\Theta_3 :=  \frac{8\Lambda^4 \|  c_0\|_{L^\infty}^2}{ d_3 d_2}.
\end{split}
\end{equation*}
To deal with the last stochastic integral in \eqref{4.15}, for any $p>1$ and $T>0$, one can apply the BDG inequality and utilizing the condition (\textsf{B$_2$}) and Young inequality $ab\leq\epsilon a^2+\frac{1}{4\epsilon}b^2$ to derive that
\begin{equation}\label{4.16}
\begin{split}
 & \mathbb{E}\sup_{t\in [0,T]}\left|\Theta_3\sum_{j\geq 1}\int_0^t(u^\epsilon,\textbf{P} f_j(t,u^\epsilon) )_{L^2}\mathrm{d}  W^j\right|^p\\
 &\quad \leq \tilde{\Lambda}_p\mathbb{E} \left[\sup_{t\in [0,T]}\|u^\epsilon(t)\|_{L^2}^p\left(\Theta_3^2\int_0^T\sum_{j\geq 1}\|\textbf{P} f_j(t,u^\epsilon) \|_{ L^2 }^2 \mathrm{d} t\right)^{p/2}\right]\\
 &\quad \leq \frac{1}{2}(\frac{\Theta_3}{2})^p \mathbb{E} \sup_{t\in [0,T]}\|u^\epsilon(t)\|_{L^2}^{2p}+ 2^{p-1}  \tilde{\Lambda}_p^2\Theta_3^{p}\varrho ^p \mathbb{E}\left(\int_0^T 1+\|u^\epsilon\|_{H^s}^2 \mathrm{d} t\right)^p\\
 &\quad \leq \frac{1}{2} \mathbb{E} \sup_{t\in [0,T]}\mathscr{F}(t)^{ p}+ 2^{p-1}  \tilde{\Lambda}_p^2\Theta_3^{p}\varrho ^p \mathbb{E}\left(\int_0^T 1+\|u^\epsilon\|_{H^s}^2  \mathrm{d} t\right)^p,
\end{split}
\end{equation}
where $\tilde{\Lambda}_p$ denotes the general constant in the BDG inequality (cf. \cite{applebaum2009levy}). Thanks to the lower-bound in \eqref{cccc}, one can raise the $p$-th ($p\geq1$) power on both sides of \eqref{4.15}, after taking the supremum over $[0,T]$ and then the expected values, we get from \eqref{4.16} that
\begin{equation} \label{ddd}
\begin{split}
 &\mathbb{E}\sup_{t\in [0,T]}\mathscr{F}(t)^p + \mathbb{E}\left(\int_0^T \mathscr{G}(t)\mathrm{d}r\right)^p\\
 &\quad \leq  \mathscr{F}(0)^p +(2d_1\|n _0\|_{L^1})^p T^p + (\Theta_1^p+\Theta_2^p)\mathbb{E}\left( \int_0^T \| \nabla c^\epsilon \|_{L^2}^2 \mathrm{d}r+ \int_0^t (1+\|u^\epsilon\|_{L^2}^2)\mathrm{d}r\right)^p \\
  &\quad +\frac{1}{2} \mathbb{E} \sup_{t\in [0,T]}\mathscr{F}(t)^{ p}+ 2^{p-1}  \tilde{\Lambda}_p^2\Theta_3^{p}\varrho ^p \mathbb{E}\left(\int_0^T 1+\|u^\epsilon\|_{H^s}^2  \mathrm{d} t\right)^p.
\end{split}
\end{equation}
After some direct calculations, it follows from the inequality \eqref{ddd} that for any $\tau\in [0,T]$
\begin{equation}\label{4.17}
\begin{split}
\mathbb{E}\sup_{t\in [0,\tau]}\mathscr{F}(t)^p
%&\leq  2\mathscr{F}(0)^p +2(2d_1\|n _0\|_{L^1})^p T^p \\
%&+2 (\Theta_1^p+\Theta_2^p)\mathbb{E}\bigg( \int_0^T \| \nabla c^\epsilon \|_{L^2}^2 \mathrm{d}r+ \frac{2}{\Theta_3}\int_0^t \frac{\Theta_3}{2}(1+\|u^\epsilon\|_{L^2}^2)\mathrm{d}r\bigg)^p \\
%& +2^{p}  \tilde{\Lambda}_p^2\Theta_3^{p}\varrho ^p \mathbb{E}\bigg(\frac{2}{\Theta_3}\int_0^T \frac{\Theta_3}{2}(1+\|u^\epsilon\|_{H^s}^2)  \mathrm{d} t\bigg)^p\\
  %&\leq 2\mathscr{F}(0)^p +2^{p+1}d_1^p\|n _0\|_{L^1}^p T^p \\
%&+2 (\Theta_1^p+\Theta_2^p)(1+\frac{2}{\Theta_3})^p\mathbb{E}\bigg( \int_0^T \| \nabla c^\epsilon \|_{L^2}^2 \mathrm{d}r+ \int_0^t \frac{\Theta_3}{2}(1+\|u^\epsilon\|_{L^2}^2)\mathrm{d}r\bigg)^p \\
 % & +4^{p}  \tilde{\Lambda}_p^2 \varrho ^p  \mathbb{E}\bigg(\int_0^T \frac{\Theta_3}{2}(1+\|u^\epsilon\|_{H^s}^2)  \mathrm{d} t\bigg)^p\\
  &\leq 2\mathscr{F}(0)^p +2^{p+1}d_1^p\|n _0\|_{L^1}^p T^p  \\
  &+\left(2 (\Theta_1^p+\Theta_2^p)(1+\frac{2}{\Theta_3})^p +4^{p}  \tilde{\Lambda}_p^2 \varrho ^p\right)  \int_0^\tau\mathbb{E}\sup_{r\in [0,t]}\mathscr{F}(r)^p  \mathrm{d} t.
\end{split}
\end{equation}
Applying the Gronwall Lemma to \eqref{4.17} leads to
\begin{equation*}
\begin{split}
\mathbb{E}\sup_{t\in [0,T]}\mathscr{F}(t)^p \leq (2\mathscr{F}(0)^p +2^{p+1}d_1^p\|n _0\|_{L^1}^p T^p)e^{(2 (\Theta_1^p+\Theta_2^p)(1+\frac{2}{\Theta_3})^p +4^{p}  \tilde{\Lambda}_p^2 \varrho ^p)T},
\end{split}
\end{equation*}
which combined with \eqref{ddd} gives the  bound for $\mathbb{E} (\int_0^T \mathscr{G}(t)\mathrm{d}r )^p$ uniformly in $\epsilon$. This completes the proof of Lemma \ref{lem4.1}.
\end{proof}

Based on Lemma \ref{lem10} and the entropy functional inequality in Lemma \ref{lem4.1}, we could now give the proof of Theorem \ref{th2}.

\vspace{3mm}\noindent
\textbf{Proof of Theorem \ref{th2}.} The method is very similar to the corresponding one for Theorem \ref{th1} and we only give a sketch for completeness.

\textsf{Step 1.} The Lemma \ref{lem9} for modified system \eqref{Mod-1} remains to be true, that is, for any $T>0$
\begin{align}
&n^{\epsilon}(x,t)\geq 0,~~ c^{\epsilon}(x,t)\geq 0,~~ ~ (t,x)\in [0,T]\times\mathbb{R}^2,\label{4.20}\\
&\| n^\epsilon(\cdot,t)\|_{L^1}\leq\| n _0\|_{L^1},~~ \|c^\epsilon(\cdot,t)\|_{L^1\cap L^\infty}\leq\|c_0\|_{L^1\cap L^\infty},~~ ~ t\in[0,T].\label{4.21}
\end{align}
For any $p\geq2$, it follows from Lemma \ref{lem4.1} that $ \{\sqrt{n^\epsilon}\}_{\epsilon >0}\in L^p(\Omega; L^2(0,T;H^1(\mathbb{R}^2)))$ is uniformly bounded, which together with \eqref{4.20} and the GN inequality yield that $n^\epsilon\in L^p(\Omega; L^2(0,T;H^1(\mathbb{R}^2)))$ is bounded.

Moreover,  $\{c^\epsilon\}_{\epsilon >0}$ is uniformly bounded in $L^p(\Omega; L^\infty(0,T;H^1(\mathbb{R}^2))\cap L^2(0,T;H^2(\mathbb{R}^2)))$ and $\{u^\epsilon\}_{\epsilon >0}$ is uniformly bounded in $L^p(\Omega; L^\infty(0,T;L^2(\mathbb{R}^2))\cap L^2(0,T;H^1(\mathbb{R}^2)))$.

\textsf{Step 2.} Similar to the argument in Section 3.2 (cf. Lemma \ref{lem11}) and the \textsf{Step 1} in the proof of Theorem \ref{th1}, one can prove that the joint laws $\mathcal {L}[n^\epsilon,c^\epsilon,u^\epsilon,W^\epsilon]$ of $(n^\epsilon,c^\epsilon,u^\epsilon,W)$ is tight on
\begin{equation}
\begin{split}
\mathcal {X}&:=L^2(0,T;L^2_{\textrm{loc}}(\mathbb{R}^2))\times L^2(0,T;H^1_{\textrm{loc}}(\mathbb{R}^2)) \times C([0, T]; (D(A))^*) \\
&\quad \cap (L^2(0,T;H^1 (\mathbb{R}^2)),weak)\cap L^2([0, T]; L^2_{\textrm{loc}}(\mathbb{R}^2))\times C([0,T];U_0).
 \end{split}
\end{equation}
Therefore, by using the stochastic compactness method (similar to \textsf{Step 2} in the proof of Theorem \ref{th1}), the Prohorov  theorem, one can prove that  there exists a new probability space $(\widetilde{\Omega},\widetilde{\mathcal {F}},\widetilde{\mathbb{P}})$ with $\mathscr{X}$-valued random variables $
(\widetilde{n},\widetilde{c},\widetilde{u},\widetilde{W})$  and $(\widetilde{n}^{\epsilon_j},\widetilde{c} ^{\epsilon_j},\widetilde{u} ^{\epsilon_j},\widetilde{W}^{\epsilon_j})$
such that

\begin{itemize}
\item [ {(e$_1$)}]  $\mathcal {L}[\widetilde{n}^{\epsilon_j},\widetilde{c} ^{\epsilon_j},\widetilde{u} ^{\epsilon_j},\widetilde{W}^{\epsilon_j}]=\mathcal {L}[n^{\epsilon_j},c ^{\epsilon_j},u ^{\epsilon_j},W^{\epsilon_j}]$ coincides on $\mathscr{X}$;

\item [ {(e$_2$)}]  the law $\mathcal {L}[\widetilde{n},\widetilde{c},\widetilde{u},\widetilde{W}]$ on $\mathscr{X}$ is a Radon measure, and

    \begin{equation}\label{eeee}
     \begin{split}
    (\widetilde{n}^{\epsilon_j},\widetilde{c} ^{\epsilon_j},\widetilde{u} ^{\epsilon_j},\widetilde{W}^{\epsilon_j})\rightarrow (\widetilde{n},\widetilde{c},\widetilde{u},\widetilde{W})~~  \textrm{as} ~~ j\rightarrow\infty , ~~ \widetilde{\mathbb{P}}\textrm{-a.s.;}
     \end{split}
\end{equation}

\item [ {(e$_3$)}] the quadruple $(\widetilde{n}^{\epsilon_j},\widetilde{c}^{\epsilon_j},\widetilde{u}^{\epsilon_j},\widetilde{W}^{\epsilon_j})$ satisfies the system \eqref{Mod-1} in the sense of distribution, that is, \eqref{Mod-1} admits a global martingale weak solution.
\end{itemize}

Thanks to the almost surely convergence  \eqref{eeee}, one can take the limit as $j\rightarrow\infty$ in the system \eqref{Mod-1} to show that the quadruple $(\widetilde{n},\widetilde{c},\widetilde{u},\widetilde{W})$ is actually a global martingale weak solution to the system \eqref{KS-SNS} under the new stochastic basis $(\widetilde{\Omega},\widetilde{\mathcal {F}},\{\widetilde{\mathcal {F}}_t\}_{t\geq 0},\widetilde{\mathbb{P}})$. By Fatou's Lemma, one can verify from the uniform bound \eqref{entropy} that the regularity of solution in \textbf{(b$_1$)}  hold true.

\textsf{Step 3.} When the sensitivity $\chi(\cdot)$ is equal to a positive constant, then the similar argument as  {(a$_2$)} of Theorem \ref{th1} implies that the pathwise uniqueness holds,  which together with the martingale theory in \textsf{Step 2} and the Yamada-Watanabe Theorem (cf. \cites{yamada1971uniqueness,watanabe1971uniqueness} imply the existence and uniqueness of global pathwise weak solutions to \eqref{KS-SNS}. The proof of Theorem \ref{th2} is completed. \qed

\section*{Data availability}

No data was used for the research described in the article.

\section*{Conflict of interest statement}

The authors declared that they have no conflicts of interest to this work.

\section*{Acknowledgements}

The authors would like to warmly thank the anonymous referee for his/her constructive comments and suggestions which helped to improve the quality of this article. This work was partially supported by the National Natural Science Foundation of China (Grant No. 12231008), and the National Key Research and Development Program of China (Grant No.  2023YFC2206100).

\bibliographystyle{amsrefs}
\bibliography{2dSCNS2}

\end{document}